\newtheorem{thm}{Theorem}[section]
\newtheorem{cor}[thm]{Corollary}
\newtheorem{lem}[thm]{Lemma}
\newtheorem{ex}[thm]{Example}
\newtheorem{definition}[thm]{Definition}
\newtheorem{rmk}[thm]{Remark}
\newtheorem{prop}[thm]{Proposition}
\newtheorem{conj}[thm]{Conjecture}
\begin{document}

\title{Correspondence Theorem between Holomorphic Discs and Tropical Discs on K3 Surfaces}
\author{Yu-Shen Lin}

\maketitle
\section*{Abstract}
 In this paper, we prove that the open Gromov-Witten invariants defined in \cite{L4} on K3 surfaces satisfy the Kontsevich-Soibelman wall-crossing formula. One one hand, this gives a geometric interpretation of the slab functions in Gross-Siebert program. On the other hands, the open Gromov-Witten invariants coincide with the weighted counting of tropical discs. This is an analog of the corresponding theorem on toric varieties \cite{M2}\cite{NS} but on compact Calabi-Yau surfaces.

\section{Introduction}

The celebrated Strominger-Yau-Zaslow conjecture \cite{SYZ} suggested that Calabi-Yau manifolds admit special Lagrangian fibration near large complex limit points. Furthermore, the special Lagrangians are expected to collapse to the base affine manifolds at the complex structure limit \cite{KS4}. Holomorphic curves under this limit are conjectured to  converge to some $1$-skeletons, known as tropical curves, on the base affine manifolds. Ideally, this can reduce some enumerative problems on Calabi-Yau manifolds to the countings of the tropical curves, which are closer to combinatoric problems. However, the great picture is not generally carried out due to the involvement of Calabi-Yau metric, yet has no explicit expression so far. Instead, Mikhalkin \cite{M2} started the realm of tropical geometry by establishing the correspondence between the counting of holomorphic curves and tropical curves on toric surfaces. Later, the correspondence theorem is generalized to all toric manifolds \cite{NS}.

For understanding mirror symmetry from the SYZ point of view, the author defined an version of open Gromov-Witten invariant $\tilde{\Omega}(\gamma;u)$ \cite{L4}, naively count the number of holomorphic discs with boundary on special Lagrangian torus fibre $L_u$ in relative class $\gamma$. The open Gromov-Witten invariant 
$\tilde{\Omega}(\gamma;u)$ locally is an 
invariant with respect to the special Lagrangian boundary condition $u$. There are some real codimension one walls on the base of the fibration known as the walls of marginal stability.  
An explicit example is constructed that the special Lagrangian torus fibres can bound new holomorphic discs when the boundary conditions vary across a wall of marginal stability and the phenomenon only occurs on one side of the wall. As a consequence, the counting invariant $\tilde{\Omega}(\gamma;u)$ jumps when the special Lagrangian boundary condition varies across the wall. This is similar to the wall-crossing phenomenon of generalized Donaldson-Thomas invariants \cite{KS2}. Under certain primitive conditions, the wall-crossing formula is calculated in \cite{L4}. 

On the other hand, the author developed the notion of tropical discs and the corresponding counting invariants on elliptic K3 surfaces \cite{L4}. The tropical discs counting invariants satisfy the Kontsevich-Soibelman wall-crossing formula. In particular, a relative class can be realized as a tropical discs when the associated open Gromov-Witten invariants are nonzero. The main goal of the paper to show that the open Gromov-Witten invariants is indeed the same as the weighted tropical discs counting. 

%

The paper is arranged as follows: In section two, we review the homological algebra of curved $A_{\infty}$ structures including the notion of Maurer-Cartan equations and their properties. In section three, we define the tropical discs counting as weighted count of tropical discs and study its properties. The generating functions of the tropical discs counting are related to the slab functions in Gross-Siebert program \cite{GS1} or Kontsevich-Soibelman transformation in \cite{KS1}. In particular, they satisfy the Kontsevich-Soibelman wall-crossing formula.

In Section \ref{4006}, we first review the Floer theory on HyperK\"ahler manifolds with special Lagrangian fibration. The hyperK\"ahler condition makes the special Lagrangian fibres which bound holomorphic discs project to affine lines on the base. Then we use the idea of family Floer homology, which is originally advocated by Fukaya \cite{F5}, to study the wall-crossing phenomenon of the open Gromov-Witten invariants. The similar method is also used to  construct the mirror by Tu \cite{T4} and the mirror functor for the Lagrangian torus fibration by Abouzaid \cite{A2}. The result of our paper can be viewed as the enumerative counterpart. More precisely, we study the Maurer-Cartan elements associated to the $A_{\infty}$ structures of the special Lagrangian fibres which can be identified as the cohomology of the torus fibres. When we vary the special Lagrangian boundary conditions in a $1$-parameter family, the $A_{\infty}$ structures are related via pseudo-isotopies. Moreover, pseudo-isotopies naturally identify the Maurer-Cartan elements. The identification records the information of holomorphic discs appearing in the $1$-parameter family and not necessarily coincide with the identification via parallel transport. We match the identification the pseudo-isotopies with the Kontsevich-Soibelman transformation associated to certain tropical discs, including the wall-crossing formula and the computation the contribution from the initial discs. In particular, this leads to the proof of the main theorem of the paper. 
  \begin{thm} (=Theorem \ref{3013})
    Let $\gamma$ be a relative class and $u$ does not fall on the wall of marginal stability of $\gamma$, then the open Gromov-Witten invariants and the weighted tropical discs counting are well-defined and coincide, i.e.
       \begin{align*}
          \tilde{\Omega}(\gamma;u)=\tilde{\Omega}^{trop}(\gamma;u).
       \end{align*}
  \end{thm}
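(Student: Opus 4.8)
The plan is to propagate the equality along the base of the fibration using the Kontsevich--Soibelman wall-crossing formula — which, by the constructions already in place ($\tilde\Omega$ from \cite{L4}, $\tilde\Omega^{trop}$ from Section~3), both sides satisfy — together with an induction on the symplectic area $\omega\cdot\gamma$. Concretely I would reduce the theorem to two statements: (i) for every wall of marginal stability $\mathsf{W}$ and every generic arc $\{u_t\}$ crossing $\mathsf{W}$ transversally, the jump of $\tilde\Omega(\gamma;u_t)$ across the wall equals the jump of $\tilde\Omega^{trop}(\gamma;u_t)$; and (ii) the two invariants agree for the "initial" classes, whose holomorphic representative is a single embedded disc and whose tropical representative is a single edge emanating from $u$. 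Granting (i) and (ii), one orders the walls by the area of the classes supporting them — a wall produced by the collision of rays carries only classes of strictly larger area than the colliding ones — and an induction on area, using that $\tilde\Omega$ and $\tilde\Omega^{trop}$ coincide in one chamber adjacent to each wall by the inductive hypothesis, forces them to coincide in the other chamber as well, hence in every chamber not on the wall of $\gamma$.

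For ingredient (i) I would run the family Floer argument of Section~\ref{4006}. Over the arc $\{u_t\}$ the curved $A_\infty$ algebras of the fibres $L_{u_t}$, each identified with $H^\ast(T^2)$, assemble into a pseudo-isotopy; by the homological algebra of Section~2 this pseudo-isotopy induces a canonical $A_\infty$-isomorphism between the endpoint algebras and therefore a canonical identification of the gauge-equivalence classes of Maurer--Cartan elements. This identification is a transformation of the formal torus chart whose structure constants are, by construction, the open Gromov--Witten invariants $\tilde\Omega(\gamma;u)$ attached to the holomorphic discs that appear in the $1$-parameter family. On the other side, Section~3 exhibits the tropical generating function as precisely a Kontsevich--Soibelman transformation with structure constants $\tilde\Omega^{trop}(\gamma;u)$. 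So (i) is the assertion that these two transformations coincide. Here the hyperKähler hypothesis is indispensable: every disc bounded by an $L_{u_t}$ is holomorphic for some complex structure in the twistor family and hence projects to an affine line in the base, so the boundaries of the discs occurring along $\{u_t\}$ sweep out exactly the tropical tree supporting $\mathsf{W}$, with the Maslov-index and area constraints translating into the balancing condition and the weight conventions of Section~3.

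Ingredient (ii) is the base case. I would analyze directly the holomorphic discs of minimal area bounded by a fibre $L_u$: show each such disc is regular, or is cut out by an unobstructed Kuranishi structure of the expected dimension, that it is the holomorphic realization of a single tropical edge through $u$, and that its contribution to $\tilde\Omega(\gamma;u)$ equals the multiplicity the construction of Section~3 assigns to that edge (namely $1$ for a primitive direction and the appropriate lattice/torsion factor otherwise). One must also check that no additional, geometrically invisible, discs contribute at this area — again a consequence of the affine-line constraint, which pins down the boundary of any contributing disc.

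The step I expect to be the main obstacle is (i): making rigorous that the pseudo-isotopy's Maurer--Cartan identification is \emph{the} Kontsevich--Soibelman transformation attached to the tropical discs, not merely a transformation sharing its wall and leading term. This requires a gluing and degeneration analysis of the moduli of discs appearing in the family — relating a nodal configuration of straight discs glued along the base tree to an honest holomorphic disc near $\mathsf{W}$ — together with careful bookkeeping of the combinatorial weights (automorphisms of the tropical tree, lattice indices of the edge directions) and control of the Kuranishi/obstruction data and orientations so that the counts and signs on both sides match term by term. Once the transformation is identified for each elementary wall-crossing, the global statement follows from the uniqueness part of the wall-crossing formalism already established, completing the induction.
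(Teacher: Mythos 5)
Your architecture --- both invariants satisfy the Kontsevich--Soibelman wall-crossing formula, the initial data near the $I_1$ fibres agree, and an induction propagates the equality --- is the same as the paper's, and your base case is exactly Theorem \ref{3020}/Theorem \ref{4020} (including the multiple-cover contributions $(-1)^{d-1}/d^2$, which are precisely the external-vertex weights in Definition \ref{1011}). The substantive difference is in how the elementary wall-crossing step, your ingredient (i), is established. You propose to identify the pseudo-isotopy's Maurer--Cartan transformation with the tropical Kontsevich--Soibelman transformation directly, by a gluing/degeneration analysis of the disc moduli along the family, and you correctly flag this as the main obstacle; the paper never performs such an analysis. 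Instead it interleaves (i) with the induction: following the attractor line $l_\gamma$ in the direction of decreasing $|Z_\gamma|$ until the first wall, the constituent classes $\gamma_i$ there have strictly fewer internal vertices (the paper inducts on $n(\gamma;u)$ rather than on area, though area would also serve since $|Z_{\gamma_i}|<|Z_\gamma|$), so by the inductive hypothesis the slab functions on the incoming side of the wall coincide; Theorems \ref{3008} and \ref{9001} assert that both ordered products are consistent around the wall, and the uniqueness part of the Kontsevich--Soibelman lemma then forces the outgoing factors --- in particular the one recording $\tilde{\Omega}(\gamma)$ and $\tilde{\Omega}^{trop}(\gamma)$ --- to coincide, with no gluing of holomorphic discs at all. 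This is what makes the proof feasible, and it is worth internalizing: the hard step you anticipate is deliberately circumvented. Two smaller points you gloss over: the structure constants of the Maurer--Cartan transformation are a priori a separate invariant $\tilde{\Omega}'(\gamma;u)$, and the identification $\tilde{\Omega}=\tilde{\Omega}'$ is itself a theorem (Theorem \ref{3005}), proved via the divisor axiom and the forgetful-map computation of Lemma \ref{27}, not ``by construction''; and the assertion that the equality is already known in one chamber adjacent to each wall is not supplied by the area induction alone --- it requires the attractor-flow argument terminating at a singular fibre together with the local constancy statements (Proposition \ref{3009}, Theorem \ref{3037}), which the paper makes explicit.
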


\section*{Acknowledgment}
  The author would like to thank Shing-Tung Yau for constant encouragement and Chiu-Chu Melissa Liu for helpful discussion.  
 The author also wants to thank Kenji Fukaya, Mark Gross, Andy Neitzke, for explaining their earlier work. The author is indebted to the referee(s) for valuable suggestions and comments. 

\section{Preliminary of Algebraic Framework}
 The section is a review of homological algebra in \cite{FOOO}\cite{F1}.
\subsection{$A_{\infty}$-Structures and $A_{\infty}$ Homomorphisms}
   Let $\Lambda^{\mathbb{C}}$ be the complex Novikov ring, namely 
     \begin{align*}
       \Lambda^{\mathbb{C}}=\{\sum_{i=0}^{\infty} a_i T^{\lambda_i}| a_i\in \mathbb{C}, \lambda_i \in \mathbb{C}, \mbox{ and $|\lambda_i|$ increasing}, \lim |\lambda_i|=+\infty \}.
     \end{align*} We will denote its maximal ideal by $\Lambda_+$.
    We will always assume $\mathcal{S}$ is a sector with angle less than $\pi$ (possibly degenerates to a ray). We define 
    \begin{align*}
        \Lambda^{\mathcal{S}}:=\{\sum^{\infty}_{i=0}a_iT^{\lambda_i}\in \Lambda| \mbox{Arg}\lambda_i\in \mathcal{S}, \lim |\lambda_i|=\infty\}.
    \end{align*}      
         There is a natural valuation $val:\Lambda^{\mathbb{C}} (\mbox{or }\Lambda^{\mathcal{S}})\rightarrow \mathbb{R}_{\geq 0}$ given by
                    \begin{align*}
                       val(\sum_i a_i T^{\lambda_i})=|\lambda_0|.
                    \end{align*}  
         We will use $F^{\lambda}\Lambda^{\mathbb{C}}$(or $F^{\lambda}\Lambda^{\mathcal{S}}$) to denote the subset of $\Lambda^{\mathbb{C}}$(or $\Lambda^{\mathcal{S}}$) consists of elements with $val$ less than $\lambda$ for a $\lambda>0$. When the sector $\mathcal{S}$ degenerates to a ray, then $\Lambda^{\mathcal{S}}$ is naturally identified with the standard Novikov ring $\Lambda$. Notice that $val$ gives a filtration on $\Lambda^{\mathbb{C}}$ does not with respect to the multiplication. However, we have the following substitute. 
        \begin{lem} \label{2999}
            Let $\Gamma$ be a lattice and $\mathcal{S}$ be a sector with angle less than $\pi$. Assume that
          $Z:\Gamma \rightarrow \mathbb{C}$ be a homomorphism such that $\Gamma_{\mathcal{S}}:=Z^{-1}(\mathcal{S})$ has finite intersection with $B_0(\lambda)$, for every $\lambda>0$. Then 
          \begin{enumerate}
             \item for every $\lambda>0$, we have $B_0(\lambda)\cap \hat{\Gamma}_{\mathcal{S}}$ is finite, where $\hat{\Gamma}_{\mathcal{S}}$ is the monoid generated by $\Gamma_{\mathcal{S}}$.
             \item given $\lambda'>0$, then there exists $\lambda>0$ such that 
                   and a subset $A$ of $Z(\Gamma_{\mathcal{S}})$ which does not intersect with $B_0(\lambda)$. Then there exists $\lambda'>0$, independent of $A$, such that the monoid generated by $A$ has no intersection with $B_0(\lambda')$. 
          \end{enumerate}
          
          \end{lem}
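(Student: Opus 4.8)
The plan is to reduce both parts to one elementary observation: on a sector of opening angle less than $\pi$, the Euclidean norm is controlled by an \emph{additive} quantity. Since replacing $Z$ by $e^{i\psi}Z$ for a fixed $\psi\in\mathbb{R}$ changes neither $|Z(\cdot)|$ nor any ball $B_0(\lambda)$, we may rotate so that $\mathcal{S}$ is symmetric about the positive real axis; then $\mathcal{S}\subseteq\{z\in\mathbb{C}\mid|\arg z|\le\theta\}$ for some $\theta<\pi/2$. Let $\ell:=\mathrm{Re}\circ Z\colon\Gamma\to\mathbb{R}$, a group homomorphism. Whenever $Z(\gamma)\in\mathcal{S}$, writing $Z(\gamma)=re^{i\alpha}$ with $|\alpha|\le\theta$ gives
\[
  (\cos\theta)\,|Z(\gamma)| \le \ell(\gamma) \le |Z(\gamma)|, \qquad \cos\theta > 0.
\]
Since $\mathcal{S}$ is a convex cone containing $0$, the map $Z$ sends $\hat{\Gamma}_{\mathcal{S}}$, as well as the monoid generated by any subset $A\subseteq Z(\Gamma_{\mathcal{S}})$, into $\mathcal{S}$, so these inequalities apply to all of their elements.

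For part (1), we may assume $\Gamma_{\mathcal{S}}\ne\{0\}$, the other case being trivial, and then the hypothesis forces $m:=\inf\{|Z(\gamma')|\mid\gamma'\in\Gamma_{\mathcal{S}}\setminus\{0\}\}$ to be strictly positive, for otherwise $\Gamma_{\mathcal{S}}$ would contain infinitely many elements $\gamma'$ with $|Z(\gamma')|<1$, contradicting the case $\lambda=1$ of the hypothesis. Now fix $\lambda>0$ and $\gamma\in\hat{\Gamma}_{\mathcal{S}}$ with $|Z(\gamma)|<\lambda$, and write $\gamma=\gamma_1+\dots+\gamma_k$ with each $\gamma_i\in\Gamma_{\mathcal{S}}\setminus\{0\}$. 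Additivity of $\ell$ together with $\ell\ge 0$ on $\Gamma_{\mathcal{S}}$ gives, for each $i$,
\[
  (\cos\theta)\,|Z(\gamma_i)| \le \ell(\gamma_i) \le \ell(\gamma) \le |Z(\gamma)| < \lambda,
\]
so every $\gamma_i$ lies in the finite set $B_0(\lambda/\cos\theta)\cap\Gamma_{\mathcal{S}}$; and summing, $k\,m\,(\cos\theta)\le(\cos\theta)\sum_i|Z(\gamma_i)|\le\ell(\gamma)<\lambda$, so $k<\lambda/(m\cos\theta)$. Hence each such $\gamma$ is a sum of at most $\lambda/(m\cos\theta)$ elements of a fixed finite set; there are only finitely many of these (together with $\gamma=0$), which proves (1).

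For part (2), given $\lambda$ set $\lambda':=(\cos\theta)\lambda$ — equivalently, given $\lambda'$ set $\lambda:=\lambda'/\cos\theta$ — a choice depending only on $\mathcal{S}$ and not on $A$. If $A\subseteq Z(\Gamma_{\mathcal{S}})$ satisfies $|a|\ge\lambda$ for all $a\in A$, then any nonzero element $w=a_1+\dots+a_k$ of the monoid generated by $A$ satisfies
\[
  |w| \ge \mathrm{Re}(w) = \sum_{i=1}^{k}\mathrm{Re}(a_i) \ge (\cos\theta)\sum_{i=1}^{k}|a_i| \ge (\cos\theta)\lambda = \lambda',
\]
so this monoid, apart from $0$, does not meet $B_0(\lambda')$, as claimed. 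The only genuine content — and the sole place where the hypothesis ``angle less than $\pi$'' is used — is the comparison $(\cos\theta)|Z(\cdot)|\le\ell(\cdot)$ with $\cos\theta>0$; the rest is bookkeeping, so I do not anticipate a real obstacle. The only care needed is in drawing from the single finiteness hypothesis both the finiteness of the relevant sets of generators and the positive lower bound $m$, and in disposing of the degenerate case $\Gamma_{\mathcal{S}}=\{0\}$.
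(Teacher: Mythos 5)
Your proof is correct and follows essentially the same strategy as the paper's: normalize the sector and exploit that, on a sector of opening less than $\pi$, an additive functional ($\mathrm{Re}\circ Z$ after your rotation, versus the two coordinates after the paper's transformation to the first quadrant) is comparable to $|Z|$, so the norms of summands cannot cancel and both the generators and the number of summands are controlled. Your write-up is in fact more careful than the paper's sketch, which asserts the bound on the summands without justification and dismisses part (2) in a single line.
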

           \begin{proof}
               It suffices to prove the case when the sector $\mathcal{S}$ has angle $\pi/2$ , since the topology will be equivalent. We will put $\mathcal{S}$ as the first quadrant. For any $\lambda>0$, $\hat{x}\in B_0(\lambda)\cap \hat{\Gamma}_{\mathcal{S}}$ then $\hat{x}=\sum x_i$ with $x_i\in B_0(\lambda)\cap \Gamma_{\mathcal{S}}$. Since the set $B_0(\lambda)\cap \Gamma_{\mathcal{S}}$ is finite, set $\lambda'$ be the smallest $x$ or $y$ coordinate of elements in $B_0(\lambda)\cap \Gamma_{\mathcal{S}}$. Then there are at most $2[\frac{\lambda}{\lambda'}]+2$ of such $\hat{x}$. This proves the first part of the lemma. The second lemma is straight forward after we transform $\mathcal{S}$ to a sector with angle $\pi$.
           \end{proof}

   \begin{definition}
     Let $G$ be a monoid with an evaluation map $\omega: G\rightarrow \mathbb{R}_{\geq 0}$\footnote{In the later application, we will take $\omega$ be either the symplectic form or the central charge defined in (\ref{4010}).} such that 
       \begin{align*}
           |\omega^{-1}[0,\lambda)| < \infty,
       \end{align*} for every $\lambda\in \mathbb{R}_{\geq 0}$. 
   \end{definition} 
   
   Let $C$ be a graded vector space over $\Lambda$ and let 
    \begin{align*}
     BC[1]=\bigoplus_k B_k C[1], \hspace{5mm} B_kC[1]=C[1]^{\otimes k}
     \end{align*} be its associated bar complex. 
  \begin{definition}
     We say that $C$ admits an filtered gapped $A_{\infty}$ algebra structure if there exists a monoid $G$ and a sequence of homomorphisms $\{m_{k,\gamma}\}_{k\geq 0,\gamma\in G}$ of degree $+1$ 
        \begin{align*}
           m_{k,\gamma}: B_kC[1]\rightarrow C[1]
        \end{align*} 
       such that $\hat{d}\circ \hat{d}=0$, where $\hat{d}$ is the coderivation induced from 
         \begin{align*}
            m_k=\sum_{\gamma\in G} m_{k,\gamma} T^{\omega(\gamma)}, k\geq 0.
         \end{align*} 
  \end{definition}

\begin{definition} Let $(C,m_{k,\gamma})$ and $(C',m'_{k,\gamma})$ be two gapped filtered $A_{\infty}$ algebras. The sequence of $\mathbb{R}$-linear maps $f_{k,\gamma}: B_k C[1]\rightarrow C'[1]$ of degree $0$, for $k\in \mathbb{N}$ and $\gamma\in G$ be an $A_{\infty}$ homomorphism if $\hat{d}'\circ \hat{f}=\hat{f}\circ \hat{d}$, where $\hat{f}, \hat{d}$ is the corresponding coderivation. 
\end{definition}

\subsection{Canonical Model and Homotopy Lemma}
     Let $(C,m_k)$ be a filtered $G$-gapped $A_{\infty}$-algebra and
          $m_{1,0}^2=0$. Let $H$ be its $m_{1,0}$-cohomology and the projection $p:C\rightarrow H$ is of degree zero. Assume that there exists a deformation retract from $C$ to $H$, namely, a pair $(H,\iota)$ where 
       \begin{enumerate}
         \item $\iota:H\rightarrow C$ of degree zero is an injective homomorphism of vector spaces such that 
            \begin{align*}
               d\iota=0, pd=0 \mbox{ and } p\iota= \mbox{Id}.
            \end{align*}
         \item The linear map $G:C\rightarrow C$ of degree $-1$ such that 
           \begin{align} \label{3}
              \mbox{Id}-\iota p= -(dG+Gd).
           \end{align}
       \end{enumerate} Then we have the famous homotopy perturbation lemma. 
  \begin{thm}  \label{25}\cite{FOOO}
    Under the assumption above, there exists an $A_{\infty}$ structure on $H$ and an invertible $A_{\infty}$ homomorphism $f:H\rightarrow C$. 
  \end{thm}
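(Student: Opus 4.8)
The plan is to establish the statement via the homological perturbation (homotopy transfer) construction: we build the transferred structure $\{m^H_{k,\gamma}\}$ on $H$ and the components $\{f_{k,\gamma}\}$ of the homomorphism $f$ by an explicit recursion, and then verify the $A_\infty$ relations by induction on the energy $\omega(\gamma)$. The gapped hypothesis is exactly what makes this work: because $|\omega^{-1}[0,\lambda)|<\infty$ for every $\lambda$, the values $\omega(\gamma)$ have no finite accumulation point, so induction on $\omega(\gamma)$ is legitimate, and at each fixed energy only finitely many terms contribute, so all the $T$-adic sums $m_k=\sum_\gamma m_{k,\gamma}T^{\omega(\gamma)}$ and their analogues converge in the valuation topology (cf.\ Lemma \ref{2999}).

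First I would treat the energy-zero part: set $f_{1,0}=\iota$, so that $m^H_{1,0}=p\circ m_{1,0}\circ\iota=p\circ d\circ\iota=0$ by $d\iota=0$, and define $\{m^H_{k,0}\}_{k\ge 2}$ and $\{f_{k,0}\}_{k\ge 2}$ by the classical Kadeishvili minimal-model formulas, i.e.\ as sums over planar rooted trees whose leaves carry $\iota$, whose root carries $p$, whose internal edges carry the homotopy $G$, and whose interior vertices carry the products $m_{k,0}$. For the inductive step, assuming all $f_{k',\gamma'}$ and $m^H_{k',\gamma'}$ of lower complexity have been built, I would define $m^H_{k,\gamma}$ by applying $p$, and $f_{k,\gamma}$ by applying the homotopy $G$, to $\sum m_{l,\gamma_0}\bigl(f_{k_1,\gamma_1}\otimes\cdots\otimes f_{k_l,\gamma_l}\bigr)$, the sum running over the decompositions $k=k_1+\cdots+k_l$, $\gamma=\gamma_0+\gamma_1+\cdots+\gamma_l$ with the single term that would reproduce $f_{k,\gamma}$ itself removed so that the recursion is well founded. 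It is cleanest to phrase this on the bar coalgebras: let $\hat{f}\colon BH[1]\to BC[1]$ be the coalgebra homomorphism and $\hat{d}^H$ the coderivation determined by these components, extend the homotopy to an operator $\hat{G}$ on $BC[1]$ and the projection to a map $\hat{P}\colon BC[1]\to BH[1]$ by the standard formulas of \cite{FOOO}, and observe that the recursion is exactly the fixed-point equation $\hat{f}=\hat{\iota}-\hat{G}\circ\hat{d}\circ\hat{f}$ together with $\hat{d}^H=\hat{P}\circ\hat{d}\circ\hat{f}$, solved by iteration since each iteration strictly raises the minimal energy of the remaining error.

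It then remains to check $\hat{d}^H\circ\hat{d}^H=0$ and $\hat{d}\circ\hat{f}=\hat{f}\circ\hat{d}^H$. I would do this by induction on energy, feeding in $\hat{d}\circ\hat{d}=0$ on $C$ and the deformation-retract identity $\mathrm{Id}-\iota p=-(dG+Gd)$ of \eqref{3}: the point is that the defect $\hat{d}\hat{f}-\hat{f}\hat{d}^H$, computed from the recursion, is expressed through $dG+Gd+\iota p-\mathrm{Id}$ at the relevant tree vertex and therefore vanishes, and then the quadratic relation on $H$ follows by applying $\hat{P}$. The main obstacle is precisely this verification: organizing the sum over trees and keeping the Koszul signs straight so that the "associativity up to homotopy" defect is absorbed term by term by the homotopy $G$; in the filtered setting one must additionally check that every such manipulation is compatible with the valuation filtration, which is again guaranteed by the finiteness $|\omega^{-1}[0,\lambda)|<\infty$.

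Finally, for invertibility: since $f_{1,0}=\iota$ is a chain homotopy equivalence with homotopy inverse $p$ by hypothesis, $f$ is a filtered $A_\infty$ homotopy equivalence. Concretely one constructs a homotopy inverse $g\colon C\to H$ by the mirror recursion (with $p$ decorating the root), and then builds explicit $A_\infty$ homotopies witnessing $g\circ f\sim\mathrm{Id}_H$ and $f\circ g\sim\mathrm{Id}_C$; this is the Whitehead-type theorem for gapped filtered $A_\infty$ algebras, and the whole package is exactly the content of the cited result of \cite{FOOO}.
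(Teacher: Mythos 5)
Your proposal is correct and follows the same route as the paper: the paper's proof is precisely the homotopy perturbation construction via sums over decorated rooted trees ($\iota$ on leaves, $p$ on the root edge, $G$ on interior edges, $m_{k_v,\gamma_v}$ at interior vertices), with $f_{k,\gamma}$ obtained by replacing the root operator $p$ by $G$, and with well-definedness resting on the gapped condition. Your additional details — the energy induction, the fixed-point formulation on the bar coalgebra, and the Whitehead-type argument for invertibility — are exactly the parts the paper delegates to \cite{FOOO}.
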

 \begin{proof} The proof is standard but we include a sketch of the proof for introducing certain notations used later. We construct the canonical $A_{\infty}$ structure and $A_{\infty}$ homomorphism as follows:
  \begin{definition}
    Let $Gr(k,\gamma)$ be the set of decorated rooted tree $T$ with the following properties:
      \begin{enumerate}
        \item The vertices $C_0(T)$ are the disjoint union of exterior vertices $C^{ext}_0(T)$ and interior vertices $C^{int}_0(T)$. The root $v_0\in C^{ext}_0(T)$ and $|C^{ext}_0(T)|=k+1$.
        \item Each exterior vertex has valency $1$.
        \item Each interior vertex $v\in C^{int}_0(T)$ is labeled by $\gamma_v\in G$. The valency of $v$ is greater than $2$ if $\gamma_v=0$.
        \item The sum of labels of interior vertices is $\gamma$.
      \end{enumerate}
  \end{definition} With the above definition, an edge is called an exterior edge if it is adjacent to an exterior vertex. Otherwise, it is called an interior edge. 

  We associate a homomorphism for each tree $T\in Gr(k,\gamma)$ as follows: For the edge adjacent to the root, we insert the projection operator $p$. For other edges adjacent to an exterior vertex, we insert the operator $\iota$. For each interior edge, we insert the operator $G$. We insert the operator $m_{k_v, \gamma_v}$ for each interior vertex, where $k_v=\mbox{val}(v)-1$. Let $m^{can}_{k,\gamma}$ denotes summation of the composition operator in above way, summing over all possible decorated rooted trees in $Gr(k,\gamma)$. Then it is straight forward to check that $\{m_{k,\gamma}\}$ defines a filtered $G$-gapped $A_{\infty}$ structure on $H$. 
  %
  
  Moreover, if we replace operator associate to the edge adjacent to the root by $G$ then the similar composition operators $f_{k,\gamma}$ define an $A_{\infty}$-homomorphism from $H$ to $C$.  
\end{proof}

\subsection{Pseudo-Isotopies of $A_{\infty}$ Algebras}
\begin{definition}
   Two gapped filtered $A_{\infty}$ algebras $(C,m^0_{k,\gamma})$ and $(C,m^1_{k,\gamma})$ are pseudo-isotopy if there exists an gapped filtered $A_{\infty}$ structure on $\big(C^{\infty}([0,1]_t,C), \hat{m}_{k,\gamma}\big)$ restricting to $(C, m^0_{k,\gamma})$ (and $(C, m^1_{k,\gamma})$) as $t=0$ (and $t=1$ respectively).
\end{definition}
\begin{rmk}
  For $\boldsymbol{x}_i=x_i(t)+dt\wedge y_i(t)\in C^{\infty}([0,1],C)$, we write
     \begin{align*}
        \hat{m}_k(\boldsymbol{x_1},\cdots,\boldsymbol{x_k})=x(t)+dt\wedge y(t), 
     \end{align*} where
     \begin{align*}
        x(t)=& m_k^t(x_1(t),\cdots,x_k(t)), \\
        y(t)= &\mathfrak{c}^t_k(x_1(t),\cdots,x_k(t)) \\
              &-\sum_{i=1}^k (-1)^* m^t_{k,\gamma}(x_1(t),\cdots, x_{i-1}(t), y_i(t), x_{i+1}(t),\cdots x_k(t)),
     \end{align*} if $(k,\gamma)\neq (1,0)$ and 
     \begin{align*}
         y(t)=\frac{d}{dt}x_1(t)+m^t_{1,0}(y_1(t)).
     \end{align*} Then the $A_{\infty}$ relation of $\hat{m}_k$ is equivalent to the following relations:
     \begin{enumerate}
        \item $m^t_{k,\gamma}$ and $\mathfrak{c}^t_{k,\gamma}$ are smooth in $t$\footnote{For the purpose of this paper, we will have $C=\Omega^*(L)$. and $m^t_{k,\gamma}, c^t_{k,\gamma}$ are smooth differential forms on $L\times [0,1]$.}. 
        \item For any fixed $t$, $(C,m^t_{k,\gamma})$ defines a gapped filtered $A_{\infty}$ structure.
        \item \begin{align*}
            &\frac{d}{dt}m^t_{k,\gamma}(x_1,\cdots,x_k)\\
            &+\sum (-1)^* \mathfrak{c}^t_{k_1,\gamma_1}(x_1,\cdots, m^t_{k_2,\gamma_2}(x_i,\cdots),\cdots,x_k)\\
            &-\sum m^t_{k_1,\gamma}(x_1,\cdots,\mathfrak{c}^t_{k_2,\gamma_2}(x_i,\cdots ),\cdots x_k)=0,            
        \end{align*} where the sum is all possible $i, (k_1,\gamma_1)$ and $(k_2,\gamma_2)$ such that $k_1+k_2=k+1$ and $\gamma_1+\gamma_2=\gamma$.
        \item $\frac{d}{dt}m^t_{k,0}=0$ and $\mathfrak{c}^t_{k,0}=0$ for every $k$.
     \end{enumerate}
\end{rmk}

\begin{thm} \cite{F1} \label{4050}
  If there exists a pseudo-isotopy between two (filtered) $A_{\infty}$ algebras $(C,m^0_{k,\gamma})$ and $(C,m^1_{k,\gamma})$, then there exists an invertible $A_{\infty}$ homomorphism between them. 
\end{thm}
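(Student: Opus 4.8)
The plan is to integrate the infinitesimal gauge equivalence recorded by the pseudo-isotopy, realizing the desired homomorphism as a ``time-ordered exponential'' of a connection. Write $\hat{d}^t$ for the coderivation on $BC[1]$ induced by $\{m^t_{k,\gamma}\}$ and $\hat{\mathfrak{c}}^t$ for the degree-$0$ coderivation induced by $\{\mathfrak{c}^t_{k,\gamma}\}$. Then relation (3) of the preceding remark is, after fixing signs, exactly the statement
\begin{align*}
  \frac{d}{dt}\hat{d}^t \;=\; \hat{\mathfrak{c}}^t\circ \hat{d}^t - \hat{d}^t\circ \hat{\mathfrak{c}}^t \;=\; [\hat{\mathfrak{c}}^t,\hat{d}^t],
\end{align*}
i.e. the family $\hat{d}^t$ evolves by the infinitesimal $A_\infty$-automorphism generated by $\hat{\mathfrak{c}}^t$, while relation (4) forces $\mathfrak{c}^t_{1,0}=0$ and $m^t_{1,0}$ to be $t$-independent.

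Concretely, I would define a family of filtered coalgebra homomorphisms $\hat{f}^t\colon BC[1]\to BC[1]$ as the solution of the ordinary differential equation
\begin{align*}
  \frac{d}{dt}\hat{f}^t = \hat{\mathfrak{c}}^t\circ \hat{f}^t,\qquad \hat{f}^0=\mathrm{id}.
\end{align*}
Well-posedness needs a remark: both $\tfrac{d}{dt}\hat{f}^t$ and $\hat{\mathfrak{c}}^t\circ\hat{f}^t$ are $(\hat{f}^t,\hat{f}^t)$-coderivations, hence are determined by their components $f^t_{k,\gamma}\colon B_kC[1]\to C[1]$, and the gapped hypothesis $|\omega^{-1}[0,\lambda)|<\infty$ lets me solve the resulting system by induction on the pair $(\omega(\gamma),k)$: at each stage it is a linear ODE in finitely many unknowns whose inhomogeneous term involves only previously determined data. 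Since $\mathfrak{c}^t_{1,0}=0$, the $(1,0)$-component reads $\tfrac{d}{dt}f^t_{1,0}=0$, so $f^t_{1,0}\equiv\mathrm{id}$ for all $t$.

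Next I would show $\hat{f}^1$ is an invertible $A_\infty$-homomorphism from $(C,m^0_{k,\gamma})$ to $(C,m^1_{k,\gamma})$. Put $\hat{D}^t := \hat{d}^t\circ\hat{f}^t - \hat{f}^t\circ \hat{d}^0$; one checks directly that $\hat{D}^t$ is an $(\hat{f}^t,\hat{f}^t)$-coderivation and $\hat{D}^0=0$. Differentiating and substituting the two displayed ODEs gives, after the bracket terms cancel in pairs,
\begin{align*}
  \frac{d}{dt}\hat{D}^t = \hat{\mathfrak{c}}^t\circ \hat{D}^t,
\end{align*}
a linear homogeneous equation with vanishing initial value, so $\hat{D}^t\equiv 0$ by the same level-by-level uniqueness; in particular $\hat{d}^1\circ\hat{f}^1=\hat{f}^1\circ\hat{d}^0$. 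Finally, because $f^1_{1,0}=\mathrm{id}$ is invertible, $\hat{f}^1$ is invertible as a filtered coalgebra homomorphism, its inverse being constructed order by order in $(\omega(\gamma),k)$; equivalently $\hat{f}^t$ is invertible for every $t$, being the time-ordered exponential $\overrightarrow{\exp}\int_0^t\hat{\mathfrak{c}}^s\,ds$, and then $\hat{d}^t=\hat{f}^t\circ\hat{d}^0\circ(\hat{f}^t)^{-1}$.

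The main obstacle is bookkeeping rather than conceptual. First, one must fix sign conventions carefully enough that relation (3) of the remark really produces the bracket identity $\tfrac{d}{dt}\hat{d}^t=[\hat{\mathfrak{c}}^t,\hat{d}^t]$ with the sign that makes the $\hat{D}^t$-computation collapse (the $(-1)^*$ factors hide here). Second, and more importantly, one must justify that the equations for $\hat{f}^t$ and $\hat{D}^t$ — a priori identities between infinite families of multilinear maps — reduce at each energy level $\omega(\gamma)$ to finite-dimensional linear ODEs; this is precisely where the monoid condition $|\omega^{-1}[0,\lambda)|<\infty$ enters, and without it the time-ordered exponential above would not even be defined. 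Everything else is a routine unwinding of the coderivation formalism from Section~2.
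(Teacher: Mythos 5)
Your construction is correct and, unwound, it produces exactly the object the paper writes down: the Dyson series (time-ordered exponential) solving $\tfrac{d}{dt}\hat f^t=\hat{\mathfrak c}^t\circ\hat f^t$ is, term by term, Fukaya's sum over decorated rooted trees $\mathfrak c(k,\gamma)=\sum_T\int\mathfrak c(T,\tau)\,d\tau$ integrated over the space of time allocations (the overall sign matching the $-\mathfrak c^{\tau(v)}_{k,\gamma_v}$ placed at each interior vertex, i.e.\ your flagged sign ambiguity). Where you genuinely diverge is in the verification: the paper simply exhibits the tree formula and defers the proof that it is an $A_\infty$ homomorphism to \cite{F1}, where it is checked by a combinatorial/Stokes argument on the time-allocation simplices, whereas you characterize $\hat f^t$ as the solution of an ODE and deduce $\hat d^1\circ\hat f^1=\hat f^1\circ\hat d^0$ from the homogeneous equation $\tfrac{d}{dt}\hat D^t=\hat{\mathfrak c}^t\circ\hat D^t$ with $\hat D^0=0$ plus level-by-level uniqueness. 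This buys a self-contained and conceptually cleaner proof of the intertwining and invertibility, at the cost of losing the explicit closed formula for $\mathfrak c(k,\gamma)$ that the paper needs later (e.g.\ in Theorem \ref{101}, where individual tree contributions $\mathfrak c'(k,d\gamma)$ are analyzed via the divisor axiom). One point to tighten: asserting that $\tfrac{d}{dt}\hat f^t$ and $\hat{\mathfrak c}^t\circ\hat f^t$ are $(\hat f^t,\hat f^t)$-coderivations already presupposes that $\hat f^t$ is a coalgebra homomorphism, which is part of what must be proved; the fix is the same trick you use for $\hat D^t$, namely showing that $E^t:=\Delta\circ\hat f^t-(\hat f^t\otimes\hat f^t)\circ\Delta$ satisfies $\tfrac{d}{dt}E^t=(\hat{\mathfrak c}^t\otimes\mathrm{id}+\mathrm{id}\otimes\hat{\mathfrak c}^t)\circ E^t$ with $E^0=0$, using only that $\hat{\mathfrak c}^t$ is a coderivation over the identity. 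With that addendum, and with the gapped condition invoked exactly as you do to make each energy level a finite linear ODE, the argument is complete.
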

\begin{proof}
 We will need the construction of the $A_{\infty}$ homomorphism for later usage, so here we sketch the construction the $A_{\infty}$ homomorphism.
 
 Let $Gr'(k,\gamma)$ be the set of trees $T$ in $Gr(k,\gamma)$\footnote{Here we assume moreover that all the vertices with valency $1$ are exterior vertices.} together with a time allocation $\tau$ (See \cite{FOOO}\cite{F1}). 
 We will define $\mathfrak{c}(T) $ inductively on the number of exterior vertex of $T$. 
   \begin{enumerate}
     \item If $|C^{int}_0(T)|=0$, we set $\mathfrak{c}(T,\tau)=Id$.
     \item If $|C^{int}_0(T)|=1$, let $v\in C^{int}_0(T)$ and $\mbox{val}(v)=k+1$. Then  we set
        \begin{align*}
         \mathfrak{c}(T,\tau)(x_1,\cdots,x_k)=-\mathfrak{c}^{\tau(v)}_{k,\gamma_v}(x_1,\cdots,x_k) .
        \end{align*}
     \item If $|C^{int}_0(T)|>1$, let $v$ be the vertex adjacent to the root. Assume that $(T_1,\tau_1),\cdots (T_k,\tau_k)$ are the subtrees (with time allocations) derived from deleting $v$ and the root. Then we set 
      \begin{align*}
       \mathfrak{c}(T,\tau)=-\mathfrak{c}^{\tau(v)}_{k,\gamma_v}(\mathfrak{c}(T_1,\tau_1)\otimes\cdots \otimes \mathfrak{c}(T_k,\tau_k) ).
      \end{align*}      
   \end{enumerate}
  
 With the notation above, the $A_{\infty}$ homomorphism from $(C, m^{t_0}_{k,\gamma})$ to $(C,m^{t_1}_{k,\gamma})$ is given by 
  \begin{align*}
     \mathfrak{c}(k,\gamma)=\sum_{Gr(k,\gamma)} \mathfrak{c}(T),
  \end{align*} where $\mathfrak{c}(T)=\int \mathfrak{c}(T,\tau) d\tau$ is the integral over the space of possible time allocation of $T$ with values in $[t_0,t_1]$. We refer the detail of the proof to \cite{F1}. 

\end{proof}


\begin{prop} \cite{F1} \label{20}
  If two (cyclic) gapped filtered gapped $A_{\infty}$ algebra are pseudo-isotopic to each other then so are their canonical models. 
\end{prop}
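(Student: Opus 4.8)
The plan is to apply the canonical model (homotopy transfer) construction of Theorem \ref{25} directly to the pseudo-isotopy itself, regarded as one gapped filtered $A_{\infty}$ algebra on $\mathfrak{C}:=C^{\infty}([0,1]_t,C)$. Write $\{\hat m_{k,\gamma}\}$ for that $A_{\infty}$ structure, so that it restricts to $\{m^0_{k,\gamma}\}$ at $t=0$ and to $\{m^1_{k,\gamma}\}$ at $t=1$. By item (4) of the Remark, $m^t_{1,0}$ is independent of $t$; denote it $m_{1,0}$, so $m^0_{1,0}=m^1_{1,0}=m_{1,0}$, and we may use one and the same deformation retract datum $(H,\iota,p,G)$ of $(C,m_{1,0})$ to build the canonical models of both endpoints. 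Extend it fiberwise over $[0,1]$ to $\hat\iota,\hat p$ between $C^{\infty}([0,1],H)$ and $\mathfrak{C}$ and to $\hat G\colon\mathfrak{C}\to\mathfrak{C}$. Unwinding the Remark, $\hat m_{1,0}=m_{1,0}+d_t$ where $m_{1,0}$ acts fiberwise and $d_t=dt\wedge\partial_t$ is the de Rham differential in $t$. A short computation — using $d\iota=pd=0$, $p\iota=\mathrm{Id}$, $\mathrm{Id}-\iota p=-(dG+Gd)$ fiberwise, and the Koszul signs (the odd degrees of $G$ and of $dt$) to cancel all cross terms containing $d_t$ — shows that $(\hat\iota,\hat p,\hat G)$ is a deformation retract of the complex $(\mathfrak{C},\hat m_{1,0})$ onto $(C^{\infty}([0,1],H),d_t)$; that is, $\hat m_{1,0}\hat\iota=\hat\iota\,d_t$, $\hat p\,\hat m_{1,0}=d_t\,\hat p$, $\hat p\hat\iota=\mathrm{Id}$, and $\mathrm{Id}-\hat\iota\hat p=-(\hat m_{1,0}\hat G+\hat G\hat m_{1,0})$.

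Now run the tree sum of Theorem \ref{25}: for $T\in Gr(k,\gamma)$ insert $\hat m_{k_v,\gamma_v}$ at each interior vertex, $\hat G$ on interior edges, $\hat\iota$ on the non-root exterior edges and $\hat p$ on the root edge, and set $\hat m^{can}_{k,\gamma}=\sum_{T\in Gr(k,\gamma)}\hat m(T)$; this is a well-defined gapped $A_{\infty}$ structure since the monoid $G$ and $\omega$ are unchanged. The only departure from the setting of Theorem \ref{25} is that on $C^{\infty}([0,1],H)$ one no longer has $m^{can}_{1,0}=0$ but $\hat m^{can}_{1,0}=\hat p\,\hat m_{1,0}\,\hat\iota=d_t$; the verification of the $A_{\infty}$ relations is the same telescoping as in Theorem \ref{25}, now reading $\hat m_{1,0}\hat\iota=\hat\iota\,d_t$ and $\hat p\,\hat m_{1,0}=d_t\,\hat p$ in place of the corresponding vanishings — equivalently this is homotopy transfer of $A_{\infty}$ algebras relative to the dga $(\Omega^*([0,1]),d_t)$. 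Hence $\{\hat m^{can}_{k,\gamma}\}$ is a gapped filtered $A_{\infty}$ structure on $C^{\infty}([0,1],H)$ with $\hat m^{can}_{1,0}=d_t$, i.e. of pseudo-isotopy type. Specializing the tree sum at $t=0$ (resp. $t=1$): there $\hat\iota,\hat p,\hat G$ become $\iota,p,G$ and $\hat m_{k,\gamma}$ becomes $m^0_{k,\gamma}$ (resp. $m^1_{k,\gamma}$), so $\hat m^{can}$ restricts to the canonical model $m^{0,can}$ (resp. $m^{1,can}$). Finally, decomposing each $\hat m^{can}_{k,\gamma}$ into its $dt$-free and $dt$-parts recovers exactly the data $(m^{t,can}_{k,\gamma},\mathfrak{c}^{t,can}_{k,\gamma})$ of the Remark: the $dt$-free part is the fiberwise canonical model $m^{t,can}_{k,\gamma}$, smooth in $t$; the $dt$-part (the contributions in which a single vertex, or the $d_t$ inside $\hat m_{1,0}$, emits the $dt$) defines $\mathfrak{c}^{t,can}_{k,\gamma}$; and the $A_{\infty}$ relations for $\hat m^{can}$ unravel into precisely conditions (1)–(4) of the Remark, with $\mathfrak{c}^{t,can}_{k,0}=0$ and $\tfrac{d}{dt}m^{t,can}_{k,0}=0$ because $\mathfrak{c}^t_{k,0}=0$ and $m^t_{k,0}$ is $t$-independent. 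This is the desired pseudo-isotopy between the two canonical models.

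In the cyclic case one runs the identical argument but takes $(H,\iota,p,G)$ compatible with the cyclic pairing (as in \cite{FOOO}\cite{F1}), so that its fiberwise extension is compatible with the induced pairing on $\mathfrak{C}$; the transferred structure and the resulting pseudo-isotopy are then cyclic by the usual argument. The part I expect to require the most care is re-running Theorem \ref{25}: one must check that relaxing $p\,m_{1,0}=0$ and $m_{1,0}\,\iota=0$ to $\hat p\,\hat m_{1,0}=d_t\,\hat p$ and $\hat m_{1,0}\,\hat\iota=\hat\iota\,d_t$ does not disturb the $A_{\infty}$ relations produced by the tree sum — this is the ``relative'' homotopy perturbation lemma — and, closely tied to it, that the Koszul signs in the fiberwise extension indeed cancel every $d_t$ cross term in the deformation retract identities; the remaining points (gapped-ness, convergence at each energy, and the two endpoint specializations) are formal.
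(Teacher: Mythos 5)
The paper does not actually prove Proposition \ref{20}; it is quoted from \cite{F1} without argument. Your proposal reconstructs what is essentially Fukaya's own proof: a pseudo-isotopy \emph{is} a gapped filtered $A_{\infty}$ structure on $\Omega^*([0,1])\otimes C$, so one transfers it to $\Omega^*([0,1])\otimes H$ by the tree sum of Theorem \ref{25} with the fiberwise extension of a single retract $(H,\iota,p,G)$ of $(C,m_{1,0})$ (legitimate because condition (4) of the Remark makes $m^t_{1,0}$ independent of $t$), and then reads off the transferred structure as a pseudo-isotopy between the two canonical models by separating the $dt$-free and $dt$-components and evaluating at the endpoints. The two points you flag as delicate are exactly the right ones and both go through: with the Koszul sign $\hat G(dt\wedge y)=-dt\wedge G(y)$ the cross terms $d_t\hat G+\hat G d_t$ cancel (because $G$ is constant in $t$), and the perturbation lemma in the form you need — a side-condition SDR onto a complex whose differential is $d_t$ rather than $0$ — is the general homotopy transfer theorem, not the special case $m^{can}_{1,0}=0$ used in Theorem \ref{25}; the telescoping verification of the $A_{\infty}$ relations is unchanged. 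Your observation that a $dt$ emitted at any vertex propagates to the output (so the $dt$-free part of $\hat m^{can}$ is exactly the fiberwise canonical model of $m^t$) is the key bookkeeping step and is correct. I see no gap; this is the intended argument.
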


\subsection{ Maurer-Cartan Equation and Maurer-Cartan Elements}
   Given a filtered gapped $A_{\infty}$ algebra $C$, the evaluation map $\omega:C\rightarrow \mathbb{R}_{\geq 0}$ naturally extends to $C$, which we still denote it by $\omega$. The kernel $\bar{C}=\omega^{-1}(0)\subseteq C$ is a vector space and we have the following definition.
%
  
  \begin{definition}
   \begin{enumerate}
     \item Let $C$ be an gapped filtered $A_{\infty}$ algebra. The Maurer-Cartan equation of $C$ is given by 
             \begin{align*} 
                m(e^b):=m_0(1)+m_1(b)+\cdots +m_k(b,\cdots,b)+\cdots=0.
             \end{align*} We say $b\in C$ is a Maurer-Cartan element if $m(e^b)=0$. We denote $\tilde{\mathcal{MC}}(C)$ to be the set of Maurer-Cartan elements.
     \item Let $b_0,b_1\in \tilde{\mathcal{MC}}(C)$, we say $b_0$ is gauge equivalent to $b_1$ if there exists $b(t)$ and $c(t)$ such that \begin{enumerate}
         \item $b(0)=b_0$ and $b(1)=b_1$.
         \item \begin{align*}
             \frac{d}{dt}b(t)+\sum_k m_k(b(t),\cdots,b(t),c(t),b(t),\cdots,b(t))=0
               \end{align*}
     \end{enumerate}
     \item  The moduli space of Maurer-Cartan elements is defined by \begin{align*}
                        \mathcal{MC}(C):=\tilde{\mathcal{MC}}(C)/\sim,
                     \end{align*} where $\sim$ is the gauge equivalence.      
   \end{enumerate}
        
    \end{definition}
%
%

 \begin{prop} \cite{F1} \label{35}
    If there exists a pseudo-isotopy $m^t_{k,\gamma},\mathfrak{c}^t_{k,\gamma}$ between two filtered $A_{\infty}$ algebras $(C,m^0_{k,\gamma})$ and $(C,m^1_{k,\gamma})$, then induces an isomorphism between Maurer-Cartan spaces
     \begin{align*}
      F:\mathcal{MC}&(C) \longrightarrow \mathcal{MC}(C') \\  
         & b \longmapsto  \sum_{k\geq 1,\gamma} \mathfrak{c}(k,\gamma)(b,\cdots,b)T^{\omega(\gamma)}. 
     \end{align*}   
 \end{prop}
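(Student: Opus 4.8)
The plan is to read off the statement from the $A_{\infty}$ homomorphism produced in Theorem \ref{4050} together with the elementary functoriality of Maurer--Cartan elements under $A_{\infty}$ homomorphisms. Let $\mathfrak{c}=\{\mathfrak{c}(k,\gamma)\}$ be the $A_{\infty}$ homomorphism from $(C,m^0_{k,\gamma})$ to $(C,m^1_{k,\gamma})$ constructed in the proof of Theorem \ref{4050} by summing over time-allocated trees. The map $F$ in the statement is exactly the pushforward $\mathfrak{c}_*\colon b\mapsto \sum_{k\geq 1,\gamma}\mathfrak{c}(k,\gamma)(b,\dots,b)\,T^{\omega(\gamma)}$, so the content splits into three claims: (i) $\mathfrak{c}_*$ converges and sends $\tilde{\mathcal{MC}}(C)$ into $\tilde{\mathcal{MC}}(C')$; (ii) $\mathfrak{c}_*$ descends to gauge equivalence classes; (iii) the resulting map on $\mathcal{MC}$ is a bijection.

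For (i) I would first note that convergence of the infinite sum is forced by the gapped hypothesis on $G$: for each energy level $\lambda$ only finitely many $\gamma\in G$ satisfy $\omega(\gamma)<\lambda$ since $|\omega^{-1}[0,\lambda)|<\infty$, and for each fixed $(k,\gamma)$ only finitely many trees occur in $Gr(k,\gamma)$, so the sum makes sense in the $T$-adic topology on $C'$ (this is the role played by the finiteness arguments of the Lemma~\ref{2999} type). The Maurer--Cartan property is then the standard consequence of the coalgebra identity $\hat d'\circ\hat{\mathfrak{c}}=\hat{\mathfrak{c}}\circ\hat d$: evaluating both sides on the group-like element $e^{b}$ one has $\hat{\mathfrak{c}}(e^{b})=e^{\mathfrak{c}_*(b)}$ because $\hat{\mathfrak{c}}$ is a morphism of coalgebras, and for a group-like element the coderivation $\hat d$ records the expression $m^0(e^{b})$; hence $m^1(e^{\mathfrak{c}_*(b)})$ is obtained by applying the components of $\mathfrak{c}$ to tensors having $m^0(e^{b})=0$ in one slot, so it vanishes. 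Thus $\mathfrak{c}_*(b)\in\tilde{\mathcal{MC}}(C')$.

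The main obstacle is (ii), i.e. well-definedness of $F$ on the quotient; I expect this to be where the real work lies, and I would handle it by the same device used for homotopy-invariance of $A_{\infty}$ homomorphisms. A gauge equivalence $(b(t),c(t))$ between $b_0$ and $b_1$ is itself a Maurer--Cartan element $\boldsymbol b=b(t)+dt\wedge c(t)$ of the gapped $A_{\infty}$ algebra $C^{\infty}([0,1]_t,C)$ equipped with the $t$-pointwise structure maps $m^1_{k,\gamma}$: one checks directly that the $dt$-component of $m(e^{\boldsymbol b})=0$ is exactly the gauge ODE $\frac{d}{dt}b(t)+\sum_k m_k(b,\dots,b,c,b,\dots,b)=0$. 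Applying $\mathfrak{c}$ pointwise in $t$ gives a Maurer--Cartan element of $C^{\infty}([0,1]_t,C')$ whose restrictions at $t=0,1$ are $\mathfrak{c}_*(b_0),\mathfrak{c}_*(b_1)$, and reading off its $dt$-component yields the desired gauge path between them, so $F([b_0])=F([b_1])$. The same mechanism, applied now in the pseudo-isotopy direction rather than at a fixed $t$, shows that the map $\mathfrak{c}_*$ is independent up to gauge of the internal choices (time allocations) entering the construction of $\mathfrak{c}$, so $F\colon\mathcal{MC}(C)\to\mathcal{MC}(C')$ is unambiguous.

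Finally, for (iii): reversing the time parameter produces a pseudo-isotopy in the opposite direction, hence by Theorem \ref{4050} an $A_{\infty}$ homomorphism $\mathfrak{c}'$ from $(C,m^1_{k,\gamma})$ to $(C,m^0_{k,\gamma})$. The compositions $\mathfrak{c}'\circ\mathfrak{c}$ and $\mathfrak{c}\circ\mathfrak{c}'$ are $A_{\infty}$ homomorphisms homotopic to the identity (equivalently, $\mathfrak{c}$ is invertible as in Theorem \ref{4050}), and homotopic $A_{\infty}$ homomorphisms induce the same map on $\mathcal{MC}$ by the argument of step (ii). Therefore $F$ has the two-sided inverse $[b]\mapsto[\mathfrak{c}'_*(b)]$ and is the asserted isomorphism of Maurer--Cartan moduli spaces.
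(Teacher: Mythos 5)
The paper offers no proof of Proposition \ref{35} at all---it defers entirely to \cite{F1}---and your argument is precisely the standard one from that reference: push $e^{b}$ forward under the coalgebra morphism $\hat{\mathfrak{c}}$ assembled from the pseudo-isotopy to get the Maurer--Cartan property, check descent to gauge classes by viewing a gauge equivalence as a Maurer--Cartan element of $C^{\infty}([0,1],C)$ and pushing it forward, and obtain bijectivity from the reversed pseudo-isotopy (equivalently from the invertibility already asserted in Theorem \ref{4050}); this is correct. Two cosmetic slips only: the constant-in-$t$ algebra carrying the gauge path should be built from $m^{0}_{k,\gamma}$ rather than $m^{1}_{k,\gamma}$, and the time allocations are already integrated out in the definition of $\mathfrak{c}(k,\gamma)$, so there is no residual choice there needing a separate well-definedness check.
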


 \begin{thm} \cite{F1} \label{47}
   Let $F_1$ and $F_2$ be two pseudo-isotopies between filtered $A_{\infty}$ algebras $C$ and $C'$. Assume that there exists a pseudo-isotopy of pseudo-isotopy between $F_1$ and $F_2$ then 
      \begin{align*}
         (F_1)_*=(F_2)_*:\mathcal{MC}(C)\rightarrow \mathcal{MC}(C'). 
      \end{align*}
 \end{thm}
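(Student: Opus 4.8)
The plan is to run the same decorated--tree argument that underlies Proposition~\ref{35}, but one parameter higher, and then read off an explicit gauge equivalence. First I would unpack what a pseudo-isotopy of pseudo-isotopies is: a gapped filtered $A_{\infty}$ structure on $C^{\infty}([0,1]_s\times[0,1]_t, C)$ whose restriction to $t=0$ and $t=1$ recovers $C$ and $C'$, and whose restriction to $s=0$ (resp.\ $s=1$) is the pseudo-isotopy $F_1$ (resp.\ $F_2$). Writing a general element as $\boldsymbol{x}=x+ds\wedge p+dt\wedge q+ds\wedge dt\wedge r$, this structure is encoded by families $m^{s,t}_{k,\gamma}$, $\mathfrak{b}^{s,t}_{k,\gamma}$ (the $ds$-component), $\mathfrak{c}^{s,t}_{k,\gamma}$ (the $dt$-component) and a mixed $ds\wedge dt$-component, subject to $A_{\infty}$ relations generalizing those in the Remark following the definition of pseudo-isotopy. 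In particular, for each fixed $s$ the pair $(m^{s,t}_{k,\gamma},\mathfrak{c}^{s,t}_{k,\gamma})$ is a pseudo-isotopy, which by Proposition~\ref{35} induces $F^s_*:\mathcal{MC}(C)\to\mathcal{MC}(C')$ with $F^0_*=(F_1)_*$ and $F^1_*=(F_2)_*$; the goal is to show that $F^s_*$ is independent of $s$.

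Fix $[b]\in\mathcal{MC}(C)$ with representative $b\in\tilde{\mathcal{MC}}(C)$, regarded as constant in $(s,t)$. Set $\hat b(s):=\sum_{k\geq 1,\gamma}\mathfrak{c}^s(k,\gamma)(b,\cdots,b)\,T^{\omega(\gamma)}$, the image of $b$ under the $A_{\infty}$ homomorphism attached to the $s$-slice pseudo-isotopy as in the proof of Theorem~\ref{4050}; by Proposition~\ref{35} this is a path in $\tilde{\mathcal{MC}}(C')$ with $\hat b(0)=F_1(b)$ and $\hat b(1)=F_2(b)$. For the gauge parameter I would introduce a second tree-level operator $\mathfrak{d}^s(k,\gamma)$, defined by the same decorated-tree recipe as $\mathfrak{c}^s(k,\gamma)$ except that exactly one interior vertex carries $\mathfrak{b}^{s,t}$ in place of $m^{s,t}$ (with the remaining interior edges still carrying the $\mathfrak{c}^{s,t}$-propagators and everything integrated over the appropriate space of time allocations valued in the $t$-interval), and set $\hat c(s):=\sum_{k,\gamma}\mathfrak{d}^s(k,\gamma)(b,\cdots,b)\,T^{\omega(\gamma)}$. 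The claim is then that $(\hat b(s),\hat c(s))$ satisfies the defining ODE of gauge equivalence,
\begin{align*}
\frac{d}{ds}\hat b(s)+\sum_k m'_k(\hat b(s),\cdots,\hat b(s),\hat c(s),\hat b(s),\cdots,\hat b(s))=0,
\end{align*}
so that $F_1(b)$ and $F_2(b)$ represent the same class in $\mathcal{MC}(C')$.

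The verification of this identity is the computational heart and the main obstacle. Differentiating $\hat b(s)$ in $s$ produces, term by term, either a derivative of a vertex operator $m^{s,t}$ or of a propagator $\mathfrak{c}^{s,t}$; one rewrites these using the $s$-derivative relations of the two-parameter $A_{\infty}$ structure---most crucially the mixed $ds\wedge dt$ relation, which plays the role of a ``flatness'' between the two connection operators $\mathfrak{b}^{s,t}$ and $\mathfrak{c}^{s,t}$---and then resums, using the Maurer--Cartan equation $m(e^b)=0$ in $C$ to kill the terms in which a full $m^{s,t}$-subtree closes up, and Stokes' theorem on the time-allocation simplices to match the remaining boundary contributions. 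After the (lengthy but formal) sign bookkeeping this collapses exactly to $-\sum_k m'_k(\hat b,\cdots,\hat c,\cdots,\hat b)$; one also checks, using the gapped/filtered hypothesis (cf.\ Lemma~\ref{2999}), that every sum converges $T$-adically. Granting the equation, $[\hat b(0)]=[\hat b(1)]$ for every $[b]$, hence $(F_1)_*=(F_2)_*$. An essentially equivalent but more structural route is to note that a pseudo-isotopy of pseudo-isotopies induces an $A_{\infty}$-homotopy between the $A_{\infty}$ homomorphisms associated to $F_1$ and $F_2$ via Theorem~\ref{4050}, and to invoke the standard fact that $A_{\infty}$-homotopic homomorphisms act identically on moduli of Maurer--Cartan elements; either way the analytic and combinatorial content is the one just described, and the argument parallels Fukaya's in \cite{F1}.
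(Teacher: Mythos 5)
The paper does not prove this statement at all: Theorem \ref{47} is quoted from \cite{F1} with no argument supplied, so there is no in-paper proof to compare against. Your sketch is the standard Fukaya-style argument and is correct in outline: the two-parameter $A_{\infty}$ structure on $C^{\infty}([0,1]_s\times[0,1]_t,C)$ restricts on each $s$-slice to a pseudo-isotopy, giving a family $F^s_*$ interpolating between $(F_1)_*$ and $(F_2)_*$; the path $\hat b(s)$ of Maurer--Cartan elements is then shown to be a gauge-equivalence path, with the gauge parameter $\hat c(s)$ built from the same decorated trees but with one vertex carrying the $ds$-component operator $\mathfrak{b}^{s,t}$, and the gauge ODE verified via the mixed $ds\wedge dt$ relation together with Stokes on the time-allocation simplices. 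The one caveat is that the entire content of the theorem lives in the step you defer as "lengthy but formal sign bookkeeping": the cancellation scheme there is delicate (one must see that the boundary strata of the time-allocation simplices pair off against the $s$-derivatives of both the vertex operators and the propagators, and that the surviving terms assemble into $\sum_k m'_k(\hat b,\dots,\hat c,\dots,\hat b)$ with the correct signs), so as written this is a correct proof strategy rather than a complete proof. Your alternative structural route via $A_{\infty}$-homotopy of the induced $A_{\infty}$ homomorphisms is also sound and is essentially how \cite{F1} packages the result.
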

 
\begin{ex}
   Let $L\subseteq X$ be a Lagrangian torus which bounds no holomorphic discs. Then the $A_{\infty}$ structure on $\Omega^*(L,\Lambda)$ reduces to a differential graded algebra with 
      \begin{align*}
          m_k= \begin{cases} \pm d & \text{if $k=1$}\\
                      \pm \int_L \cdot \wedge \cdot & \text{if $k=2$} \\ 
                      0& \text{otherwise.} \end{cases}
      \end{align*} The associate Maurer-Cartan space is the set of de Rham equivalent classes of degree $1$. The canonical model of $(\Omega^*(L,\Lambda),m_k)$ reduces to the cohomology ring $(H^*(L,\Lambda),m_k^{can})$. Namely, 
      \begin{align*}
         m_k^{can}= \begin{cases} \pm \langle, \rangle & \text{if $k=2$}\\
                      0& \text{otherwise.} \end{cases}
      \end{align*} The Maurer-Cartan space of $H^*(L,\Lambda)$ is the space of harmonic $1$-forms on $L$ with values in $\Lambda_+$. The induced isomorphism from Theorem \ref{25} between Maurer-Cartan spaces $\mathcal{MC}(\Omega^*(L,\Lambda))$ and $\mathcal{MC}(H^*(L,\Lambda))$ is the identification between  harmonic forms and de Rham classes.
\end{ex}

\section{Recap of HyperK\"ahler Geometry}
   We will have a brief recap of the hyperK\"ahler rotation which we will use through the whole paper. Let $X$ be a complex surface with the holomorphic symplectic $2$-form $\Omega$. Assume that there exists a K\"ahler form $\omega$ in the K\"ahler class $[\omega]$ such that 
      \begin{align*}
         \omega^{2}=\frac{1}{2} \Omega\wedge \bar{\Omega},
      \end{align*} then we say $X$ is a hyperK\"ahler surface.
 \begin{ex}
     Compact holomorphic symplectic manifolds are always hyperK\"ahler by the theorem of Yau \cite{Y1}. There are also non-compact examples such as Ooguri-Vafa space with Ooguri-Vafa metric \cite{OV}.
 \end{ex}     
      The pair $(\omega,\Omega)$ determines an $S^1$-family\footnote{It actually determines an $S^2$-family of hyperK\"ahler structures but we will only use the particular $S^1\subseteq S^2$.} of hyperK\"ahler structures on the underline space of $X$ with the K\"ahler form and holomorphic symplectic $2$-form given by 
         \begin{align*}
             \omega_{\vartheta}:&=-\mbox{Im}(e^{-i\vartheta}\Omega),\\
                 \Omega_{\vartheta}:&=\omega-iRe(e^{-i\vartheta}\Omega).
         \end{align*} for $\vartheta\in S^1$. We will denote the corresponding hyperK\"ahler manifold by $X_{\vartheta}$. 
 
      Let $L_{u_0}$ be a holomorphic curve in $X$, thus $\Omega|_{L_{u_0}}=0$. Then $L_{u_0}$ is a special Lagrangian in $X_{\vartheta}$ in the sense that $\omega_{\vartheta}|_{L_{u_0}}=0$ and $\mbox{Im}\Omega_{\vartheta}|_{L_{u_0}}=0$. This procedure is called the hyperK\"ahler rotation. Let $B$ be the linear system of $L_{u_0}$ and $B_0$ parametrizes the smooth ones. Let $u_0$ be the point in $B_0$ corresponding to $L_{u_0}$ and similarly for each $u\in B_0$, we denote the corresponding smooth holomorphic Lagrangians by $L_u$. After hyperK\"ahler rotation $L_u$ will become a special Lagrangian in $X_{\vartheta}$. Before we can talk about tropical geometry, we have to introduce the integral affine structure on $B_0$, for a given $\vartheta$. Let $e_i\in H_2(X,L_{u_0};\mathbb{Z}), i=1,\cdots, 2g$ be an integral basis. For any $u\in B_0$ near $u_0$ and a path $\phi(t)$ connecting $u_0$ and $u$, the functions 
       \begin{align*}
          f_i(u)=\int_{\bar{e_i}_u} \mbox{Im}\Omega_{\vartheta}, i=1,2
       \end{align*} give an integral affine structure on $B_0$ \cite{H2}. We will use $B_{\vartheta}$ to denote the affine manifold (to distinguish the affine structures constructed from various $\vartheta\in S^1$). Here $\bar{e_i}_u$ is the $2$-chain which is the union of parallel transport of $\partial \gamma$ along the path $\phi$. In the case $n=2$, the affine structure described above is known as the complex affine coordinate in the context of mirror symmetry. The the affine functions $f_i$ are real analytic (multi-value) functions on $B_0$.

\section{Tropical Geometry}
    Naively, tropical curves/discs are union of affine line segments on an integral affine manifold with the "balancing condition".
    In this section, we will explore the tropical geometry on K3 surface (with special Lagrangian fibration and only $I_1$-type singular fibres). It worth mentioning that the tropical curves/discs in this paper live on the base of special Lagrangian fibration. On the other hand, Gross-Siebert \cite{GS1} considered the tropical curves on the dual intersection complex from a toric degeneration (see also \cite{Y4} for the setting in non-Archimedean geometry). However, it is not clear if the bases of the special Lagrangian fibrations can be identified with the affine manifolds in the Gross-Siebert program. This is due to the complexity of the details of Ricci-flat metric on K3 surfaces.  
    It also worth mentioning that there is a tropical discs counting/tropical Donaldson-Thomas invariants defined in \cite{L4}\cite{KS5} by assigning initial conditions and use the expected wall-crossing formula to define the invariants. Here in this paper, we define the tropical discs counting as the weighted count of admissible tropical discs, including the ones passing through the singularities of the affine manifold. 
   
   First we give the definition of tropical discs. See a similar definition in \cite{KS5}.
\begin{definition} \label{322}
    Let $B$ be an affine $2$-manifold with singularities $\Delta$ and with an integral structure on $TB$. In other words, there exists an integral affine structure on $B\backslash \Delta$. Assume that around each singularity of the affine structure the monodromy is conjugate to $\bigl(
    \begin{smallmatrix}
      1 & 1\\
      0 & 1
    \end{smallmatrix} \bigr)$\footnote{We will discuss the tropical geometry with other singularities in the next paper \cite{L10}.}. Let $B_0$ be the complement of the singularities $\Delta$. Let $T$ be a rooted connected tree (with root $x$), we denote the set of vertices and edges by $C_0(T)$ and $C_1(T)$ respectively. A tropical curve (with stop $u$) on $B$ is a $3$-tuple $(\phi,T,w)$ where $T$ is a rooted connected tree (with a root $x$), a weight function $w:C_1(T)\rightarrow \mathbb{N}$ and $\phi:T\rightarrow B$ is a continuous map such that 
  \begin{enumerate}
    \item $\phi(x)=u\in B_0$.
    \item We allow $T$ to have unbounded edges only when $B$ is non-compact.
    \item For any vertex $v\in C_0(T)$, the unique edge $e_v$ closest to the root $x$ is called the outgoing edge of $v$ and write $w_v:=w(e_v)$. 
    \item For each $e\in C_1(T)$, $\phi|_e$ is either an embedding of affine segment on $B_0$ or $\phi |_e$ is a constant map. Assume that $\phi(e)\nsubseteq  \Delta$. Let $v_e\in \Gamma(e,\phi^*T_{\mathbb{Z}}B_0)$ be a primitive flat section. In the former case, we will further assume that $v_e$ is in the tangent direction of $\phi(e)$ and pointing toward $\phi(x)$. 
    \item For each $v\in C_0(T)$, $v\neq x$ and $\mbox{val}(v)=1$, we have $\phi(v)\in \Delta$. Moreover, 
      \begin{enumerate}
         \item If $\phi|_{e_v}$ is an embedding, then $\phi(e_v)$ is in the monodromy invariant direction\footnote{Straight forward computation shows that the monodromy invariant direction of the affine structure is rational.}.
         \item Let $T'\subseteq T$ be a connected subtree contains $v$ such that $\phi(T')=\phi(v)$, then 
            \begin{enumerate}
              \item For $e_v\in C_1(T')$, we associate an primitive integral vector $v_{e_v}\in T_{\phi(v)^+}B$ in the monodromy invariant direction such that $\phi(v)^+=Exp_{\phi(v)}(\epsilon v_{e_v})$ for some $0<\epsilon\ll 1$.
              \item For each edge $e$ in $T'$ or edge adjacent to $T'$, we associate a primitive integral vector $e_v\in T_{\phi(v)^+}B$. 
              \item For each $v\in C_0(T')\cap C_0^{intf}(T)$, the balancing condition (\ref{1099}) holds at $T_{\phi(v)}^+B$.  
              \item Let $e$ be an edge adjacent to $T'$. Then $v_e$ (up to parallel transport along the shortest path from $\phi(v)^+$ to $\phi(e)$) is the primitive tangent vector of $\phi(e)$.   
            \end{enumerate}
      \end{enumerate}
     
    \item For each $v\in C_0(T)$, $v\neq x$ and $\mbox{val}(v)=2$, we have $\phi(v)\in \Delta$. Moreover, 
      \begin{enumerate}
         \item the edges $e_v^+,e_v^-$ adjacent to $v$ are not contracted by $\phi$.
         \item $\phi(e_v^{\pm})$ is in the monodromy invariant direction and the primitive integral vectors associated to $e^{\pm}_{v}$ is monodromy invariant.
         \item $w(e_v^+)=w(e_v^-)$.
      \end{enumerate}
    \item For each $v\in C_0(T)$, $\mbox{val}(v)\geq 3$, $\phi(v)\notin \Delta$, we have the following assumption: 
     (balancing condition) Let $e_1,\cdots, e_n,e_v$ are the edges adjacent to $v$. Then 
          \begin{align} \label{1099}
             w_{e_v}v_{e_v}=\sum_i w_{e_i}v_{e_i}\in T_{\phi(v)}B.
          \end{align}
  \end{enumerate}
\end{definition}
 \begin{rmk} By induction, all $v_e$ are of rational slope.\end{rmk}
The main difference from the usual definition of tropical discs in the literature is that we allow an edge maps to a point and the affine structures can have singularities. The
balancing condition (\ref{1099}) will make the following definition well-defined.
\begin{definition} \label{2000}
    Let $\phi: T\rightarrow B$ be a parametrized tropical rational curves (with stop). Assume that $\mbox{val}(v)=3$ for a vertex $v\neq x$. The multiplicity at such vertex $v$ is given by 
       \begin{equation} \label{1010}
            Mult_v(\phi)=w_1 w_2|v_{e_1} \wedge v_{e_2}|,
       \end{equation}
    where $e_1, e_2$ are two of the edges adjacent to $v$ and $w_i=w(e_i)$. The last term $v_{e_1}\wedge v_{e_2}$ in (\ref{1010}) falls in $\wedge^2 T_{\phi(v)}B\cong \mathbb{Z}$ due to the integral structure of $TB_0$. Here we will replace $\phi(v)$ by $\phi(v)^+$ if $\phi(v)\in \Delta$.
\end{definition}

\begin{definition} \label{53}
  A tropical disc at $u$ is a tropical curve $(w,T,\phi)$ with stop $u\in B_0$ and $T$ is a tree.
\end{definition} Naively, tropical discs with stop at $u$ corresponds to holomorphic discs with boundaries on $L_u$. One very different nature of tropical discs on K3 surfaces is the following observation, which is a direct consequence of Lemma \ref{1911} and reflects the fact that the moduli space
 of holomorphic discs with boundary on special Lagrangian torus fibres have negative virtual dimension.
\begin{prop}
  Every tropical discs in $B$ is rigid.
\end{prop}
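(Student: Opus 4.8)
The plan is to argue by a dimension count on the combinatorial moduli of tropical discs, matching the statement that the relevant moduli of holomorphic discs have negative virtual dimension. The key quantity is the number of parameters of a tropical disc $(\phi,T,w)$ with fixed stop $u$ and fixed total class $\gamma$, versus the number of constraints imposed by the balancing condition at the trivalent vertices together with the requirement that the univalent vertices (other than the root $x$) land on the singular set $\Delta$.

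First I would set up the parameter count. The moduli of maps $\phi:T\to B$ with a fixed combinatorial type $T$ and fixed primitive edge directions is governed by the positions of the vertices: each vertex contributes $2$ real parameters (its position in the $2$-dimensional base), and each bounded internal edge whose image is a segment contributes one length parameter, while forcing the two endpoints to be collinear in the prescribed direction costs one constraint — so edges are parameter-neutral once we track vertex positions with the collinearity constraints. The root $x$ is pinned to $u$, costing $2$. Each univalent vertex $v\neq x$ is forced onto $\Delta$ (a finite set of points, i.e. codimension $2$), costing $2$ each, and its outgoing edge must be in the monodromy-invariant direction, which (given $\phi(v)\in\Delta$ is already a point) is automatically consistent. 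Each trivalent vertex imposes the balancing condition \eqref{1099}, which is a vector equation in $T_{\phi(v)}B$, hence $2$ real constraints, but one of those is absorbed by the freedom to rescale / is accounted for by the outgoing edge direction being determined — the net effect, as in the classical toric case, is that a trivalent vertex is again parameter-neutral. Bivalent vertices on $\Delta$ are likewise rigidly constrained by (6) in Definition \ref{322}.

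Next I would assemble these local counts into a global Euler-characteristic-style identity for a tree. Since $T$ is a tree, $|C_0(T)| - |C_1(T)| = 1$. Writing the total parameter count as (sum of vertex freedoms) $+$ (edge-length freedoms) $-$ (collinearity constraints) $-$ (balancing constraints) $-$ (incidence constraints at $x$ and at the $\Delta$-vertices), the tree relation causes the bulk terms to telescope, and what survives is exactly $\dim B - 2 = 2 - 2 = 0$ from the root condition, \emph{minus} a positive contribution coming from each leaf forced onto $\Delta$ and from the codimension-$2$ nature of $\Delta$ relative to the $1$-dimensional "wall" directions the monodromy-invariant edges would naively trace. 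Concretely, a generic tropical disc in a $2$-dimensional base "wants" its leaves to lie on $1$-dimensional rays, but here they are pinned to the $0$-dimensional set $\Delta$, which is the source of the strict negativity; after imposing the stop condition at $u$ there are no remaining moduli, i.e. the disc is rigid (isolated). I would phrase the conclusion as: the formal dimension of the space of tropical discs through a fixed generic $u$ in a fixed class is $\le 0$, and equals $0$ only at isolated configurations, hence every tropical disc is rigid.

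The main obstacle I expect is handling the degenerate features allowed in Definition \ref{322} that are absent in the classical toric picture: edges contracted to a point, and chains $T'$ of vertices all mapping to a single singularity $\phi(v)\in\Delta$ with the bookkeeping of vectors in $T_{\phi(v)^+}B$ done after pushing off the singularity. For these I would argue that passing to $\phi(v)^+$ and unfolding the local monodromy replaces the contracted subtree by an ordinary balanced vertex configuration at a nearby smooth point, so the parameter count there reduces to the smooth trivalent/bivalent case already treated, with the contraction itself adding no new moduli (the "position" of the contracted $T'$ is a single point of $\Delta$, already counted once). The other delicate point is making the "one constraint absorbed per balancing vertex" bookkeeping rigorous in the presence of weights $w_e$ and non-primitive edges; I would do this by the standard device of recording each edge by its primitive direction together with an integer length/weight and checking the rank of the linear system of balancing equations is maximal for a tree, which is where the hypothesis that all $v_e$ have rational slope (the Remark after Definition \ref{322}) and the $I_1$-type monodromy $\bigl(\begin{smallmatrix}1&1\\0&1\end{smallmatrix}\bigr)$ are used. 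Since the statement cites Lemma \ref{1911} as the real input, I anticipate the actual proof will invoke that lemma to supply precisely this rank/transversality fact and then run the Euler-characteristic count; my proposal is to reconstruct that count explicitly as above.
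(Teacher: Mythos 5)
Your route is genuinely different from the paper's. The paper offers no Euler--characteristic count at all: it states the proposition as a direct consequence of Lemma \ref{1911}, whose content is that every non-contracted edge of a tropical disc is a trajectory of the gradient flow of $|Z_{\gamma}|^2$, equivalently a level set of the harmonic function $\mathrm{Arg}\,Z_{\gamma}$ for the relative class accumulated along that edge. Rigidity then follows by propagation from the leaves: the leaf edges are anchored at the fixed singular points of $\Delta$ and must lie in the monodromy-invariant direction, so they are the completely determined rays $l^{\pm}_{\vartheta}$; each subsequent vertex is the intersection of already-determined edges, the balancing condition fixes the class and direction of the outgoing edge, and Lemma \ref{1911} then fixes that edge as a set. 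Since $T$ is a tree, this induction terminates at the root with no continuous parameter left. Your dimension count is the expected-dimension shadow of this: carried out carefully it gives virtual dimension $-1$ (with the stop pinned at $u$), which matches the paper's own remark about negative virtual dimension of the holomorphic disc moduli.

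The one genuine soft spot in your write-up is the inference from ``formal dimension $\le 0$'' to ``rigid.'' An expected-dimension count does not rule out superabundance; you need the constraint system to have maximal rank. You acknowledge this and promise to verify it ``for a tree,'' but note that Lemma \ref{1911} does not hand you a rank statement directly --- what it hands you is the fact that each edge is determined by one point on it together with its class, and the maximal-rank claim for a tree is then verified exactly by the leaf-to-root propagation above. So the missing step of your argument, once filled in, is the paper's entire argument; the dimension count becomes a corollary rather than the engine. A smaller bookkeeping quibble: for a fixed combinatorial type the balancing condition constrains the discrete direction data $\{v_e, w_e\}$, not the continuous vertex positions, so the ``two constraints, one absorbed'' discussion at trivalent vertices is not where the codimension comes from; the positional constraints are the one collinearity condition per edge plus the two-dimensional incidence conditions at the root and at each leaf in $\Delta$.
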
 

Since our goal is to define a tropical discs counting and compare it with open Gromov-Witten invariants, we first have to associate a relative class for each tropical disc. Recall that there exists a natural complex structure $J_{\vartheta}^{sf}$ on $TB_{\vartheta}$. We may identify the underlying space of $X$ with singular fibres deleted  with $TB_{\vartheta}/\Lambda$, on which there is a nowhere vanishing holomorphic $(2,0)$-form $\Omega_{\vartheta}^{sf}$, for some lattice $\Lambda$. Fix $\vartheta\in S^1$ and a singularity $u_0\in \Delta$, there exist two rays $l_{\vartheta}^{\pm}$ emanating from $u_0$ such that the tangents are in the monodromy invariant directions. Let $u\in l_{\vartheta}^{\pm}$ and $v\in T_{u}B_{\vartheta}$ be the tangent of $l_{\vartheta}^{\pm}$. Let $\tilde{u}$ be the point on the zero section of $TB_{\vartheta}/\Lambda$ which project to $u\in l_{\vartheta}^{\pm}$. Let $\tilde{v}\in T_{\tilde{u}}X$ be a unique lifting of $v$ by identifying the zero section with $B_{\vartheta}$.
Then $J_{\vartheta}^{sf}\tilde{v}$ gives a vector field along the fiber of $T_uB_{\vartheta}/\Lambda$. Since $v$ is rational, there exists a closed geodesic (with orientation) with respect to the flat metric on $L_u$ and tangent $J\tilde{v}$.    
\begin{definition} \label{303}
  Let $X\rightarrow B$ be an elliptic fibration with $24$ singular fibres. Fix $e^{i\vartheta}\in S^1$ and let $B_{\vartheta}$ be the affine manifold with the affine structure given by the complex affine structure with singularities of $X_{\vartheta}$.
   \begin{enumerate}
   \item
   Given a tropical disc $\phi:T\rightarrow B$ with stop at $u$ on the
   integral affine manifold $B_{\vartheta}$, we will associate it with a relative class as
   follows by induction on the number of singularities of affine structure $\phi$
   hits: If the $\phi$ only hits only one singularity and has its stop at $u$, then let
   $[\phi]\in H_2(X,L_u)$ be the relative class of Lefschetz thimble
   such that with $\partial [\phi]$ is the closed geodesic described above \footnote{One may also use the auxiliary data $\omega_{\vartheta}$ and determine the sign by $\mbox{Arg}\int_{[\phi]}\Omega>0$}. Otherwise let $p$
   be the internal vertex of $\phi$ closest to the root $x$ and let $\phi_1,\cdots
   \phi_s$ be the components of $\mbox{Im}(\phi)\backslash p$
   containing an ingoing edge of $p$. By induction we already define $[\phi_i]\in H_2(X,L_p)$. Then $[\phi]$ is defined to be the parallel transport of $\sum_{i=1}^s[\phi_i]$ from $\phi(p)$ to $u$.
   \item Given a holomorphic volume form $\Omega$ of $X$. Then  the central charge of a tropical disc $(\phi,T,w)$  on $B_{\vartheta}$ is defined to be
   \begin{align*}
   Z_{\phi}=\int_{[\phi]}\Omega.
   \end{align*}
   \end{enumerate}
\end{definition}

 \begin{definition}
    \begin{enumerate}
       \item Let $v_1,\cdots, v_n\in M\cong \mathbb{Z}^2$ be primitive vectors (not necessarily distinct) and $\mathfrak{d}_{ij}\in M\otimes \mathbb{R}$ be the lines in the direction $v_i$ with weight $w_{ij}$, $j=1,\cdots, l_i$. Assume that $w_{ij}\leq w_{ij'}$ if $j\leq j'$. We order $\mathfrak{d}_{ij}$ such that $\mathfrak{d}_{i_1j_1}<\mathfrak{d}_{i_2j_2}$ if 
        \begin{enumerate}
            \item $i_1<i_2$ or
            \item $i_1=i_2$ and $j_1<j_2$. 
        \end{enumerate}
       \item  We say that the lines $\{\mathfrak{d}_{ij}\}$ are in the standard position if the intersection of $\mathfrak{d}_{i_1j_1}$ and $\mathfrak{d}_{i_2j_2}$ is on the far right side of the line $\mathfrak{d}_{ij}$ if $\mathfrak{d}_{ij}>\mathfrak{d}_{i_1j_1}$ and $\mathfrak{d}_{ij}>\mathfrak{d}_{i_2j_2}$. 
       \item   Let $(\phi,T,w)$ be a tropical curve in $\mathbb{R}^2$ with no contracted edges. We say $(\phi,T,w)$ is in the standard position with respect to $\{(v_i,w_{ij})\}$ if $T$ has $\sum_i l_i+1$ unbounded edges such that all except one unbounded edges are mapped into some $\mathfrak{d}_{ij}$ with weight $w_{ij}$ by $\phi$. The exceptional unbounded edge has direction $v$ and weight $w$ such that $wv=\sum_{i,j} v_i w_{ij}$. 
            
    \end{enumerate}
    \end{definition}
    The following definition explains which tropical discs will contribute to the tropical discs counting invariants in Definition \ref{1011}. 
    \begin{definition} \label{1012}
       A tropical disc $(\phi,T,w)$ with stop $u\in B_0$ is called an admissible tropical disc if the following holds:
       \begin{enumerate}
          \item For every vertex $v\in C_0(T)$, its valency $\mbox{val}(v)\leq 3$.
          \item Assume that $e\in C_1(T)$ is contracted to a point $\phi(e)\in B$\footnote{If $\phi(e)\in \Delta$, then we will use $\phi(e)^+$ instead of $\phi(e)$.}. The preimage of $\phi(e)$ is a disjoint union of subtrees of $T$. Let $T_e$ be the connected subtree containing $e$. Let $e_0,e_1,\cdots,e_m \in C_1(T)$ be the edges adjacent to $T_e$ and $e_0$ is the one closest to the root. Denote the weight of $e_i$ by $w_i$. Let $T'$ be the tree obtained by adding edges $e_0,\cdots, e_m$ with weight $w_1,\cdots w_m$ and $\tilde{T}$ be the tree by replacing each $e_i\in C_1(T')$ by an unbounded edge with weight $w_i$. $T\backslash T'$ is a disjoint union of subtrees of $T$. Let $T_i$ be the connected subtree containing $e_i$, $i=1,\cdots m$. Then $\phi_i=(\phi|_{T_i}, T_i, w|_{T_i})$ defines a tropical disc with stop at $\phi(e)$.

           For each $e_i$ there exists a primitive relative class $[\phi_i]\in H_2(X,L_{\phi(e)})$.
           Let $v_i\in T_{\phi(e)}B_0$ be the primitive vector such that $v_i|Z_{[\phi_i]}|>0$ and $v_i\mbox{Arg}Z_{[\phi_i]}=0$. Let $(v_i,Z_{[\phi_i]}), i=1,\cdots,n$ be such distinct pairs and with the order such that 
              \begin{enumerate}
                 \item $\mbox{Arg}Z_{[\phi_i]}(\phi(e)-\epsilon v_i)\leq \mbox{Arg}Z_{[\phi_j]}(\phi(e)-\epsilon v_j)$ for $0<\epsilon \ll 1$, if $i<j$. 
                 \item The above equality holds and $i<j$ implies $|Z_{[\phi_i]}(\phi(e))|\leq Z_{[\phi_j]}(\phi(e))|$. 
              \end{enumerate}
           Assume that $w_{ij},j=1,\cdots,l_i$ are the weights of the edges attached to the pairs $(v_i, [\phi_i])$ and ordered in the way that $w_{ij}\leq w_{ij'}$ if $j<j'$. 
 Then there exists $\tilde{\phi}:\tilde{T}\rightarrow \mathbb{R}^2$ with no contracted edges and weight $\tilde{w}:C_1(\tilde{T})\rightarrow \mathbb{N}$,
   \begin{align*}
      \tilde{w}(e')=\begin{cases}
             w(e'), & e\in T'  \\
             w_i, & e=e_i,
      \end{cases}
   \end{align*} such that the balancing condition (\ref{1099}) is satisfied. Moreover, the tropical curve $(\tilde{\phi},\tilde{T},\tilde{w})$ is in the standard position with respect to $\{v_i,w_{ij}\}$.   
\item Two admissible tropical discs $(\phi,T,w)$ and $(\phi',T',w')$ are equivalent if there exists a homeomorphism $f:T\rightarrow T'$ such that $\phi'\circ f=\phi$ and $w'\circ f=w$.
       \end{enumerate}

    \end{definition}

Now we will define a weighted count for tropical discs on K3 surfaces following \cite{L4}, which is motivated from the work of \cite{GPS}.

 \begin{definition}\label{1011}
   \begin{enumerate}
     \item Let $\phi:T \rightarrow B_{\vartheta}$ be an admissible tropical disc with the stop $u\in B_0$. Then we define its weight of $\phi$ to be 
     \begin{align} \label{4029}
         \mbox{Mult}(\phi):=\prod_{\substack{v\in C^{int}_0(T)\\ v: trivalent}}\mbox{Mult}_v(\phi)\prod_{v\in C^{ext}_0(T)\backslash \{u\}}\frac{(-1)^{w_v-1}}{w_v^2}  \prod_{T_e: \phi(e) \mbox{is a point}}|\mbox{Aut}(\bold{w}_{T(e)})|^{-1},
     \end{align} where the notation is explained below:
     \begin{enumerate}
        \item Here we use the notation in Definition \ref{1012}. Then we set $\bold{w}_{T_e}=(\bold{w}_1,\cdots,\bold{w}_n)$. The last product factor in (\ref{4029}) doesn't repeat the factor if $T_e=T_{e'}$. 
        \item Let $\bold{w}=(\bold{w}_1,\cdots, \bold{w}_n)$ be a set of weight vectors $\bold{w}_i=(w_{i1},\cdots, w_{il_i})$, for $i=1,\cdots, n$. We set 
          \begin{align*}
             a^i_n=|\{w_{ij}|w_{ij}=n\}|
          \end{align*} and
          \begin{align*}
             b_l=\#\{i|Z_{[\phi_i]}(\phi(e))\in l\}
          \end{align*} for any ray $l\in \mathbb{C}^*$. 
          Define
          \begin{align*}
            |\mbox{Aut}(\bold{w})|=\bigg(\prod_{i}\prod_{n\in \mathbb{N} \atop a^i_n\neq 0} (a^i_n)! \bigg) \prod_{l} (b_l)!,
          \end{align*} where the first factor is the size of the subgroup of the permutation group $\prod_i
          \Sigma_{l_i}$ stabilizing $\bold{w}$.
     \end{enumerate}

     \item Let $u\in B_0$ and $\gamma\in H_2(X,L_u)$ such that $u\notin W'^{trop}_{\gamma}$. We define the tropical discs counting invariant $\tilde{\Omega}^{trop}(\gamma;u)$ to be 
        \begin{align*}
           \tilde{\Omega}^{trop}(\gamma;u):=\sum_{\phi} \mbox{Mult}(\phi) ,
        \end{align*} where the sum is over all equivalent classes of admissible tropical discs on $B_{\mbox{Arg}Z_{\gamma}(u)-\pi/2
        }$ with stop at $u$ such that $[\phi]=\gamma$.        
   \end{enumerate}
     \end{definition}

\begin{rmk}\begin{enumerate}
  \item It is proved in \cite{GPS} that the tropical curves counting $N^{trop}_{\{\partial \gamma_i\}}(\mathbf{w})$ (in the proof of Theorem \ref{3011}) is independent of the generic arrangement of the affine lines. 
 The technical and artificial definition of the admissibility is to avoid over count of the tropical discs. 
  \item  The definition of the weighted counting of tropical discs $\tilde{\Omega}^{trop}(\gamma;u)$ does not depend on the choices of the K\"ahler class $[\omega]$ nor the $\mathbb{C}^*$-scaling of the 
  holomorphic volume form $\Omega$.
  \item The central charge can be viewed as the weighted sum of affine length of each edge. 
\end{enumerate}
 
\end{rmk}

The definition of the tropical discs counting comes from the following reason: assume that the affine structure with singularities on $B$ comes from the complex affine structure of a special Lagrangian fibration in K3 surface, the hyperK\"ahler rotation will induce an $S^1$-family of affine structure with singularities, which we will denote them by $\{B_{\vartheta}\}$ and $B_{\vartheta=0}=B$ \cite{L4}.

The following is an analogue of Gromov compactness theorem for tropical discs.
\begin{lem} \label{3006}
   Fix $u\in B_0$ and $\lambda>0$, there exist finitely many relative classes $\gamma\in H_2(X,L_u)$ such that $\tilde{\Omega}^{trop}(\gamma;u)\neq 0$ and $|Z_{\gamma}(u)|<\lambda$. 
\end{lem}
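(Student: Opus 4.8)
The plan is to reduce the finiteness statement to a combinatorial bound on the structure of admissible tropical discs once the central charge is bounded. First I would recall that by the balancing/central-charge observation (the remark after Definition \ref{1011}), the central charge $Z_\phi = \int_{[\phi]}\Omega$ decomposes as a weighted sum of the ``affine lengths'' of the edges of $\phi$, where each summand is the integral of $\Omega$ over the corresponding Lefschetz-thimble-like piece. Since all these contributions have argument lying in a fixed half-plane determined by $\mbox{Arg}Z_\gamma(u)$ (this is exactly why the affine structure $B_{\mbox{Arg}Z_\gamma(u)-\pi/2}$ is the relevant one: after hyperK\"ahler rotation the relevant discs project to \emph{genuine} affine line segments, cf. Section \ref{4006}), there is no cancellation: $|Z_\phi| < \lambda$ forces every edge's weighted affine length, and in particular the affine length of each non-contracted edge and the weight $w_v$ of each external edge hitting a singularity, to be bounded in terms of $\lambda$ and $u$.

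Next I would bound the combinatorics of the tree $T$. Because $\phi$ is admissible, every vertex has valency $\le 3$, so $T$ is a binary tree and the number of external vertices controls the total number of vertices and edges. Each external vertex $v \ne u$ lies on $\Delta$, which is a \emph{finite} set (24 singular fibres), and contributes a primitive relative class whose central charge is a nonzero summand of $Z_\phi$; since these summands all have argument in a fixed sector of angle $<\pi$, Lemma \ref{2999} applies with $\Gamma = H_2(X,L_u;\mathbb{Z})$ and $Z$ the central charge: the monoid generated by the admissible primitive classes meeting $B_0(\lambda)$ is finite, so only finitely many relative classes $\gamma$ can arise with $|Z_\gamma(u)| < \lambda$, and for each such $\gamma$ there is a uniform bound on $\sum_i l_i$ (the number of external edges), hence on $|C_0(T)|$. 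Now I would argue that for a \emph{fixed} tree type $T$ with fixed weights and fixed images of the external vertices in $\Delta$, there are only finitely many admissible $\phi: T \to B_{\vartheta}$ with stop at $u$: by the rigidity proposition (every tropical disc in $B$ is rigid, a consequence of Lemma \ref{1911}), such a $\phi$ has no moduli, so it is determined by its discrete data — the tree, the weights, the directions (all of rational slope by the remark after Definition \ref{322}), and the finitely many choices of which singularities the leaves hit. Combining: finitely many $\gamma$, finitely many tree/weight/leaf configurations per $\gamma$, finitely many $\phi$ per configuration, hence finitely many $\phi$ in total, and a fortiori finitely many $\gamma$ with $\tilde\Omega^{trop}(\gamma;u)\ne 0$.

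The main obstacle I expect is making rigorous the claim that ``bounded central charge implies bounded combinatorics,'' i.e. ruling out trees with arbitrarily many edges of arbitrarily small affine length that still sum to small central charge. The key point that closes this gap is the no-cancellation phenomenon from hyperK\"ahler geometry together with part (1) of Lemma \ref{2999}: the primitive classes of the ``initial'' Lefschetz thimbles emanating from the 24 singularities have central charges bounded \emph{below} in absolute value on any compact neighborhood of $u$ (there is a positive minimum among finitely many singularities and finitely many monodromy-invariant primitive directions), so each external edge contributes at least a definite positive amount to $|Z_\phi|$ — hence at most $O(\lambda)$ external edges, hence at most $O(\lambda)$ vertices. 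A secondary technical point is to handle contracted edges and the subtree surgery of Definition \ref{1012} carefully: a contracted edge can a priori hide arbitrarily complicated local data at a point of $\Delta$, but the admissibility condition forces that local data to be a standard-position tropical curve in $\mathbb{R}^2$ relative to the initial pairs $(v_i,[\phi_i])$, and the same lower bound on $|Z_{[\phi_i]}|$ together with the balancing condition $w v = \sum_{i,j} v_i w_{ij}$ again bounds the total weight, and therefore the number of edges, of that local piece. Once these two points are established the rest is bookkeeping.
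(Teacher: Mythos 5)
Your overall strategy coincides with the paper's: exploit the fact that $|Z_{[\phi]}|$ is a sum of nonnegative weighted affine lengths (no cancellation after the hyperK\"ahler rotation), extract a uniform positive lower bound for the contribution of each external edge/internal vertex, deduce a bound on the number of vertices and on the total weight, and then count the finitely many remaining discrete configurations using rigidity, the finiteness of $\Delta$, and the admissibility constraints on contracted subtrees. The paper organizes the bookkeeping slightly differently (it first factors $\phi$ through the unique injective $\phi'$ and bounds $|C_0(T')|$ and $\sum_i w_i'$), but that is cosmetic.

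There is, however, a genuine gap in your justification of the crucial lower bound. You assert that the initial Lefschetz-thimble classes have central charges bounded below in absolute value ``on any compact neighborhood of $u$,'' and conclude that each external edge contributes a definite positive amount to $|Z_\phi|$. But an external edge runs from a singularity $p\in\Delta$ out to an internal vertex, and its contribution is the value of $|Z_{\gamma_e}|$ at that internal vertex, which tends to $0$ as the vertex approaches $p$; nothing confines the internal vertices of the disc to a compact set near the stop $u$, so the bound you invoke does not exclude arbitrarily short external edges a priori. The correct substitute, and the one the paper uses, is geometric: the two initial rays emanating from a single $I_1$ singularity point in opposite monodromy-invariant directions and never re-intersect, so an internal trivalent vertex can only occur where rays traceable to at least two \emph{distinct} singularities meet; taking $\lambda_0$ to be the smallest sum of affine lengths of initial rays to such a pairwise intersection point yields the uniform per-vertex lower bound and hence $|C_0^{int}(T')|\lesssim \lambda/\lambda_0$. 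Relatedly, your appeal to Lemma \ref{2999} is circular at this stage: its hypothesis requires knowing that the set of classes realized with $|Z|<\lambda$ is finite, which is essentially the statement being proved; the $\lambda_0$ bound must come first. Once that step is repaired, the rest of your argument goes through as in the paper.
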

\begin{proof}
   First notice that $|Z_{\gamma}|$ is the sum of affine length of the edges in the image of tropical discs.    Consider the intersections of initial rays from each pair of singularities. Let $\lambda_0$ be the smallest sum of affine length of initial rays to the intersection point. 
   
   Given an admissible tropical disc $(\phi,T,w)$, with $|Z_{[\phi]}|<\lambda$, there exists a unique tropical disc $(\phi',T',w')$ such that $\phi'$ is injective and $\phi$ factors through $\phi'$. In particular, $Z_{[\phi']}=Z_{[\phi]}$. Then we have 
     \begin{align*}
        (|C^{int}_0(T')|+1+|C^{ext}_0(T')-C^{int}_0(T')|)\lambda_0< |Z_{[\phi']}|<\lambda.
     \end{align*} Thus, the number of vertices $|C_0(T')|$ is bounded and there are only finitely many such trees. Let $w'_i$ be the weights on the external wedges of $T'$, then 
       \begin{align*}
         (\sum_i w'_i )\lambda_0< Z_{[\phi']}
       \end{align*} and there are finitely many possible assignment of weights on external edges of a fixed tree. The weight and the direction of the rest of the internal edges are determined by the balancing conditions. 
     
   For a fixed tropical disc $(\phi',T',w')$, there are only finitely many admissible tropical discs factoring through it from the definition of admissibility. Actually, we prove a stronger statement: fix $\lambda>0$. There are only finitely many tropical discs up to equivalent classes, where two tropical discs are equivalent if one can be derived from the other by naturally extending the edge adjacent to the root and the associated relative class are the same (up to parallel transport).
\end{proof} 

\subsection{Wall-Crossing Formula and Properties of Tropical Discs Countings}

Before we talk about the wall-crossing formula, we will introduce the notion of central charge and wall of marginal stability. First there exists a holomorphic function called central charge 
  \begin{align}\label{4010}
      Z: \Gamma=&\cup_{u\in B_0}H_2(X,L_u) \rightarrow \mathbb{C} \notag\\
         &\gamma_u \longmapsto Z_{\gamma_u}(u):=\int_{\gamma_u}\Omega,  \end{align} which is well-defined since $\Omega$ restrict to zero on the fibres. 
 \begin{definition}
    Let $\gamma\in H_2(X,L_{u_0})$, locally we define the locus $W'^{trop}_{\gamma}$ to be 
       \begin{align*}
          W'^{trop}_{\gamma}=\bigcup_{\gamma_1+\gamma_2=\gamma \atop \langle\gamma_1,\gamma_2\rangle\neq 0} W'^{trop}_{\gamma_1,\gamma_2},
       \end{align*} where
       \begin{align*}
          W'^{trop}_{\gamma_1,\gamma_2}=\{u\in B| \mbox{Arg}Z_{\gamma_1}=\mbox{Arg}Z_{\gamma_2} \mbox{ and there exist tropical discs } \\ \mbox{ of relative classes $\gamma_1,\gamma_2$ ending on $u$ such that $\langle \gamma_1,\gamma_2\rangle\neq 0$.}\}.
       \end{align*}
 \end{definition} The locus $W'^{trop}_{\gamma_1,\gamma_2}$ is a locally closed subset of the zero locus of a harmonic function, which is locally union of finitely many smooth curves on $B$. 
\begin{lem} \label{1911}
  Given a tropical disc $(\phi,T,w)$ and $e\in C_1(T)$. If $\phi(e)$ is not a point, then $\phi(e)$ is part of
   the trajectory of a gradient flow
     \begin{align*}
              \frac{d}{dt}\phi(t)=\nabla |Z_{\gamma}(\phi(t))|^2,
          \end{align*} for some relative class $\gamma$, where $\nabla$ is respect to a K\"ahler metric on $B$. 
\end{lem}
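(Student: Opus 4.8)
The plan is to show that the edge $\phi(e)$, which by Definition \ref{322} is a segment of an integral affine line in the direction $v_e$, coincides with (part of) the affine line on $B_\vartheta$ cut out by the vanishing of $\mbox{Re}(e^{-i\vartheta}Z_{\gamma_e})$, where $\gamma_e$ is the relative class assigned by Definition \ref{303} to the component of $\mbox{Im}\,\phi$ lying beyond $e$, regarded as a flat section along $\phi(e)$; and then to recognise that line as a trajectory of the gradient flow of $|Z_{\gamma_e}|^2$.

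Two preliminaries. (i) Since $Z_{\gamma}$ is holomorphic on $B_0$, in a local holomorphic coordinate $z$ with metric $\rho\,|dz|^2$ one has, along a solution of $\frac{d}{dt}\phi=\nabla|Z_{\gamma}|^2$, the identity $\frac{d}{dt}\log Z_{\gamma}=\rho^{-1}|Z_{\gamma}'|^2>0$; hence the gradient trajectories are exactly the connected components of $\{\mbox{Arg}\,Z_{\gamma}=\mbox{const}\}=Z_{\gamma}^{-1}(\mbox{ray})$, and since a conformal change of metric only rescales $\nabla$ without changing its direction this description is independent of the K\"ahler metric chosen. (ii) For any flat local section $\gamma$ of $\bigcup_u H_2(X,L_u)$, the function $u\mapsto\mbox{Re}(e^{-i\vartheta}Z_{\gamma}(u))$ is affine on $B_\vartheta$: since $\mbox{Im}\,\Omega_\vartheta=-\mbox{Re}(e^{-i\vartheta}\Omega)$, parallel transporting a basis $\{e_i\}$ of $H_2(X,L_{u_0})$ to $u$ gives $\mbox{Re}(e^{-i\vartheta}Z_{e_i}(u))=\mbox{const}-f_i(u)$, and a general $\gamma$ is an integral combination of the $e_i$ together with a boundaryless class, whose period is locally constant. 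Combining (i) and (ii), $\{\mbox{Arg}\,Z_{\gamma}=\vartheta\pm\frac{\pi}{2}\}=\{\mbox{Re}(e^{-i\vartheta}Z_{\gamma})=0,\ \pm\mbox{Im}(e^{-i\vartheta}Z_{\gamma})>0\}$, so provided $\partial\gamma\neq0$---so that the affine function $\mbox{Re}(e^{-i\vartheta}Z_{\gamma})$ is nonconstant---these two trajectories together fill the affine line $\{\mbox{Re}(e^{-i\vartheta}Z_{\gamma})=0\}$.

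Next I would determine the direction of this line. By the first variation of periods, $d\big(\mbox{Re}(e^{-i\vartheta}Z_{\gamma})\big)\big|_u(\xi)=\int_{\partial\gamma}\iota_{\tilde\xi}\,\mbox{Re}(e^{-i\vartheta}\Omega)$ for any lift $\tilde\xi$ of $\xi\in T_uB$; evaluating this cohomological pairing in the semi-flat model $TB_\vartheta/\Lambda$ near $L_u$ and using the monodromy-equivariant SYZ identification $T_{\mathbb Z,u}B_\vartheta\cong H_1(L_u;\mathbb Z)$, it equals---up to sign---the intersection number on $L_u\cong T^2$ of the class of $\xi$ with $\partial\gamma$, and hence vanishes exactly when $\xi$ is proportional to $\partial\gamma$ (antisymmetry of the intersection form). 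Therefore, when $\partial\gamma\neq0$, the line $\{\mbox{Re}(e^{-i\vartheta}Z_{\gamma})=c\}$ runs in the direction of $\partial\gamma$, and it is a gradient-trajectory locus of $|Z_{\gamma}|^2$ precisely when $c=0$. Making this identification precise, with the correct signs, so that $\partial\gamma_e$ really lies along $v_e$ rather than some transverse line, is the delicate point; once it is in place the rest is bookkeeping.

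Finally I would feed the combinatorial data of Definition \ref{322} through an induction on $T$, from the leaves inward, proving for each non-contracted edge $e$ that (a) $\partial\gamma_e=w_e v_e$, in particular nonzero, and (b) $\mbox{Re}(e^{-i\vartheta}Z_{\gamma_e})$ vanishes identically along $\phi(e)$. Base case: $e$ abuts a leaf at a singularity $p\in\Delta$, $\phi(e)$ is the initial ray in the monodromy-invariant direction, and $\gamma_e$ is the associated weight-$w_e$ Lefschetz thimble; its boundary is $w_e$ times the vanishing cycle, which is monodromy-invariant and so corresponds under the SYZ identification to the monodromy-invariant direction $v_e$, giving (a), while $Z_{\gamma_e}(p)=0$ (the thimble collapses) together with the direction computation forces $\mbox{Re}(e^{-i\vartheta}Z_{\gamma_e})$ to be constant, hence $\equiv0$, along $\phi(e)$, giving (b). Inductive step: $e$ is the outgoing edge of an internal vertex $v$ with incoming edges $e_1,\dots,e_s$; then $\gamma_e=\sum_i\gamma_{e_i}$ parallel-transported to $\phi(v)$, so (a) follows from the inductive hypotheses and the balancing condition (\ref{1099}), while $\mbox{Re}(e^{-i\vartheta}Z_{\gamma_e})$ is again constant along $\phi(e)$ with value $\sum_i\mbox{Re}(e^{-i\vartheta}Z_{\gamma_{e_i}})(\phi(v))=\sum_i 0=0$, giving (b). (Valency-two vertices over singularities and edges abutting a subtree contracted to a singularity are treated the same way: the collapsing thimbles contribute $0$ to $\mbox{Re}(e^{-i\vartheta}Z)$ at the singularity and the remaining incident edges contribute $0$ by induction, which is exactly what the balancing conditions in Definition \ref{322} and the recursion in Definition \ref{303} guarantee.) With (a) and (b) established, $\phi(e)\subseteq\{\mbox{Re}(e^{-i\vartheta}Z_{\gamma_e})=0\}$, which by the preliminaries---off its unique endpoint in $\Delta$, if any---is the union of the two gradient trajectories $\{\mbox{Arg}\,Z_{\gamma_e}=\vartheta\pm\frac{\pi}{2}\}$ of $|Z_{\gamma_e}|^2$; according to the sign of $\mbox{Im}(e^{-i\vartheta}Z_{\gamma_e})$ along $\phi(e)$ it lies in one of them, which proves the lemma with $\gamma=\gamma_e$.
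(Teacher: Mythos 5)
Your proposal is correct and follows essentially the same route as the paper: the paper also characterizes the gradient trajectories of $|Z_{\gamma}|^2$ as the constant-phase loci of $Z_{\gamma}$ via the Cauchy--Riemann equations, and then observes that each non-contracted edge lies on such a locus for the relative class attached to it by Definition \ref{303}. The only difference is one of detail: what the paper asserts in one line by citing Definition \ref{303}, you verify explicitly by induction from the Lefschetz-thimble leaves using the balancing condition, which is a legitimate (and slightly stronger, since you pin the phase to $\vartheta\pm\tfrac{\pi}{2}$) unpacking of the same argument.
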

 \begin{proof}
  Since $\phi(e)$ is an affine segment. Locally, it is given by the equation $f_{\gamma_{\bar{e}}}=c$ for some relative class $\gamma_{\bar{e}}\in H_2(X,L)$ such that $\mbox{Arg}Z_{\gamma_{\bar{e}}}$ is constant along $\bar{e}$ by Definition \ref{303}. On the other hand, for a given relative class $\gamma\in H_2(X,L)$
    the trajectories of the equation 
       \begin{align*}
           \frac{d}{dt}\phi(t)=\nabla |Z_{\gamma}(\phi(t))|^2
       \end{align*} are also characterized by the same property. Indeed, we have 
         \begin{align*}
           \frac{d}{dt}\mbox{Arg}Z_{\gamma}(\phi(t))=(\nabla F_{\gamma})\mbox{Arg}Z_{\gamma}=J\nabla F_{\gamma}(\log{|Z_{\gamma}}|)=0,
         \end{align*} where we denote $F_{\gamma}=|Z_{\gamma}|^2$ locally. The second equality follows from the Cauchy-Riemann equation of $Z_{\gamma}$. The third equation holds because for any function $f$, its gradient $\nabla f$ is perpendicular to its level set. So $J\nabla f(f)=0$. 
          In particular, the tropical disc $(\phi,T,w)$ can vary smoothly with respect to $\vartheta$ if $\phi(C^{int}_0(T))\cap \Delta =\emptyset$. 
 \end{proof}
\begin{rmk}
 Since $Z_{\gamma}$ is a holomorphic function, the gradient flow lines of phase $\vartheta_1$ will be on the right of the gradient flow line of phase $\vartheta_2$ if $\vartheta_1<\vartheta_2$. 
\end{rmk}

For a fixed fibre $L_{u}, u\in B_0$ and $\gamma \in H_2(X,L_{u})$ being primitive, We will associate a generating function of the tropical disc of multiple covers of $\gamma$ given by 
     \begin{align} \label{78}
        \log f^{trop}_{\gamma}(u)= \sum_{d=1}^{\infty}  d\tilde{\Omega}^{trop}(d\gamma;u)\big(T^{Z_{\gamma}(u)}z^{\gamma}\big)^d \in \mathbb{C}[[z^{\gamma}]]\otimes \Lambda_+^{\mathcal{S}_{\gamma}(u)},
     \end{align}  where $\mathcal{S}_{\gamma}(u):=\mathbb{R}_{>0}Z_{\gamma}(u)$ is a ray. 
The function $f^{trop}_{\gamma}$ is known as the slab function in \cite{GS1}. In particular, we have 
   \begin{align*}
      &f^{trop}_{\gamma}(u) \equiv 1 \hspace{4mm}( \mbox{mod }\Lambda_+^{\mathcal{S}_{\gamma}(u)})\\
      &f^{trop}_{\gamma}(u) \equiv 1 \hspace{4mm}( \mbox{mod } (z^{\partial \gamma})).
   \end{align*} 

 Then given $u\in B_0$, $\lambda>0$ and a sector $\mathcal{S}$, we associate a symplectomorphism of the "quantum torus $(\mathbb{C}^*)^2$" as follows: first we let $\tilde{\theta}^{trop}_{\gamma,<\lambda}(u)$ be an automorphism of algebras through the inclusion $\Lambda^{\mathcal{S}_{\gamma}}/F^{\lambda}\Lambda^{\mathcal{S}_{\gamma}}\subseteq \Lambda^{\mathcal{S}}/F^{\lambda}\Lambda^{\mathcal{S}}$ and $\Lambda[[z^{ \gamma}]]\subseteq \Lambda[[H_2(X,L_u)]]$,
   \begin{align} \label{3002}
        \tilde{\theta}^{trop}_{\gamma,<\lambda}(u):(\Lambda^{\mathcal{S}}/F^{\lambda}\Lambda^{\mathcal{S}})[[H_2(X,L_u)]] &\rightarrow (\Lambda^{\mathcal{S}}/ F^{\lambda}\Lambda^{\mathcal{S}})[[H_2(X,L_u)]] \notag \\
           z^{\gamma'} & \longmapsto  z^{ \gamma'} (f^{trop}_{\gamma}(u))^{\langle \gamma', \gamma \rangle}
    \end{align} and let $\tilde{\theta}_{\gamma}(u):=\lim \tilde{\theta}^{trop}_{\gamma}(u)$. Then we define
   \begin{align} \label{3007}
     \Theta_{\mathcal{S},<\lambda}^{trop}(u):=\prod^{\curvearrowright}_{\gamma\in H_2(X,L_u) \mbox{ primitive}: \atop \mbox{Arg}Z_{\gamma}(u)\in \mathcal{S} } \tilde{\theta}^{trop}_{\gamma,<\lambda}(u).
  \end{align} Here the product $\overset{\curvearrowright}{\prod}$ is taken in the order of $\mbox{Arg}Z_{\gamma}(u)$ and by Lemma \ref{3006} that (\ref{3007}) is a finite product. The pairing $\langle \gamma',\gamma\rangle$ is the natural pairing of the boundaries in $H_1(L_u)$. Then the limit
     \begin{align*}
         \Theta^{trop}_{\mathcal{S}}(u):=\lim_{\lambda\rightarrow \infty} \Theta_{\mathcal{S},<\lambda}^{trop}(u)\in \mbox{Aut}(\lim_{\leftarrow}\Lambda^{\mathcal{S}}/(F^{\lambda}\Lambda^{\mathcal{S}})[[H_1(L_u)]])
     \end{align*} exists via Theorem \ref{3011} and the uniqueness of scattering diagram.

  \begin{definition}
  We will call a transformation of the form in (\ref{3002}) an elementary transformation and call $f^{trop}_{\gamma}$ the associated slab function.
  \end{definition}
  It is straight-forward to check that if $\langle \gamma_1,\gamma_2\rangle=0$, then $[\tilde{\theta}^{trop}_{\gamma_1},\tilde{\theta}^{trop}_{\gamma_2}]=0$.
 The symplectomorphism $\Theta(u)$ is well-defined in $\Lambda_+$-adic topology due to the above algebra calculation 
 and Lemma \ref{3006}.

Given a path $\phi:[0,1] \rightarrow B_0$ such that $\phi(0)=u_0$ and $\phi(1)=u_1$, the parallel transport $T_{u_0,u_1}:H_2(X,L_{u_0})\rightarrow H_2(X,L_{u_1})$ naturally induces a well-defined transformation (which we will still denote by $T_{u_0,u_1}$ and also the induced transformation on the quotients) 
   \begin{align*}
      \Lambda[[H_2&(X,L_{u_0})]] \rightarrow \Lambda[[H_2(X,L_{u_1})]] \\
          &z^{ \gamma_{u_0}} \mapsto T^{Z_{\gamma}(u_1)-Z_{\gamma}(u_0)}z^{\gamma_{u_1}}.
   \end{align*}
 We will also use the same notation for the induced action on $\Theta^{trop}_{\mathcal{S}}(u)$. 
 The tropical disc counting invariants $\tilde{\Omega}^{trop}(\gamma;u)$ satisfy the Kontsevich-Soibelman wall-crossing formula in the following sense:

\begin{thm}\label{3008} Assume that $X$ satisfies the generic condition (*) (see Definition \ref{9002}). 
 Given $u\in B_0$, $\lambda>0$ and a sector $\mathcal{S}$. Assume that there exists no $\gamma\in H_2(X,L_u)$ such that $\tilde{\Omega}'(\gamma;u)\neq 0$, $|Z_{\gamma}(u)|<\lambda$ and $\mbox{Arg}Z_{\gamma}(u)\in \partial \mathcal{S}$. Then there exists a neighborhood $U_{\lambda}\ni u$ and $\lambda'=\lambda'(\lambda)>0$ such that 
     \begin{align*}
        \Theta^{trop}_{\mathcal{S}}(u_2)=T_{u_1,u_2}\Theta^{trop}_{\mathcal{S}}(u_1) \hspace{4mm}(\mbox{mod }F^{\lambda'}\Lambda^{\mathcal{S}}),
     \end{align*} for generic $u_1,u_2\in U$ . Moreover, we have $\lim_{\lambda\rightarrow \infty}\lambda'(\lambda)=\infty$.  
     In other words, the symplectomorphism $\Theta^{trop}_{\mathcal{S}}(u)$ is "invariant" under the identification of parallel transport. 
\end{thm}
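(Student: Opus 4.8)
The plan is to reinterpret the statement as the \emph{consistency} of a scattering diagram on $B_0$ whose walls are the trajectories swept out by tropical discs, and then to establish consistency order by order in the energy filtration by appealing to the existence–and–uniqueness of scattering diagrams together with the Gross--Pandharipande--Siebert tropical vertex formula. First I would fix $u,\lambda,\mathcal{S}$ as in the statement. By Lemma \ref{3006} only finitely many primitive $\gamma$ with $|Z_\gamma(u)|<\lambda$ and $\mbox{Arg}\,Z_\gamma(u)\in\mathcal{S}$ contribute to $\Theta^{trop}_{\mathcal{S},<\lambda}$, and by Lemma \ref{1911} every non-contracted edge of a contributing tropical disc lies on a trajectory of the gradient flow of $|Z_{\gamma'}|^2$ for the relevant class $\gamma'$. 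Shrinking to a neighborhood $U_\lambda\ni u$ small enough that the combinatorial type of each contributing disc is locally constant away from the loci $W'^{trop}_{\gamma_1,\gamma_2}$ — which, by the remark after Definition \ref{1011} and the genericity hypothesis (*) of Definition \ref{9002}, are finitely many smooth curves in general position in $U_\lambda$ — I define a scattering diagram $\mathfrak{D}^{trop}$ on $U_\lambda$: its walls are these (locally straightened) gradient trajectories, each carrying the slab function $f^{trop}_\gamma(u')$ of (\ref{78}), and its chambers are the components of the complement of the walls $W'^{trop}_{\gamma_1,\gamma_2}$.

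With this setup, $\Theta^{trop}_{\mathcal{S}}(u')$ is by definition the $\mbox{Arg}\,Z$-ordered product of the elementary transformations (\ref{3002}) attached to the walls of $\mathfrak{D}^{trop}$ met along a small arc at $u'$ sweeping the angular range $\mathcal{S}$, so the identity $\Theta^{trop}_{\mathcal{S}}(u_2)=T_{u_1,u_2}\Theta^{trop}_{\mathcal{S}}(u_1)\ (\mbox{mod }F^{\lambda'}\Lambda^{\mathcal{S}})$ is exactly the assertion that the path-ordered product of $\mathfrak{D}^{trop}$ along any path from $u_1$ to $u_2$ inside $U_\lambda$ (staying within the $\mathcal{S}$-sector) is trivial modulo $F^{\lambda'}\Lambda^{\mathcal{S}}$; that is, $\mathfrak{D}^{trop}$ is consistent to that order. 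I would prove this by induction on the energy, using the gapped/Novikov structure and Lemma \ref{2999} to make ``order by order'' precise. At the lowest order the only walls of $\mathfrak{D}^{trop}$ are the initial rays $l_{\vartheta}^{\pm}$ from the $24$ singularities of $\Delta$, decorated with the initial slab functions coming from the Lefschetz thimbles of Definition \ref{303}, and these constitute a legitimate initial datum. Assuming $\mathfrak{D}^{trop}$ is consistent modulo $F^{\lambda'_0}\Lambda^{\mathcal{S}}$, the uniqueness part of the scattering-diagram theorem (Theorem \ref{3011}, in the form of \cite{GPS}) gives a unique way to add walls and wall functions so as to restore consistency modulo the next step of the filtration, and it suffices to check that the functions $f^{trop}_\gamma$ built from $\tilde{\Omega}^{trop}$ realize exactly this completion.

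Concretely, when walls carrying primitive classes $\gamma_i$ (and their multiples, with weights $w_{ij}$) meet at a point $q$, the discrepancy of the path-ordered product across $q$ is, by the GPS tropical vertex computation, the generating series assembled from the tropical curve counts $N^{trop}_{\{\partial\gamma_i\}}(\mathbf{w})$; the outgoing walls this forces correspond precisely to the admissible tropical discs of Definition \ref{1012} whose interior vertex closest to the stop (or whose outgoing contracted subtree) sits at $q$, and the weight $\mbox{Mult}(\phi)$ of (\ref{4029}) — with its trivalent factors $\mbox{Mult}_v(\phi)$, its multiple-cover factors $\frac{(-1)^{w_v-1}}{w_v^2}$, and its automorphism factors $|\mbox{Aut}(\mathbf{w}_{T_e})|^{-1}$ — should match term by term the coefficient with which $\phi$ enters the GPS expansion. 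Summing over all such $\phi$ reproduces $\log f^{trop}_\gamma(q)$, closing the induction; independence of the result from the generic position of the initial rays is the invariance statement of \cite{GPS} quoted in the remark, and $\lim_{\lambda\to\infty}\lambda'(\lambda)=\infty$ follows from Lemma \ref{2999}(2).

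The main obstacle is precisely this last, combinatorial, identification: one must construct a weight-preserving bijection between equivalence classes of admissible tropical discs with prescribed stop-closest vertex or contracted-subtree at $q$ and the monomials in the GPS tropical vertex expansion. The delicate points are: (a) matching the $\mbox{Arg}\,Z$-ordering used to define admissibility in Definition \ref{1012} with the left-to-right ordering of incoming walls in the GPS normal form, so that the factors $(b_l)!$ and $(a^i_n)!$ in $|\mbox{Aut}(\mathbf{w})|$ appear with the correct multiplicity; (b) accounting for contracted edges and contracted subtrees, which have no analogue in the toric GPS picture and whose overcounting is exactly what the automorphism factors cancel — this is why the definition of admissibility looks ``technical and artificial''; and (c) handling tropical discs whose interior vertices land on $\Delta$, where the monodromy $\bigl(\begin{smallmatrix}1&1\\0&1\end{smallmatrix}\bigr)$ acts, by passing to the local branched cover and using the ``$+$-side'' conventions of Definitions \ref{322} and \ref{303}. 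Once this dictionary is in place, the genericity condition (*) guarantees that all scattering occurs at isolated, transverse intersection points so that the local analysis above is legitimate, and the proof is complete.
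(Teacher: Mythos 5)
Your strategy is sound and, at its mathematical core, the same as the paper's: both arguments ultimately rest on the local consistency of the ordered product of elementary transformations at a scattering point, which comes from the Gross--Pandharipande--Siebert tropical vertex, together with the filtration control of Lemma \ref{2999} to pass to $\lambda'(\lambda)\to\infty$. The organizational difference is that the paper does \emph{not} re-derive the consistency: it has already packaged the entire tropical-disc-to-GPS dictionary (the weight-preserving matching of $\mbox{Mult}(\phi)$, the automorphism factors $|\mbox{Aut}(\mathbf{w})|^{-1}$, and the counts $N^{trop}_{\{\partial\gamma_i\}}(\mathbf{w})$) into Theorem \ref{3011}, whose statement is exactly ``the product of the $\tilde{\theta}_{\gamma_i}$ around a small loop is the identity.'' The proof of Theorem \ref{3008} then splits into two cases: if $u$ lies in the interior of a chamber, it uses Theorem \ref{3037} to get $T_{u_1,u_2}f^{trop}_\gamma(u_1)=f^{trop}_\gamma(u_2)$ and concludes by Lemma \ref{2999}; if $u$ lies on walls $W'^{trop}_{\gamma_1}\cap\cdots\cap W'^{trop}_{\gamma_k}$, it reduces to a single wall crossing at a point $p$ and applies Theorem \ref{3011} to equate the two half-loop products. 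By contrast, you spend most of your effort re-proving the content of Theorem \ref{3011} via an energy induction on a scattering diagram, and you explicitly leave the hardest part of that re-derivation --- the bijection between admissible tropical discs and GPS monomials --- as an acknowledged ``main obstacle.'' Since that bijection is precisely what the paper's proof of Theorem \ref{3011} supplies (via the identity $\tilde{\Omega}^{trop}(d\gamma;u_+)-\tilde{\Omega}^{trop}(d\gamma;u_-)=\sum_{\mathbf{w}}N^{trop}_{\{\partial\gamma_i\}}(\mathbf{w})|\mbox{Aut}(\mathbf{w})|^{-1}\prod\tilde{\Omega}^{trop}(w_{ij}\gamma_i)$ and Theorem 2.8 of \cite{GPS}), you should simply cite it rather than reconstruct it.

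There is one genuine under-specification in your setup. You declare that each wall of $\mathfrak{D}^{trop}$ ``carries the slab function $f^{trop}_\gamma(u')$,'' but a scattering-diagram wall must carry a \emph{single} function, constant along the wall up to parallel transport; that $f^{trop}_\gamma$ is in fact locally constant along $l_\gamma$ away from the higher-codimension intersections is not automatic from consistency --- it is the content of Theorem \ref{3037} (local constancy of $\tilde{\Omega}^{trop}(\gamma;\cdot)$ off $W'^{trop}_\gamma$), which in turn requires the separate analysis of discs passing through singularities of the affine structure. Without invoking this, your identification of $\Theta^{trop}_{\mathcal{S}}(u')$ with a path-ordered product of the diagram, and hence the chamber-interior case of the theorem, does not close. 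Everything else --- the role of condition (*) in keeping the scattering points isolated and transverse, and the use of Lemma \ref{2999} for the asymptotics of $\lambda'$ --- matches the paper.
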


  First we have the following corollary from \cite{GPS}. 
\begin{thm}\cite{GPS}\cite{L4} \label{3011} 
  Assume $X$ satisfies the generic condition (*).
Fix a generic $\vartheta\in S^1$. 
   Let $u\in B_0$ and $\lambda>0$. Let $\{l_{\gamma_i}\}$ the set of  affine line (and rays) with respect to the affine manifold $B_{\vartheta}$ labeled by $\gamma_i$ passing through $u$ such that $|Z_{\gamma_i	}(u)|<\lambda$. Let $\phi$ be an oriented circle around $u$ small enough such that it intersects each of $l_{\gamma_i}$ exactly twice (and once) and transversally at $p_i$. Then for any $\lambda$,
      \begin{align*}
         \prod_i \bigg(T_{p_i,u}\tilde{\theta}_{\gamma_i}(p_i)\bigg)^{\epsilon_i}=\mbox{Id} \hspace{4mm} ( \mbox{ mod }F^{\lambda}\Lambda^{\mathbb{R}_{>0}e^{i\vartheta}}),
      \end{align*} where the order of the product is with respect to the order of the intersections of $l_{\gamma_i}$ with $\phi$. The exponent $\epsilon_i$ is the sign of the pairing between the tangent of the circle and the tangent of the affine line $l_{\gamma_i}$ (with the direction $|Z_{\gamma_i}|$ is decreasing).

\end{thm}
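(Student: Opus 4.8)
I would read the left-hand side as the path-ordered monodromy, around $u$, of the scattering diagram whose walls are the affine lines and rays $l_{\gamma_i}$ carrying the wall functions $f^{trop}_{\gamma_i}$, and then deduce the claimed identity from the consistency of that diagram. First I would localize. Working modulo $F^{\lambda}\Lambda^{\mathbb{R}_{>0}e^{i\vartheta}}$, Lemma \ref{3006} shows only finitely many relative classes $\gamma_i$ are relevant, so inside a small enough disc $U$ around $u$ one sees only the finitely many walls $l_{\gamma_i}$ through $u$, and (shrinking $U$, since $u\in B_0$) no singularity of $\Delta$. By Lemma \ref{1911} each $l_{\gamma_i}$ is a genuine affine segment whose slope is fixed by $\mbox{Arg}Z_{\gamma_i}$, and modulo $F^\lambda$ we have $f^{trop}_{\gamma}\equiv 1$ off the ray $\mathcal{S}_\gamma(u)$ and $\equiv 1$ modulo $(z^{\partial\gamma})$; hence $\{(l_{\gamma_i},f^{trop}_{\gamma_i})\}$ is literally a scattering diagram on $U\subseteq\mathbb{R}^2$ in the sense of \cite{GPS}\cite{KS1}, and the assertion is exactly that this diagram has trivial monodromy around $u$ mod $F^\lambda$, the sign $\epsilon_i$ recording the orientation of $\phi$ against the decreasing-$|Z_{\gamma_i}|$ direction.

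Next I would invoke the structure theory of scattering diagrams. All walls meeting $U$ are ``produced'' walls propagating from the finitely many initial rays $l^{\pm}_{\gamma}$ attached to the singularities of $\Delta$, whose wall functions are the elementary transformations associated to the Lefschetz thimble classes of Definition \ref{303}; this is the point where the open Gromov--Witten/tropical input of \cite{L4} supplies the seeds that, in \cite{GPS}, are put in by hand. By the uniqueness half of the scattering algorithm, the consistent completion of this initial data is unique, and \cite{GPS} identifies the wall function of each produced wall as a generating series over weight vectors $\mathbf{w}$ of the tropical curve counts $N^{trop}_{\{\partial\gamma_i\}}(\mathbf{w})$ of tropical curves in standard position with prescribed unbounded edges, times the standard multiple-cover and automorphism normalization. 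The core step is then to check that this GPS wall function coincides with $f^{trop}_{\gamma}$ of (\ref{78}): the admissibility conditions of Definition \ref{1012} are precisely the requirement that a tropical disc with a contracted edge factor through a standard-position tropical curve $\tilde\phi$; the trivalent contributions $\mbox{Mult}_v(\phi)=w_1w_2|v_{e_1}\wedge v_{e_2}|$ match the GPS vertex weighting; and the factors $(-1)^{w_v-1}/w_v^2$ and $|\mbox{Aut}(\mathbf{w}_{T_e})|^{-1}$ in (\ref{4029}) reproduce GPS's multiple-cover and automorphism factors, so that the log-generating series in (\ref{78}) is exactly the logarithm of the GPS wall function.

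Granting this matching, the family $\{(l_{\gamma},f^{trop}_{\gamma})\}_{\gamma}$ \emph{is} the unique consistent completion of the initial diagram, hence is consistent; therefore its path-ordered product around any point, and in particular around $u$, is the identity modulo $F^\lambda$. Reading off the walls through $u$ and their functions gives precisely $\prod_i\big(T_{p_i,u}\tilde\theta_{\gamma_i}(p_i)\big)^{\epsilon_i}=\mbox{Id}$ modulo $F^{\lambda}\Lambda^{\mathbb{R}_{>0}e^{i\vartheta}}$, and since $\lambda$ is arbitrary this is the theorem. (The remark following the statement, that $N^{trop}_{\{\partial\gamma_i\}}(\mathbf{w})$ is independent of the generic arrangement of the lines, is exactly what makes the matched wall functions well-defined independently of how one perturbs the $l_{\gamma_i}$ to bring them into standard position.)

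I expect the main obstacle to be the combinatorial identification of the previous paragraph: reconciling the author's tropical \emph{disc} multiplicity, with its multiple-cover factors, its automorphism factors, and its treatment of contracted edges, with the GPS tropical \emph{curve} count, together with the bookkeeping at $\Delta$ — namely showing that a tropical disc crossing a singularity contributes exactly the elementary transformation of its Lefschetz thimble class, and that the monodromy-invariance and balancing conditions imposed at valence-one and valence-two vertices in Definition \ref{322} are compatible with the change of lattice a wall undergoes when it crosses a singular ray. The remaining ingredients — the reduction to a finite configuration, the genuine affineness of the walls, and the passage $\lambda\to\infty$ — are routine given Lemmas \ref{3006} and \ref{1911}.
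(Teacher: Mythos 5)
Your proposal is correct and follows essentially the same route as the paper: the paper likewise reduces the identity to Theorem 2.8 of \cite{GPS} by showing, directly from the admissibility conditions of Definition \ref{1012} and the gluing description at the wall, that the jump $\tilde{\Omega}^{trop}(d\gamma;u_+)-\tilde{\Omega}^{trop}(d\gamma;u_-)$ equals $\sum_{\mathbf{w}}N^{trop}_{\{\partial\gamma_i\}}(\mathbf{w})\,|\mbox{Aut}(\mathbf{w})|^{-1}\prod_{i,j}\tilde{\Omega}^{trop}(w_{ij}\gamma_i)$, which is exactly the combinatorial matching you single out as the core step. The only packaging difference is that the paper verifies the GPS commutator identity locally at each wall-crossing rather than invoking the uniqueness of consistent completions globally.
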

\begin{proof} The generic choice of $\vartheta$ implies that all the affine lines/rays go from one side of the wall $W_{\gamma}^{'trop}\ni u$.
 Let $u_+,u_-$ be on different side of $W'^{trop}_{\gamma}$ with the following properties: 
 \begin{enumerate}
    \item $\mbox{Arg}Z_{\gamma}(u_+)=\mbox{Arg}Z_{\gamma}(u_-)$ and $|Z_{\gamma}(u_+)|>|Z_{\gamma}(u_-)|$.  
    \item there exists no $u'$ in the affine line between $u_+,u_-$ (in $B_{\vartheta},\vartheta=\mbox{Arg}Z_{\gamma}(u_+)$) with $\gamma'\in H_2(X,L_u)$ such that $u\in W'^{trop}_{\gamma'}$ and $|Z_{\gamma'}(u)|<\lambda$. 

 \end{enumerate}
 For each $\lambda>0$, the above conditions can be achieved by choosing $u_+,u_-$ close enough to $W'^{trop}_{\gamma}$. Assume that $u$ is the intersection point of the affine line segment between $u_+,u_-$ and $W'^{trop}_{\gamma}$. Let $\gamma_i\in H_2(X,L_u)$, $i=1,\cdots, n$, be the primitive relative classes which can be realized as tropical discs of the same phase and $|Z_{\gamma_i}(u)|<\lambda$. Then the additional tropical discs with stop at $u_+$ of relative class $d\gamma$ are given by those tropical discs with a subtree $T_e$ maps to $u$ and the complement of $T_e$ and edge adjacent to $x$ are tropical sub-discs $\phi_j=(\phi|_{T_j},T_j,w|_{T_j})$. Assume that $[\phi_j]/\gamma_i$ (if not divisible then we set it to be zero) takes non-zero values $ w_{i1}\leq \cdots \leq w_{il_i}$ and $\sum_i |\bold{w}_i|\gamma_i=d\gamma$. For fixed tropical sub-disc $\phi_i$, one can glue to a tropical curve $(\tilde{\phi},\tilde{T},\tilde{w})$ (see Definition \ref{1012}) to a tropical disc with stop at $u_+$ and relative class $d\gamma$. Each tropical disc has its weight given by
    \begin{align} \label{2017}
        \frac{\mbox{Mult}(\tilde{\phi})}{|Aut(\mathbf{w})|}\prod_i \mbox{Mult}(\phi_i).
    \end{align}
  Recall that $N^{trop}_{\{\partial \gamma_i\}}(\mathbf{w})=\sum_{\tilde{\phi}}\mbox{Mult}(\tilde{\phi})$ is the counting of tropical rational curves defined in \cite{GPS}. Then the difference of the tropical discs counting invariants is given by summing (\ref{2017}) over all possible admissible $\tilde{T}$, $\phi_i$, $w_{ij}$. By induction on the number of vertices, we have  
    \begin{align*}
     \tilde{\Omega}^{trop}(d\gamma;u_+)-\tilde{\Omega}^{trop}(d\gamma;u_- )= \sum_{\mathbf{w}:\sum|\mathbf{w_i}|\gamma_i=d\gamma}\frac{N^{trop}_{\{\partial\gamma_i\}}(\mathbf{w})}{|Aut(\mathbf{w})|} \bigg(\prod_{1 \leq i \leq n, 1 \leq j \leq l_i} \tilde{\Omega}^{trop}(w_{ij}\gamma_i)
               \bigg),
    \end{align*}  and the theorem follows from Theorem 2.8 \cite{GPS}. 
\end{proof}

\begin{thm} \cite{L4} \label{3037}
  Assume that $u\notin W'^{trop}_{\gamma}$, then there exist a neighborhood $\mathcal{U}\ni u$ such that tropical discs counting is locally a constant, namely
   \begin{align*}
  \tilde{\Omega}^{trop}(\gamma,u)=\tilde{\Omega}^{trop}(\gamma;u'),
  \end{align*}
  for every $u'\in \mathcal{U}$. 
\end{thm}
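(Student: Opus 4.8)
The plan is to deform each admissible tropical disc of class $\gamma$ continuously with its stop and show that, away from $W'^{trop}_\gamma$, neither the number of such discs nor their multiplicities change. I would argue by induction on $|Z_\gamma(u)|$; by Lemma \ref{3006} the central charges of discs with nonzero invariant form a discrete set, so the induction is well-founded, and proper sub-discs have strictly smaller central charge. Fix $u\notin W'^{trop}_\gamma$ and put $\lambda=|Z_\gamma(u)|+1$. By Lemma \ref{3006} there are only finitely many admissible tropical discs with stop $u$ and class $\gamma$, and only finitely many $\gamma'$ with $\tilde\Omega^{trop}(\gamma';u)\neq 0$ and $|Z_{\gamma'}(u)|<\lambda$; hence $W'^{trop}_\gamma$ and the walls $W'^{trop}_{\gamma'}$ attached to the proper sub-classes occurring in those discs are, near $u$, finite unions of smooth arcs. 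I would then choose a connected neighborhood $\mathcal{U}\ni u$ so small that $\overline{\mathcal{U}}$ avoids $\overline{W'^{trop}_\gamma}$, that $|Z_{\gamma'}(u')|<\lambda$ for all $u'\in\mathcal{U}$ and all relevant $\gamma'$, and that the interior vertices of the finitely many discs stay off the relevant sub-walls as $u'$ ranges over $\mathcal{U}$.

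The core step is to produce a multiplicity-preserving bijection between the admissible discs of class $\gamma$ with stop $u$ and those with stop $u'\in\mathcal{U}$. Joining $u$ to $u'$ by a short path makes the affine structure vary through the family $B_{\mbox{Arg}Z_\gamma(\cdot)-\pi/2}$. Given $(\phi,T,w)$ with stop $u$, Lemma \ref{1911} says every non-contracted edge of class $\gamma_{\bar e}$ is a trajectory of $\nabla|Z_{\gamma_{\bar e}}|^2$, and these trajectories move smoothly — and, by the remark after Lemma \ref{1911}, monotonically — with $\vartheta$, while the thimble edges issuing from singularities keep their fixed monodromy-invariant directions and the contracted subtrees stay pinned at their singularities (their classes being locally constant under parallel transport). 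Since every tropical disc is rigid, this produces the unique continuous family with the given combinatorial type $(T,w)$, and along it $\mbox{Mult}(\phi)$ in (\ref{4029}) is constant: $\mbox{Mult}_v=w_1w_2|v_{e_1}\wedge v_{e_2}|=|\langle\gamma_1,\gamma_2\rangle|$ is an integer hence locally constant, the factors $(-1)^{w_v-1}/w_v^2$ depend only on $w$, and each $|\mbox{Aut}(\mathbf{w}_{T_e})|^{-1}$ depends only on $w$ and on which of the $Z_{[\phi_i]}(\phi(e))$ are collinear, which is unchanged since $\phi(e)$ does not cross the relevant sub-walls. Running the same argument with $u,u'$ interchanged shows the correspondence is a bijection.

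It remains to check that no disc is created or destroyed as $u$ varies in $\mathcal{U}$. A deformation can degenerate only when (i) a trivalent vertex reaches $\Delta$, (ii) a non-contracted edge collapses, or (iii) an interior contracted point crosses a sub-wall. Case (i) is excluded for $\mathcal{U}$ small, since trivalent vertices stay off $\Delta$ by Definition \ref{322} and there are finitely many discs. In case (ii) I would use that, along $W'^{trop}_{\gamma_1,\gamma_2}$, the argument $\mbox{Arg}Z_{\gamma_1+\gamma_2}$ is constant, so this wall is itself a trajectory of $\nabla|Z_{\gamma_1+\gamma_2}|^2$; thus the collapse of the edge adjacent to the root lands the stop on $\overline{W'^{trop}_{\gamma_1,\gamma_2}}$ with $\gamma_1+\gamma_2=\gamma$, hence on $\overline{W'^{trop}_\gamma}$ unless $\langle\gamma_1,\gamma_2\rangle=0$, in which case $\mbox{Mult}_v=|\langle\gamma_1,\gamma_2\rangle|=0$ and the disc contributes nothing; the collapse of a deeper edge is pushed onto an interior sub-wall and absorbed by (iii). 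In case (iii), when $p=\phi(e)$ crosses $W'^{trop}_{\gamma'_1,\gamma'_2}$, the sub-disc invariants $\tilde\Omega^{trop}(\gamma'_i;p)$ are unchanged by the inductive hypothesis, and the reorganization of the admissible discs of class $\gamma$ across $p$ is precisely the telescoping produced by the standard-position gluing of Definition \ref{1012} together with the consistency of the scattering diagram behind Theorem \ref{3011}, so $\sum_\phi\mbox{Mult}(\phi)$ is preserved. I expect the last point — that an interior recombination is count-neutral — to be the main obstacle; the rest is smooth deformation and bookkeeping.
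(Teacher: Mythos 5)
Your first half — smooth deformation of the non‑contracted edges via Lemma \ref{1911}, constancy of the local multiplicities, and the finiteness input from Lemma \ref{3006} — reproduces the paper's ``easy'' case, namely when every admissible disc of class $\gamma$ satisfies $\phi(C_0^{int}(T))\cap\Delta=\emptyset$. But your degeneration analysis (i)--(iii) omits the case on which the paper spends essentially its entire proof: a tropical disc whose image \emph{passes through} a singular point $p\in\Delta$ of the affine structure. Definition \ref{322} explicitly allows uni- and bivalent interior vertices mapped to $\Delta$, and Lemma \ref{1911} only guarantees smooth deformation when the interior vertices avoid $\Delta$. As $u$ moves in $\mathcal{U}$ the relevant phase $\vartheta=\mathrm{Arg}\,Z_\gamma(u)-\pi/2$ rotates, and a disc that at phase $\vartheta$ runs straight through $p$ must, at phase $\vartheta\pm\epsilon$, pass on one side or the other of $p$. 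The relative class transported past $p$ on the two sides differs by the monodromy $\alpha$, and on each side the disc can additionally scatter off the initial rays $l_{\pm\gamma_e}$ emanating from $p$; so the combinatorial types and relative classes of the contributing discs change discontinuously. Your remark that ``the thimble edges issuing from singularities keep their fixed monodromy-invariant directions and the contracted subtrees stay pinned'' does not address this, and it is not one of your cases (i)--(iii): it is neither a trivalent vertex hitting $\Delta$, nor an edge collapse, nor a contracted subtree crossing a sub-wall at a smooth point.

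The paper closes this gap by working locally around $p$: it decomposes a small loop into four arcs separated by $l_{\pm\gamma_e}$, uses the consistency of the scattering diagram around the singularity (Theorem \ref{3011}) on both sides, and proves the identity $\alpha^{-1}\circ(\theta_{-\gamma_e}(u'_+))^{-1}=T^{-Z_{\gamma_e}(u'_+)\langle\gamma_e,\gamma\rangle}\theta_{\gamma_e}(u_-)$ to conjugate one consistency relation into the other. This yields the matching $\tilde{\Omega}(\gamma;u_+)_1=\tilde{\Omega}(\gamma;u_-)_2$, $\tilde{\Omega}(\gamma;u_+)_2=\tilde{\Omega}(\gamma;u_-)_1$, $\tilde{\Omega}(\gamma;u_+)_3=\tilde{\Omega}(\gamma;u_-)_3$ between the three types of contributions (discs through $p$ meeting both local regions, discs missing $p$ but meeting both regions, and discs staying in one region), which is what actually proves local constancy across the singular configuration. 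You would need to supply an argument of this kind — your appeal to Theorem \ref{3011} in case (iii) is in the right spirit but is applied at a smooth point of $B$, not at a singularity where the monodromy twists the classes. The secondary steps of your write-up (constancy of $\mathrm{Mult}_v$, the $|\mathrm{Aut}(\mathbf{w}_{T_e})|$ factors, the edge-collapse discussion) are fine as bookkeeping.
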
 
\begin{proof}
   First we assume that for all admissible tropical discs $(\phi,T,w)$ stop at $u$ with $[\phi]=\gamma$ satisfy that $\phi(C^{int}_0(T))\subseteq B_0$. Then the theorem follows from Lemma \ref{1911} and the fact that the solution of the first order ordinary differential equations has smooth dependence of the initial values. It worth noticing that tropical discs with vertices of valency larger than $3$ might not deform smoothly respect to $\vartheta$. 
      
   Assume that there exists a tropical disc $(\phi,T,w)$ with respect to $\vartheta$, $[\phi]=\gamma$, and pass through a singularity $p$.   Fix $\lambda>0$, 
  we may assume that the affine line in phase $\vartheta$ from $p$ to $u$ does not intersect any other $W'_{\gamma'}$ with $|Z_{\gamma}|<\lambda$. Thus, $\tilde{\Omega}(\gamma;u)=\tilde{\Omega}(\gamma;u')$ for $u'$ on the affine segment. Choose $u_+,u_-$ near $u$ such that $\mbox{Arg}Z_{\gamma}(u_+)>\vartheta$ and $\mbox{Arg}Z_{\gamma}(u_-)<\vartheta$. Let $\gamma_e$ be the relative class of Lefschetz thimble associated to $p$. Then locally $l_{\pm\gamma_e}$ divide a neighborhood of $p$ in to two regions.
  
  Let $l_{\gamma_i}$ be the affine lines of phase $\vartheta$, labeled by $\gamma_i$, passing through $p$ and $|Z_{\gamma_i}(p)|<|Z_{\gamma}(p)|$. Choose $0<\epsilon \ll 1$, and denotes $\{l^{\pm}_i\}$ be the sets of affine lines with respect to $\vartheta\pm \epsilon$ corresponding to the deformation of $\{l_{\gamma}\}$ and those affine rays with respect to $\vartheta\pm \epsilon$ labeled with relative classes that admits tropical discs containing some $l_{\gamma_i}^{\pm}$. 
  
   Choose $u'_{\pm}$ on $l_{\pm\gamma_e}$ close to $p$ such that $-Z_{\gamma_e}(u'_+)=Z_{-\gamma_e}(u'_-)$. Choose counter-clockwise loops $\phi_{\pm}$ around $u'_{\pm}$ such that they intersect all the affine lines (and rays) in $\{l_{\gamma_i}^{\pm}\}$ transversally and exactly twice (and once respectively). We decompose $\phi_{\pm}$ into four arcs $\phi^{\pm}_i$, $i=1,2,3,4$ such that $\phi_1^{\pm},\phi_3^{\pm}$ only intersect $l_{\pm\gamma_e}$, $\phi^{\pm}_2$ is contained in the region with $u$ and $\phi^{\pm}_4$ is contained in the region without $u$ (See Figure \ref{fig:99}). Let $F_{\phi^{\pm}_i}(u'_{\pm})$ to be the composition of the transformation (parallel transport to $u'_{\pm}$) attached to $l^{\pm}_{\gamma_i}$ such that $\phi^{\pm}_i$ intersect and the composition order is with respect to the order of intersection of $l^{\pm}_{\gamma_i}$ with $\phi^{\pm}_i$. Then by Theorem \ref{3011}, we have 
        \begin{align*}
          F_{\phi^+_4}(u'_+)\circ F_{\phi^+_3}(u'_+) \circ F_{\phi^+_2}(u'_+)\circ F_{\phi^+_1}(u'_+)  &= \mbox{Id}  \\
          F_{\phi^-_4}(u'_-)\circ F_{\phi^-_3}(u'_-) \circ F_{\phi^-_2}(u'_-)\circ F_{\phi^-_1}(u'_-)  &= \mbox{Id}  
        \end{align*}
     \begin{figure}
                         \begin{center}
                         \includegraphics[height=3in,width=6in]{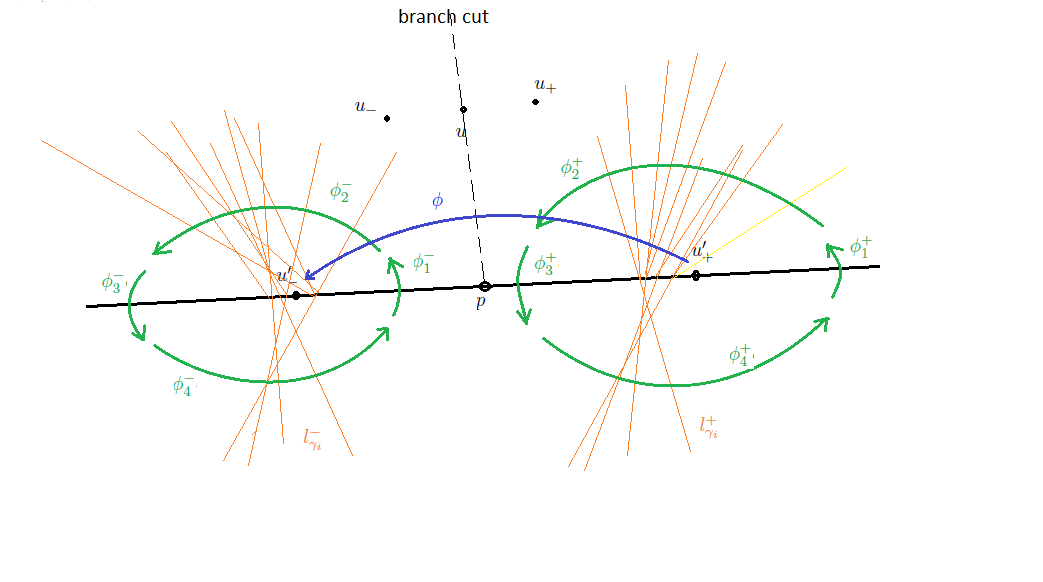}
                         \caption{When tropical discs vary across a singularity.}
                          \label{fig:99}
                         \end{center}
                         \end{figure}
 Let $\alpha$ denotes the action of (counter-clockwise) monodromy around $p$, 
   \begin{align*}
       \alpha(z^{\gamma'})=z^{\gamma'-\langle\gamma_e,\gamma'\rangle\gamma_e}.
   \end{align*}  
 For $u'$ near $p$, we also define the transformation 
    \begin{align*}
       \theta_{\pm\gamma_e}(u')(z^{\gamma'})= z^{\gamma'} (1+z^{\pm \gamma_e}T^{\pm Z_{\gamma_e}(u')})^{\pm 1}.
    \end{align*}  
   
Notice that for any $\gamma'$, we have
     \begin{align*}
         \alpha^{-1}\circ 
         (\theta_{-\gamma_e}(u'_+))^{-1}(z^{\gamma'})&= \alpha^{-1}\big(z^{\gamma'}(1+T^{-Z_{\gamma_e}(u'_+)}z^{-\gamma_e})^{\langle \gamma_e,\gamma'\rangle}\big) \\
         &=z^{\gamma'+\langle \gamma_e,\gamma\rangle \gamma_e} (1+T^{-Z_{\gamma_e}(u'_+)}z^{-\gamma_e})^{\langle\gamma_e,\gamma'\rangle}\\
         &=z^{\gamma'}(1+T^{Z_{\gamma_e}(u'_+)}z^{ \gamma_e})^{\langle \gamma_e,\gamma\rangle}T^{-Z_{\gamma_e}(u'_+)\langle \gamma_e,\gamma\rangle}.
      \end{align*}  
Therefore, we have 
  \begin{align}\label{4999}
     \alpha^{-1}\circ 
     (\theta_{-\gamma_e}(u'_+))^{-1}=T^{-Z_{\gamma_e}(u'_+)\langle\gamma_e,\gamma\rangle}\theta_{\gamma_e}(u_-).
  \end{align}  
     By definition, we have $F_{\phi^{\pm}_3}(u'_{\pm})={\theta_{\pm\gamma_e}(u'_{\pm})}^{\mp 1}$ and $F_{\phi^{\pm}_1}(u'_{\pm})=\theta_{\pm\gamma_e}(u'_{\pm})^{\pm 1}$. 
 From (\ref{4999}), we have 
   \begin{align}\label{4996}
      \mbox{Id}&= F_{\phi^+_4}(u'_+)\circ {\theta}_{\gamma_e}(u'_+)^{-1}\circ F_{\phi^+_2}(u'_+)\circ {\theta}_{\gamma_e}(u'_+)  \notag \\
      &= F_{\phi^+_4}(u'_+)\circ {\theta}_{-\gamma_e}(u'_+)^{-1}\circ ( \alpha \circ F_{\phi^+_2}(u'_+)\circ \alpha^{-1} ) \circ \theta_{\gamma_e}(u'_+)^{-1} \\
   \end{align}
       Apply $T_{u'_+,u'_-}$ (along a path contained in the region does not contain $u$) to both sides of (\ref{4996}), we have $F_{\phi^-_2}(u'_-)=T_{u'_+,u'_-}\alpha\circ F_{\phi^+_2}(u_+)\circ \alpha^{-1}$ and we have 
        \begin{align*}
           \tilde{\Omega}(\gamma;u_+)=\tilde{\Omega}(\gamma;u_-). 
        \end{align*} 
  Let $\tilde{\Omega}(\gamma;u_+)_1$ to be the contribution from tropical discs with images which contain $p$ and locally intersect both regions, $\tilde{\Omega}(\gamma;u_+)_2$ to be the contribution from tropical discs with images which do not contain $p$ and locally intersect both regions, $\tilde{\Omega}(\gamma;u_+)_3$ to be the contribution from tropical discs with images locally only intersect the region contains $u$. Then 
    \begin{align*}
       \tilde{\Omega}(\gamma;u_+)=\tilde{\Omega}(\gamma;u_+)_1+\tilde{\Omega}(\gamma;u_+)_2+\tilde{\Omega}(\gamma;u_+)_3.
    \end{align*} Similarly, we define $\tilde{\Omega}(\gamma;u_-)_1,\tilde{\Omega}(\gamma;u_-)_2$. Then 
    we have 
      \begin{align*}
         \tilde{\Omega}(\gamma;u_+)_1=\tilde{\Omega}(\gamma;u_-)_2 \\
          \tilde{\Omega}(\gamma;u_+)_2=\tilde{\Omega}(\gamma;u_-)_1 \\
          \tilde{\Omega}(\gamma;u_+)_3=\tilde{\Omega}(\gamma;u_-)_3.
      \end{align*}
    In particular, this implies that
      \begin{align*}
        \tilde{\Omega}(\gamma;u)=\tilde{\Omega}(\gamma;u_+)_1+\tilde{\Omega}(\gamma;u_-)_1+\tilde{\Omega}(\gamma;u_+)_3=\tilde{\Omega}(\gamma;u_+)     
      \end{align*} and this finishes the proof of the theorem.
\end{proof}

Given $1$-parameter family of central charges $Z^t:\Gamma \rightarrow \mathbb{C}$, $t\in [0,1]$ such that $Z^0=Z$, then each central charge will induce an $S^1$-family of affine structures on $B_0$ and the corresponding tropical disc counting invariant which we denote them by $\tilde{\Omega}_t^{trop}(\gamma;u)$. 
\begin{thm}  \label{9011}
Given the pair $(u,\gamma)\notin W'^{trop}(\gamma;u)$, then $\tilde{\Omega}^{trop}_t(\gamma;u)$ will be well-defined for $t\ll 1$ and 
\begin{align*}
  \tilde{\Omega}^{trop}(\gamma;u)=\tilde{\Omega}_t^{trop}(\gamma;u),
\end{align*} for $t\ll 1$. 
\end{thm}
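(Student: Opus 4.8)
The plan is to run the argument of Theorem \ref{3037} in families, replacing the local variation of the stop $u$ by the variation of the central charge $Z^t$. Since $(u,\gamma)\notin W'^{trop}_\gamma$ and, by Lemma \ref{3006}, there are only finitely many relative classes $\gamma'$ with $\tilde{\Omega}^{trop}(\gamma';u)\neq 0$ and $|Z_{\gamma'}(u)|\leq |Z_\gamma(u)|$, there is a uniform angular gap $\delta>0$ with $|\mbox{Arg}Z_{\gamma_1}(u)-\mbox{Arg}Z_{\gamma_2}(u)|>\delta$ for every decomposition $\gamma=\gamma_1+\gamma_2$, $\langle\gamma_1,\gamma_2\rangle\neq 0$, with $\gamma_1,\gamma_2$ realized by tropical discs stopping at $u$. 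As $Z^t\to Z^0=Z$ when $t\to 0$, the arguments $\mbox{Arg}Z^t_{\gamma'}(u)$ converge uniformly over this finite collection of classes, so the gap $\delta/2$ persists for $t\ll 1$, provided no new relative class becomes realizable as a tropical disc at $u$ with central charge of norm $\leq |Z_\gamma(u)|$. That last point follows from the second step below, and it yields $u\notin W'^{trop}_t(\gamma;u)$, i.e. the well-definedness of $\tilde{\Omega}^{trop}_t(\gamma;u)$.

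Fix $\lambda=|Z_\gamma(u)|+1$ and rerun the counting in the proof of Lemma \ref{3006} for the family $Z^t$: the quantity $\lambda_0$ (the minimal weighted affine length of an initial segment between two of the singularities) depends continuously on the central charge, so it stays bounded below by $\lambda_0/2$ for $|t|\ll 1$; hence there is a single finite list of combinatorial types $(T,w)$ that can underlie an admissible tropical disc of a class $\gamma'$ with $|Z^t_{\gamma'}(u)|<\lambda$ and stop at $u$, uniformly for $t$ near $0$. For a type whose interior vertices all lie in $B_0$, a tropical disc realizing it is cut out by the conditions that its non-contracted edges be gradient trajectories of $|Z^t_{\gamma_{\bar e}}|^2$ through the prescribed incidence points (Lemma \ref{1911}), that the univalent exterior edges meet the singularities along the monodromy-invariant directions, and that the contracted-edge data be in standard position. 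Since every tropical disc is rigid, the solution set of this system is zero-dimensional; smooth dependence of ODE solutions on parameters, together with the uniform compactness of Lemma \ref{3006}, shows that for each type the number of solutions is locally constant in $t$ and each solution varies continuously, while the integral direction vectors $v_e$ and the weights, being locally constant along such a deformation, leave the multiplicity $\mbox{Mult}(\phi)$ in (\ref{4029}) unchanged. This gives, for $t\ll 1$, a multiplicity-preserving bijection between the admissible tropical discs of class $\gamma$ with stop at $u$ on $B^0_{\mbox{Arg}Z_\gamma(u)-\pi/2}$ and those on $B^t_{\mbox{Arg}Z^t_\gamma(u)-\pi/2}$ (in the case no interior vertex meets $\Delta$); in particular no new realizable class appears, completing the first step and giving $\tilde{\Omega}^{trop}_t(\gamma;u)=\tilde{\Omega}^{trop}(\gamma;u)$ in this case.

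When some admissible tropical disc of class $\gamma$ has an interior vertex at a singularity $p$, or acquires one as $t$ moves, I argue exactly as in the proof of Theorem \ref{3037}: the rays $l_{\pm\gamma_e}$ of the Lefschetz thimble class cut a neighborhood of $p$ into two regions, one containing $u$; applying Theorem \ref{3011} to small loops $\phi_\pm$ around points $u'_\pm\in l_{\pm\gamma_e}$, decomposed into the four arcs $\phi^\pm_i$, together with the monodromy relation used there, shows that the contribution from discs through $p$ is exchanged with that from discs not through $p$ under the wall-crossing while the rest is invariant; since the scattering diagram is consistent for every member of the family, this conclusion is insensitive to the deformation $Z^t$ for $t\ll 1$. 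Combining the three steps yields the well-definedness of $\tilde{\Omega}^{trop}_t(\gamma;u)$ and the equality $\tilde{\Omega}^{trop}(\gamma;u)=\tilde{\Omega}^{trop}_t(\gamma;u)$ for $t\ll 1$. The main obstacle is the second step, namely ruling out the birth or death of tropical discs (and the ensuing shift of realizable classes) under the deformation; this is exactly where rigidity must be used as a non-degeneracy statement together with the uniform Gromov-type compactness of Lemma \ref{3006}. The singularity-collision case of the third step is, as indicated, handled by importing the scattering argument of Theorem \ref{3037} with only minor modifications.
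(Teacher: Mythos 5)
Your proposal is correct and follows essentially the route the paper intends: the paper's own proof consists of the single remark that $(u,\gamma)\notin W'^{trop}_{\gamma}$ implies the walls defined by $Z^t$ are avoided for $t\ll 1$ and that the rest is "similar to Theorem \ref{3037}", which is exactly the two-case argument you carry out (smooth dependence of the gradient-flow trajectories of Lemma \ref{1911} on the parameter for discs avoiding $\Delta$, and the scattering/consistency argument of Theorem \ref{3011} near a singularity). You have simply supplied the details — the uniform finiteness from Lemma \ref{3006} and the persistence of the angular gap — that the paper leaves implicit.
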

\begin{proof}
If $(u,\gamma)\notin W'^{trop}_{\gamma}$, then it avoids the corresponding walls defined by $Z^t$ for $t\ll 1$. 
The proof of Theorem \ref{9011} is similar to Theorem \ref{3037} so we will skip it. 
\end{proof}

\begin{proof} (of Theorem \ref{3008})
   First we assume that $u\notin W'^{trop}_{\gamma}$ for any $\gamma\in H_2(X,L_u)$ with $|Z_{\gamma}(u)|<\lambda$ and $Z_{\gamma}(u)\in \mathcal{S}$. Together with the assumption of the Theorem \ref{3008} and Lemma \ref{3006}, there exists a neighborhood $U\ni u$ such that for every $u'\in U$, we have 
   \begin{enumerate}
      \item there exits no $\gamma\in H_2(X,L_{u'})$ with $|Z_{\gamma}(u')|<\lambda$, $\tilde{\Omega}^{trop}(\gamma;u')\neq 0$ and $\mbox{Arg}Z_{\gamma}(u' )\in \partial \mathcal{S}$. 
      \item there exists no $\gamma\in H_2(X,L_{u'})$ with $|Z_{\gamma}(u')|<\lambda$, $Z_{\gamma}(u')\in \mathcal{S}$ and $u'\in W'^{trop}_{\gamma}$. 
   \end{enumerate}
   Then the relative class $\gamma$ with $|Z_{\gamma}(u')|<\lambda$, $Z_{\gamma}(u')\in \mathcal{S}$ and $\tilde{\Omega}^{trop}(\gamma;u')\neq 0$ for every $u'\in U$ can be identified via the parallel transport. From Theorem \ref{3037}, we have 
     \begin{align} \label{2998}
         T_{u_1,u_2}f^{trop}_{\gamma}(u_1)=f^{trop}_{\gamma}(u_2) \hspace{4mm}(\mbox{mod }T^{\lambda})
     \end{align} for all $u_1,u_2\in U$ and $\gamma$ such that $|Z_{\gamma}|<\lambda$ on $U$. Moreover, the order of $\mbox{Arg}Z_{\gamma}(u')$ with $\gamma\in H_2(X,L_{u'})$, $\tilde{\Omega}(\gamma;u')\neq 0$ is the same for every $u'\in U$. Then the theorem follows from Lemma \ref{2999}. 
   
   Now assume that $u\in W'^{trop}_{\gamma_1}\cap \cdots \cap W'^{trop}_{\gamma_k}$ for one $\gamma_i\in H_2(X,L_u)$ with $|Z_{\gamma}(u)|<\lambda$, $\mbox{Arg}Z_{\gamma_i}(u)\in \mathcal{S}$. We take $U\ni u$ be a neighborhood such that for any $u'\in U$, we have 
     \begin{enumerate}
        \item there is no $\gamma\in H_2(X,L_{u'})$ with $|Z_{\gamma}(u')|<\lambda$, $\tilde{\Omega}^{trop}(\gamma;u')\neq 0$ and $\mbox{Arg}Z_{\gamma}(u')\in \partial\mathcal{S}$.
         \item there is no $\gamma\in H_2(X,L_{u'})$ with $|Z_{\gamma}(u')|=\lambda$, $\tilde{\Omega}^{trop}(\gamma;u')\neq 0$ and $\mbox{Arg}Z_{\gamma}(u')\in \mathcal{S}$. (this can be achieved by slightly enlarge $\lambda$)
        \item there is no $\gamma\in H_2(X,L_{u'})$ with $|Z_{\gamma}(u')|<\lambda$, $u'\in W'^{trop}_{\gamma}\cap U \neq W'^{trop}_{\gamma_i}\cap U$.
     \end{enumerate}
      Given any $u_1,u_2\in U$, the relative class $\gamma$ with $\tilde{\Omega}(\gamma;u_i)\neq 0$, $\mbox{Arg}Z_{\gamma}(u_i)\in \mathcal{S}$ and $|Z_{\gamma}(u_i)|<\lambda$ are identified except those annihilate/creation from the walls $W'^{trop}_{\gamma_i}$. 
      It suffices to consider the situation when  $u_1,u_2$ are closed to a single wall $W'^{trop}_{\gamma_i}$ and on the different sides (otherwise, it reduces to the previous situation), for some $i$.
      Choose a path $\phi$ connecting $u_1,u_2$ passing through $W'^{trop}_{\gamma_i}$ exactly once at $p$ and do not intersect $W'^{trop}_{\gamma_j}$ for $j\neq i$. Let $\{l_{\gamma'_i}\}$ be the sets of affine lines and rays such that $\gamma'_i$ satisfies $\tilde{\Omega}^{trop}(\gamma'_i;p)\neq 0$ and $\mbox{Arg}Z_{\gamma'_i}(u)=\mbox{Arg}Z_{\gamma_i}(p)$. Let $\phi$ be a small loop around $p$ and falls in $U$. Then $\phi$ is separated by $W'^{trop}_{\gamma_i}$ into two parts $\phi_1,\phi_2$. Assume that $\phi_1$ is in the same sides of $W'^{trop}_{\gamma_i}$ as $u_1$. Then there exists a subsector $\mathcal{S}_i$ contains $Z_{\gamma_i}(p)$ in its interior such that  
      \begin{align} \label{2996}
         T_{u_1,p}\big(\prod_{\gamma:\mbox{Arg}Z_{\gamma}\in \mathcal{S}'}\tilde{\theta}^{trop}_{\gamma_i}(u_1)\big)&=\prod_{\gamma_i:l_{\gamma_i}\cap \phi_1\neq \emptyset} \tilde{\theta}^{trop}_{\gamma_i}(p) \\&=\prod_{\gamma_i:l_{\gamma_i}\cap \phi_2\neq \emptyset} \tilde{\theta}^{trop}_{\gamma_i}(p)= T_{u_2,p}\big(\prod_{\gamma:\mbox{Arg}Z_{\gamma}\in \mathcal{S}'}\tilde{\theta}^{trop}_{\gamma_i}(u_2)\big) \hspace{4mm}(\mbox{ mod } T^{\lambda'})\notag.
      \end{align} Here the first and third equality comes from the choice of $U$ and the second equality comes from Theorem \ref{3011}. Applying $T_{u,u_2}$ on both sides of (\ref{2996}), we have 
        \begin{align*}
            \Theta^{trop}_{\mathcal{S}_i}(u_2)=T_{u_1,u_2}\Theta^{trop}_{\mathcal{S}_i}(u_1) \hspace{4mm}(\mbox{mod } T^{\lambda'}).
        \end{align*} For relative classes $\gamma'$ such that $\tilde{\Omega}^{trop}(\gamma')\neq 0$ and $\mbox{Arg}Z_{\gamma'}\in \mathcal{S}\backslash \mathcal{S}_i$ along $\phi$, they are identified via parallel transport and thus (\ref{2998}) holds for those $\gamma'$. The theorem then follows from Lemma \ref{2999}.

\end{proof}

\begin{rmk}
   The Theorem \ref{3008} is also true under the projection $H_2(X,L_u)\rightarrow H_1(L_u)$. 
\end{rmk}

\begin{definition} 
   The wall of marginal stability for tropical discs are defined by
   \begin{align*}
      W^{trop}_{\gamma}:=\{u\in W'^{trop}_{\gamma}| \tilde{\Omega}^{trop}(\gamma) \mbox{ differs on different sides of $W'^{trop}_{\gamma}$}\}.
   \end{align*} For $u\in W'^{trop}_{\gamma}\backslash W^{trop}_{\gamma}$, we will define $\tilde{\Omega}^{trop}(\gamma;u)$ by the natural continuous extension. 
\end{definition}
\begin{rmk}
   Similar proofs shows that Theorem \ref{3037} holds when $u\notin W^{trop}_{\gamma}$. 
\end{rmk}

\begin{definition}
  A quadratic refinement is an assignment $c:\Gamma \rightarrow \{\pm 1 \}$ such that 
     \begin{align*}
        c(\gamma_1+\gamma_2)=(-1)^{\langle\partial \gamma_1,\partial\gamma_2 \rangle} c(\gamma_1)c(\gamma_2), 
     \end{align*} for any $\gamma_1,\gamma_2\in \Gamma$.
\end{definition}
We will put proof the existence of quadratic refinement in Appendix \ref{2997}. With the notion of quadratic refinement, the slab function $f_{\gamma}$ can be decomposed into unique multiplicative sequence in a unique way as follows:
   \begin{align} \label{77}
       f^{trop}_{\gamma}(u)=\prod_{d\in \mathbb{N}} (1-c(d\gamma)z^{d\partial \gamma})^{d\Omega^{trop}(d\gamma;u)}
   \end{align} with $\Omega^{trop}(\gamma;u)\in \mathbb{Q}$ a priori. In other words, this introduce a multiplicative decomposition of $\tilde{\theta}_{\gamma}(u)$ given by
     \begin{align*}
        \tilde{\theta}^{trop}_{\gamma}(u)=\prod_d(\theta^{trop}_{\gamma}(u))^{\Omega(\gamma;u)}.
     \end{align*}
     Properties of $\tilde{\Omega}^{trop}(\gamma;u)$ will translate to analogue properties of $\Omega^{trop}(\gamma;u)$. The locally constant $\Omega^{trop}(\gamma;u)$ are known as the generalized Donaldson-Thomas invariants. 
Then the integrality conjecture \cite{KS2} states that
  \begin{conj}
     Under above notation and assumption, 
     \begin{enumerate}
        \item $\Omega^{trop}(\gamma;u)\in \mathbb{Z}$ for $u\notin W'^{trop}_{\gamma}$.
        \item ${\Omega}^{trop}(d\gamma;u)=0$ for sufficiently large $d$. 
     \end{enumerate}
     
  \end{conj}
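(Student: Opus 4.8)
Since the displayed statement is a conjecture, what follows is a proposal for an attack rather than a finished argument. The plan is first to transport the problem to the symplectic side: by the main theorem of the paper the open Gromov-Witten invariants and the weighted tropical counts agree, hence the slab functions $f^{trop}_{\gamma}$ coincide with the generating functions built from $\tilde\Omega(\gamma;u)$, and therefore $\Omega^{trop}(\gamma;u)=\Omega(\gamma;u)$, the Donaldson-Thomas-type invariant extracted from (\ref{77}). So it suffices to prove both assertions for $\Omega(\gamma;u)$. The point of this reduction is that the factors $(-1)^{w_v-1}/w_v^2$ and $|\mathrm{Aut}|^{-1}$, which make $\tilde\Omega^{trop}$ only \emph{a priori} rational, are exactly the universal multiple-cover and automorphism contributions; passing from (\ref{78}) to (\ref{77}) is the same reorganization that converts naive disc counts into BPS-type counts, and one expects the integrality to be the shadow of that reorganization.

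For part (1) I would argue by induction on the central charge $|Z_{\gamma}(u)|$. Fix $u\notin W'^{trop}_{\gamma}$ and $\lambda>0$; by Lemma \ref{3006} only finitely many classes contribute below energy $\lambda$. The base case lives in a chamber of $B_0$ adjacent to an $I_1$-singularity, where the only tropical discs below $\lambda$ are the Lefschetz thimbles $\gamma_e$: such a thimble is primitive and rigid (by the rigidity of tropical discs, the proposition after Definition \ref{53}), so $\Omega(\gamma_e)=\tilde\Omega(\gamma_e)=\pm1\in\mathbb{Z}$ and $\Omega(d\gamma_e)=0$ for $d\ge 2$ there. The inductive step crosses the walls $W'^{trop}_{\gamma'}$ one at a time using the wall-crossing formula (Theorem \ref{3008}, with the local consistency Theorem \ref{3011}); under the decomposition $\tilde\theta_{\gamma}=\prod_d(\theta_{\gamma})^{\Omega(d\gamma)}$ this becomes the statement that the subgroup of automorphisms generated by the one-parameter subgroups $\{(\theta_{\gamma})^t\}$ preserves $\mathbb{Z}$-integrality of the exponents. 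In the two-dimensional (\emph{rank two}) scattering situation at hand this is precisely the integrality of two-dimensional scattering diagrams proved in the tropical vertex group of \cite{GPS} (see also the quiver interpretation of Reineke and the general framework of \cite{KS2}): iterated commutators of elementary transformations re-expand with integer $\Omega$. Integrality of $\Omega(\gamma;u)$ then follows.

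Part (2) is the harder half, and here Lemma \ref{3006} is not enough: it bounds how many classes lie below a fixed energy, but the ray $\{d\gamma\}_{d\ge1}$ leaves every bounded region, so it does not force $\Omega(d\gamma;u)=0$ for large $d$. I see two possible routes. The combinatorial one is to show directly that for fixed primitive $\gamma$ and fixed $u$ only finitely many $d$ admit an admissible tropical disc of class $d\gamma$ contributing to the $\Omega$-decomposition; via the standard-position curves $N^{trop}_{\{\partial\gamma_i\}}(\mathbf{w})$ and the structure of Definition \ref{1012} this ought to reduce to a lattice-geometric finiteness coming from the integral affine monodromy $\bigl(\begin{smallmatrix}1&1\\0&1\end{smallmatrix}\bigr)$ around the $24$ singular fibres. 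The conceptual route is to realize $\Omega^{trop}(\gamma;u)$ as the Donaldson-Thomas invariant of a $3$-Calabi-Yau category modelling the local geometry near the special Lagrangian fibre, so that integrality becomes automatic as a weighted Euler characteristic (giving part (1) for free) and part (2) follows from the support property together with finiteness of the relevant Harder-Narasimhan strata.

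The main obstacle is exactly part (2): unlike the wall-crossing invariance (Theorem \ref{3037}) and the Kontsevich-Soibelman formula, it is a global rather than a local statement, with no purely combinatorial proof in sight from Lemma \ref{3006} alone; it requires either a quantitative bound on which multiples of a primitive class can be tropicalized, or an identification of $\Omega^{trop}$ with a categorical Donaldson-Thomas theory whose BPS spectrum is known to be finite. By comparison part (1), though itself a nontrivial analogue of the Kontsevich-Soibelman integrality conjecture, is within reach by bootstrapping from the $\pm1$ thimble invariants through the scattering-diagram integrality of \cite{GPS}.
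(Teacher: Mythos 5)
The statement you were asked about is not proved in the paper at all: it is stated verbatim as the (tropical avatar of the) Kontsevich--Soibelman integrality conjecture, and the author offers no argument for either part. So there is no proof of record to match your proposal against, and your honesty in presenting an attack plan rather than a proof is appropriate. That said, the plan as written has two genuine gaps. For part (1), your key step is to propagate integrality across walls by appealing to ``the integrality of two-dimensional scattering diagrams proved in \cite{GPS}.'' This overstates what \cite{GPS} establishes: their factorization theorem expresses the wall functions in terms of genus-zero \emph{relative} Gromov--Witten invariants $N_\beta$, which are a priori rational, and the integrality of the extracted BPS-type exponents is itself a conjecture in that paper, settled only in special cases (Reineke's quiver computations, Gross--Pandharipande's two-wall results). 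Since the scattering at a generic wall intersection here involves arbitrary integral input from earlier stages of the induction, you cannot invoke \cite{GPS} as a black box; the inductive step is exactly as open as the conjecture you are trying to prove. Your preliminary reduction to the symplectic side via Theorem \ref{3013} also buys nothing for integrality, since $\tilde\Omega(\gamma;u)$ is defined by a virtual fundamental class and is only rational a priori.

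For part (2) you correctly diagnose that Lemma \ref{3006} gives finiteness only below a fixed energy cutoff and says nothing about the ray $\{d\gamma\}_{d\ge 1}$, and you concede you have no proof; both of your suggested routes (a lattice-geometric bound on which multiples of a primitive class support admissible tropical discs, or a categorification by a CY3 Donaldson--Thomas theory with finite BPS spectrum) are programs, not arguments. In short: your proposal is a reasonable map of the difficulties and correctly identifies the base case (the thimble classes $\gamma_e$ with $f_{\gamma_e}=1+T^{Z_{\gamma_e}}z^{\partial\gamma_e}$, hence $\Omega(\gamma_e)=1$ and $\Omega(d\gamma_e)=0$ for $d\ge 2$), but neither part of the conjecture is actually established by it, and part (1) rests on a cited result that is not available in the required generality.
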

The set of invariants $\{\tilde{{\Omega}}^{trop}(\gamma;u)\}$ can be derived from the set of invariants $\{{\Omega}^{trop}(\gamma;u)\}$ recursively and vice versa. Indeed, we have 
  \begin{align} \label{79}
     \tilde{\Omega}^{trop}(d\gamma)=-\sum_{k|d}c(\frac{d}{k}\gamma)^d\frac{\Omega^{trop}(\frac{d}{k}\gamma)}{k^2}, 
  \end{align} for any $d\in \mathbb{Z}$ from equation (\ref{78}) (\ref{77}). The equation (\ref{79}) is also referred as the multiple cover formula for holomorphic discs \cite{FOOO}.

 \begin{rmk} \label{348}
    Another way to understand the Kontsevich-Soibelman transformation is to consider the Lie algebra structure on $\Lambda^{\mathcal{S}}/F^{\lambda}\Lambda^{\mathcal{S}}[[H_1(L_u)]]$ given by 
       \begin{align*}
          [z^{\partial\gamma_1},z^{\partial\gamma_2}]=\langle\gamma_1,\gamma_2\rangle z^{\partial\gamma_1+\partial\gamma_2}.
       \end{align*}
    Then the transformation $\theta_{\gamma}$ admits another expression given by 
      \begin{align} \label{336}
         \theta_{\gamma}^{-1}(u)=\exp{\bigg(\frac{1}{d}ad\big(Li_2(c(\gamma)T^{Z_{\gamma}(u)}z^{\partial\gamma}))\big)\bigg)},
      \end{align} where $Li_2$ is the dilogarithm function 
       $
          Li_2(z):=\sum^{\infty}_{k=1}\frac{z^k}{k^2}
       $ and $d$ is the divisibility of $\partial\gamma$. More generally, an elementary transformation 
         \begin{align*}
            z^{\partial\gamma'} \mapsto z^{\partial \gamma'}f^{\langle\gamma',\gamma\rangle}
         \end{align*} for $f\in 1+ \Lambda[[z^{\partial\gamma}]]$ can be expressed as product of factors in the form (\ref{336}) or the form $\exp{\tilde{f}}$, where $\tilde{f}\in z^{\partial\gamma}\Lambda [[z^{\partial \gamma}]]$. It is straight-forward to check that $f$ and $\tilde{f}$ determines each other. There exists a natural $q$-deformation of the natural dilogarithm which leads to a natural $q$-deformation of tropical discs counting satisfying the $q$-deformed Kontsevich-Soibelman wall-crossing formula. We will leave the discussion in Appendix \ref{4099}.
 \end{rmk}

\section{Lagrangian Floer Theory of Special Lagrangians in hyperK\"ahler Manifolds} \label{4006}

This section we will review some results in Lagrangian Floer theory ant its application to special Lagrangians in hyperK\"ahler manifolds.
In particular, we will use $\Lambda$ to be the standard Novikov ring
   \begin{align*}
      \Lambda:=\{\sum_{i=0}^{\infty} a_i T^{\lambda_i}| a_i\in \mathbb{C}, \lambda_i \in \mathbb{R}, \mbox{ and $\lambda_i$ increasing}, \lim \lambda_i=+\infty \}.
   \end{align*} and $\Lambda_+$ be the subset of $\Lambda$ with positive valuation. 
 
 Let $(X,\omega)$ be a symplectic manifold with a compatible almost complex structure $J$. Let $L$ be a Lagrangian submanifold in $X$. From the real codimension one boundary of the moduli space of holomorphic discs, Fukaya  proved
 \begin{thm}  \cite{F1}\label{41}
    There exists an gapped filtered $A_{\infty}$ structure $\{m_{k,\gamma}\}_{k\geq 0,\gamma\in H_2(X,L)}$ on the de Rham complex $\Omega^*(L,\Lambda)$ unique up to pseudo-isotopies. 
 \end{thm}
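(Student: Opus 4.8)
The plan is to build the operations $m_{k,\gamma}$ out of moduli spaces of pseudo-holomorphic discs and to deduce the $A_\infty$ relations from the structure of their codimension-one boundary. Fix a compatible almost complex structure $J$. For each $k\geq 0$ and each relative class $\gamma\in H_2(X,L)$, let $\mathcal{M}_{k+1}(L;\gamma)$ be the compactified moduli space of stable maps from genus-zero bordered Riemann surfaces with $k+1$ cyclically ordered boundary marked points, in class $\gamma$, with boundary on $L$, with evaluation maps $ev_0,\dots,ev_k:\mathcal{M}_{k+1}(L;\gamma)\to L$. One then sets
\[
  m_{k,\gamma}(\alpha_1,\dots,\alpha_k):=(ev_0)_!\big(ev_1^*\alpha_1\wedge\cdots\wedge ev_k^*\alpha_k\big),
\]
where $(ev_0)_!$ denotes integration along the fibre; the degree $+1$ shift works out from the virtual dimension formula $\dim\mathcal{M}_{k+1}(L;\gamma)=n+\mu(\gamma)+k-2$ together with the fibre dimension of $ev_0$. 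For the exceptional class $\gamma=0$ there are no nonconstant discs, so one declares $m_{1,0}=\pm d$ (the de Rham differential), $m_{2,0}=\pm\wedge$ (the wedge product) and $m_{k,0}=0$ for $k\neq 1,2$, recovering the classical dga on $\Omega^*(L)$.

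The gapped condition follows from Gromov compactness: the set of areas $\{\omega(\gamma):\mathcal{M}_{k+1}(L;\gamma)\neq\emptyset,\ \gamma\neq 0\}$ is bounded below by a positive constant and discrete, so the submonoid $G\subseteq H_2(X,L)$ it generates satisfies $|\omega^{-1}[0,\lambda)|<\infty$ for every $\lambda$, as required. The $A_\infty$ relations then come from Stokes' theorem applied to the forms above, using the decomposition of the codimension-one boundary
\[
  \partial\mathcal{M}_{k+1}(L;\gamma)=\bigsqcup_{\substack{k_1+k_2=k+1\\ \gamma_1+\gamma_2=\gamma}}\mathcal{M}_{k_1+1}(L;\gamma_1)\times_L\mathcal{M}_{k_2+1}(L;\gamma_2)
\]
into strata where a boundary disc bubbles off; each fibre product matches exactly a composition $m_{k_1,\gamma_1}(\dots,m_{k_2,\gamma_2}(\dots),\dots)$, with signs working out after the usual careful bookkeeping.

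The hard part is transversality. In general $\mathcal{M}_{k+1}(L;\gamma)$ is not cut out transversally, is not a smooth manifold, and $ev_0$ is not a submersion, so the naive pushforward of a differential form is meaningless. To remedy this I would equip each $\mathcal{M}_{k+1}(L;\gamma)$ with a Kuranishi structure compatible with the boundary decomposition above, and then choose CF-perturbations (equivalently, continuous families of multisections) so that every $ev_0$ becomes a submersion on the perturbed space; the fibre integral of a smooth form along such a perturbed Kuranishi space is then a well-defined smooth form on $L$. These choices must be made by induction on $(\omega(\gamma),k)$ so that the restriction of the data to a boundary stratum $\mathcal{M}_{k_1+1}(L;\gamma_1)\times_L\mathcal{M}_{k_2+1}(L;\gamma_2)$ is the product of the data already chosen. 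This compatibility is exactly what forces the $A_\infty$ relations to hold on the nose rather than merely up to homotopy, and is the technical heart of the argument, carried out in detail in \cite{FOOO}\cite{F1}.

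Finally, uniqueness up to pseudo-isotopy: given two sets of auxiliary data producing $(C,m^0_{k,\gamma})$ and $(C,m^1_{k,\gamma})$ with $C=\Omega^*(L,\Lambda)$, interpolate the almost complex structures by a path $\{J^t\}_{t\in[0,1]}$ and form the parametrized moduli spaces over $[0,1]$; pushing smooth forms forward along the evaluation maps, now with target $L\times[0,1]$, yields a gapped filtered $A_\infty$ structure on $C^\infty([0,1]_t,\Omega^*(L,\Lambda))$ restricting to the two given structures at $t=0$ and $t=1$, i.e.\ a pseudo-isotopy in the sense defined above. Together with Theorem \ref{4050}, this also yields the invertible $A_\infty$ homomorphism relating any two such structures, which completes the proof.
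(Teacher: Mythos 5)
The paper does not prove this theorem itself but cites it to Fukaya \cite{F1}, and your sketch correctly reproduces the standard construction used there: smooth correspondences $(ev_0)_!\circ(ev_1,\dots,ev_k)^*$ over Kuranishi-structured moduli spaces of bordered stable maps, with continuous families of perturbations chosen inductively so that $ev_0$ is a submersion and the boundary decomposition yields the $A_\infty$ relations via Stokes, and with parametrized moduli over $[0,1]$ producing the pseudo-isotopy. This is essentially the same approach as the cited proof, so no further comparison is needed.
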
     
 From Theorem \ref{41} and Theorem \ref{25}, we have the following corollary:
 \begin{cor} \label{22} \cite{F1}
   There exists an gapped filtered $A_{\infty}$ structure on $H^*(L,\Lambda)$ unique up to pseudo-isotopies. 
 \end{cor}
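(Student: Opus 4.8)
\textbf{Proof proposal for Corollary \ref{22}.}

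The plan is to combine the two results that immediately precede the statement: Theorem \ref{41}, which produces a gapped filtered $A_{\infty}$ structure $\{m_{k,\gamma}\}$ on the de Rham complex $\Omega^*(L,\Lambda)$ unique up to pseudo-isotopy, and Theorem \ref{25} (the homotopy perturbation lemma), which transfers an $A_{\infty}$ structure from a complex $C$ to its $m_{1,0}$-cohomology $H$ whenever a deformation retract $(H,\iota,G)$ is available. So the first step is to verify the hypotheses of Theorem \ref{25} in the present setting: take $C=\Omega^*(L,\Lambda)$ with $m_{1,0}=\pm d$ the de Rham differential (note $m_{1,0}^2=0$), take $H=H^*(L,\Lambda)$ its cohomology, and exhibit a deformation retract. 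This is standard Hodge theory: fix a Riemannian metric on $L$, let $\iota$ be the inclusion of harmonic forms, $p$ the harmonic projection, and $G$ the Green's operator composed with $d^*$, so that $\mathrm{Id}-\iota p = -(dG+Gd)$. Applying Theorem \ref{25} then yields an $A_{\infty}$ structure $\{m_{k,\gamma}^{can}\}$ on $H^*(L,\Lambda)$ together with an invertible $A_{\infty}$ homomorphism $f:H^*(L,\Lambda)\to \Omega^*(L,\Lambda)$.

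The second step is the uniqueness-up-to-pseudo-isotopy assertion. Here I would invoke Proposition \ref{20}: if two (cyclic) gapped filtered $A_{\infty}$ algebras are pseudo-isotopic, then so are their canonical models. Theorem \ref{41} gives that any two choices of $\{m_{k,\gamma}\}$ on $\Omega^*(L,\Lambda)$ (coming from different almost complex structures, perturbation data, etc.) are pseudo-isotopic; by Proposition \ref{20} the induced canonical $A_{\infty}$ structures on $H^*(L,\Lambda)$ are pseudo-isotopic as well. One should also check that the construction in Theorem \ref{25} is, up to pseudo-isotopy, independent of the auxiliary choice of deformation retract (equivalently, of the metric on $L$): two metrics give two Hodge decompositions, hence two canonical models of the \emph{same} $A_{\infty}$ algebra on $C$, and these are known to be pseudo-isotopic (this is part of the homological algebra in \cite{FOOO}\cite{F1} that the excerpt is summarizing). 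Combining the two independence statements gives that the $A_{\infty}$ structure on $H^*(L,\Lambda)$ is well-defined up to pseudo-isotopy, which is the claim.

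The main obstacle, such as it is, is bookkeeping rather than conceptual: one must make sure the gapped/filtered structure is genuinely preserved at every stage — that the monoid $G=H_2(X,L)$ and the energy filtration behave well under the tree sums $m_{k,\gamma}^{can}$ of Theorem \ref{25}, and that the pseudo-isotopies produced by Theorem \ref{41} and Proposition \ref{20} respect the same gapping. Since Theorem \ref{25} as stated already asserts the output is a filtered $G$-gapped $A_{\infty}$ structure, and Proposition \ref{20} is stated for gapped filtered $A_{\infty}$ algebras, these points are covered by the cited results, so the proof is essentially a two-line citation: apply Theorem \ref{25} to the Hodge-theoretic deformation retract of $(\Omega^*(L,\Lambda),m_{k,\gamma})$ from Theorem \ref{41} to get existence, and apply Proposition \ref{20} to the pseudo-isotopy uniqueness of Theorem \ref{41} to get uniqueness. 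I would state it at roughly that level of detail, perhaps spelling out the Hodge-theoretic deformation retract explicitly since the notation $(H,\iota,G)$ of Theorem \ref{25} will be reused later in the family Floer arguments of Section \ref{4006}.
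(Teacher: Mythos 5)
Your proposal matches the paper's argument: the corollary is stated there as an immediate consequence of Theorem \ref{41} and Theorem \ref{25}, with the Hodge-theoretic deformation retract supplying the hypotheses of the homotopy perturbation lemma and Proposition \ref{20} handling uniqueness of the canonical model up to pseudo-isotopy. Your additional remarks on the independence of the metric and the preservation of the gapped filtration are correct and consistent with how the paper treats these points via the cited results of \cite{FOOO}\cite{F1}.
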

 A priori, the $A_{\infty}$ structure in Corollary \ref{22} may depend on the choices of the almost complex structures. For any two compatible almost complex structures $J,J'$, one follows Theorem \ref{41} and constructs two $A_{\infty}$ structures $\{m_{k,\gamma}\}$ and $\{m_{k,\gamma}'\}$ on $\Omega^*(L,\Lambda)$.
 Then there exists a $1$-parameter family of compatible almost complex structures $\{J_t\}_{t\in [0,1]}$ connecting $J,J'$. Moreover, Fukaya proved 
 \begin{thm} \cite{F1} \label{2899}
    There exists an pseudo-isotopy $(\{m^t_{k,\gamma}\},\{c^t_{k,\gamma}\})$ between the two $A_{\infty}$ structures induced by the almost complex structures $J$ and $J'$. 
 \end{thm}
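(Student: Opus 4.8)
The plan is to realize the desired pseudo-isotopy as the $A_{\infty}$ structure produced by running Fukaya's construction of Theorem \ref{41} over the parameter interval $[0,1]$, applied to a path of almost complex structures. First I would fix a smooth path $\{J_t\}_{t\in[0,1]}$ of $\omega$-compatible almost complex structures with $J_0=J$ and $J_1=J'$, and form the parametrized moduli spaces
\[
  \mathcal{M}_{k+1}(\beta;\{J_t\}):=\{(t,u)\mid t\in[0,1],\ u\in \mathcal{M}_{k+1}(\beta;J_t)\},\qquad \beta\in H_2(X,L),
\]
of genus-zero bordered stable maps with $k+1$ boundary marked points, which carry Kuranishi structures with corners of virtual dimension one greater than each fibre. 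The codimension-one boundary decomposes into three types of strata: (i) fibre products $\mathcal{M}_{k_1+1}(\beta_1;\{J_t\})\times_L \mathcal{M}_{k_2+1}(\beta_2;\{J_t\})$ over a common value of $t$, from boundary-node degenerations; (ii) the locus $t=0$, canonically $\mathcal{M}_{k+1}(\beta;J)$; and (iii) the locus $t=1$, canonically $\mathcal{M}_{k+1}(\beta;J')$.

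Next I would choose a coherent system of CF-perturbations (continuous-family multisections) on these spaces by induction on $\omega(\beta)$ and on $k$, extending the perturbation data already fixed on the boundary strata; in particular, on the strata $t=0$ and $t=1$ one imposes exactly the perturbation data defining $\{m_{k,\gamma}\}$ and $\{m'_{k,\gamma}\}$ from Theorem \ref{41}. Pushing forward differential forms under the evaluation maps, I would set
\[
  \hat m_{k,\beta}(\alpha_1,\dots,\alpha_k):=(\mathrm{ev}_0)_*\big(\mathrm{ev}_1^*\alpha_1\wedge\cdots\wedge\mathrm{ev}_k^*\alpha_k\big)\in\Omega^*(L\times[0,1],\Lambda)\cong C^{\infty}([0,1]_t,\Omega^*(L,\Lambda)),
\]
reading off $m^t_{k,\gamma}$ as the $dt$-free part and $\mathfrak{c}^t_{k,\gamma}$ from the $dt$-component, with $G$ the monoid generated by those $\beta$ with nonempty perturbed moduli and $\omega$ the symplectic area (so gappedness follows exactly as in Theorem \ref{41}). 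Applying Stokes' theorem to $(\mathrm{ev}_0)_*$ and using the boundary description gives $\hat d\circ\hat d=0$: the type-(i) strata produce precisely the fibrewise $A_{\infty}$ relations of condition (2) together with the differential relation (3) in the Remark following the definition of pseudo-isotopy (the $\frac{d}{dt}m^t$ term coming from the $dt$-component of the pushforward), type-(ii) and (iii) make the structure restrict to $(C,m_{k,\gamma})$ at $t=0$ and to $(C,m'_{k,\gamma})$ at $t=1$, smoothness in $t$ (condition (1)) is built in, and condition (4) holds because the $\beta=0$ contributions are the classical de Rham operations, independent of $t$.

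The main technical obstacle is exactly this construction of a coherent perturbation system on the parametrized moduli with corners: one must simultaneously achieve transversality, compatibility with all boundary fibre products, agreement with the prescribed data at $t=0,1$, and smooth dependence on $t$. This is handled by the standard framework of \cite{FOOO}\cite{F1} (homotopy units, the time-ordered product structure on the boundary strata, and Whitney-type extension of multisections from the boundary), using Gromov compactness uniformly over the compact interval $[0,1]$ and the energy filtration so that the inductive construction terminates at each filtration level. I would refer to \cite{F1} for the details of these perturbation-theoretic steps and only assemble the geometric input above.
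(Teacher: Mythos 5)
The paper offers no proof of this theorem: it is quoted directly from \cite{F1}, and your outline is precisely the construction given there (and in \cite{FOOO}) — parametrized moduli over $[0,1]$ with the three types of codimension-one strata, a coherent system of continuous-family perturbations restricting to the given data at $t=0,1$, pushforward of forms, and Stokes' theorem yielding the pseudo-isotopy relations, with the $\gamma=0$ terms reducing to the $t$-independent de Rham operations. This is correct and is essentially the same approach the paper relies on by citation.
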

 Thus the $A_{\infty}$ structure on $H^*(L,\Lambda)$ can be viewed as an symplectic invariant and does not depend on the choices of almost complex structures up to pseudo-isotopies. By constructing compatible forgetful maps, we have the analogue of divisor axiom in Gromov-Witten theory:
   \begin{thm} \cite{F2} \label{4053} Let $k\geq 0$ and $b, x_0,\cdots, x_k\in H^1(L,\Lambda_+ )$, then 
   \begin{align*}
      \sum_{m_0+\cdots+m_k=m}& m_{k+m,\gamma}(b^{\otimes m_0},x_1,b^{\otimes m_1},\cdots, b^{\otimes m_{k-1}},x_k,b^{\otimes m_k}) \\
       =& \frac{1}{m!}\langle \partial \gamma,b\rangle^m m_k(x_1,\cdots,x_k).
    \end{align*}
   \end{thm}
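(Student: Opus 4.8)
The plan is to follow Fukaya's argument \cite{F2}, which realizes the divisor axiom through forgetful maps on the moduli spaces of bordered stable maps. Recall from Theorem \ref{41} that $m_{k,\gamma}$ is constructed, up to sign, as the fiberwise integral $m_{k,\gamma}(x_1,\dots,x_k)=(\mathrm{ev}_0)_!\big(\mathrm{ev}_1^*x_1\wedge\dots\wedge\mathrm{ev}_k^*x_k\big)$ along the evaluation $\mathrm{ev}_0\colon\mathcal{M}_{k+1}(L;\gamma)\to L$ at the zeroth (output) boundary marked point, where $\mathcal{M}_{k+1}(L;\gamma)$ is the perturbed moduli space of $J$-holomorphic discs in class $\gamma$ with $k+1$ counterclockwise-cyclically-ordered boundary marked points. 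Interpreted this way, the left-hand side of the claimed identity is the integration along $\mathrm{ev}_0$, over the moduli space $\mathcal{M}_{k+1+m}(L;\gamma)$ of discs carrying $k$ "main" inputs and $m$ "auxiliary" inputs, of $\mathrm{ev}_1^*x_1\wedge\dots\wedge\mathrm{ev}_k^*x_k$ wedged with one copy of $\mathrm{ev}^*b$ for each auxiliary point; the sum $\sum_{m_0+\dots+m_k=m}$ is precisely the bookkeeping of the cyclic positions of the $m$ auxiliary points relative to $x_1,\dots,x_k$.

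The first step is to construct the forgetful map $\mathfrak{forget}_m\colon\mathcal{M}_{k+1+m}(L;\gamma)\to\mathcal{M}_{k+1}(L;\gamma)$ that drops the $m$ auxiliary marked points (stabilizing by contracting any resulting unstable disc or sphere components), and to arrange the Kuranishi structures and CF-perturbations on both moduli spaces to be compatible with $\mathfrak{forget}_m$, so that fiberwise integration along $\mathfrak{forget}_m$ is defined and the projection formula $(\mathrm{ev}_0)_!\big((\cdot)\wedge\mathfrak{forget}_m^*(\cdot)\big)=(\mathrm{ev}_0)_!(\cdot)\wedge(\text{fiberwise integral})$ holds. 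This is the technical core and I expect it to be the main obstacle: it requires the full Kuranishi machinery of \cite{FOOO} and, following \cite{F2}, a careful treatment of the codimension-one boundary strata of $\mathcal{M}_{k+1+m}(L;\gamma)$ where an auxiliary point collides with another marked point, runs into a boundary node, or a sphere/disc bubbles off, so that these do not contribute extra terms to the fiber integral.

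Granting the compatible forgetful maps, the second step is the fiber computation. The fiber of $\mathfrak{forget}_m$ over a disc $u$ is a compactification of the configuration space of $m$ distinct auxiliary points on $\partial D^2$ away from the main marked points, and on this fiber the evaluation at the $i$-th auxiliary point is simply $z\mapsto u(z)$, i.e. the restriction $u|_{\partial D^2}$. Since $b\in H^1(L,\Lambda_+)$ is closed, $\int_{\partial D^2}(u|_{\partial D^2})^*b=\langle\partial\gamma,b\rangle$ depends only on $\gamma$ and $[b]$, so integrating $\bigwedge_{i=1}^m\mathrm{ev}^*b$ over the full labeled configuration space of the fiber gives $\langle\partial\gamma,b\rangle^m$. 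The factor $1/m!$ then appears because the summation $\sum_{m_0+\dots+m_k=m}$ corresponds to integrating over a fundamental domain for the relabeling symmetry $S_m$ of the auxiliary points — equivalently, over the sub-configuration on which the $m$ auxiliary labels occur in their given cyclic order, which is $1/m!$ of the whole (the main marked point $z_0$ breaks the cyclic symmetry, so $\partial D^2\setminus\{z_0\}$ is an interval and the count is literally $1/m!$). Lower-dimensional strata of the fiber contribute nothing to this top-degree fiber integral. Hence the fiberwise integral of $\bigwedge_{i=1}^m\mathrm{ev}^*b$ is the locally constant function $\tfrac{1}{m!}\langle\partial\gamma,b\rangle^m$ pulled back from $\mathcal{M}_{k+1}(L;\gamma)$, and substituting into the projection formula yields $(\mathrm{ev}_0)_!\big(\mathrm{ev}_1^*x_1\wedge\dots\wedge\mathrm{ev}_k^*x_k\big)\cdot\tfrac{1}{m!}\langle\partial\gamma,b\rangle^m=\tfrac{1}{m!}\langle\partial\gamma,b\rangle^m\,m_{k,\gamma}(x_1,\dots,x_k)$.

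Finally I would verify that the orientations of the fibers of $\mathfrak{forget}_m$ and of the evaluation maps — as fixed in \cite{FOOO}\cite{F2} — produce exactly the sign asserted (none, with the above conventions), and that the Novikov weight $T^{\omega(\gamma)}$ matches trivially since $\gamma$ is the same on both sides. One could alternatively run the argument on the canonical model via Theorem \ref{25}, using that the homotopy transfer is assembled from the same moduli data and so inherits compatibility with the forgetful maps; but the direct argument on $\Omega^*(L,\Lambda)$ above is cleaner.
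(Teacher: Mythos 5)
The paper gives no proof of this statement: it is quoted from Fukaya \cite{F2}, with the only indication of method being the sentence ``By constructing compatible forgetful maps, we have the analogue of divisor axiom in Gromov-Witten theory.'' Your sketch is exactly that argument --- forgetful maps compatible with the Kuranishi/perturbation data, the fiberwise integral of $\mathrm{ev}^*b$ over the ordered boundary configurations producing $\tfrac{1}{m!}\langle\partial\gamma,b\rangle^m$ --- so it matches the approach the paper points to, and you correctly flag the compatibility of perturbations with the forgetful maps as the genuine technical core that must be imported from \cite{FOOO}\cite{F2}.
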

 Similar argument to Theorem \ref{4053}, we have the divisor axiom for both $m^t_{k,\gamma}$ and $c^t_{k,\gamma}$. 
     \begin{thm} \label{4054} Given a pseudo-isotopy $(m^t_{k,\gamma},c^t_{k,\gamma})$ between $A_{\infty}$-structures, then similar statement of Theorem \ref{4053} holds for $m^t_{k,\gamma}$ and $c^{t}_{k,\gamma}$.  
     \end{thm}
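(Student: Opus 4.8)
The plan is to deduce Theorem \ref{4054} from the divisor axiom of Theorem \ref{4053}, applied not to a single almost complex structure but to the parametrized moduli problem underlying the pseudo-isotopy. Recall from the definition of pseudo-isotopy that $m^t_{k,\gamma}$ and $c^t_{k,\gamma}$ are the two components of a single gapped filtered $A_{\infty}$ structure $\hat{m}_{k,\gamma}$ on $C^{\infty}([0,1]_t,\Omega^*(L,\Lambda))$: for $dt$-free inputs $x_1,\cdots,x_k$ one has $\hat{m}_{k,\gamma}(x_1,\cdots,x_k)=m^t_{k,\gamma}(x_1,\cdots,x_k)+dt\wedge c^t_{k,\gamma}(x_1,\cdots,x_k)$, by the formulas in the Remark following the definition of pseudo-isotopy (taking $y_i=0$). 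Since $\hat{m}_{k,\gamma}$ is itself constructed, following \cite{F2}, by integration over the parametrized moduli spaces $\widehat{\mathcal{M}}_{k+1,\gamma}=\bigcup_{t\in[0,1]}\{t\}\times\mathcal{M}_{k+1,\gamma}(J_t)$ of stable discs with boundary on $L$, it suffices to establish the divisor axiom for $\hat{m}_{k,\gamma}$ itself and then separate the resulting identity into its $dt$-free and $dt$-parts.

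To prove the divisor axiom for $\hat{m}_{k,\gamma}$, I would repeat the construction of \cite{F2} in the family setting. First I would build the system of forgetful maps $\mathfrak{forget}_m:\mathcal{M}_{k+1+m,\gamma}(J_t)\to\mathcal{M}_{k+1,\gamma}(J_t)$ that delete the $m$ extra boundary marked points and contract the resulting unstable components, check that they are compatible with varying $t$, and hence obtain forgetful maps $\widehat{\mathcal{M}}_{k+1+m,\gamma}\to\widehat{\mathcal{M}}_{k+1,\gamma}$ commuting with the projection to $[0,1]_t$. One then arranges the Kuranishi structures (or abstract perturbations) on the moduli spaces with extra marked points to be pulled back under $\mathfrak{forget}_m$, coherently in $t$ and compatibly with the corner strata governing the $A_{\infty}$ and pseudo-isotopy relations. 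For $b\in H^1(L,\Lambda_+)$, pulled back to $L\times[0,1]$ and hence $dt$-free, fiber integration of the forms $ev_j^*b$ at the forgotten points along $\mathfrak{forget}_m$ produces a factor $\langle\partial\gamma,b\rangle$ for each forgotten point, since the fiber of $\mathfrak{forget}_m$ is a configuration space inside the boundary circle and $\int_{\partial\gamma}b=\langle\partial\gamma,b\rangle$; summing over the distributions $m_0+\cdots+m_k=m$ of the forgotten points into the $k+1$ arcs cut out by the retained marked points, and dividing by the $m!$ orderings of the now indistinguishable copies of $b$, gives precisely $\frac{1}{m!}\langle\partial\gamma,b\rangle^m\hat{m}_{k,\gamma}(x_1,\cdots,x_k)$. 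Because $\mathfrak{forget}_m$ commutes with the projection to $[0,1]_t$, this fiber integration commutes with extracting the $dt$-free and $dt$-components, so restricting the resulting identity for $\hat{m}_{k+m,\gamma}$ to $dt$-free inputs and comparing $dt$-free parts yields the divisor axiom for $m^t_{k,\gamma}$, while comparing $dt$-parts yields the divisor axiom for $c^t_{k,\gamma}$.

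The main obstacle is the one already present in the non-family case of Theorem \ref{4053}: making the virtual perturbation data on $\mathcal{M}_{k+1+m,\gamma}(J_t)$ genuinely pulled back under the forgetful maps, now simultaneously for all $t\in[0,1]$ and compatibly with all the boundary and corner data defining the pseudo-isotopy. Once this compatibility of perturbations with $\mathfrak{forget}_m$ is arranged, which is exactly the family-parametrized version of Fukaya's argument and requires no genuinely new idea, the divisor axioms for $m^t_{k,\gamma}$ and $c^t_{k,\gamma}$ follow formally from the fiber integration computation above, with the family direction playing a purely passive role.
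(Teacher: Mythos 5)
Your proposal is correct and follows essentially the same route as the paper, which proves Theorem \ref{4054} only by asserting that the ``similar argument'' to Theorem \ref{4053} (compatible forgetful maps plus fiber integration of $ev_j^*b$) applies in the family setting. Your elaboration --- running Fukaya's divisor-axiom argument on the parametrized moduli spaces $\widehat{\mathcal{M}}_{k+1,\gamma}$ to get the axiom for $\hat{m}_{k,\gamma}$ and then separating the $dt$-free and $dt$-components to obtain the statements for $m^t_{k,\gamma}$ and $\mathfrak{c}^t_{k,\gamma}$ --- is exactly the intended argument, just spelled out in more detail than the paper provides.
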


      Assume that $\mathcal{M}_{\gamma}(X_{\vartheta},L_{u_0})$ is non-empty and $L_u$ is a nearby smooth special Lagrangian with the same property. Write $\partial \gamma= \sum_i a_i e_i$, for some $a_i\in \mathbb{Z}$, then 
     \begin{align*}
       \sum_i a_i (f_i(u)-f_i(u_0))= a_i(\int_{\gamma_u}\mbox{Im}\Omega-\int_{\gamma_{u_0}}\mbox{Im}\Omega) =0-0=0. 
     \end{align*} Thus, we reach the following proposition:
     \begin{prop} \label{21}
        Assume that $L_u$ is a nearby smooth special Lagrangian such that $\mathcal{M}_{\gamma}(X,L_{u_0})$ and $\mathcal{M}_{\gamma}(X,L_u)$ are non-empty for some relative class $\gamma$, then $u$ falls on an affine hyperplane passing through $u_0$.
     \end{prop}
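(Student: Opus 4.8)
The plan is to unwind the definitions of the integral affine structure on $B_0$ and use the vanishing of $\mathrm{Im}\,\Omega_\vartheta$ on special Lagrangians. Recall from Section 3 that the affine coordinates near $u_0$ are the functions $f_i(u) = \int_{\bar{e_i}_u}\mathrm{Im}\,\Omega_\vartheta$, where $\bar{e_i}_u$ is the $2$-chain swept out by parallel transporting $\partial e_i$ along a path from $u_0$ to $u$. First I would fix a relative class $\gamma$ with $\mathcal{M}_\gamma(X,L_{u_0})$ and $\mathcal{M}_\gamma(X,L_u)$ both non-empty, write $\partial\gamma = \sum_i a_i e_i$ in the chosen integral basis $e_i \in H_2(X,L_{u_0};\mathbb{Z})$, and let $\gamma_u \in H_2(X,L_u;\mathbb{Z})$ denote the corresponding parallel transport of $\gamma$.

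The key computation is the displayed identity preceding the proposition: the $2$-chain $\bar\gamma_u := \sum_i a_i\, \bar{e_i}_u$ interpolates between $\partial\gamma_{u_0}$ and $\partial\gamma_u$, so by Stokes' theorem (using $d\,\mathrm{Im}\,\Omega_\vartheta = 0$) one gets $\sum_i a_i(f_i(u) - f_i(u_0)) = \int_{\gamma_u}\mathrm{Im}\,\Omega_\vartheta - \int_{\gamma_{u_0}}\mathrm{Im}\,\Omega_\vartheta$. Now the crucial input is that $L_{u_0}$ and $L_u$ are special Lagrangian in $X_\vartheta$ — this is exactly the hyperK\"ahler rotation set up in Section 3, where a holomorphic curve $L_{u_0}$ in $X$ becomes special Lagrangian in $X_\vartheta$ with $\mathrm{Im}\,\Omega_\vartheta|_{L_{u_0}} = 0$. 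Since $\gamma_{u_0}$ has boundary on $L_{u_0}$ and $\gamma_u$ has boundary on $L_u$, and the moduli spaces being non-empty means these classes are represented by holomorphic discs (so $\mathrm{Im}\,\Omega_\vartheta$ restricted to the disc vanishes — or more precisely, one argues via the boundary being special Lagrangian), both integrals vanish, giving the linear relation $\sum_i a_i(f_i(u) - f_i(u_0)) = 0$.

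This linear relation $\sum_i a_i(f_i(u) - f_i(u_0)) = 0$ in the affine coordinates $(f_1,\dots,f_{2g})$ is precisely the statement that $u$ lies on an affine hyperplane through $u_0$, with normal covector $\partial\gamma = \sum_i a_i e_i^\vee$; note $\partial\gamma \neq 0$ since a nonzero relative class represented by a nonconstant holomorphic disc has nonzero boundary class (a contractible boundary would bound in $L$, contradicting area positivity), so the hyperplane is genuinely codimension one. I would also remark that the statement is independent of the choice of path $\phi(t)$ defining the parallel transport and the chain $\bar{e_i}_u$, since $\mathrm{Im}\,\Omega_\vartheta$ is closed and $B_0$ is being worked with locally near $u_0$.

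The main obstacle I anticipate is being careful about what "$\mathcal{M}_\gamma(X,L_u)$ non-empty" buys us: strictly it gives a holomorphic disc $u_\gamma\colon (D,\partial D)\to (X,L_u)$ in class $\gamma_u$, and one must argue $\int_{\gamma_u}\mathrm{Im}\,\Omega_\vartheta = 0$. Since $\Omega_\vartheta$ is a $(2,0)$-form with respect to the complex structure on $X$ (before rotation) — indeed $\mathrm{Im}\,\Omega_\vartheta = -\mathrm{Re}(e^{-i\vartheta}\Omega)$ restricted to holomorphic objects — the pullback of $\mathrm{Im}\,\Omega_\vartheta$ to a $J$-holomorphic disc vanishes for type reasons, exactly as in the fact $\Omega|_{L_{u_0}} = 0$ used at the start of Section 3. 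Alternatively, this is the content of Proposition~\ref{21}'s companion facts about holomorphic discs projecting to affine lines; one simply invokes that holomorphic discs have zero $\mathrm{Im}\,\Omega_\vartheta$-period. So the proof is short — the real work is already in the displayed identity — and my write-up would mostly consist of stating the Stokes argument cleanly and checking $\partial\gamma \neq 0$.
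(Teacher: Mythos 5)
Your proof is correct and follows essentially the same route as the paper: the displayed identity $\sum_i a_i(f_i(u)-f_i(u_0))=\int_{\gamma_u}\mathrm{Im}\,\Omega_\vartheta-\int_{\gamma_{u_0}}\mathrm{Im}\,\Omega_\vartheta=0$ is exactly the paper's argument, and you correctly supply the missing justification that the periods vanish because $\mathrm{Im}\,\Omega_\vartheta$ is the imaginary part of a $(2,0)$-form and hence pulls back to zero on the relevant holomorphic discs. The only nitpick is that the discs in $\mathcal{M}_\gamma$ are $J_\vartheta$-holomorphic (for $\vartheta=\mathrm{Arg}\,Z_\gamma+\pi/2$), so the type argument should be run in $X_\vartheta$ rather than "before rotation," but the conclusion is unaffected.
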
 
       
       We will say an affine line $l$ in Proposition \ref{21} is labeled by $\gamma$ (locally) and denoted it by $l_{\gamma}$. Notice that $l_{\gamma}$ has a natural orientation due to the fact that $|Z_{\gamma}|$ is monotone on $l_{\gamma}$. Since $H_2(X,L_u)$ is countable, we have the following corollary:
     \begin{cor} \label{49}
        For any generic point $u\in B_0$, the special Lagrangian submanifold $L_u$ bounds no holomorphic discs.
     \end{cor}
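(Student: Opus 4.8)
The plan is to deduce this from Proposition~\ref{21} together with the countability of the lattice of relative classes, plus a ``thinness'' argument. First I would cover $B_0$ by countably many simply connected open sets $U_\alpha$ over which the local system $\Gamma=\cup_u H_2(X,L_u)$ is trivial with countable fibre $\Gamma_\alpha$. For each $\alpha$ and each nonzero $\gamma\in\Gamma_\alpha$, consider
\[
   D_{\alpha,\gamma}:=\{\,u\in U_\alpha\mid \mathcal{M}_\gamma(X,L_u)\neq\emptyset\,\}.
\]
If this set is nonempty, pick $u_0\in D_{\alpha,\gamma}$; then by Proposition~\ref{21} every $u\in D_{\alpha,\gamma}$ near $u_0$ lies on the affine line $l_\gamma$ through $u_0$, which (writing $\partial\gamma=\sum_i a_i e_i$) is the level set $\{\sum_i a_i f_i=\sum_i a_i f_i(u_0)\}$ of the affine coordinates. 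Hence $D_{\alpha,\gamma}$ is locally contained in a real-analytic hypersurface; covering once more by countably many such neighborhoods, $D_{\alpha,\gamma}$ has Lebesgue measure zero and empty interior.

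Since there are only countably many pairs $(\alpha,\gamma)$, the set $D:=\bigcup_{\alpha,\gamma}D_{\alpha,\gamma}$ still has measure zero. Any $u\in B_0\setminus D$ satisfies $\mathcal{M}_\gamma(X,L_u)=\emptyset$ for every nonzero relative class $\gamma$, hence $L_u$ bounds no nonconstant holomorphic disc; and $B_0\setminus D$ is the dense, full-measure set of generic points in the statement. The same codimension-one constraint can alternatively be read off from Lemma~\ref{1911}, which forces an edge of the corresponding tropical object onto a gradient trajectory of some $|Z_\gamma|$.

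If one wants the stronger conclusion that $B_0\setminus D$ is comeager, one checks that each $D_{\alpha,\gamma}$ is nowhere dense, i.e.\ that its closure is still contained in a locally finite union of the hypersurfaces $\{\sum_i a_i f_i=c\}$. The only subtlety is that a Gromov limit of discs in class $\gamma$ need not be a disc in class $\gamma$ but a broken configuration whose component classes sum to $\gamma$; each component again pins $u$ to an affine line and there remain only countably many configurations, so this causes no real trouble. This bookkeeping is the only mild obstacle: all the geometric content already sits in Proposition~\ref{21}.
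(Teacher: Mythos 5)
Your argument is correct and is exactly the one the paper intends: the corollary is stated immediately after Proposition~\ref{21} with the one-line justification ``since $H_2(X,L_u)$ is countable,'' and your proposal simply fleshes out that countable union of affine hyperplanes with the standard measure-zero/nowhere-dense bookkeeping. No substantive difference in approach.
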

      \begin{lem}
              For any $b_1,b_2\in H^1(L_u)$ and $b_1\neq b_2$, then $b_1$ is not gauge equivalent to $b_2$. 
            \end{lem}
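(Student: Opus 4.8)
The goal is to show that distinct classes $b_1, b_2 \in H^1(L_u)$ are never gauge equivalent as Maurer--Cartan elements of the $A_\infty$ algebra on $L_u$. Since $u$ is generic (Corollary \ref{49}), $L_u$ bounds no holomorphic discs, so by the example following Proposition \ref{35} the $A_\infty$ structure reduces to the trivial one: $m_{k,\gamma} = 0$ except for $m_{1,0} = \pm d$ and $m_{2,0} = \pm \wedge$, and in particular there are no quantum ($\gamma \neq 0$) contributions. Under this reduction the Maurer--Cartan equation and the gauge equivalence relation become the \emph{classical} ones on the de Rham dga $\Omega^*(L_u, \Lambda)$ (or on $H^*(L_u,\Lambda)$ via the canonical model), so the whole statement takes place in ordinary Hodge theory over $\Lambda_+$ rather than in a genuinely curved setting. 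The point is that on the harmonic (canonical) model the only surviving products are $m_2^{can} = \pm\langle\,,\,\rangle$, which lands in $H^2$, so for degree reasons all gauge trajectories are essentially linear.

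\smallskip

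\noindent\textbf{Key steps.} First I would unwind the definition of gauge equivalence in this reduced setting: a path $b(t) \in H^1(L_u, \Lambda_+)$ with $c(t) \in H^0(L_u,\Lambda_+)$ satisfying $\tfrac{d}{dt} b(t) + \sum_k m_k(b(t),\dots,c(t),\dots,b(t)) = 0$. On the canonical model $H^*(L_u,\Lambda)$, $m_1^{can} = 0$ and $m_2^{can}(b, c)$, $m_2^{can}(c,b)$ lie in $H^1$ but are proportional to the cup product of a degree-$1$ and a degree-$0$ class, hence vanish since $L_u$ is connected and $c$ is a (scalar) $0$-form up to Novikov coefficients — more precisely $m_2^{can}(x, c) = \pm \langle x, c\rangle$ with $c \in H^0$, and the wedge of a harmonic $1$-form with a constant function is that function times the $1$-form, but this is again a harmonic $1$-form, so one must be slightly careful: the correct statement is that the degree-$1$ component of $\sum_k m_k(b,\dots,c,\dots,b)$ is $m_2^{can}(c,b) \pm m_2^{can}(b,c) = \pm(\text{const}(c)) \cdot b$ — wait, I would instead argue directly on $\Omega^*(L_u,\Lambda)$ where $m_1 = \pm d$ genuinely contributes: the gauge equation becomes $\tfrac{d}{dt}b(t) = \mp d c(t) \mp (\text{lower-order } m_2 \text{ terms involving } c \text{ and } b)$, and one checks that the $m_2$ terms with one input $c \in \Omega^0$ give $\pm d(\text{something})$-exact-or-controlled corrections, so that $\tfrac{d}{dt}[b(t)] = 0$ in de Rham cohomology. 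Integrating, $[b(0)] = [b(1)]$, i.e. $b_1$ and $b_2$ represent the same de Rham class; but $b_1, b_2$ are \emph{already} harmonic representatives (elements of $H^1(L_u)$ in the canonical model sense), so equality of de Rham classes forces $b_1 = b_2$. On the canonical model the argument is even cleaner: gauge equivalence collapses to the statement that the gauge action of $H^0$ on $H^1$ is trivial because $m_2^{can}$ is the (graded) pairing landing in top degree, so the right-hand side of the gauge ODE is $\tfrac{d}{dt}b(t) = 0$, whence $b$ is constant.

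\smallskip

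\noindent\textbf{Main obstacle.} The subtle point — and the one I would spend the most care on — is bookkeeping the degree-$1$ part of $\sum_k m_k(b(t),\dots,c(t),\dots,b(t))$ and confirming it contributes nothing to the cohomology class of $b(t)$: one must check that inserting a single $c \in \Omega^0$ (or $H^0$) into $m_2$ produces only terms that are $d$-exact (on $\Omega^*$) or literally zero (on $H^*$), and that no higher $m_k$ can appear because all $m_{k,\gamma}$ with $(k,\gamma) \neq (1,0),(2,0)$ vanish for a fibre bounding no discs. A secondary technical wrinkle is that everything is defined over $\Lambda_+$, so "$b(t)$ constant in cohomology" must be interpreted $T$-adically and one invokes the filtration to justify term-by-term integration of the ODE; this is routine given the gapped hypothesis but should be stated. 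Once that is settled, the conclusion $b_1 = b_2$ is immediate from the identification of $H^1(L_u)$ with de Rham classes in the canonical model.
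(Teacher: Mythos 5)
Your argument only covers the case in which $L_u$ bounds no holomorphic discs, i.e.\ you assume at the outset that $u$ is a generic point in the sense of Corollary \ref{49}. The lemma, however, is stated for an arbitrary fibre $L_u$, and the case that actually matters in the paper is precisely the non-generic one: the whole point of the surrounding section is to study Maurer--Cartan elements of fibres lying on (or near) walls, where $L_u$ \emph{does} bound holomorphic discs. For such $u$ the operations $m_{k,\gamma}$ with $\gamma\neq 0$ do not vanish, the $A_\infty$ structure is genuinely curved and deformed, and your reduction to classical Hodge theory on the de Rham dga (or to the formal canonical model with only $m_2^{can}$) breaks down --- the gauge ODE then contains quantum terms $m_{k,\gamma}(b,\dots,c,\dots,b)T^{\omega(\gamma)}$ whose effect on the degree-one component you have not controlled. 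So as written the proof establishes the lemma only on the complement of the walls.

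The paper closes this gap with one more step: connect $u$ by a short path $\phi$ to a nearby generic point $u'$ and apply Fukaya's trick to obtain a pseudo-isotopy between the two $A_\infty$ structures; by Proposition \ref{35} this pseudo-isotopy induces an isomorphism $F$ of Maurer--Cartan moduli spaces, and $F$ is injective on elements (it is the identity modulo higher-order $\Lambda_+$ terms, hence invertible in the $T$-adic filtration). If $b_1\neq b_2$ were gauge equivalent over $L_u$, their images $F(b_1)\neq F(b_2)$ would be gauge equivalent over the generic fibre $L_{u'}$, contradicting the case you did prove. Your treatment of the generic case is essentially the paper's (via the Example identifying $\mathcal{MC}(H^*(L,\Lambda))$ with harmonic $1$-forms), modulo some wobble in the degree bookkeeping for the $m_2$ terms involving $c\in H^0$; but without the pseudo-isotopy/Fukaya's-trick reduction the proof is incomplete.
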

            \begin{proof}
                The lemma follows from Corollary \ref{49} when $L_u$ is a generic torus fibre.  Together with Proposition \ref{35} and Fukaya's trick prove the lemma for general case.
            \end{proof}
     
           The Proposition \ref{21} together with Gromov compactness theorem gives the follow corollary:
     
     \begin{cor} \label{29}
       Fix $\lambda>0$, then the set
        \begin{align*}
                    L_{\lambda}:= \{u\in B_0| L_u \mbox{ bounds a holomorphic discs with area less than $\lambda$. }\}
                  \end{align*}
        is locally a closed subset of finite union of
       walls of first kind with energy less than $\lambda$.
     \end{cor}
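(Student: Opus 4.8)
The plan is to run the argument announced just before the statement: combine Proposition~\ref{21} with Gromov compactness. First I would fix a point $u_0\in B_0$ and pick a contractible, relatively compact neighborhood $U\ni u_0$, small enough that parallel transport identifies $H_2(X,L_u)$ with $\Gamma:=H_2(X,L_{u_0})$ for all $u\in\overline U$ and small enough for Proposition~\ref{21} to apply over $\overline U$. Since $\omega_\vartheta$ (equivalently $\mathrm{Im}\,\Omega_\vartheta$) restricts to zero on the fibres, Stokes' theorem shows the symplectic area of a holomorphic disc with boundary on any $L_u$, $u\in\overline U$, depends only on its class in $\Gamma$; write $E(\gamma)$ for this number (it coincides with $|Z_\gamma|$ after the appropriate hyperK\"ahler rotation). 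With this notation $L_\lambda\cap U=\bigcup_{E(\gamma)<\lambda} W_\gamma$, where $W_\gamma:=\{u\in U\mid \mathcal M_\gamma(X_\vartheta,L_u)\neq\emptyset\}$. Applying Gromov compactness to the moduli space of holomorphic discs of area $<\lambda$ with boundary on the compact family $\{L_u\}_{u\in\overline U}$ inside the compact K3 surface $X_\vartheta$, one gets that only finitely many classes $\gamma_1,\dots,\gamma_N\in\Gamma$, each with $E(\gamma_i)<\lambda$, have $W_{\gamma_i}\neq\emptyset$, so that $L_\lambda\cap U=\bigcup_{i=1}^N W_{\gamma_i}$.

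Next I would localize each piece onto a wall. If $W_{\gamma_i}\neq\emptyset$, choose $u^{(i)}\in W_{\gamma_i}$ and write $\partial\gamma_i=\sum_j a_j e_j$. Then Proposition~\ref{21} forces every $u\in W_{\gamma_i}$ to satisfy $\sum_j a_j\bigl(f_j(u)-f_j(u^{(i)})\bigr)=0$, i.e. $W_{\gamma_i}$ lies on the affine line $l_{\gamma_i}$ through $u^{(i)}$, a wall of the first kind of energy $E(\gamma_i)<\lambda$. Hence $L_\lambda\cap U\subseteq l_{\gamma_1}\cup\cdots\cup l_{\gamma_N}$, a finite union of walls of the first kind of energy less than $\lambda$, as required.

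It remains to check that $L_\lambda\cap U$ is closed in $U$, hence a closed subset of $\bigcup_i l_{\gamma_i}$. If $u_n\to u$ in $U$ with $u_n\in L_\lambda$, then each $L_{u_n}$ bounds a disc of area $<\lambda$; since only the finitely many values $E(\gamma_1),\dots,E(\gamma_N)$ can occur, after passing to a subsequence these areas equal a single $a<\lambda$, and Gromov compactness yields a stable limit with boundary on $L_u$ of total energy $a$ whose nonconstant disc component has positive area $\le a<\lambda$, so $u\in L_\lambda$. I expect the main obstacle to be precisely this last compactness step together with the uniform finiteness in the first step: one must control sphere and disc bubbling in the Gromov limit so that a genuine nonconstant holomorphic disc of area still below $\lambda$ survives on the central fibre, and one must know the finiteness of represented classes is uniform over the compact base $\overline U$ (and over the relevant $S^1$-family of complex structures $J_\vartheta$). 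Both are consequences of the standard Gromov compactness package once the Lagrangian boundary condition, and the almost complex structure, are allowed to vary in a compact family; I would cite this rather than reprove it.
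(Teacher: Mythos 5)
Your argument is exactly the one the paper intends: Corollary \ref{29} is stated without proof as a direct consequence of Proposition \ref{21} and Gromov compactness, and your write-up fills in precisely those two ingredients (finiteness of the represented classes over a compact family, containment of each nonempty locus in an affine line $l_{\gamma_i}$, and closedness via a limiting/bubbling argument). One small correction: the area of a disc in class $\gamma$ is \emph{not} independent of $u$ --- it equals $|Z_{\gamma}(u)|$, which the paper notes is monotone along $l_{\gamma}$ --- so your $E(\gamma)$ is not well-defined as a single number per class; this does not damage the argument, since Gromov compactness applied over the compact family $\overline{U}\times S^1_{\vartheta}$ with the uniform energy bound $\lambda$ already yields the finiteness of classes you need, and the rest goes through verbatim.
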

     The central charge function (\ref{4010}) plays an important roles for holomorphic discs with special Lagrangian boundary condition in hyperK\"ahler manifolds. 
     
       \begin{lem}\cite{L4}
                If $\mathcal{M}_{\gamma}(\mathfrak{X}^{[\omega]},L)\neq \emptyset$, then $\mathcal{M}_{\gamma}(\mathfrak{X}^{[\omega]},L)=\mathcal{M}_{\gamma}(X_{\vartheta},L)$, $\vartheta=\mbox{Arg}Z_{\gamma}+\frac{\pi}{2}$ as topological spaces. 
             \end{lem}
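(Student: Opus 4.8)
The plan is to unwind the definition of the moduli spaces in terms of the hyperK\"ahler family $\{X_\vartheta\}$ and observe that the almost complex structures involved differ only by a rotation that does not affect the $J$-holomorphicity condition once the boundary condition is fixed. Recall that $\mathfrak{X}^{[\omega]}$ denotes the twistor-type family built from the pair $(\omega,\Omega)$, and that for each $\vartheta\in S^1$ the complex structure on $X_\vartheta$ is the one whose K\"ahler form is $\omega_\vartheta=-\mathrm{Im}(e^{-i\vartheta}\Omega)$ and whose holomorphic volume form is $\Omega_\vartheta=\omega-i\,\mathrm{Re}(e^{-i\vartheta}\Omega)$. The first step is to write down explicitly, for a disc $u:(D,\partial D)\to(X,L)$ with $[u]=\gamma$, what the energy $\int_D u^*\omega_\vartheta$ is: by Stokes and the fact that $\omega_\vartheta$ restricts to zero on $L$ (since $L$ is special Lagrangian in $X_\vartheta$ whenever $\vartheta=\mathrm{Arg}Z_\gamma+\pi/2$), this energy is computed purely from the class $\gamma$ and equals $|Z_\gamma|$ up to the normalization; in particular it is positive and finite exactly along the relevant ray of phases.

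Next I would compare the Cauchy--Riemann operators. A map $u$ is $J_\vartheta$-holomorphic iff $\bar\partial_{J_\vartheta}u=0$; the point is that for a fixed relative class $\gamma$ the condition $[u]=\gamma$ forces $\mathrm{Arg}\int_{[u]}\Omega$ to be fixed, hence forces the phase $\vartheta$ for which the boundary condition $\omega_\vartheta|_L=0$ is compatible to be $\mathrm{Arg}Z_\gamma+\pi/2$. For that single value of $\vartheta$ the disc then has zero symplectic area against every $\omega_{\vartheta'}$ with $\vartheta'\neq\vartheta$ unless the phase matches, so no holomorphic representative of $\gamma$ exists for $X_{\vartheta'}$, $\vartheta'\neq\vartheta$. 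Conversely, for $X_{\vartheta}$ with exactly this $\vartheta$, the positivity of energy together with the standard argument (e.g.\ the one behind Proposition \ref{21} and Corollary \ref{29}) shows the moduli space is cut out by the same equation as for the twistor family $\mathfrak{X}^{[\omega]}$ restricted to the slice of matching phase. Thus I would show there is a tautological bijection between $\mathcal{M}_\gamma(\mathfrak{X}^{[\omega]},L)$ and $\mathcal{M}_\gamma(X_\vartheta,L)$, and that this bijection is a homeomorphism because on both sides the topology is the $C^\infty$ (Gromov) topology on the space of maps, which does not see the relabeling of the ambient complex structure.

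The verification that the bijection respects the topologies is essentially formal: a sequence of $J_\vartheta$-holomorphic discs converges in $\mathcal{M}_\gamma(X_\vartheta,L)$ iff the underlying maps converge in $C^\infty_{\mathrm{loc}}$ with no bubbling (bubbling is excluded here by the negative virtual dimension / rigidity statements and by the energy identity above fixing the total area), and the identical statement characterizes convergence in $\mathcal{M}_\gamma(\mathfrak{X}^{[\omega]},L)$; since the map on underlying geometric objects is the identity, continuity in both directions is immediate. I would also invoke the preceding lemma-style observations (the ones just before this statement, relating holomorphic curves $L_{u_0}$ in $X$ to special Lagrangians in $X_\vartheta$ via hyperK\"ahler rotation) to make sure the boundary condition is literally the same submanifold on both sides, not merely diffeomorphic.

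The main obstacle I anticipate is bookkeeping the orientation and phase conventions carefully enough that the claimed equality $\vartheta=\mathrm{Arg}Z_\gamma+\frac{\pi}{2}$ comes out with the correct sign, and ensuring that the energy computation excludes, rather than merely bounds, holomorphic representatives at the wrong phase --- i.e.\ proving that a $J_{\vartheta'}$-holomorphic disc in class $\gamma$ with $\vartheta'\neq\mathrm{Arg}Z_\gamma+\pi/2$ would have either nonzero boundary integral against $\mathrm{Im}\,\Omega_{\vartheta'}$ (contradicting the special Lagrangian condition) or negative/zero area (contradicting $J_{\vartheta'}$-holomorphicity). Once that dichotomy is pinned down, the equality of moduli spaces as topological spaces is a direct consequence, with essentially no analysis beyond Gromov compactness already invoked in Corollary \ref{29}.
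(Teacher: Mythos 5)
First, a caveat about the comparison you asked for: the paper does not prove this lemma at all --- it is quoted from \cite{L4} with no argument given --- so there is no internal proof to measure your attempt against. Judged on its own terms, your outline has the right overall shape (the phase of $Z_\gamma$ must pin down the unique $\vartheta$ at which holomorphic representatives can exist, after which the identification of $\mathcal{M}_{\gamma}(\mathfrak{X}^{[\omega]},L)=\bigcup_{\vartheta'}\mathcal{M}_\gamma(X_{\vartheta'},L)$ with the single slice is a tautology compatible with the Gromov topology). But the central step is not actually carried out, and the one mechanism you do invoke cannot do the job.

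The gap: positivity of energy alone does not single out $\vartheta=\mathrm{Arg}Z_\gamma+\pi/2$. For a disc in class $\gamma$ the $\omega_{\vartheta'}$-area is $\int_\gamma\omega_{\vartheta'}=-\mathrm{Im}(e^{-i\vartheta'}Z_\gamma)=|Z_\gamma|\sin(\vartheta'-\mathrm{Arg}Z_\gamma)$, which is strictly positive on the entire open half-circle $\vartheta'\in(\mathrm{Arg}Z_\gamma,\mathrm{Arg}Z_\gamma+\pi)$; your assertion that the disc ``has zero symplectic area against every $\omega_{\vartheta'}$ with $\vartheta'\neq\vartheta$'' is false, and the energy argument therefore excludes only half of the phases. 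The boundary condition gives nothing further: since $\Omega|_L=0$ for a fibre $L$, one has $\omega_{\vartheta'}|_L=\mathrm{Im}\Omega_{\vartheta'}|_L=0$ for \emph{every} $\vartheta'$, so $L$ is special Lagrangian in all $X_{\vartheta'}$ (not only at $\mathrm{Arg}Z_\gamma+\pi/2$ as your first paragraph asserts), and no constraint on $\vartheta'$ can come from $L$. The missing ingredient is the holomorphic-symplectic calibration identity: $\Omega_{\vartheta'}=\omega-i\,\mathrm{Re}(e^{-i\vartheta'}\Omega)$ is a $(2,0)$-form for $J_{\vartheta'}$ and hence restricts to zero pointwise on any $J_{\vartheta'}$-holomorphic curve, so $\int_\gamma\Omega_{\vartheta'}=0$, forcing $\mathrm{Re}(e^{-i\vartheta'}Z_\gamma)=0$ (and $\int_\gamma\omega=0$). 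Combined with $-\mathrm{Im}(e^{-i\vartheta'}Z_\gamma)>0$ this gives $e^{-i\vartheta'}Z_\gamma\in -i\,\mathbb{R}_{>0}$, i.e.\ $\vartheta'=\mathrm{Arg}Z_\gamma+\tfrac{\pi}{2}$ exactly. You flag precisely this dichotomy as an ``anticipated obstacle'' in your closing paragraph rather than resolving it; without it the proof does not close. Once it is in place, the remaining set-theoretic and topological identification is as routine as you describe.
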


    Assume $\gamma\in H_2(X,L_u)$ and $\mathcal{M}_{\gamma}(X,L_u)$ has non-trivial real codimension one boundary. Namely, there exists $\gamma_1,\gamma_2\in H_2(X,L_u)$ such that $\gamma=\gamma_1+\gamma_2$ and $u$ satisfies the equation
        \begin{align*}
           \mbox{Arg}\big(\int_{\gamma_1}\mbox{Im}\Omega+i\omega\big)= \mbox{Arg}\big(\int_{\gamma_2}\mbox{Im}\Omega+i\omega\big)=0,
        \end{align*} which cuts out finitely many points on $B_0$ \cite{L4}.

   For our purpose, we need the following Fukaya's trick: 
   Given a reference point $p\in B_0$ and a path $\phi$ in a neighborhood $U$ of $p$ such that $\phi(0)=u_0, \phi(1)=u_1$. Assume that $U$ is small enough so that there exists a $2$-parameter family of fibrewise preserving diffeomorphisms $\psi_{s,t}$ of the K3 surface satisfying 
      \begin{enumerate}
         \item $\psi_{0,t}=\mbox{Id}$ and $\psi_{1,t}(L_{\phi(t)})=L_p$. Thus, $\psi_{s,t}(L_{\phi(t)})$ induces a path from $\phi(t)$ to $p$ for fixed $t$. 
         \item Let $J$ be the almost complex structure of the K3 surface $X$. The complex structure $(\psi_{s,t})_*J$ is tame with respect to the symplectic form $\omega$. 
         \item The diffeomorphism $\psi_{s,t}$ is an identity away from $L_u$, where $u$ is outside of the neighborhood $U$. 
      \end{enumerate} Then for each relative class $\gamma\in H_2(X,L_{p})$, the two moduli spaces of holomorphic discs \footnote{Here there is a natural identification $H_2(X,L_{u_0})$ between $H_2(X,L_{\phi(t)})$ via ${\psi_{1,t}}_*$, which is the parallel transport of the Jacobian fibration.}
      \begin{align}\label{3049}
       \mathcal{M}_{\gamma}(X,& L_{\phi(t)})  \rightarrow \mathcal{M}_{\gamma}(X, L_{p}) \notag  \\
           &\alpha \longmapsto  (\psi_{1,t})_* \circ \alpha
      \end{align} are naturally identified. Here the later moduli space is with respect to the complex structure $J$, while the former one is respect to the complex structure $(\psi_{1,t})_*J$.  Thus via the choice of $2$-parameter family of diffeomorphisms $\phi_{s,t}$, we can have a $1$-parameter family of (almost) complex structures $\{J_t=(\psi_{1,t})_*J\}_{t\in [0,1]}$. Write the symplectic affine coordinates of $u$ associate to the basis $u_1,\cdots,u_2$, where $u_i=\int_{\bar{e_i}_u}\omega$. Since $J_t$ is a small perturbation of complex structure of $J$, Corollary \ref{22} gives a pseudo-isotopy between 
              \begin{align} \label{7878}
                 \big(\Omega^*(L_{p},\Lambda),\sum_{\gamma}m_{k,\gamma}T^{\int_{\gamma}\omega}\big) \mbox{ and } \big(\Omega^*(L_{p},\Lambda), m_{k,\gamma}T^{\int_{\gamma}\omega-\sum u_i ( \partial\gamma,e_i)}\big), 
              \end{align} where the former is computed using the almost complex structure $(\psi_{1,0})_*J$ while the later is computed using $(\psi_{1,1})_*J$. Here we use the notation $\partial\gamma=\sum_k (\partial\gamma,e_i) e_i$ and $(,)$ denotes the natural pairing between $H_1(L)$ and $H^1(L)$.
   In particular, it induces an isomorphism on the Maurer-Cartan spaces of the canonical models \footnote{Notice that the $A_{\infty}$ structures are slightly different from the one on $H^*(L_{u_0},\Lambda),H^*(L_{u_1},\Lambda)$ due to the flux.},
     \begin{align*}
        F^{can}_{(\phi,p)}: H^1(L_{p},\Lambda_+)=\mathcal{MC}(H^*(L_{p},\Lambda_+)) \cong \mathcal{MC}(H^*(L_{p},\Lambda_+))= H^1(L_{p},\Lambda_+) .
     \end{align*} Here the first $\mathcal{MC}(H^*(L_p,\Lambda_+))$ is the Maurer-Cartan space for the $A_{\infty}$ algebra of the former one in (\ref{7878}) and the second one is the Maurer-Cartan space for the later one in (\ref{7878}).  
First of all, the isomorphism $F^{can}_{(\phi,p)}$ is independent of the choices made to construct the Kuranishi structures and perturbed multi-sections. Indeed, the different choices will induce pseudo-isotopies of the pseudo-isotopies and $F^{can}_{(\phi,p)}$ is well-defined by Theorem \ref{35}.  
     
     For a different choice of the $2$-parameter family of the diffeomorphisms $\psi_{s,t}$ and $\psi_{s,t}'$, the resulting isomorphism $F^{can}_{(\phi,p)}$ and $(F^{can}_{(\phi,p)})'$ on the Maurer-Cartan spaces are the same due to the natural isomorphism (\ref{3049}). Moreover, for a different choice of reference point $p'$, the resulting isomorphisms on the Maurer-Cartan spaces are related by 
       \begin{align*}
           F^{can}_{(\phi,p)}=T_{p',p}F^{can}_{(\phi,p')}.
       \end{align*} 
     
      The following proposition follows directly from the construction of $F^{can}_{\phi}$. 
   \begin{prop} \label{32}
       Fix $\lambda>0$. Assume that $L_{\phi(t)}, t\in [0,1]$ does not bound any holomorphic discs of relative $\gamma$ such that  $|Z_{\gamma}(p)|<\lambda$, then the isomorphism 
         \begin{align*}
            F^{can}_{(\phi,p)}=\mbox{Id}_{H^1(L,\Lambda_+)} \hspace{4mm} (\mbox{mod }T^{\lambda}).
         \end{align*}

     \end{prop}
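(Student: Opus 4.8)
The plan is to show that, modulo $T^{\lambda}$, the pseudo-isotopy produced by Fukaya's trick degenerates to the constant (and formal) one, so that the tree formula of Proposition~\ref{35} forces the induced map on Maurer--Cartan spaces to be the identity.

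First I would observe that for $\gamma\neq 0$ the structure constants $m^{t}_{k,\gamma}$ of the pseudo-isotopy relating the two $A_{\infty}$ algebras in (\ref{7878}), as well as the pseudo-isotopy constants $\mathfrak{c}^{t}_{k,\gamma}$, are defined by (virtual) integration over the moduli space of $J_{t}$-holomorphic discs with boundary on $L_{p}$ representing $\gamma$ --- equivalently, via the identification (\ref{3049}), over $\mathcal{M}_{\gamma}(X,L_{\phi(t)})$. The symplectic area of any such disc equals $|Z_{\gamma}(p)|$ up to the flux term that already appears in (\ref{7878}); hence, by hypothesis, for every $t\in[0,1]$ and every $\gamma\neq 0$ with $|Z_{\gamma}(p)|<\lambda$ the relevant moduli space is empty, and Gromov compactness shows the parametrized family over $t\in[0,1]$ is empty as well. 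Since $F^{can}_{(\phi,p)}$ is independent of the auxiliary Kuranishi data and perturbed multi-sections (different choices differ by a pseudo-isotopy of pseudo-isotopies, so Theorem~\ref{47} applies), I may select perturbations for which an operation with empty moduli space is identically zero. Therefore $m^{t}_{k,\gamma}=0$ and $\mathfrak{c}^{t}_{k,\gamma}=0$ for all such $\gamma$, while for $\gamma=0$ the operations are the ($t$-independent) de Rham differential and wedge product, and $\mathfrak{c}^{t}_{k,0}=0$ identically. Passing to canonical models via Proposition~\ref{20} and expanding the corresponding tree sums, gappedness together with the additivity of $\omega$ shows that any tree in $Gr(k,\gamma)$ with $0<\omega(\gamma)<\lambda$ must carry an interior vertex $v$ with $\gamma_{v}\neq 0$ and $\omega(\gamma_{v})<\lambda$, at which the inserted operator already vanishes; hence the canonical-model structure constants and pseudo-isotopy constants likewise vanish for $0<\omega(\gamma)<\lambda$, and $\mathfrak{c}^{can,t}_{k,0}=0$.

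Finally I would apply Proposition~\ref{35}, which gives $F^{can}_{(\phi,p)}(b)=\sum_{k\geq 1,\gamma}\mathfrak{c}^{can}(k,\gamma)(b,\dots,b)\,T^{\omega(\gamma)}$, where $\mathfrak{c}^{can}(k,\gamma)$ is the sum over time-allocated trees $T\in Gr(k,\gamma)$ of the composition operators $\mathfrak{c}^{can}(T,\tau)$ built, vertex by vertex, from the $\mathfrak{c}^{can,\tau(v)}_{k_{v},\gamma_{v}}$ as in Theorem~\ref{4050}. Working modulo $T^{\lambda}$ we may discard all $\gamma$ with $\omega(\gamma)\geq\lambda$; for $\omega(\gamma)<\lambda$ every decorated tree with at least one interior vertex contributes $0$, because each interior vertex $v$ contributes a factor $\mathfrak{c}^{can,\tau(v)}_{k_{v},\gamma_{v}}$ which is zero whether $\gamma_{v}=0$ (always) or $\gamma_{v}\neq 0$ with $\omega(\gamma_{v})<\lambda$ (by the previous paragraph). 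The unique tree with no interior vertex is the single edge, corresponding to $(k,\gamma)=(1,0)$ with $\mathfrak{c}^{can}(T,\tau)=\mathrm{Id}$, whose contribution is exactly $b$. Hence $F^{can}_{(\phi,p)}(b)\equiv b\pmod{T^{\lambda}}$, which is the claim. The only genuinely delicate point is the passage from emptiness of the geometric moduli spaces to identical vanishing of the perturbed operations; this is precisely what the invariance of $F^{can}_{(\phi,p)}$ under changes of perturbation data (Theorem~\ref{47}) allows one to arrange, and the rest is routine bookkeeping with gapped trees.
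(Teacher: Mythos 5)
Your argument is correct and is precisely the unpacking of what the paper asserts without proof (the paper simply states that the proposition ``follows directly from the construction of $F^{can}_{\phi}$''): emptiness of the moduli spaces forces $\mathfrak{c}^{t}_{k,\gamma}=0$ for $0<\omega(\gamma)<\lambda$, the axiom $\mathfrak{c}^{t}_{k,0}=0$ kills the energy-zero trees, and the tree expansion of Proposition~\ref{35} then leaves only the identity term modulo $T^{\lambda}$. No discrepancy with the paper's (omitted) proof.
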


    The following is the key theorem to prove the wall-crossing formula of open Gromov-Witten invariants in the later section. 
    
    \begin{thm} \cite{F1} \cite{T4} \label{102}
      Let $\phi_0,\phi_1$ be two paths with the same end points and homotopic to each other, within a small enough open neighborhood $U\ni p$ in $B_0$. Then $F^{can}_{(\phi_0,p)}=F^{can}_{(\phi_1,p)}$. In particular, if $\phi$ is a loop contractible in an small enough open set $U\subseteq B_0$. Then $F^{can}_{(\phi,p)}=Id$. 
    \end{thm}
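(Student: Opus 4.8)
The plan is to reduce the statement to the invariance results for Maurer--Cartan spaces under pseudo-isotopies, namely Proposition~\ref{35} and Theorem~\ref{47}, combined with Fukaya's trick as set up above. First I would recall that $F^{can}_{(\phi,p)}$ is, by construction, the map on Maurer--Cartan spaces of canonical models induced by a pseudo-isotopy; this pseudo-isotopy is built out of the $1$-parameter family of almost complex structures $\{J_t=(\psi_{1,t})_*J\}_{t\in[0,1]}$ attached to the path $\phi$ via the $2$-parameter family $\psi_{s,t}$. The key point is that composing the pseudo-isotopy for $\phi_0$ with (the inverse of) the one for $\phi_1$ gives a loop of pseudo-isotopies, and a homotopy between $\phi_0$ and $\phi_1$ inside $U$ produces a \emph{pseudo-isotopy of pseudo-isotopies} between these two. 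By Theorem~\ref{47}, a pseudo-isotopy of pseudo-isotopies induces the same map on $\mathcal{MC}$, so $(F_1)_*=(F_2)_*$, which is exactly $F^{can}_{(\phi_0,p)}=F^{can}_{(\phi_1,p)}$.

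The steps, in order, are as follows. First, given a homotopy $\Phi:[0,1]_s\times[0,1]_t\to U$ with $\Phi(0,\cdot)=\phi_0$, $\Phi(1,\cdot)=\phi_1$ and fixed endpoints, I would upgrade the $2$-parameter family $\psi_{s,t}$ to a $3$-parameter family of fibrewise-preserving diffeomorphisms adapted to $\Phi$, so that for each fixed $s$ we get the family of complex structures defining the pseudo-isotopy associated to $\phi_s:=\Phi(s,\cdot)$; shrinking $U$ ensures the tameness condition (2) and the support condition (3) continue to hold uniformly in $s$. Second, I would invoke Fukaya's construction (Theorem~\ref{2899} and its parametrized version) to obtain, from this $3$-parameter family, a pseudo-isotopy of pseudo-isotopies interpolating between the pseudo-isotopy for $\phi_0$ and that for $\phi_1$; on canonical models this passes through Proposition~\ref{20}. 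Third, apply Theorem~\ref{47} to conclude equality of the induced maps on Maurer--Cartan spaces, i.e. $F^{can}_{(\phi_0,p)}=F^{can}_{(\phi_1,p)}$. Finally, for the loop case: a contractible loop $\phi$ in $U$ is homotopic rel endpoints to the constant loop, whose associated family of complex structures is constant, hence induces the trivial pseudo-isotopy and the identity on $\mathcal{MC}$; combined with the first part this gives $F^{can}_{(\phi,p)}=\mathrm{Id}$.

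The main obstacle is the first step: constructing the $3$-parameter family of fibrewise-preserving diffeomorphisms and checking that it genuinely yields a pseudo-isotopy of pseudo-isotopies rather than merely a continuous family of pseudo-isotopies. One has to be careful that the interpolating data is smooth in all parameters, that the tameness of $(\psi_{s,t,r})_*J$ holds throughout a \emph{single} open neighborhood independent of all parameters (which is why the hypothesis insists $U$ be small enough), and that the algebraic output of Fukaya's machine in this $3$-parameter setting is exactly the notion of pseudo-isotopy of pseudo-isotopies required as input to Theorem~\ref{47}. Since the homotopy has fixed endpoints, at $t=0$ and $t=1$ all the diffeomorphisms can be taken to agree with those for $\phi_0$, so no extra boundary identifications are needed; the independence of $F^{can}_{(\phi,p)}$ from the choices of Kuranishi data and of the family $\psi$, already established above via Theorem~\ref{35}, is what lets us speak of ``the'' map attached to a path and makes the homotopy argument well-posed.
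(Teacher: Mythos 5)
Your proposal is correct and follows essentially the same route as the paper: the homotopy $\Phi(s,t)$ induces a pseudo-isotopy of pseudo-isotopies between the pseudo-isotopies attached to $\phi_0$ and $\phi_1$, and the conclusion then follows from Theorem \ref{47}. The additional care you take with the $3$-parameter family of diffeomorphisms and the loop case is a reasonable elaboration of the same argument, which the paper leaves implicit.
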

     \begin{proof}
       Assume that  $\Phi=\Phi(s,t)$ is a homotopy between $\phi_0$ and $\phi_1$, i.e.,
         \begin{align*}
             &\Phi(0,t)=\phi_0(t)   \\
             &\Phi(1,t)=\phi_1(t).
         \end{align*} 
       Then the homotopy $\Phi$ induces a pseudo-isotopy of pseudo-isotopy. Then the theorem follows from Theorem \ref{47}. 
   \end{proof}

  \begin{rmk}
  From symplectic geometric point of view, Theorem \ref{102} is more natural than the Kontsevich-Soibelman wall-crossing formula. This can be view as the mathematical replacement of the argument in Cecotti-Vafa \cite{CV}.
  \end{rmk}

\section{Open Gromov-Witten Invariants on K3 Surfaces}  
  \subsection{Open Gromov-Witten Invariants} \label{4003}
      In this section, we will restrict ourselves to elliptic K3 surfaces\footnote{See \cite{L7} for the open Gromov-Witten invariants with rigid special Lagrangian boundary conditions.}. 
       Given an elliptic K3 surface $X\rightarrow B$ and a K\"ahler class $[\omega]$, there exists an $S^1$-family of hyperK\"ahler structures such that the fibration $X\rightarrow B$ become special Lagrangian fibration. We will denote the corresponding hyperK\"ahler manifold by $X_{\vartheta}, \vartheta\in S^1$.

      Similar to the story of tropical discs, we have an analogue definition for holomorphic discs:
    \begin{definition}
       Let $\gamma$ be a relative class with $\partial \gamma\neq 0$, locally we define the locus $W'_{\gamma}$ to be 
          \begin{align*}
             W'_{\gamma}=\bigcup_{\substack{\gamma_1+\gamma_2=q\gamma\\ q\in \mathbb{Q}, 0\leq q \leq 1}} W'_{\gamma_1,\gamma_2},
          \end{align*} where
          \begin{align*}
             W'_{\gamma_1,\gamma_2}=\{u\in B| \mbox{Arg}Z_{\gamma_1}=\mbox{Arg}Z_{\gamma_2} \mbox{ and there exist holomorphic discs}\\ \mbox{ of relative class $\gamma_1,\gamma_2$ ends on $L_u$ with $\langle \gamma_1,\gamma_2\rangle\neq 0$.}\}.
          \end{align*}
    \end{definition}
    Then locally $W'_{\gamma}$ is a closed subset of the real codimension one locus cut out by 
       \begin{align*}
          \mbox{Arg}Z_{\gamma_1}=\mbox{Arg}Z_{\gamma_2}.
       \end{align*}
    Now assume $\gamma\in H_2(X,L_u)$ is primitive such that $u\neq W'_{\gamma}$, then the moduli space
    \begin{align*}
     \mathcal{M}_{\gamma}(\mathfrak{X},L_u)=\cup_{\vartheta\in S^1}\mathcal{M}_{\gamma}(X_{\vartheta},L_u)
    \end{align*} is compact without boundary (thus no real codimension one boundary)\cite{L4}. In particular, the moduli space admits a virtual fundamental class $[\mathcal{M}_{\gamma}(\mathfrak{X},L_u)]^{vir}$\cite{FOOO}\cite{L4}. We may define the open Gromov-Witten invariants as follows:
       \begin{align*}
          \tilde{\Omega}(\gamma;u):=\int_{[\mathcal{M}_{\gamma}(\mathfrak{X},L_u)]^{vir}}1.
       \end{align*} In general, we define the open Gromov-Witten invariants via the smooth correspondence in \cite{L4},
       \begin{align*} 
          \tilde{\Omega}(\gamma;u):=Corr_*(\mathcal{M}_{\gamma}(\mathfrak{X},L_u);tri, tri)(1).
       \end{align*} We will refer the readers to \cite{F1} for the definition and details about the smooth correspondences. The following are some properties of the open Gromov-Witten invariants $\tilde{\Omega}(\gamma;u)$.

    \begin{prop} \label{3012} \cite{L4}
          Assume that $u\notin W'_{\gamma}$ then $\tilde{\Omega}(\gamma;u)$ is well-defined. Moreover, we have the following properties:
          \begin{enumerate}
             \item The invariant $\tilde{\Omega}(\gamma;u)$ is independent of choice of the K\"ahler class $[\omega]$. 
             \item If there exists a path connecting $u,u'\in B_0$ which does not pass through $W'_{\gamma}$, then 
                       \begin{align*}
                           \tilde{\Omega}(\gamma;u)=\tilde{\Omega}(\gamma;u').
                       \end{align*} In particular, $W'_{\gamma}$ locally divides $B_0$ into chambers and  $\tilde{\Omega}(\gamma;u)$ is locally constant in each chamber. 
             \item Let $u\in W'_{\gamma}$ generic and let $u_+,u_-$ be on the different sides of $W'_{\gamma}$ near $u$. Assume that there exists no $\gamma_i\in H_2(X,L_u)$ such that $\sum_{i}\gamma_i=\gamma$, $\mbox{Arg}Z_{\gamma_i}(u)=\mbox{Arg}Z_{\gamma}(u)$ and $\tilde{\Omega}(\gamma_i;u)\neq 0$, $\langle \gamma_i,\gamma_{i'}\rangle \neq 0$ for some $i,i'$, then 
             \begin{align*}
                 \tilde{\Omega}(\gamma;u_+)=\tilde{\Omega}(\gamma;u_-).
             \end{align*}        
             \item (Reality condition) 
             $\tilde{\Omega}(\gamma;u)=\tilde{\Omega}(-\gamma;u)$.
          \end{enumerate}
           \end{prop}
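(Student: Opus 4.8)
The plan is to run all four statements off the single structural input from \cite{L4}: for $u\notin W'_\gamma$ the parametrized moduli space $\mathcal{M}_\gamma(\mathfrak X,L_u)=\bigcup_{\vartheta\in S^1}\mathcal M_\gamma(X_\vartheta,L_u)$ is compact of virtual dimension zero and carries a Kuranishi structure whose only real-codimension-one boundary would come from disc bubbling $\gamma=\gamma_1+\gamma_2$ with $\mathrm{Arg}Z_{\gamma_1}=\mathrm{Arg}Z_{\gamma_2}$ and $\langle\gamma_1,\gamma_2\rangle\neq 0$ (sphere bubbling and Maslov reasons being excluded here). Since this boundary is precisely the defining condition of $W'_\gamma$, the invariant $\tilde\Omega(\gamma;u)=Corr_*(\mathcal M_\gamma(\mathfrak X,L_u);tri,tri)(1)$ is well defined; parts (1)--(3) will be cobordism arguments for a one-parameter family of such spaces, and part (4) an explicit identification under the antipodal hyperK\"ahler rotation.

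For (1) and (2): given two K\"ahler classes $[\omega_0],[\omega_1]$, choose a path $[\omega_s]$, $s\in[0,1]$, of K\"ahler classes; each $[\omega_s]$ produces an $S^1$-family $X^s_\vartheta$ and a moduli space $\mathcal M_\gamma(\mathfrak X^s,L_u)$. The key observation is that the wall $W'_\gamma\subseteq B_0$ does not move with $s$: by Proposition \ref{21} the existence of a disc of class $\gamma_i$ forces $u$ onto the affine line $l_{\gamma_i}$ cut out by $\mathrm{Im}\,\Omega$, and the alignment $\mathrm{Arg}Z_{\gamma_1}=\mathrm{Arg}Z_{\gamma_2}$ also involves only $\Omega$, so every condition defining $W'_{\gamma_1,\gamma_2}$ is independent of $[\omega_s]$. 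Hence $u\notin W'_\gamma$ for all $s$, $\bigcup_s\mathcal M_\gamma(\mathfrak X^s,L_u)$ is a compact virtually $1$-dimensional cobordism with boundary only at $s=0,1$, and $Corr_*$ gives $\tilde\Omega(\gamma;u)_{[\omega_0]}=\tilde\Omega(\gamma;u)_{[\omega_1]}$. Statement (2) is the same argument with the path taken in $B_0$ instead: a path $\phi$ avoiding $W'_\gamma$ gives $\bigcup_t\mathcal M_\gamma(\mathfrak X,L_{\phi(t)})$ a compact cobordism with no codimension-one degeneration, whence $\tilde\Omega(\gamma;\phi(0))=\tilde\Omega(\gamma;\phi(1))$ and local constancy on chambers of $B_0\setminus W'_\gamma$ follows.

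Statement (3) is where the work lies, and I expect the bookkeeping there to be the main obstacle. Near a generic $u\in W'_\gamma$ take a short path from $u_-$ to $u_+$ crossing $W'_\gamma$ once; the family $\bigcup_t\mathcal M_\gamma(\mathfrak X,L_{\phi(t)})$ now acquires potential boundary strata from bubbled configurations $\gamma=\sum_i\gamma_i$ with all $\mathrm{Arg}Z_{\gamma_i}(u)$ aligned. One must show that under the stated hypothesis these do not obstruct the cobordism: if no class $\gamma_i$ in such a decomposition bounds a disc the stratum is simply empty, while if all relevant pairings $\langle\gamma_i,\gamma_{i'}\rangle$ vanish the corresponding gluing locus sits in real codimension $\ge 2$ (the bubbling does not separate the two sides of $W'_\gamma$) and contributes nothing to the count. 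Carrying this out rigorously at the level of Kuranishi structures --- the dimension and transversality analysis for the bubbled configurations and the sign argument killing the $\langle\,,\rangle=0$ strata --- is the delicate step; it is done in \cite{L4}. Granting it, $\bigcup_t\mathcal M_\gamma(\mathfrak X,L_{\phi(t)})$ is again a genuine compact cobordism and $\tilde\Omega(\gamma;u_+)=\tilde\Omega(\gamma;u_-)$.

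For (4) I would use the antipodal hyperK\"ahler rotation $\vartheta\mapsto\vartheta+\pi$: a direct computation gives $\omega_{\vartheta+\pi}=-\omega_\vartheta$ and $\Omega_{\vartheta+\pi}=\overline{\Omega_\vartheta}$, so $X_{\vartheta+\pi}$ is the complex-conjugate surface with reversed symplectic form and $J_{\vartheta+\pi}=-J_\vartheta$. Pre-composing a $J_\vartheta$-holomorphic map $(D,\partial D)\to(X,L_u)$ with $z\mapsto\bar z$ and regarding the target as $X_{\vartheta+\pi}$ yields a $J_{\vartheta+\pi}$-holomorphic disc with the same image but with the boundary circle orientation reversed, hence representing $-\gamma$; this defines a homeomorphism $\mathcal M_\gamma(X_\vartheta,L_u)\cong\mathcal M_{-\gamma}(X_{\vartheta+\pi},L_u)$, and taking the union over $\vartheta\in S^1$ gives $\mathcal M_\gamma(\mathfrak X,L_u)\cong\mathcal M_{-\gamma}(\mathfrak X,L_u)$. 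It remains to check that this identification is compatible with the Kuranishi data and orientations used to define $Corr_*$, so that the virtual counts (and not merely the underlying spaces) coincide; this gives the reality condition $\tilde\Omega(\gamma;u)=\tilde\Omega(-\gamma;u)$.
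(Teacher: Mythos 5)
This proposition is quoted in the paper from \cite{L4} and no proof is given here, so there is no internal argument to compare against; I can only judge your proposal against the framework the paper sets up. On that basis your overall architecture is the expected one and is consistent with everything the paper does say: well-definedness from compactness of $\mathcal{M}_{\gamma}(\mathfrak{X},L_u)$ with no codimension-one boundary when $u\notin W'_{\gamma}$, parts (1)--(3) as cobordism arguments for one-parameter families of Kuranishi spaces, and part (4) from the identification $X_{\vartheta+\pi}=\overline{X_{\vartheta}}$ with $\omega_{\vartheta+\pi}=-\omega_{\vartheta}$ (the paper's own proof of the parallel statement in Proposition \ref{3009} likewise reduces the reality condition to an orientation check on $\mathcal{M}_{\gamma}$ versus $\mathcal{M}_{-\gamma}$, "similar to the proof of the reality condition in \cite{L4}"). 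You are also candid about where the real work is deferred (transversality, the sign cancellation on strata with $\langle\gamma_i,\gamma_{i'}\rangle=0$, compatibility of the conjugation map with the Kuranishi data), which is where \cite{L4} spends its effort.

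The one step I would push back on is the claim in your treatment of (1) that the wall $W'_{\gamma}$ does not move as $[\omega_s]$ varies. You are right that the affine lines $l_{\gamma_i}$ and the phase-alignment locus $\{\mbox{Arg}Z_{\gamma_1}=\mbox{Arg}Z_{\gamma_2}\}$ depend only on $\Omega$; but $W'_{\gamma_1,\gamma_2}$ is defined as the subset of that locus where holomorphic discs in the classes $\gamma_1,\gamma_2$ actually exist, and existence is a condition on the complex structures $X^s_{\vartheta}$, whose periods $\Omega^s_{\vartheta}=\omega_s-i\mbox{Re}(e^{-i\vartheta}\Omega)$ do involve $\omega_s$. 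So a point $u$ lying on the $\Omega$-determined superset but off $W'_{\gamma}$ for $[\omega_0]$ could a priori acquire disc-bubbling boundary at some intermediate $s$, and "the wall does not move" is essentially a piece of the statement being proven rather than an input. The fix is the usual one (energy truncation via Gromov compactness plus an induction on energy so that the lower-energy invariants, already known to be $[\omega]$-independent and locally constant, control where bubbling can occur), but as written this step is circular. The remaining parts I find unobjectionable modulo the virtual-technique details you explicitly defer.
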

       The open Gromov-Witten invariants near an $I_1$-type singular fibre is calculated via Theorem \ref{4020} and a cobordism argument.      
      \begin{thm}\label{3020} \cite{L4}
         Let $L_{u_0}$ be an type $I_1$-singular fibre. Then there exists a sequence neighborhood $U_d$ of $u_0$, $U_{d+1}\subseteq U_{d}$ such that for each $u\in U_d$,
            \begin{align*}
               \tilde{\Omega}(\gamma;u)=\begin{cases}
                  \frac{(-1)^{d-1}}{d^2}, & \gamma=d\gamma_e \\
                  0, & \mbox{otherwise},
               \end{cases}
            \end{align*} where $\gamma_e$ is the relative class of the Lefschetz thimble associate to $L_{u_0}$. 
      \end{thm}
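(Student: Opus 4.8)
The plan is to compute the open Gromov-Witten invariants $\tilde{\Omega}(\gamma;u)$ for $u$ near an $I_1$-singular fibre $L_{u_0}$ by reducing the problem to a local model and exploiting the fact that, after hyperK\"ahler rotation, the only holomorphic discs that can appear near $L_{u_0}$ are (multiple covers of) the Lefschetz thimble $\gamma_e$. First I would invoke the structure of the affine base near an $I_1$-singularity: the monodromy is conjugate to $\left(\begin{smallmatrix}1&1\\0&1\end{smallmatrix}\right)$, and the only relative classes $\gamma$ with $\partial\gamma\ne 0$ that admit a holomorphic disc with boundary on a nearby fibre $L_u$ and whose central charge is small are the multiples $d\gamma_e$; this follows from Proposition \ref{21} together with Gromov compactness (Corollary \ref{29}), which forces any disc of small area to have boundary class proportional to $\partial\gamma_e$, the vanishing cycle. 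Hence for all $\gamma$ not of the form $d\gamma_e$ we already get $\tilde{\Omega}(\gamma;u)=0$ in a sufficiently small neighbourhood $U_d$ (shrinking with $d$ to control the energy), and it remains to evaluate $\tilde{\Omega}(d\gamma_e;u)$.

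For the classes $d\gamma_e$, the strategy is a local model plus a cobordism/multiple-cover argument. One knows that the moduli space $\mathcal{M}_{\gamma_e}(\mathfrak{X},L_u)=\bigcup_\vartheta \mathcal{M}_{\gamma_e}(X_\vartheta,L_u)$ is, for $u$ near $u_0$, identified with the moduli of holomorphic discs in a semi-flat or Ooguri-Vafa-type local model for the $I_1$-fibre. In that local model the primitive disc $\gamma_e$ is regular and its moduli space is a single point (up to the $S^1$ of hyperK\"ahler rotations being accounted for in the definition of $\mathfrak{X}$), giving $\tilde{\Omega}(\gamma_e;u)=1$. For the $d$-fold covers, I would cite the multiple-cover formula for holomorphic discs, equation (\ref{79}), which expresses $\tilde{\Omega}(d\gamma_e;u)$ in terms of the ``BPS'' invariants $\Omega(k\gamma_e;u)$; since the only primitive contribution is $\Omega(\gamma_e;u)=1$ and all higher $\Omega(k\gamma_e;u)$ vanish (the vanishing-cycle disc is rigid and the local model supports no genuinely new disc classes), (\ref{79}) collapses to
\begin{align*}
  \tilde{\Omega}(d\gamma_e;u)=-\sum_{k\mid d}c\!\left(\tfrac{d}{k}\gamma_e\right)^{d}\frac{\Omega(\tfrac{d}{k}\gamma_e;u)}{k^2}
  =\frac{(-1)^{d-1}}{d^2},
\end{align*}
where the sign comes from the quadratic refinement $c$ applied to the self-intersection-zero boundary class and a short bookkeeping of $(-1)$'s, exactly as in the toric/local calculations it is modelled on.

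The only subtle point, and the one I would treat most carefully, is justifying that no extra disc classes appear: a priori $L_u$ near the singular fibre could bound discs in classes $\gamma_e+\gamma'$ where $\gamma'$ is a fibre class, or concatenations produced by bubbling. Here I would use that $u\notin W'_\gamma$ (so there is no real-codimension-one boundary, and the moduli space carries a virtual class by \cite{L4}), combined with Proposition \ref{3012}(2)--(3), which says $\tilde{\Omega}(\gamma;u)$ is locally constant away from $W'_\gamma$ and unchanged across walls built from classes with non-trivial pairing whose constituent invariants vanish. Shrinking to $U_d$ ensures every relevant class has energy below the threshold where a second singular fibre or a non-trivial fibre-class disc could interfere, so the local model computation is exact. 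The main obstacle is thus the transversality/virtual-class identification in the local model — establishing that the primitive Lefschetz-thimble disc is the unique isolated disc and that its contribution is $+1$ — and the cobordism argument connecting the abstract invariant to this local count; once that is in place, the formula for general $d$ follows formally from (\ref{79}).
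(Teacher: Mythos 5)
Your overall skeleton --- restrict to a shrinking neighbourhood so that only multiples of the Lefschetz thimble can support discs of bounded energy (via Proposition \ref{21}, Gromov compactness, and the fact that $H_2(X_U,L)\cong\mathbb{Z}$ is generated by the thimble near an $I_1$-fibre), then reduce to the Ooguri--Vafa local model by a cobordism --- is exactly the route the paper indicates: Theorem \ref{3020} is obtained from Theorem \ref{4020} plus a cobordism argument, and the vanishing for $\gamma\neq d\gamma_e$ is handled the way you describe. So the first and third paragraphs of your proposal are sound.

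The genuine gap is in your treatment of the classes $d\gamma_e$ for $d>1$. Equation (\ref{79}) is not an independent geometric input: in the paper it is the \emph{definitional} relation between $\tilde{\Omega}$ and the BPS-type invariants $\Omega$ obtained by factoring the slab function as in (\ref{77}). Asserting that $\Omega(\gamma_e;u)=1$ and $\Omega(k\gamma_e;u)=0$ for all $k>1$ is logically equivalent to asserting $\tilde{\Omega}(d\gamma_e;u)=\frac{(-1)^{d-1}}{d^2}$, which is the statement to be proven; your parenthetical justification (``the disc is rigid and no new classes appear'') does not address it, because the class $k\gamma_e$ \emph{is} represented by holomorphic maps, namely the $k$-fold covers of the primitive disc, and these moduli spaces are not transversally cut out, so their virtual contribution requires an actual computation. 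The paper's source (\cite{L4}, as summarized just before Theorem \ref{4020}) carries out this computation by deforming the Ooguri--Vafa neighbourhood of the node through a family of $S^1$-invariant hyperK\"ahler structures to an open set of $T\mathbb{P}^1$ with the Eguchi--Hanson metric, using the anti-symplectic involution there to double each disc to a real rational curve, and then evaluating the multiple-cover contributions by localization. Without that (or an equivalent direct evaluation of the virtual class of the $d$-fold cover moduli), your argument establishes only the $d=1$ case and the vanishing statement, not the value $\frac{(-1)^{d-1}}{d^2}$.
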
     
           
    One of the application of the open Gromov-Witten invariants is the weak correspondence theorem, which gives a sufficient condition for the existence of tropical discs.
    \begin{thm}\cite{L4}
       Let $u\in B_0$ and $\gamma\in H_2(X,L_u)$ such that $u\notin W'_{\gamma}$ and $\tilde{\Omega}(\gamma;u)\neq 0$. Then there exists a tropical disc $(\phi,T,w)$ such that $[\phi]=\gamma$. 
    \end{thm}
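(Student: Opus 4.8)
The plan is to replace the cohomological hypothesis by a geometric one and induct on symplectic area. Since $\tilde{\Omega}(\gamma;u)\neq 0$ the moduli space $\mathcal{M}_{\gamma}(X_{\vartheta},L_u)$ with $\vartheta=\mathrm{Arg}Z_{\gamma}(u)+\pi/2$ is non-empty, so it suffices to prove the stronger statement: if $L_u$ bounds a stable holomorphic disc of class $\gamma$ and $u\notin W'_{\gamma}$, then $\gamma$ is realized by a tropical disc with stop $u$. I would induct on $E(\gamma)=|Z_{\gamma}(u)|=\int_{\gamma}\omega$; by Corollary \ref{29} the set of areas of discs bounding fibres is bounded below away from $0$, so the induction is well founded, and in any splitting $\gamma=\gamma_1+\gamma_2$ into disc classes both summands have strictly smaller area.

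The geometric input is that every non-constant holomorphic disc component with boundary on a torus fibre $L_{u'}$ projects to $B_{\vartheta}$ onto an affine segment. This is the content of the computation behind Lemma \ref{1911}: in the semiflat description of $X_{\vartheta}$ near the smooth locus, $\mathrm{Arg}Z_{\gamma'}$ is constant along the image, which therefore lies along the gradient trajectory of $|Z_{\gamma'}|^2$ through $u'$; moreover the boundary of the disc is isotopic in $L_{u'}$ to the closed geodesic of Definition \ref{303}, so the projected segment is the affine line $l_{\gamma'}$ emanating from $u'$ in the direction of decreasing $|Z_{\gamma'}|$. I would make this precise first, since it both identifies the edges of the would-be tropical disc and pins down the monodromy-adapted primitive tangent vectors attached to them.

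For the induction, follow $l_{\gamma}$ from $u$ in the $|Z_{\gamma}|$-decreasing direction. By Proposition \ref{21} and Corollary \ref{29} the locus of $u'$ with $L_{u'}$ bounding a disc of class $\gamma$ is contained in this line, and the moduli family either persists until $l_{\gamma}$ reaches a singularity $u_0\in\Delta$ or degenerates at some point $p$ lying on a wall $W'_{\gamma_1,\gamma_2}\subseteq W'_{\gamma}$. In the first case Theorem \ref{3020} and the $I_1$ local model force $\gamma=d\gamma_e$ for the Lefschetz thimble class $\gamma_e$ of $u_0$, and the weight-$d$ affine ray from $u_0$ to $u$ is a tropical disc of class $\gamma$ by Definition \ref{303}(1); this is the base case, and since a split needs positive area on each side it also covers minimal area. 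In the second case Gromov compactness gives a stable limit whose disc components bound $L_p$ in classes $\gamma_1,\dots,\gamma_s$, arranged in a tree, with $\sum_i\partial\gamma_i=\partial\gamma$ and each $E(\gamma_i)<E(\gamma)$; applying the inductive hypothesis at $p$ produces tropical sub-discs $\phi_i$ with stop $p$, which I glue at a single vertex $v$ with $\phi(v)=p$ (subdividing into trivalent vertices as in Definition \ref{322}), the balancing condition \eqref{1099} at $v$ being exactly the identity $\sum_i\partial\gamma_i=\partial\gamma$ read through the dictionary of Definition \ref{303} between boundary classes and monodromy-adapted directions; finally I attach the root edge $l_{\gamma}$ from $p$ back to $u$ with the appropriate weight. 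That $[\phi]=\gamma$ is then immediate from the inductive clause of Definition \ref{303}(1).

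The hard part is controlling what can appear in the Gromov limit at $p$. First, I have deliberately run the induction on non-emptiness of moduli spaces rather than on non-vanishing of $\tilde{\Omega}$, so that no positivity input is needed to pass from ``$\gamma_i$ bounds a disc on $L_p$'' to the inductive hypothesis. Second and more seriously, a priori the limit could carry a sphere-bubble component, giving only $\gamma=\sum_i\gamma_i+\beta$ with $\beta$ a class of closed curves and spoiling $[\phi]=\gamma$; here one invokes the genericity hypothesis $(*)$, under which rational curves on $X$ are isolated and the locus of fibres bounding a disc of a fixed class with a sphere attached has real codimension at least two in $B_0$, so a generic affine line $l_{\gamma}$ meets only honest disc breakings. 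Verifying this codimension count, together with the transversality needed to run the degeneration analysis with only $I_1$ singular fibres, is the step that requires the most care.
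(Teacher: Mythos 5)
Your skeleton --- follow the attractor line $l_{\gamma}$ in the direction of decreasing $|Z_{\gamma}|$ until you either reach a singularity (base case: multiple of a Lefschetz thimble, realized by the initial affine ray) or hit a splitting point, then recurse and glue --- is the same as the paper's proof of the parallel statement, Theorem \ref{3021}. But your decision to run the induction on non-emptiness of $\mathcal{M}_{\gamma}(X_{\vartheta},L_{u'})$ rather than on non-vanishing of the invariant opens a genuine gap. The paper's recursion works because the invariant is locally constant off $W'_{\gamma}$ (Proposition \ref{3009}(1), Proposition \ref{3012}(2)), so non-vanishing propagates along $l_{\gamma}$ for free, and because the contrapositive of Proposition \ref{3009}(2) --- i.e.\ the wall-crossing statement --- says a jump can only occur on $W'_{\gamma}$ and then hands you a decomposition $\gamma=\sum_i\gamma_i$ with each $\tilde{\Omega}(\gamma_i)\neq 0$ and some $\langle\gamma_i,\gamma_{i'}\rangle\neq 0$, to which the inductive hypothesis applies directly. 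Non-emptiness, by contrast, is not a deformation invariant: the moduli space has virtual dimension $0$ only after summing over the $S^1$-family of complex structures, and as $u'$ moves along $l_{\gamma}$ the actual moduli space can become empty at a point lying on no wall (configurations with cancelling signed contributions can annihilate), so your dichotomy ``persists to a singularity or degenerates at a wall'' is unjustified. Moreover, at a putative degeneration point Gromov compactness only yields a stable configuration whose components' classes sum to $\gamma$ --- possibly with ghost components, interior nodes, multiple covers and sphere bubbles --- and extracting from it a clean list of classes $\gamma_i$, each again satisfying your inductive hypothesis as the class of an honest disc on $L_p$, is exactly the analytic work that the wall-crossing formula packages algebraically. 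The codimension count you invoke to exclude sphere bubbles under condition $(*)$ is not carried out anywhere in the paper and is not obviously available.

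A smaller point: your claim that the induction on area is well founded because ``by Corollary \ref{29} the set of areas of discs bounding fibres is bounded below away from $0$'' is not what Corollary \ref{29} says, and is false globally --- Lefschetz thimbles have arbitrarily small area near a singular fibre. Termination instead comes from the observation used in the paper (via Proposition 4.4.1 of \cite{S6}) that once the area is small enough the fibre lies in a tubular neighborhood of a singular fibre and the class is forced to be a thimble multiple, so the base case is reached; combined with Gromov compactness this bounds the depth of the recursion. If you rerun your argument with the invariant $\tilde{\Omega}$ (or $\tilde{\Omega}'$) in place of moduli non-emptiness and invoke Proposition \ref{3009}(2) at the jump points, your proof becomes the paper's.
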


         \subsection{Local Model: Focus-Focus Singularity}
           \begin{definition}
            The Ooguri-Vafa space $X_{OV}$ is an elliptic fibration over a unit disc with a unique singular fibre, a single node rational curve, over the origin. 
           \end{definition}      
            From the explicit coordinate description, there exists a natural $S^1$-action which preserves the complex structure. There exists a hyperK\"ahler metric on $X_{OV}$, realized as a periodic Gibbons-Hawking ansatz, thus is $S^1$-invariant. Conversely, any $S^1$-invariant hyperK\"ahler metric on the same underlying space is in this form \cite{GW}.

            For a fixed $\vartheta\in S^1$, the complex affine coordinate associated to the special Lagrangian fibration in $(X_{OV})_{\vartheta}$ gives rise to an affine structure on $B$ with a singularity at the origin. The monodromy of the affine structure around the origin is conjugate to $\bigl(
            \begin{smallmatrix}
              1 & 1\\
              0 & 1
            \end{smallmatrix} \bigr)$. There are two affine rays $l_{\pm}$ emanating from the origin in the monodromy invariant direction. Chan studied the holomorphic discs in the Ooguri-Vafa space using the maximal principle and the $S^1$-invariance of the hyperK\"ahler structure. 
            \begin{prop} \cite{C} \label{33}
              \begin{enumerate}
                 \item There exist a simple holomorphic disc with boundary in each special Lagrangian torus fibre over $l_{\pm}$. Moreover, the image of the holomorphic disc is smooth.
                 \item The above holomorphic discs together with their multiple covers are the only holomorphic discs in $(X_{OV})_{\vartheta}$.
               \end{enumerate}
            \end{prop}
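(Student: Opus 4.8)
The plan is to exploit the tri-holomorphic circle symmetry of the Ooguri--Vafa space. Recall that the periodic Gibbons--Hawking ansatz carries the circle action $\partial_\tau$, an isometry that preserves every complex structure $J_\vartheta$ in the twistor family and is Hamiltonian for the holomorphic symplectic form $\Omega_\vartheta$, with a globally defined holomorphic moment map $w\colon (X_{OV})_\vartheta\to\mathbb{C}$ (well defined modulo the period lattice of the $\tau$-circle, so $e^{\pm w}$ is a genuine holomorphic function). Being a holomorphic moment map, $w$ is invariant under the action and satisfies $dw=\iota_{\partial_\tau}\Omega_\vartheta$, hence $d(\operatorname{Im} w)=\iota_{\partial_\tau}\operatorname{Im}\Omega_\vartheta$. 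On a special Lagrangian torus fibre $L_u$ the generator $\partial_\tau$ is tangent to $L_u$ while $\operatorname{Im}\Omega_\vartheta|_{L_u}=0$ by the special Lagrangian condition, so $d(\operatorname{Im} w)|_{L_u}=0$; since $L_u$ is connected, $\operatorname{Im} w$ is constant along each fibre $L_u$.

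First I would establish a confinement statement: any $J_\vartheta$-holomorphic disc $f\colon (D,\partial D)\to\big((X_{OV})_\vartheta,L_u\big)$ maps into a single fibre of $w$. Indeed $\operatorname{Im}(w\circ f)$ is the imaginary part of a holomorphic function on $D$, hence harmonic, and it is constant on $\partial D$ by the previous paragraph; the maximum principle forces it to be constant on $\overline{D}$, and a holomorphic function on the connected domain $D$ with constant imaginary part is itself constant. Thus $w\circ f\equiv c_0$ and $f(D)\subseteq w^{-1}(c_0)$.

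Next I would classify the holomorphic discs lying inside the fibres. From the explicit Gibbons--Hawking picture, the generic fibre $w^{-1}(c_0)$ is a cylinder (biholomorphic to $\mathbb{C}^\ast$ or an annulus), on which the $\partial_\tau$-orbits are the core circles $\{|z|=r\}$, and $L_u\cap w^{-1}(c_0)$ is a single such orbit; applying the maximum principle to $z\circ f$ and to $z^{-1}\circ f$ shows $|z\circ f|$ is constant, so $f$ is constant. The one exceptional fibre is the one over the value of $w$ at the node: it is the nodal rational curve, two copies of $\mathbb{C}$ glued at the node, and it meets $L_u$ precisely when $u$ lies on the monodromy-invariant line through the image of the node, i.e.\ when $u\in l_\pm$. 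In that case $L_u$ meets the nodal fibre in a single $\partial_\tau$-orbit contained in one of the two branches $\mathbb{C}_z$, a circle $\{|z|=r_u\}$ enclosing the node. A holomorphic disc with boundary on that circle is, after reparametrising the source by a finite Blaschke product, the inclusion $\{|z|\le r_u\}\hookrightarrow\mathbb{C}_z$, which is an embedding into the smooth surface $X_{OV}$; this gives the simple disc of part (1) with smooth image, and all other discs are its multiple covers. Carrying this out for each of the two branches produces the two families of Lefschetz thimbles over $l_+$ and $l_-$ and yields part (2).

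The main obstacle is translating the clean symmetry argument into the explicit local geometry needed in the last step: identifying the fibres of $w$ and the symplectic/holomorphic reductions in Gibbons--Hawking coordinates, verifying that $L_u\cap w^{-1}(c_0)$ is exactly one $\partial_\tau$-orbit and locating it as a core circle in the cylindrical case and a node-enclosing circle in the nodal case, and matching the condition $u\in l_\pm$ with the complex affine structure on $B_\vartheta$ (in particular checking that the simple disc represents the Lefschetz-thimble class $\gamma_e$ via the relation between $w$, the central charge $Z_\gamma$ and the affine coordinates used throughout the paper). A minor technical point to handle carefully is that $\operatorname{Im} w$ is a priori $\mathbb{R}/2\pi\mathbb{Z}$-valued because of the $\tau$-periodicity, so one must note that on the connected sets $L_u$ and $\partial D$ the vanishing of its differential still yields genuine constancy.
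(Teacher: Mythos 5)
Your proposal is correct and is essentially the argument of the cited reference \cite{C} (the paper itself offers no proof, only noting that Chan's result rests on the maximum principle and the $S^1$-invariance of the hyperK\"ahler structure): the tri-holomorphic circle action gives a holomorphic moment map for $\Omega_{\vartheta}$, the maximum principle confines any disc with boundary on a fibre to a level set of that map, and the level-set analysis produces exactly the Lefschetz-thimble discs over $l_{\pm}$ and their multiple covers. Two cosmetic caveats: in the paper's conventions the circle-valued component of $w$ is the one paired with $\omega$ (so it is $\mathrm{Re}\,w$ rather than $\mathrm{Im}\,w$ that is periodic, and the globally defined function is $e^{\pm iw}$), and the exceptional level set is a non-compact curve with two smooth branches meeting at the node of the torus fibration, not the nodal elliptic fibre itself, which is special Lagrangian rather than holomorphic in $X_{\vartheta}$.
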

           \begin{rmk}
              For each $u\in l_{\pm}$, the affine segment (with multiplicity) from the origin to $u$ gives a tropical disc with stop at $u$ which corresponds to the simple holomorphic disc (or its multiple cover) in Proposition \ref{33}.
           \end{rmk} 
          As the phase $\vartheta$ goes around $S^1$, the two affine rays also rotate around the origin counterclockwisely and every point will be exactly swept once. In other words, every torus fibre bounds exactly a simple disc (up to orientation) which is holomorphic to some complex structure $(X_{OV})_{\pm\vartheta}$. 
            
         The holomorphic discs in Proposition \ref{33} falls in a small neighborhood $U_{OV}$ of the singular point of the singular fibre. One can find a $1$-parameter family of $S^1$invariant hyperK\"ahler structures connecting the one on $U_{OV}$ and an	 open set of $T\mathbb{P}^1$ with Eguchi-Hanson metric. The later admits an anti-symplectic involution and every holomorphic disc can be doubled to a real rational curve. One than can use the localization to compute the open Gromov-Witten invariants.
       \begin{thm} \cite{L4} \label{4020}
            Let $X=X_{OV}$ be the Ooguri-Vafa space and $\gamma_e$ to be the relative class of the Lefschetz thimble.
            Then the open Gromov-Witten invariants on $X$ is calculated:
              \begin{align*}
                 \tilde{\Omega}(\gamma;u)=\begin{cases}
                 \frac{(-1)^{d-1}}{d^2}, \mbox{ if }\gamma=d\gamma_e, d\in \mathbb{Z}. \\
                 0, \mbox{ otherwise}.
                 \end{cases}
              \end{align*} 
       \end{thm}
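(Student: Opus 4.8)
The plan is to combine Chan's explicit classification of holomorphic discs on the Ooguri--Vafa space with deformation invariance of the open Gromov--Witten invariants and an \(S^1\)-localization computation, after doubling, on the Eguchi--Hanson space. \emph{First, the reduction.} By Proposition \ref{33}, for a fixed phase \(\vartheta\) the only holomorphic discs in \((X_{OV})_\vartheta\) with boundary on a special Lagrangian torus fibre are the simple disc over \(l_\pm\) and its multiple covers; in particular the relative class of any such disc lies in \(\mathbb{Z}\gamma_e\). Hence \(\mathcal{M}_\gamma(\mathfrak{X},L_u)=\emptyset\) whenever \(\gamma\notin\mathbb{Z}\gamma_e\), which gives \(\tilde\Omega(\gamma;u)=0\) in those cases. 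For \(\gamma=\gamma_e\) one checks that over the relevant phase the moduli space is a single point represented by an embedded disc with smooth image and that the linearized Cauchy--Riemann operator is surjective, so no Kuranishi obstruction is needed and \(\tilde\Omega(\gamma_e;u)=\pm1\); the sign is pinned down as \(+1\) by the doubling below. It remains to compute the multiple-cover contributions \(\tilde\Omega(d\gamma_e;u)\) for \(d\ge2\).

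\emph{Next, the deformation.} Since \(u\notin W'_{d\gamma_e}\), the moduli space \(\mathcal{M}_{d\gamma_e}(\mathfrak{X},L_u)\) is compact without boundary, and by Proposition \ref{3012} (together with a cobordism argument as in the proofs of local constancy) the invariant is unchanged under deformations of the hyperK\"ahler data along which no wall \(W'_{d\gamma_e}\) is crossed and all discs stay in a fixed compact region. Using that any \(S^1\)-invariant hyperK\"ahler metric on the underlying space arises from a Gibbons--Hawking ansatz, interpolate between the Ooguri--Vafa metric on \(U_{OV}\) and the Eguchi--Hanson metric on a neighborhood of the zero section of \(T^*\mathbb{P}^1\cong\mathcal{O}_{\mathbb{P}^1}(-2)\); the discs of Proposition \ref{33} persist in a compact set and no wall is crossed, so \(\tilde\Omega(d\gamma_e;u)\) may be computed in the Eguchi--Hanson model. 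After hyperK\"ahler rotation this model carries an anti-symplectic involution \(\sigma\) with respect to which every holomorphic disc under consideration has its boundary in \(\mathrm{Fix}(\sigma)\), and whose underlying holomorphic surface is a neighborhood of the \((-2)\)-curve \(\mathbb{P}^1_0\) (the zero section).

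\emph{Then, doubling and localization.} Schwarz reflection along the \(\sigma\)-fixed boundary doubles each disc \(w:(D^2,\partial D^2)\to(T^*\mathbb{P}^1,L_u)\) to a closed holomorphic sphere \(\widetilde w:\mathbb{P}^1\to T^*\mathbb{P}^1\), and the \(d\)-fold cover of the basic disc doubles to a degree-\(d\) cover of \(\mathbb{P}^1_0\). This identifies the Kuranishi model for \(\mathcal{M}_{d\gamma_e}(\mathfrak{X},L_u)\) with the \(\sigma\)-real locus in \(\overline{\mathcal{M}}_{0,0}(T^*\mathbb{P}^1,d[\mathbb{P}^1_0])\), and the smooth-correspondence integral defining \(\tilde\Omega(d\gamma_e;u)\) with an \(S^1\)-equivariant localization contribution on the latter in which the real structure replaces each equivariant Euler class by a square root. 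The residual \(S^1\)-action has isolated fixed points on \(\mathbb{P}^1_0\); running virtual localization, the relevant fixed locus is the moduli of degree-\(d\) covers of \(\mathbb{P}^1_0\), whose obstruction bundle is \(H^1\) of the pull-back of \(N_{\mathbb{P}^1_0/T^*\mathbb{P}^1}\cong\mathcal{O}_{\mathbb{P}^1}(-2)\). The equivariant Euler classes of deformations and obstructions, together with the automorphism factor \(1/d\), assemble under these square roots to \((-1)^{d-1}/d^2\). As a consistency check, the resulting generating function \(\sum_{d\ge1} d\,\tilde\Omega(d\gamma_e;u)\big(T^{Z_{\gamma_e}(u)}z^{\partial\gamma_e}\big)^d=\log\big(1+T^{Z_{\gamma_e}(u)}z^{\partial\gamma_e}\big)\) is exactly the Ooguri--Vafa slab function, in agreement with (\ref{78}).

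\emph{Main obstacle.} The delicate point is the bookkeeping in the last step: matching the Kuranishi-theoretic open Gromov--Witten invariant with an honest \(S^1\)-equivariant piece of the doubled moduli space, and tracking orientations and equivariant weights so that the contributions assemble to precisely \((-1)^{d-1}/d^2\) rather than, say, the purely closed Aspinwall--Morrison-type factor \(1/d^3\) or its negative. This is exactly where the cobordism argument relating the Ooguri--Vafa and Eguchi--Hanson presentations does the real work, and it is the step I expect to require the most care.
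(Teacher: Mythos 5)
Your proposal follows essentially the same route the paper takes (and attributes to \cite{L4}): Chan's classification in Proposition \ref{33} rules out all relative classes outside $\mathbb{Z}\gamma_e$, a $1$-parameter family of $S^1$-invariant hyperK\"ahler (Gibbons--Hawking) structures transports the computation from the neighborhood $U_{OV}$ of the node to the Eguchi--Hanson model on $T\mathbb{P}^1$, and there the anti-symplectic involution doubles discs to real rational curves so that localization on degree-$d$ covers of the $(-2)$-curve yields $(-1)^{d-1}/d^2$. Since this matches the paper's argument in both structure and the key inputs, including the consistency with the slab function $1+T^{Z_{\gamma_e}}z^{\partial\gamma_e}$, there is nothing further to flag.
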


           Let $p_{\pm}\in l_{\pm}$ and $U_{\pm}$ is a small neighborhood of $p_{\pm}$. 
           Let $u^{\pm}_1,u^{\pm}_2\in U_{\pm}$ be on the different side of $l_{\pm}$ as in Figure \ref{fig:95}. Choose paths $\phi_{\pm}:[0,1]\rightarrow B_0$ such that $\phi_{\pm}(0)=u^{\pm}_1$, $\phi_{\pm}(1)=u^{\pm}_2$ and intersect $l_{\pm}$ exactly once and transversally. The family of Lagrangians induces a pseudo-isotopy between the $A_{\infty}$ structures on $H^*(L_{u_1})$ and $H^*(L_{u_2})$. In particular, the pseudo-isotopy induces an isomorphism $F^{can}_{(\phi_{\pm},p)}$ between $\mathcal{MC}(L_{u_1})$ and $\mathcal{MC}(L_{u_2})$. The explicit expression of the isomorphism follows directly from Theorem \ref{4020}, Theorem \ref{3005} and equation (\ref{4030}).
          \begin{thm} \label{42}
          Let $e_1,e_2$ be an symplectic integral basis of $H^1(L_{p_{\pm}},\mathbb{Z})$ with $\langle e_1,e_2\rangle=1$ and $-e_2$ is the Poincaré\'e dual of the vanishing cycle $\gamma_e$. 
            \begin{align}
               &F^{can}_{(\phi_{\pm},p_{\pm})}(z_1)=z_1  \notag  	\\
               &F^{can}_{(\phi_{\pm},p_{\pm})}(z_2)=z_2(1+T^{\pm Z_{\gamma_e}(p_{\pm})}z_1^{\pm 1})^{\mp 1},
            \end{align}  where $\gamma_e$ is the relative class of Lefschetz thimble.
         \end{thm}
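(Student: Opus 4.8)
The plan is to deduce the explicit form of $F^{can}_{(\phi_\pm,p_\pm)}$ by combining the general wall-crossing description of the family-Floer identification with the open Gromov--Witten computation on the Ooguri--Vafa space. First I would recall, as set up in the paragraph preceding the statement, that the family of special Lagrangian fibres $L_{\phi_\pm(t)}$ produces via Fukaya's trick a $1$-parameter family of $\omega$-tamed almost complex structures on $X_{OV}$, hence (Theorem \ref{2899}, Corollary \ref{22}) a pseudo-isotopy between the $A_\infty$ structures on $\Omega^*(L_{p_\pm},\Lambda)$ at $t=0$ and $t=1$; by Proposition \ref{20} this descends to the canonical models, and by Proposition \ref{35} the induced map on Maurer--Cartan spaces $H^1(L_{p_\pm},\Lambda_+)$ is exactly $F^{can}_{(\phi_\pm,p_\pm)}$, which we express on the coordinates $z_1,z_2$ dual to $e_1,e_2$.

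Next I would identify which relative classes contribute. By Chan's classification (Proposition \ref{33}) the only holomorphic discs bounding a torus fibre of $(X_{OV})_\vartheta$ are the simple disc of class $\gamma_e$ over $l_\pm$ and its multiple covers $d\gamma_e$, $d\ge 1$; these all satisfy $\mathrm{Arg}\,Z_{d\gamma_e}=\mathrm{Arg}\,Z_{\gamma_e}$, so they lie on the single wall $l_\pm$, which $\phi_\pm$ crosses once and transversally. Hence the general wall-crossing formula for $F^{can}$ (Theorem \ref{3005}, equation (\ref{4030})) applies and identifies $F^{can}_{(\phi_\pm,p_\pm)}$ with the elementary transformation $z^{\gamma'}\mapsto z^{\gamma'}f_{\gamma_e}^{\pm\langle\gamma',\partial\gamma_e\rangle}$, the overall sign of the exponent being fixed by the orientation in which $\phi_\pm$ crosses $l_\pm$, with slab function
\[
\log f_{\gamma_e}\;=\;\sum_{d\ge 1}d\,\tilde{\Omega}(d\gamma_e;p_\pm)\,\big(T^{Z_{\gamma_e}(p_\pm)}z^{\partial\gamma_e}\big)^d .
\]

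I would then substitute the values $\tilde{\Omega}(d\gamma_e;p_\pm)=(-1)^{d-1}/d^2$ from Theorem \ref{4020} and sum the series,
\[
\log f_{\gamma_e}\;=\;\sum_{d\ge 1}\frac{(-1)^{d-1}}{d}\,\big(T^{Z_{\gamma_e}(p_\pm)}z^{\partial\gamma_e}\big)^d\;=\;\log\!\big(1+T^{Z_{\gamma_e}(p_\pm)}z^{\partial\gamma_e}\big),
\]
so that $f_{\gamma_e}=1+T^{Z_{\gamma_e}(p_\pm)}z^{\partial\gamma_e}$. Finally, from $\langle e_1,e_2\rangle=1$ and $-e_2=\mathrm{PD}(\gamma_e)$ I would read off the intersection pairings $\langle e_1,\partial\gamma_e\rangle=0$ and $\langle e_2,\partial\gamma_e\rangle=\mp 1$, together with $z^{\partial\gamma_e}=z_1^{\pm 1}$; plugging these into the elementary transformation yields $F^{can}_{(\phi_\pm,p_\pm)}(z_1)=z_1$ and $F^{can}_{(\phi_\pm,p_\pm)}(z_2)=z_2\big(1+T^{\pm Z_{\gamma_e}(p_\pm)}z_1^{\pm 1}\big)^{\mp 1}$, as claimed. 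Should one prefer not to invoke Theorem \ref{3005} as a black box, the same conclusion follows directly: expand $F^{can}$ by the tree formula of Theorem \ref{4050} applied to the pseudo-isotopy operators $\mathfrak{c}^t_{k,d\gamma_e}$, collapse the repeated $b$-insertions via the divisor axiom of Theorem \ref{4054}, and resum using the identity above.

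I expect the bookkeeping of signs and orientations to be the only real obstacle: one must match (i) which component of $B_0\setminus l_\pm$ carries the jump, (ii) the orientation of the crossing path $\phi_\pm$, (iii) the sign of $\partial\gamma_e$ relative to $-e_2=\mathrm{PD}(\gamma_e)$, hence the sign of $\langle e_2,\partial\gamma_e\rangle$, and (iv) the sign convention built into Theorem \ref{3005}, so that the exponents $\pm 1$ (on $z_1$ and on $T^{Z_{\gamma_e}}$) and $\mp 1$ (overall) come out consistently for $l_+$ and $l_-$. The reality property $\tilde{\Omega}(\gamma;u)=\tilde{\Omega}(-\gamma;u)$ of Proposition \ref{3012} is the cleanest device here: it shows that crossing $l_-$ is governed by the class $-\gamma_e$, and therefore produces the formal inverse of the transformation attached to $l_+$ with $\gamma_e$ replaced by $-\gamma_e$ throughout, which forces all three signs to flip simultaneously, exactly as in the statement.
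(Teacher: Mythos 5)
Your proposal is correct and follows essentially the paper's own route: the text asserts that Theorem \ref{42} ``follows directly from Theorem \ref{4020}, Theorem \ref{3005} and equation (\ref{4030})'', and you assemble precisely these ingredients --- Chan's classification showing only multiples of $\gamma_e$ contribute, the values $\tilde{\Omega}(d\gamma_e)=(-1)^{d-1}/d^2$, and the resummation $\sum_{d\geq 1}(-1)^{d-1}x^d/d=\log(1+x)$ giving slab function $1+T^{Z_{\gamma_e}}z^{\partial\gamma_e}$, with the pairings against $e_1,e_2$ fixing the exponents --- merely filling in details the paper leaves implicit. The one convention point you implicitly (and correctly) resolve is reading (\ref{4030}) as defining $\log f_{\gamma}$, in line with (\ref{78}), since the literal formula without the logarithm would not reproduce the stated transformation.
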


       \begin{figure}
                      \begin{center}
                      \includegraphics[height=3in,width=6in]{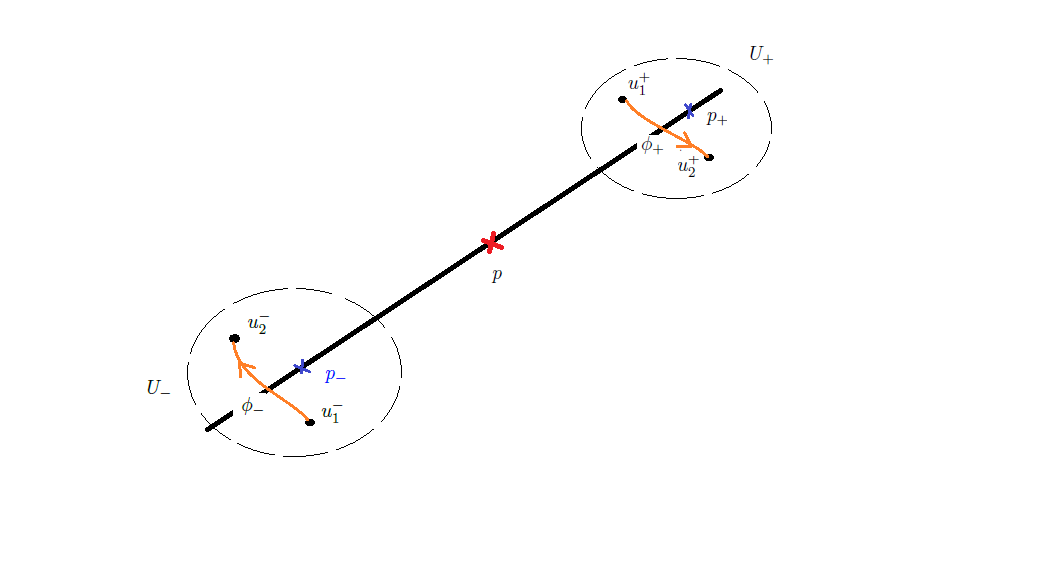}
                      \caption{On the base of Ooguri-Vafa space}
                       \label{fig:95}
                      \end{center}
                      \end{figure}
           
  
  \subsection{Wall-Crossing of Maurer-Cartan Elements}
    Now we want to understand the wall-crossing phenomenon of the open Gromov-Witten invariants. Given a basis $e_1, e_2\in H^1(L_p,\mathbb{Z})$ such that $\langle e_1,e_2\rangle=1$,
    there exists a natural symplectic $2$-form on $H^1(L_p;\Lambda_+)$ given by
            \begin{align*}
             \varpi=  \frac{dz_1}{z_1}\wedge \frac{dz_2}{z_2}.
            \end{align*}
  \begin{lem}
     The symplectic $2$-form $\varpi$ is independent of choice of the basis $e_i\in H^1(L_p,\mathbb{Z})$. 
  \end{lem}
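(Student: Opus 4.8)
The plan is to verify that the $2$-form $\varpi = \frac{dz_1}{z_1}\wedge\frac{dz_2}{z_2}$ does not change when we replace the basis $e_1,e_2$ of $H^1(L_p,\mathbb{Z})$ by another basis $e_1',e_2'$ related by an element $A = \bigl(\begin{smallmatrix} a & b \\ c & d \end{smallmatrix}\bigr) \in SL(2,\mathbb{Z})$, since any two symplectic integral bases (with $\langle e_1,e_2\rangle = \langle e_1',e_2'\rangle = 1$) differ by such an $A$. First I would record how the coordinates transform: if $e_i' = \sum_j A_{ij} e_j$, then the monomials transform multiplicatively, $z_{e_i'} = z_1^{A_{i1}} z_2^{A_{i2}}$, i.e. $z_1' = z_1^a z_2^b$ and $z_2' = z_1^c z_2^d$. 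The key computational observation is that $d\log$ turns this multiplicative action into a linear one: $\frac{dz_1'}{z_1'} = a\,\frac{dz_1}{z_1} + b\,\frac{dz_2}{z_2}$ and $\frac{dz_2'}{z_2'} = c\,\frac{dz_1}{z_1} + d\,\frac{dz_2}{z_2}$.

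Then I would simply compute the wedge product:
\begin{align*}
  \frac{dz_1'}{z_1'}\wedge\frac{dz_2'}{z_2'}
  &= \Big(a\tfrac{dz_1}{z_1} + b\tfrac{dz_2}{z_2}\Big)\wedge\Big(c\tfrac{dz_1}{z_1} + d\tfrac{dz_2}{z_2}\Big) \\
  &= (ad - bc)\,\tfrac{dz_1}{z_1}\wedge\tfrac{dz_2}{z_2}
  = \det(A)\,\varpi = \varpi,
\end{align*}
using $\det A = 1$. This shows the form is basis-independent. One small point I would address for rigor: the coordinates $z_i$ take values in the formal/Novikov setting (elements of $1 + \Lambda_+$ on $H^1(L_p;\Lambda_+)$), so ``$d\log$'' should be interpreted as the algebraic logarithmic derivative on the appropriate (formal or analytic) torus; the identity $d\log(fg) = d\log f + d\log g$ still holds there, which is all the computation uses.

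I do not expect a genuine obstacle here — the statement is essentially the observation that the standard invariant symplectic form on an algebraic torus $(\mathbb{C}^*)^2$ (or its Novikov analogue $H^1(L_p;\Lambda_+)$) is $SL(2,\mathbb{Z}) = \mathrm{Sp}(2,\mathbb{Z})$-invariant, which is a one-line determinant calculation after passing to logarithmic coordinates. The only thing that requires a sentence of care is making precise the sense in which the ``coordinates'' and the ``$2$-form'' live on $H^1(L_p;\Lambda_+)$, and confirming that two symplectic integral bases of $H^1(L_p,\mathbb{Z})$ indeed differ by $SL(2,\mathbb{Z})$ rather than merely $GL(2,\mathbb{Z})$ — this follows immediately from the constraint $\langle e_1,e_2\rangle = 1$ being preserved, which forces $\det A = 1$.
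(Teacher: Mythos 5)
Your proof is correct, and it is the same underlying idea as the paper's: a change of integral basis acts multiplicatively on the coordinates $z_i$, hence linearly on the logarithmic differentials $\tfrac{dz_i}{z_i}$, and one checks that $\varpi$ is preserved. The execution differs slightly: the paper decomposes the basis change $A$ into elementary transformations and verifies invariance for each factor, whereas you do the general case in one stroke via $\tfrac{dz_1'}{z_1'}\wedge\tfrac{dz_2'}{z_2'} = \det(A)\,\varpi$. Your route is cleaner (it avoids invoking the generation of the matrix group by elementary matrices) and, more importantly, it isolates the one genuine subtlety: the paper asserts invariance under $GL(2,\mathbb{Z})$, but a transformation with $\det A = -1$ sends $\varpi$ to $-\varpi$, so the statement is only correct for bases compatible with the symplectic pairing $\langle e_1, e_2\rangle = 1$, i.e.\ for $A \in SL(2,\mathbb{Z})$. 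Since the surrounding text does fix $\langle e_1,e_2\rangle = 1$, your reading is the intended one, and your remark that this constraint forces $\det A = 1$ is exactly the point that makes the lemma true as used. Your caveat about interpreting $d\log$ formally over $\Lambda_+$ is also appropriate and costs nothing, since only the additivity $d\log(fg) = d\log f + d\log g$ is used.
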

   \begin{proof}
      Since any two basis are related by a transform $A=GL(2,\mathbb{Z})$, which can be decomposed into product of elementary transformation. It is straightforward to check that $\varpi$ is the same if the two basis is differed by an elementary transformation. 
   \end{proof}
  
     Now assume that $X\rightarrow B$ is a K3 surface with special Lagrangian torus fibration. Let $p\in B_0$ and $U\subseteq B_0$ is a small enough neighborhood of $p$. Let $u_0,u_1\in U$. Let $\phi$ be a path in $U$ connecting $u_0,u_1$ such that "Fukaya's trick" applies (see Section \ref{4006}). 
    We will use the same notation $e_1,e_2$ for their parallel transport along $\phi$. Write $b=x_1e_1+x_2e_2\in H^1(L_{u},\Lambda_+)$. Using equation (140) of \cite{F1}, we can compute the form of $F^{can}_{(\phi,p)}$ with respect the above coordinates:
              \begin{align*}
                     F^{can}_{(\phi,p)}:  &H^1(L_{u_0},\Lambda_+) \longrightarrow H^1(L_{u_1},\Lambda_+)\\
                      & b=x_1e_1+x_2e_2 \mapsto (F^{can}_{(\phi,p)}(b))_1e_1+(F^{can}_{(\phi,p)}(b))_2e_2
              \end{align*} and gives a transformation $x_k \mapsto (F^{can}_{(\phi,p)}(b))_k$, $k=1,2$.    
     %
     %
        Motivated by mirror symmetry, it is more natural to consider $z_k=\exp{(x_k)}$ and transformation $F^{can}_{(\phi,p)}$ acts on $z_k$ by $z_k\mapsto \exp (F^{can}_{(\phi,p)}(b))_k$. This defines an automorphism of the Tate algebra \cite{T4}. It is straight-forward to check that the transformation induced by $F^{can}_{(\phi,p)}$ is independent of the choice of the integral basis $e_1,e_2\in H^1(L_p,\mathbb{Z})$. 
   
    Let $\mathcal{M}^t_{k,\gamma}$ be the moduli spaces of stable discs (holomorphic with respect to $J_t$) with boundary on $L_p$ and $k$ boundary marked points. We will denote the family moduli space by $\tilde{\mathcal{M}}_{k,\gamma}=\cup_{t\in [0,1]}\mathcal{M}^t_{k,\gamma}$. Let $tri$ denotes the trivial map $\tilde{\mathcal{M}}_{k,\gamma}\rightarrow pt$ to a point. Let $ev_k$ be the evaluation map at the $k$-th boundary marked point and $ev_t$ be the evaluation map to $[0,1]$. Recall that the $c^t_{0,\gamma}$ of the pseudo-isotopy in Theorem \ref{2899} is constructed via smooth correspondences
     \begin{align*}
        Corr_*(\tilde{\mathcal{M}}_{1,\gamma};tri,(ev_0,ev_t))(1)=m^t_{0,\gamma}+dt\wedge c^t_{0,\gamma}.
     \end{align*} Fukaya further considered
     \begin{align*}
     Corr_*(\tilde{\mathcal{M}}_{0,\gamma};tri,ev_t)(1) =m^t_{-1,\gamma}+   c^t_{-1,\gamma}dt
     \end{align*} for understanding the wall-crossing phenomenon of the holomorphic discs \cite{F1}. With the above notation, we have the following lemma to help understand further the form of $F^{can}_{(\phi,p)}$:
          \begin{lem}\label{27} Let $b\in \Omega^1(L_p)$ be a closed $1$-form and $\gamma$ be a relative class, then 
            \begin{align} 
               \int_{L\times [t_0,t_1]} c^t_{0,\gamma}\wedge b dt= \int_{[t_0,t_1]}c^t_{-1,\gamma}dt\int_{\partial \gamma}b,
            \end{align} for any $[t_0,t_1]\subseteq [0,1]$. 
          \end{lem}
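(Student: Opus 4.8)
The plan is to unwind the definition of the two operators $c^t_{0,\gamma}$ and $c^t_{-1,\gamma}$ as smooth correspondences associated to the family moduli spaces $\tilde{\mathcal M}_{1,\gamma}$ and $\tilde{\mathcal M}_{0,\gamma}$, and to exploit the forgetful map that drops the unique boundary marked point. First I would write, using the defining equations $Corr_*(\tilde{\mathcal M}_{1,\gamma};tri,(ev_0,ev_t))(1)=m^t_{0,\gamma}+dt\wedge c^t_{0,\gamma}$ and $Corr_*(\tilde{\mathcal M}_{0,\gamma};tri,ev_t)(1)=m^t_{-1,\gamma}+c^t_{-1,\gamma}dt$, that $\int_{L\times[t_0,t_1]}c^t_{0,\gamma}\wedge b\,dt$ is literally an integral of the pulled-back form $b$ over the part of $\tilde{\mathcal M}_{1,\gamma}$ lying above $[t_0,t_1]$, via $(ev_0,ev_t)$. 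So the left-hand side equals $\int_{\tilde{\mathcal M}_{1,\gamma}|_{[t_0,t_1]}} ev_0^*b \wedge ev_t^*(dt)$ up to the sign/orientation bookkeeping intrinsic to the smooth correspondence formalism of \cite{F1}.

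The key step is the forgetful map $\mathfrak{forget}:\tilde{\mathcal M}_{1,\gamma}\to \tilde{\mathcal M}_{0,\gamma}$ forgetting the boundary marked point; this is a fiber bundle whose fiber is the boundary circle $\partial\gamma$ of the disc (an $S^1$), and it intertwines the two evaluation maps to $[0,1]$, i.e. $ev_t = ev_t\circ\mathfrak{forget}$. Then I would apply the projection (fiber-integration) formula along $\mathfrak{forget}$: integrating $ev_0^*b$ first over the $S^1$-fiber produces the factor $\int_{\partial\gamma}b$, which is a \emph{constant} (independent of the point of $\tilde{\mathcal M}_{0,\gamma}$, since $b$ is closed and the homology class of the boundary is locally constant in the family), and what remains is exactly $\int_{[t_0,t_1]}c^t_{-1,\gamma}\,dt$ after re-identifying the pushed-forward form with $Corr_*(\tilde{\mathcal M}_{0,\gamma};tri,ev_t)(1)$ restricted to the interval. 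Pulling the constant out of the integral gives the claimed identity. The closedness of $b$ is used to ensure the fiberwise integral over $S^1$ depends only on $[\partial\gamma]\in H_1(L_p)$ and not on the particular disc, so that it factors out; this is the analogue of the divisor axiom (Theorem \ref{4053}, \ref{4054}) in this setting.

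I expect the main obstacle to be the careful treatment of the smooth-correspondence/Kuranishi bookkeeping: making precise that $\mathfrak{forget}$ is compatible with the Kuranishi structures and perturbed multisections chosen on $\tilde{\mathcal M}_{1,\gamma}$ and $\tilde{\mathcal M}_{0,\gamma}$, so that fiber integration along $\mathfrak{forget}$ genuinely relates the two correspondences, and tracking the signs $(-1)^*$ that appear when commuting $ev_0^*b$ past $ev_t^*(dt)$ and past the integration along the fiber. All of this is standard in the framework of \cite{F1}\cite{FOOO} — the forgetful map at the level of virtual chains, the projection formula for smooth correspondences, and the resulting divisor-type relation — so once the compatibility of Kuranishi data under $\mathfrak{forget}$ is invoked, the computation is essentially the fiber-integration identity above. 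I would therefore present the argument at the level of the correspondence diagrams, cite the relevant compatibility statements from \cite{F1}, and leave the orientation verification as routine.
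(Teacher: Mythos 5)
Your proposal is correct and follows essentially the same route as the paper: the paper also rewrites the left-hand side as a smooth correspondence with $b$ as input, factors through the forgetful map $\mathfrak{forget}:\tilde{\mathcal{M}}_{1,\gamma}\to\tilde{\mathcal{M}}_{0,\gamma}$, and uses integration along the $S^1$-fibre together with the compatibility of the forgetful map with the Kuranishi data to extract the factor $\int_{\partial\gamma}b$. Nothing essential differs beyond presentation.
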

          \begin{proof}
          Let  $\mathfrak{forget}$ denotes the forgetful map $\tilde{\mathcal{M}}_{1,\gamma}\rightarrow \tilde{\mathcal{M}}_{0,\gamma}$. In terms of smooth correspondence, the left hand side of (\ref{27}) is 
            \begin{align*}
               & \int_{L\times [t_0,t_1]} Corr_*(\tilde{\mathcal{M}}_{1,\gamma};tri, (ev_0,ev_t))(1)\wedge b \\
             =& Corr_*(\tilde{\mathcal{M}}_{1,\gamma};(ev_0,ev_t),tri)(b) \\
             =& Corr_*(\tilde{\mathcal{M}}_{1,\gamma};(ev_0,ev_t),tri\circ \mathfrak{forget})(b)\\
            =& Corr_*(\tilde{\mathcal{M}}_{0,\gamma};tri,tri)(1) \cdot \int_{\gamma}b\\
            =&\int_{[t_0,t_1]}c^t_{-1,\gamma}dt\int_{\partial \gamma}b.
            \end{align*} The first equality is the composition formula of smooth correspondences (Lemma 4.3\cite{F1}). The third equality comes from integration along the fibre of the forgetful map and the compatibility of the forgetful map \cite{F1}. 
            
          \end{proof}
           Lemma \ref{27} implies that for any smooth function $b'$on $L_p$, we have 
             \begin{align*}
               \int_{L_p} b' (dc^t_{0,\gamma}) =-\int_{L_p} c^t_{0,\gamma}\wedge db'= 0.
             \end{align*}
        In particular, we have                
 \begin{cor} \label{4056} $c^t_{0,\gamma}$ is $d$-closed $1$-form and
            $ \langle \partial \gamma, c^t_{0,\gamma}\rangle=0$
        \end{cor}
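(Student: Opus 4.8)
The plan is to read both assertions off Lemma~\ref{27}. The $d$-closedness is essentially the identity displayed just before the statement: applying Lemma~\ref{27} with $b=db'$ for an arbitrary smooth function $b'$ on $L_p$, the right-hand side vanishes because $\partial\gamma$ is a cycle, so $\int_{L_p\times[t_0,t_1]}c^t_{0,\gamma}\wedge db'\,dt=0$ for every subinterval; the asserted smoothness in $t$ then gives $\int_{L_p}c^t_{0,\gamma}\wedge db'=0$ for each fixed $t$, Stokes' theorem on the closed surface $L_p$ rewrites this as $\int_{L_p}b'\,(dc^t_{0,\gamma})=0$ for all $b'$, and since $dc^t_{0,\gamma}$ is a top-degree form on $L_p$ we conclude $dc^t_{0,\gamma}=0$.

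For $\langle\partial\gamma,c^t_{0,\gamma}\rangle=\int_{\partial\gamma}c^t_{0,\gamma}=0$ (the case $\partial\gamma=0$ being trivial), I would exploit the closedness just established together with the antisymmetry of the wedge product on the $2$-torus $L_p$. Fix an integral basis $e_1,e_2$ of $H_1(L_p;\mathbb{Z})$ with $e_1\cdot e_2=1$ and dual closed $1$-forms $\eta_1,\eta_2$ with $\int_{e_i}\eta_j=\delta_{ij}$, normalized so that $\int_{L_p}\eta_1\wedge\eta_2=1$, and write $\partial\gamma=a_1e_1+a_2e_2$. Since $c^t_{0,\gamma}$ is closed, $c^t_{0,\gamma}=c_1(t)\eta_1+c_2(t)\eta_2+(\text{exact})$ with $c_i(t)=\int_{e_i}c^t_{0,\gamma}$, and using $\eta_i\wedge\eta_i=0$ one computes $\int_{L_p}c^t_{0,\gamma}\wedge\eta_1=-c_2(t)$ and $\int_{L_p}c^t_{0,\gamma}\wedge\eta_2=c_1(t)$. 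Feeding $b=\eta_1$ and $b=\eta_2$ into Lemma~\ref{27}, integrating over an arbitrary $[t_0,t_1]$ and differentiating in $t$, yields $-c_2(t)=a_1\,c^t_{-1,\gamma}$ and $c_1(t)=a_2\,c^t_{-1,\gamma}$. Hence $\int_{\partial\gamma}c^t_{0,\gamma}=a_1c_1(t)+a_2c_2(t)=(a_1a_2-a_2a_1)\,c^t_{-1,\gamma}=0$.

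The argument is essentially formal; the substantive feature is that the vanishing comes precisely from the skew-symmetry of the intersection form ($\eta_i\wedge\eta_i=0$), together with the fact that $c^t_{-1,\gamma}$, being a single scalar for each $t$, factors out of every period. No positivity, transversality, or dimension count is needed beyond $\dim_{\mathbb{R}} L_p=2$. The only points demanding care — and not genuine obstacles — are the sign bookkeeping in the Stokes and wedge manipulations and the passage from the $t$-integrated identities supplied by Lemma~\ref{27} to pointwise-in-$t$ identities, which is legitimate by the asserted smoothness of $c^t_{0,\gamma}$ and $c^t_{-1,\gamma}$ in $t$.
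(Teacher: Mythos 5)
Your argument is correct and follows essentially the paper's own route: the $d$-closedness is obtained exactly as in the text, by feeding exact forms $b=db'$ into Lemma \ref{27}, passing to pointwise-in-$t$ identities by smoothness, and integrating by parts on the closed surface $L_p$. The paper leaves the second assertion implicit (``in particular''), and your period computation with $b=\eta_1,\eta_2$ is precisely the natural completion: Lemma \ref{27} identifies $[c^t_{0,\gamma}]$ with $c^t_{-1,\gamma}$ times the Poincar\'e dual of $\partial\gamma$, so its pairing with $\partial\gamma$ vanishes by skew-symmetry of the intersection form, and your sign conventions do not affect this vanishing.
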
  
        
    Now we can state the key theorem to compute the open Gromov-Witten invariants $\tilde{\Omega}'(\gamma;u)$ defined in Section \ref{4055}.    
      
    The following definition is natural from the Torelli theorem of K3 surfaces.  
  \begin{definition} \label{9002}
     Let $X$ be a hyperK\"ahler surface with elliptic fibration and admits only type $I_1$ singular fibres. Then 
       \begin{enumerate}
           \item we say that $X$ satisfies the generic condition (*) if $\Omega:H_2(X,\mathbb{Z})\rightarrow \mathbb{C}$ is injective.
           \item we say that $X$ satisfies the generic condition $(*)_{u,\gamma}$ if $\gamma'\in H_2(X,L_u)$ represented by a holomorphic disc in $X_{\vartheta}$ for some $\vartheta$ and $Z_{\gamma'}=tZ_{\gamma}$ in a neighborhood of $u$ for some $t\in \mathbb{Q}$, then $t>1$. 
           \item we say that $X$ satisfies the generic condition $(*)_{\lambda}$ for some $\lambda>0$ if $X$ satisfies the generic condition $(*)_{u,\gamma}$ for each pair $(u,\gamma)$ such that $|Z_{\gamma}(u)|<\lambda$. 
       \end{enumerate}
  \end{definition} 
  \begin{rmk} \label{445}
  \begin{enumerate}
     \item  By the Torelli theorem of K3 surfaces, the K3 surface satisfies the generic condition (*) is indeed generic in the period domain. By the Torelli theorem of K3 surfaces and Gromov compactness theorem, the generic condition $(*)_{\lambda}$ is an open condition in the period domain. 
     \item 
          It is easy to see that generic condition (*) implies generic condition $(*)_{u,\gamma}$ for all pairs $(u,\gamma)$.
      \item  If $X$ satisfies the generic condition (*), then $l_{\gamma}\neq l_{\gamma'}$ for a generic $\vartheta$ unless $\gamma=k\gamma'$ for some $k\in \mathbb{Q}$.    
           Similarly, if $X$ satisfies generic condition $(*)_{u,\gamma}$ and $l_{\gamma'}=l_{\gamma}$ for a generic $\vartheta$, then either $\gamma'=k\gamma$ for some $k\in \mathbb{Q}$ for $|Z_{\gamma'}(u)|>|Z_{\gamma}(u)|$. 
   \end{enumerate}
  \end{rmk}

      Let $u_{\pm}$ be two points on the different sides of $l_{\gamma}$ and $\phi$ is a path connecting $u_{\pm}$. We may take $u_{\pm}$ be close enough to $l_{\gamma}$ such that there are only relative classes $d\gamma$, where $d\in\mathbb{N}$ and $\gamma$ primitive, can be realized as holomorphic discs with boundary on $L_{\phi(t)}$ for some $t$ and with symplectic area less than $\lambda$ for a given $\lambda>0$.
   \begin{thm} \label{101}
    The transformation $F^{can}_{(\phi,p)}$ is of the form 
           \begin{align*}
               z^{\partial\gamma'}\mapsto z^{\partial \gamma'}f_{\gamma}^{\langle \gamma',\gamma\rangle} \hspace{3mm} (\mbox{ mod $T^{\lambda}$}),
           \end{align*} for some $f_{\gamma}\in 1+\Lambda[[z^{\partial\gamma}]]$. Here we use the notation that
                      \begin{align*}
                        z^{\partial \gamma}=z_1^{( \partial\gamma, e_1)}z_{2}^{(\partial\gamma,e_{2})}.
                      \end{align*}
   \end{thm}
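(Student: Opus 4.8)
The plan is to compute the transformation $F^{can}_{(\phi,p)}$ directly, as the time-one map of the flow on $\mathcal{MC}\big(H^*(L_p,\Lambda_+)\big)=H^1(L_p,\Lambda_+)$ induced (via Proposition \ref{35}) by the canonical model of the pseudo-isotopy that Fukaya's trick attaches to the path $\phi$. By \cite{F1} this map sends $b=b(0)$ to $b(1)$, where $b(t)\in H^1(L_p,\Lambda_+)$ solves
\begin{align*}
  \frac{d}{dt}b(t)+\sum_{\beta}\Big(\sum_{k\geq 0}\tilde c^t_{k,\beta}\big(b(t)^{\otimes k}\big)\Big)T^{\omega(\beta)}=0,\qquad b(0)=b,
\end{align*}
with $\tilde c^t_{k,\beta}$ the $dt$-component of the pseudo-isotopy. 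First I would reduce the class sum: $\tilde c^t_{k,0}=0$ by the pseudo-isotopy axioms, and by the choice of $u_\pm$ close to $l_\gamma$ together with Corollary \ref{29}, modulo $T^\lambda$ the only classes $\beta$ that occur are $\beta=d\gamma$, $d\in\mathbb N$. Hence, working modulo $T^\lambda$ throughout,
\begin{align*}
  \frac{d}{dt}b(t)=-\sum_{d\geq 1}\Big(\sum_{k\geq 0}\tilde c^t_{k,d\gamma}\big(b(t)^{\otimes k}\big)\Big)T^{\omega(d\gamma)}.
\end{align*}

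The second step is to collapse the inner sums using the divisor axiom for pseudo-isotopies (Theorem \ref{4054}): for $b\in H^1(L_p,\Lambda_+)$ one gets $\sum_{k\geq 0}\tilde c^t_{k,d\gamma}(b^{\otimes k})=e^{\langle d\partial\gamma,b\rangle}\,\tilde c^t_{0,d\gamma}=z^{d\partial\gamma}\,\tilde c^t_{0,d\gamma}$, in the notation of the theorem ($z_i=e^{x_i}$, $b=x_1e_1+x_2e_2$, so $z^{\partial\gamma}=z_1^{(\partial\gamma,e_1)}z_2^{(\partial\gamma,e_2)}=e^{\langle\partial\gamma,b\rangle}$). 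By the canonical model analogue of Corollary \ref{4056}, each $\tilde c^t_{0,d\gamma}$ is a closed $1$-form with $\langle\partial\gamma,\tilde c^t_{0,d\gamma}\rangle=0$; since $H^1(L_p)$ is $2$-dimensional and $\partial\gamma\neq0$, the kernel of $\langle\partial\gamma,\cdot\rangle$ is the line through the Poincar\'e dual $\nu:=\mathrm{PD}(\partial\gamma)\in H^1(L_p)$, which in addition pairs with any class $\gamma'$ as $\langle\gamma',\gamma\rangle$. Writing $\tilde c^t_{0,d\gamma}=g_d(t)\,\nu$ with $g_d$ smooth, the flow becomes $\frac{d}{dt}b(t)=-\big(\sum_{d\geq1}z^{d\partial\gamma}g_d(t)T^{\omega(d\gamma)}\big)\nu$.

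Now comes the point that makes the integration elementary: pairing this equation with $\partial\gamma$ gives $\frac{d}{dt}\langle\partial\gamma,b(t)\rangle=0$, since $\langle\partial\gamma,\nu\rangle=0$; hence $z^{\partial\gamma}$ is a first integral of the flow, the equation is linear, and
\begin{align*}
  F^{can}_{(\phi,p)}(b)=b-S\,\nu,\qquad S:=\sum_{d\geq1}\Big(\int_0^1 g_d(t)\,dt\Big)\big(z^{\partial\gamma}T^{\omega(\gamma)}\big)^d\in z^{\partial\gamma}\Lambda[[z^{\partial\gamma}]],
\end{align*}
where $z^{\partial\gamma}$ on the right is the input value (a polynomial in $z^{\partial\gamma}$ modulo $T^\lambda$). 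Passing to the coordinates $z_i$ and writing $\nu=\nu_1e_1+\nu_2e_2$, this reads $z_i\mapsto z_i\,e^{-S\nu_i}$, so for any $\gamma'$
\begin{align*}
  z^{\partial\gamma'}\longmapsto z^{\partial\gamma'}\exp\!\Big(-S\sum_i(\partial\gamma',e_i)\nu_i\Big)=z^{\partial\gamma'}\exp\!\big(-S\langle\gamma',\gamma\rangle\big),
\end{align*}
using $\sum_i(\partial\gamma',e_i)\nu_i=\langle\partial\gamma',\nu\rangle=\langle\gamma',\gamma\rangle$ because $\nu$ is Poincar\'e dual to $\partial\gamma$. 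Thus the theorem holds with $f_\gamma:=\exp(-S)\in 1+\Lambda[[z^{\partial\gamma}]]$, which is automatically $\equiv1$ modulo $z^{\partial\gamma}$ since $S$ has no constant term. (This is consistent with the explicit computation in Theorem \ref{42}.)

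The step I expect to be the real work is the foundational one: justifying that $F^{can}_{(\phi,p)}$ is indeed computed by the flow above with $b(t)$ staying of degree one, and that the canonical model structure maps $\tilde c^t_{0,d\gamma}$ inherit the closedness and the vanishing $\partial\gamma$-period of Corollary \ref{4056} (which is proved at the level of $\Omega^*(L_p)$). The plan here is to carry out the argument with the $\Omega^*(L_p)$-level pseudo-isotopy of Section \ref{4006} and its homotopy transfer, using that the transfer is compatible with the forgetful maps, so that Theorem \ref{4054} and the degree/period constraints descend to the canonical model, exactly as in \cite{F1}\cite{T4}; granting this, the resummation and the single-variable integration above are routine.
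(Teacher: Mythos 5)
Your proposal is correct and takes essentially the same route as the paper: both rest on the Maurer--Cartan identification of Proposition \ref{35} (Fukaya's formula for $F^{can}_{(\phi,p)}$), the restriction to classes $d\gamma$ from the choice of $u_{\pm}$ near $l_{\gamma}$, the divisor axiom (Theorems \ref{4053}/\ref{4054}), and Corollary \ref{4056}, reducing everything to $(z^{\partial\gamma})^{d}\int_0^1 c^{t}_{0,d\gamma}\,dt$. Your ODE/first-integral packaging is an equivalent reformulation of the paper's inductive vanishing of the contributions of trees with two or more interior vertices (the time-ordered tree sum is exactly the Picard expansion of your flow), and the descent of Corollary \ref{4056} to the canonical model that you flag as the remaining foundational work is treated at the same level of detail in the paper.
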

  \begin{proof}
     By Proposition \ref{35}, we have 
          \begin{align}\label{4998}
             F^{can}_{(\phi,p)}(b)=\sum_{k\geq 1,d\geq 1}\mathfrak{c}(k,d\gamma)(b,\cdots,b)T^{d\omega(\gamma)},
          \end{align} where $\mathfrak{c}(k,\gamma)$ is defined in Theorem \ref{4050}. Let $\mathfrak{c}'(k,d\gamma)(b)$ denote the sum of the terms in (\ref{4998}) which are corresponding to the trees with $k$ internal vertices.
          First we will show that 
          \begin{align} \label{4052}
          \mathfrak{c}'(k,d\gamma)(b)=0
          \end{align}
           (before integrating over the time allocation) by induction on $k$ when $k\geq 2$. For $k=2$, we have 
            \begin{align*}
                \mathfrak{c}'(2,d\gamma)(b)&= \sum_{l_1,l_2\geq 0} \sum_{i=1}^{l_1-1} \int_{\tau_1>\tau_2} c^{\tau_1}_{l_1,\gamma}(\underbrace{b,\cdots, b}_\text{$i$ copies}, c^{\tau_2}_{l_2,\gamma}(b,\cdots,b),\cdots,b).
            \end{align*}
          By Theorem \ref{4053}, we have 
           \begin{align*}
              &\sum_{i=1}^{l_1-1}c^{\tau_1}_{l_1,\gamma}(\underbrace{b,\cdots, b}_\text{$i$ copies}, c^{\tau_2}_{l_2,\gamma}(b,\cdots,b),\cdots,b) \\
             =& \frac{\langle \partial \gamma,b\rangle^{l_1-1}}{(l_1-1)!}c^{\tau_1}_{1,\gamma}(c^{\tau_2}_{l_2,\gamma}(b,\cdots,b)) \\  
             =& \frac{\langle \partial \gamma,b\rangle^{l_1+l_2-1}}{(l_1+l_2-1)!} c^{\tau_1}_{1,\gamma}(c^{\tau_2}_{0,\gamma}) \\
             =& \frac{\langle \partial \gamma,b\rangle^{l_1+l_2-1}}{(l_1+l_2-1)!} \langle \partial \gamma,c^{\tau_2}_{0,\gamma}\rangle c^{\tau_1}_{0,\gamma}=0.
           \end{align*} Let last equality follows from Corollary \ref{4056}. Assume that $e_1^*=c^{-1}\partial \gamma$ is primitive in $H_1(L_p,\mathbb{Z})$ for some constant $c$. Extend it to a symplectic integral basis $e_1^*,e_2^*$ and let $e_1,e_{2}$ be the corresponding dual basis in $H^1(L_p,\mathbb{Z})$.  For $k>2$, the equation (\ref{4052}) follows from the induction hypothesis and the divisor axiom. 
          For $k=1$, the only tree which contributes to $\mathfrak{c}'(1,d\gamma)(b,\cdots,b)$ has one interior vertex $v$ and $l+1$ exterior vertices (including the root), $l\geq 0$ or only one vertex.
           Thus 
                 \begin{align*}
                    \mathfrak{c}'(1,d\gamma)(b)&=\sum_{l\geq 0}\int_0^1 c^{\tau}_{l,d\gamma}(b,\cdots,b) d\tau\\
                    &= \sum_{l\geq 0}\frac{\langle d\partial \gamma,b\rangle^l}{l!} \int_0^1 c^{\tau}_{0,d\gamma}d\tau \\
                    &= (z^{\partial \gamma})^d \int_0^1 c^{\tau}_{0,d\gamma}d\tau
                 \end{align*}
      Together with Corollary \ref{4056}, the transformation given by $F^{can}_{(\phi,p)}$ becomes
            \begin{align*}
              z_i \rightarrow z_i f, \mbox{ where } f= 1+\begin{cases} 0& \text{if $i=1$}\\
                                  \mbox{ power series in $z_1^c$}& \text{if $i=2$,}
                                                                            \end{cases}
            \end{align*} 
     
  \end{proof}
      Fix $\lambda>0$. There are finitely many of affine hyperplanes $l_{\gamma}$ passing through $U$ such that there exists $\gamma\in H_2(X,L_u)$ with $\mathcal{M}_{\gamma}(X,L_u)\neq \emptyset$ and $|Z_{\gamma}(p)|<\lambda$ for some $u\in U$. A direct consequence of  Theorem \ref{101} is the following theorem. 
  \begin{thm}
     Let $u_+,u_-$ be in a small enough neighborhood $U\ni p$ and a path $\phi$ connecting $u_+,u_-$ falls in $U$.
     Then we have $(F^{can}_{(\phi,p)})^*\varpi=\varpi$.
  \end{thm}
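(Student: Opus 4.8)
The plan is to exploit the precise shape of $F^{can}_{(\phi,p)}$ recorded in Theorem~\ref{101}: modulo $T^{\lambda}$, for every $\lambda>0$, the transformation $F^{can}_{(\phi,p)}$ is a finite composition of ``elementary'' transformations (and parallel transports), each of which I will check preserves $\varpi$; since $\varpi$ carries no Novikov parameter, equality modulo $T^{\lambda}$ for all $\lambda$ upgrades to $(F^{can}_{(\phi,p)})^{*}\varpi=\varpi$ on the nose.

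First I would reduce to a single wall crossing. Fix $\lambda>0$. After shrinking $U$, using Proposition~\ref{21} together with Gromov compactness, the only affine walls $l_{\gamma_1},\dots,l_{\gamma_k}$ meeting $U$ along which some $L_{u}$, $u\in U$, bounds a disc of central charge $<\lambda$ are finitely many. By Theorem~\ref{102} I may replace $\phi$ by a homotopic (rel endpoints) path meeting $\bigcup_i l_{\gamma_i}$ transversally and at distinct points, and write it as a concatenation $\phi\simeq\phi_1\cdots\phi_m$ in which each $\phi_j$ either avoids all these walls or crosses a single $l_{\gamma_{i(j)}}$ exactly once. Gluing the corresponding families of special Lagrangians (``Fukaya's trick'') identifies $F^{can}_{(\phi,p)}$ modulo $T^{\lambda}$ with the composition $F^{can}_{(\phi_m,p)}\circ\cdots\circ F^{can}_{(\phi_1,p)}$, up to the parallel transports relating consecutive reference points; a parallel transport acts on the $z$-coordinates through an element of $\mathrm{Sp}(2,\mathbb Z)$ (and a constant Novikov prefactor) and hence preserves $\varpi$ by the lemma that $\varpi$ is basis-independent. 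A factor $F^{can}_{(\phi_j,p)}$ of the first kind equals a parallel transport modulo $T^{\lambda}$ by Proposition~\ref{32}; a factor of the second kind is an elementary transformation by Theorem~\ref{101}.

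Second I would check directly that an elementary transformation preserves $\varpi$. Let $c$ be the divisibility of $\partial\gamma_{i(j)}$ and choose the symplectic integral basis $e_1^{*},e_2^{*}$ of $H_1(L_p,\mathbb Z)$ adapted to it, with dual basis $e_1,e_2$ of $H^1$ and coordinates $z_i=z^{e_i}$; then $\varpi=\frac{dz_1}{z_1}\wedge\frac{dz_2}{z_2}$ by the lemma, one has $z^{\partial\gamma_{i(j)}}=z_1^{c}$, and by (the proof of) Theorem~\ref{101} the transformation reads $z_1\mapsto z_1$, $z_2\mapsto z_2\, g(z_1)$ with $g\in 1+\Lambda[[z_1^{c}]]$. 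Hence
\[
(F^{can}_{(\phi_j,p)})^{*}\varpi=\frac{dz_1}{z_1}\wedge\frac{d\bigl(z_2\, g(z_1)\bigr)}{z_2\, g(z_1)}=\frac{dz_1}{z_1}\wedge\Bigl(\frac{dz_2}{z_2}+\frac{g'(z_1)}{g(z_1)}\,dz_1\Bigr)=\varpi,
\]
the extra term vanishing because $g$ depends on $z_1$ only (the Novikov variable $T$ being constant), so it is a multiple of $dz_1\wedge dz_1=0$. Since a composition of $\varpi$-preserving maps is $\varpi$-preserving, $(F^{can}_{(\phi,p)})^{*}\varpi\equiv\varpi\pmod{T^{\lambda}}$; letting $\lambda\to\infty$ gives $(F^{can}_{(\phi,p)})^{*}\varpi=\varpi$.

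I expect the main obstacle to be the reduction carried out in the second paragraph, namely making it precise that $F^{can}_{(\phi,p)}$ genuinely factors, modulo each $T^{\lambda}$, into finitely many single-wall elementary transformations and parallel transports. This uses the homotopy invariance of Theorem~\ref{102}, the compatibility of the family-Floer construction with concatenation of the paths (so that the induced maps on Maurer--Cartan spaces compose), and — to ensure the finitely many relevant walls are distinct affine lines that can be crossed one at a time — the generic condition $(*)_{\lambda}$. Once that structural statement is granted, the symplectic claim is the one-line computation displayed above together with the observation that parallel transports are $\varpi$-preserving.
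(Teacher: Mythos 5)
Your argument is correct and is essentially the paper's: the theorem is stated there as a direct consequence of Theorem \ref{101}, i.e.\ one decomposes $F^{can}_{(\phi,p)}$ modulo each $T^{\lambda}$ into parallel transports and elementary transformations $z_1\mapsto z_1$, $z_2\mapsto z_2 g(z_1)$, each of which visibly preserves $\varpi=\frac{dz_1}{z_1}\wedge\frac{dz_2}{z_2}$, and then lets $\lambda\to\infty$. You have merely written out the wall-by-wall reduction and the one-line pullback computation that the paper leaves implicit, so no further comment is needed.
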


\begin{rmk} \begin{enumerate}
     \item  We expect that the Theorem \ref{101} is true for special Lagrangian surface of higher genus as well. However, our argument does not apply directly. 
     \item Similar result in a different setting and assumption also appears in the work of Seidel (Section 11 \cite{S4}).
\end{enumerate}
\end{rmk}
 
   \subsection{Locally Around Type $I_1$ Singular Fibre}
From this section we will focus on the case when $X$ is a K3 surface with special Lagrangian fibration and $L$ is a special Lagrangian torus fibre. Assume moreover that each of the singular fibre has a single simple node and we want to compute $F^{can}_{\phi}$ for a path $\phi$ connecting $u_1,u_2\in B_0$ near a singularity $0\in B$ of the affine structure.

\begin{thm} \label{45}
  Given $\lambda>0$ and a generic $\vartheta\in S^1$, there exists a neighborhood $\mathcal{U}_{\lambda}\ni 0$ such that for any path $\phi$ in $\mathcal{U}_{\lambda}$ connecting $u_1,u_2$, then
   \begin{enumerate}
      \item if $\phi$ does not intersect $l_{\pm}$, then
              $F^{can}_{\phi}\cong \mbox{Id} \mbox{ ( mod $T^{\lambda}$)}$.
      \item if $\phi$ is homotopy to $\phi_{\pm}$ within $\mathcal{U}_{\lambda}$, then
      \begin{align*}
              \begin{cases}
               F^{can}_{(\phi,p_{\pm})}(z_1)=z_1  \\
               F^{can}_{(\phi,p_{\pm})}(z_2)=z_2(1+ T^{\pm\omega(\gamma_e)}z_1^{\pm 1})^{\mp 1}   
               \end{cases} \mbox{( mod $T^{\lambda}$)},              
            \end{align*} 
    
   \end{enumerate}
\end{thm}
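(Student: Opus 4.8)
The plan is to reduce both statements to the Ooguri--Vafa model, where the analogous computations are already available from Theorem \ref{4020} and Theorem \ref{42}, using Proposition \ref{32}, Theorem \ref{102} and Theorem \ref{101}. First I would fix $\lambda>0$ and analyze which walls can meet a small neighborhood of $0$. By Gromov compactness there are only finitely many classes $\gamma$ realised by holomorphic discs bounding fibres near $0$ with $|Z_\gamma|<\lambda$; among these, only the multiples $d\gamma_e$ of the Lefschetz thimble class satisfy $Z_{d\gamma_e}\to 0$ as $u\to 0$, while every other such wall $l_\gamma$ stays a definite distance away from $0$ (the corresponding special Lagrangians retain bounded area). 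Hence, after shrinking $\mathcal{U}_\lambda$, the only walls $l_\gamma$ with $|Z_\gamma|<\lambda$ meeting $\mathcal{U}_\lambda$ are $l_{\pm\gamma_e}=l_\pm$. Part (1) is then immediate: if $\phi$ avoids $l_\pm$, then $L_{\phi(t)}$ bounds no holomorphic disc of class $\gamma$ with $|Z_\gamma|<\lambda$ for any $t\in[0,1]$, so Proposition \ref{32} gives $F^{can}_\phi\equiv\mathrm{Id}\ (\mathrm{mod}\ T^\lambda)$.

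For part (2), by the homotopy invariance of Theorem \ref{102} I may replace $\phi$ by $\phi_\pm$, so it suffices to compute $F^{can}_{(\phi_\pm,p_\pm)}$ modulo $T^\lambda$. The key input is a confinement statement: for $\mathcal{U}_\lambda$ small enough, every stable holomorphic disc of total area $<\lambda$ with boundary on some $L_{\phi_\pm(t)}$ lies entirely inside the Ooguri--Vafa neighborhood $U_{OV}$ of the singular fibre; here one argues as in Chan's proof of Proposition \ref{33}, via a maximum principle applied to a suitable function pulled back from the base, and rules out closed sphere bubbles since tracking the relative class $d\gamma_e$ already forbids fibre-class components and the only closed holomorphic curves meeting $U_{OV}$ are multiples of the $I_1$ fibre. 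Consequently the family moduli spaces $\tilde{\mathcal{M}}_{k,d\gamma_e}$, together with compatible Kuranishi data, that enter the pseudo-isotopy attached to $\{L_{\phi_\pm(t)}\}$ agree with those for $X_{OV}$, so the induced isomorphism $F^{can}_{(\phi_\pm,p_\pm)}$ on Maurer--Cartan spaces coincides, modulo $T^\lambda$, with the one for the Ooguri--Vafa space. By Theorem \ref{101} this transformation has the elementary form $z^{\partial\gamma'}\mapsto z^{\partial\gamma'}f_{\gamma_e}^{\langle\gamma',\gamma_e\rangle}$, hence is pinned down by the single series $f_{\gamma_e}$, which Theorem \ref{42} (together with Theorem \ref{4020}) identifies as $1+T^{\pm\omega(\gamma_e)}z_1^{\pm 1}$ after matching the central charge $Z_{\gamma_e}(p_\pm)$ with the symplectic area $\omega(\gamma_e)$ under the hyperK\"ahler rotation. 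This gives the claimed formula modulo $T^\lambda$.

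The main obstacle is the confinement step and the attendant local-to-global comparison of moduli spaces: one must ensure that no holomorphic disc of area $<\lambda$ with boundary on a fibre over $\mathcal{U}_\lambda$ escapes $U_{OV}$, and that the Kuranishi structures—hence the pseudo-isotopies and the operators $\mathfrak{c}(k,\gamma)$—can be chosen compatibly on $X$ and on $X_{OV}$. A secondary difficulty is that the hyperK\"ahler metric of the K3 surface is only asymptotically Ooguri--Vafa near an $I_1$ fibre; I would handle this by invoking the independence of the $A_\infty$ structure on the (almost) complex structure up to pseudo-isotopy (Theorem \ref{2899}) and Fukaya's trick, thereby reducing the computation to the exact Ooguri--Vafa model exactly as in the cobordism argument underlying Theorem \ref{3020}.
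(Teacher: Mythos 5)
Your part (1) and the overall goal (reduce part (2) to the Ooguri--Vafa computation of Theorem \ref{42}) match the paper, but the mechanism you propose for the reduction has a genuine gap. First, the confinement step is not justified: Chan's maximum-principle argument in Proposition \ref{33} uses the exact $S^1$-invariant Ooguri--Vafa metric, whereas the K3 hyperK\"ahler structure near an $I_1$ fibre only approximates it, and for an arbitrary fixed $\lambda$ an area bound $<\lambda$ does not force disc images into a fixed neighborhood $U_{OV}$ (monotonicity-type estimates only confine discs of \emph{small} area; classes $d\gamma_e$ with $d$ large have area up to $\lambda$). Second, and more seriously, even granting confinement, the statement that ``the family moduli spaces $\tilde{\mathcal{M}}_{k,d\gamma_e}$ together with compatible Kuranishi data agree with those for $X_{OV}$'' is false as written, since the ambient complex structures differ; and your proposed fix --- invoke Theorem \ref{2899} and Fukaya's trick ``as in the cobordism argument underlying Theorem \ref{3020}'' --- does not close the gap. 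Pseudo-isotopy invariance only says the two $A_\infty$ structures are pseudo-isotopic; the pseudo-isotopy attached to the deformation from the K3 structure to the OV structure induces its own transformation of Maurer--Cartan spaces, which records any discs appearing along that deformation and is a priori nontrivial. The cobordism argument behind Theorem \ref{3020} computes the numbers $\tilde{\Omega}(d\gamma_e;u)$, not the transformation $F^{can}$, so it cannot be quoted to identify $F^{can}_{(\phi_\pm,p_\pm)}$ with the OV transformation.

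The paper closes exactly this gap differently: by Lemma 4.42 of \cite{L4} there is a $1$-parameter family of hyperK\"ahler structures (a path $\phi_1$) joining $X_U$ to $X_{OV}$ with the holomorphic volume form fixed and the same torus fibration special Lagrangian throughout; by Corollary \ref{49} one chooses the endpoints $\tilde{u},\tilde{u}'$ generic so that their fibres bound \emph{no} holomorphic discs during the whole deformation, forcing $F^{can}_{\phi_1}=\mbox{Id}$; then the conjugation identity
\begin{align*}
  F^{can}_{\phi}=(F^{can}_{\phi_2'})^{-1}\circ (F^{can}_{\phi_1})^{-1}\circ F^{can}_{\phi_{OV}}\circ F^{can}_{\phi_1}\circ F^{can}_{\phi_2},
\end{align*}
together with part (1) applied to the connecting paths $\phi_2,\phi_2'$ (which avoid $l_\pm$), gives $F^{can}_{\phi}\equiv F^{can}_{\phi_{OV}}$ $(\mbox{mod } T^{\lambda})$, and Theorem \ref{42} finishes. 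If you want to keep your route, you must either prove the confinement and a genuine comparison of Kuranishi data on the overlap, or, better, replace that step by an argument controlling the transformation induced by the change of ambient structure --- which is precisely what the choice of disc-free endpoint fibres via Corollary \ref{49} accomplishes.
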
 
\begin{proof}
   The genericity $\vartheta$ guarantees that there is no $l_{\gamma}$ passing through $0$. By Gromov compactness theorem, there exists only finitely many $l_{\gamma}$ passing through a neighborhood of $0$ with the corresponding symplectic area is less than $\lambda$. 
   Therefore, there exists a neighborhood $\mathcal{U}_{\lambda}$ such that it avoids all such $l_{\gamma}$. 
   It is worth noticing that for any torus fibre $L$ over $U$, we have $H_2(X_U,L)\cong \mathbb{Z}$, which is generated by the Lefschetz thimble. Thus, the only holomorphic discs with small symplectic area have relative classes to be multiple cover of Lefschetz thimbles. In particular, this proves the first part of the theorem.   
   
   By Lemma 4.42 \cite{L4}, there exists a $1$-parameter family of hyperK\"ahler structures (which we will denote the path by $\phi$) connecting $X_U$ and $X_{OV}$(after hyperK\"ahler rotation), with the holomorphic volume form fixed. Moreover, the same underlying topological torus fibration is a special Lagrangian fibration with respect to any hyperK\"ahler structures in the $1$-parameter family. Assume that the size of the special Lagrangian fibre is small enough, so the the hyperK\"ahler structures of $X_{U}$ and $X_{OV}$ are closed enough to apply the Fukaya's trick (See Corollary 4.32 \cite{L4}). Without lose of generality, we may assume that the endpoints $u,u'$ of $\phi$ are closed enough so that the Fukaya's trick still apply by shrinking $U$ further if necessary.

    By Corollary \ref{49}, we may choose $\tilde{u},\tilde{u}'\in U$ near $u,u'$ such that $(X,L_{\tilde{u}})$ and $(X,L_{\tilde{u}'})$ do not bound any holomorphic discs when the complex structure changing via the path $\phi_1$. It worth mentioning that such locus only depends on the holomorphic volume form of the elliptic fibration. Therefore, we have $F^{can}_{\phi_1}=\mbox{Id}$. Choose a path $\phi_2$ (and $\phi_2'$) in $U$ connecting $u$ and $\tilde{u}$ ($u'$ and $\tilde{u}'$ respectively) which do not intersect $l_{\pm}$ (See Figure \ref{fig:55} below). 
     \begin{figure}
                 \begin{center}
                 \includegraphics[height=3in,width=6in]{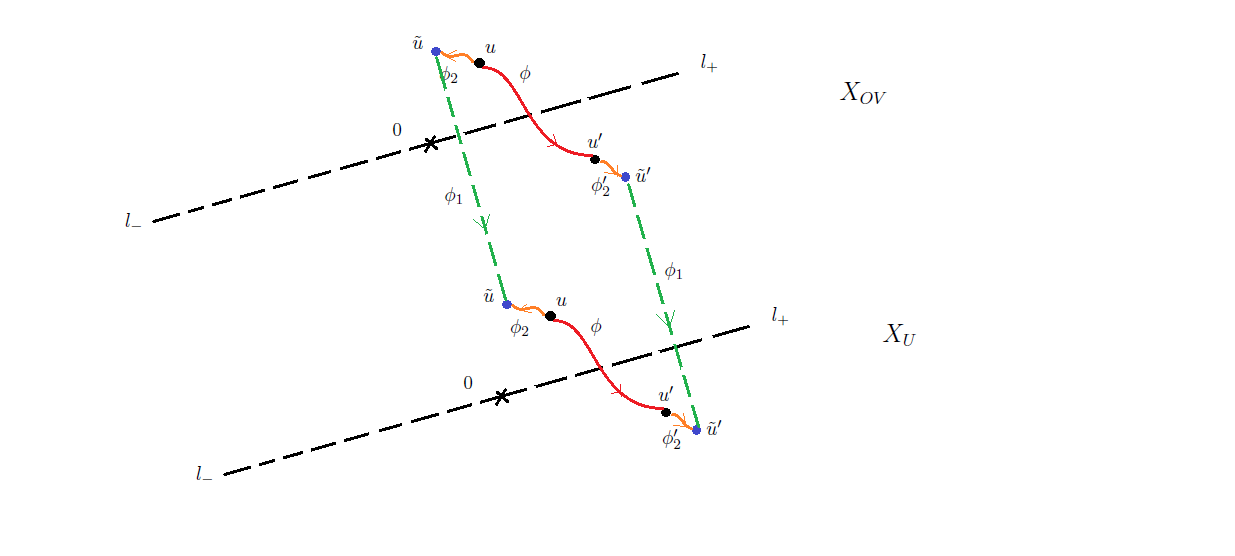}
                 \caption{Connecting $X_U$ to $X_{OV}$ via hyperK\"ahler manifolds.}
                  \label{fig:55}
                 \end{center}
                 \end{figure}
    
    By the construction of $U$, there is only holomorphic discs in the relative class of Lefschetz thimble (and its multiple covers), ending on fibres in $X_U$ and with symplectic area less than $\lambda$. Therefore, we have 
       \begin{align*}
          & F^{can}_{\phi_2}\equiv \mbox{Id} \mbox{( mod $T^{\lambda}$)} \\
          & F^{can}_{\phi_2'}\equiv \mbox{Id} \mbox{( mod $T^{\lambda}$)}.
       \end{align*}
   from the first part of the theorem. Therefore, 
      \begin{align} \label{46}
        F^{can}_{\phi}&= (F^{can}_{\phi'_2})^{-1}\circ(F^{can}_{\phi_1})^{-1} \circ F^{can}_{\phi_{OV}} \circ F^{can}_{\phi_1} \circ F^{can}_{\phi_2} \notag\\
                 &\equiv F^{can}_{\phi_{OV}} \mbox{( mod $T^{\lambda}$)}.
      \end{align} 
      Equation (\ref{46}) together with Theorem \ref{42} proves the second part of the theorem. 
\end{proof}
  
  \begin{rmk}
   A priori, we can't exclude the situation that a tropical/holomorphic disk in the relative class of $k\gamma_e$ with stop in $u\in B_0$ near $0$ when $k$ is large. Thus, it might be too optimistic to expect a universal $\mathcal{U}$ such that the following holds for all $\lambda>0$, which corresponds to the analogue statement of Section 11.5 \cite{KS1}.
  \end{rmk}

    \subsection{Open Gromov-Witten Invariants Revisited} \label{4055}
    
The wall-crossing phenomenon of Maurer-Cartan elements motivates the definition of another open Gromov-Witten invariant $\tilde{\Omega}'(\gamma;u)$, which satisfies the Kontsevich-Soibelman wall-crossing formula and shares the most of the properties with $\tilde{\Omega}(\gamma;u)$ defined in \cite{L4}. 
  
  \begin{lem} \label{3001} Assume that $X$ satisfies the generic condition $(*)_{\lambda}$. Assume that $\gamma$ is a primitive relative class such that $|Z_{\gamma}(u)|<\lambda$ and $u\notin l_{\gamma}$. Then there exists a small enough neighborhood $U_{\lambda}$ of $u$ such that for $u'_{\pm},u''_{\pm}\in U_{\lambda}$ on different sides of $l_{\gamma}$ and  $\mbox{Arg}Z_{\gamma}(u'_{\pm})<\mbox{Arg}Z_{\gamma}(u''_{\pm})$. Let paths $\phi_{\pm}\in U_{\lambda}$ such that $\phi_{\pm}(0)=u'_{\pm}$ and $\phi_{\pm}(0)=u''_{\pm}$, then we have 
        \begin{align*}
            F^{can}_{(\phi_+,u)}=F^{can}_{(\phi_-,u)} \hspace{4mm}(\mbox{mod }T^{\lambda}).
        \end{align*}
  \end{lem}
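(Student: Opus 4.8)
The plan is to glue $\phi_+$ and $\phi_-$ into a single contractible loop in $U_\lambda$, apply the homotopy invariance of $F^{can}$ from Theorem \ref{102}, and kill the two connecting arcs using Proposition \ref{32}. First I would fix the neighborhood. Since $l_\gamma$ is a level set of the harmonic function $\mbox{Arg}\,Z_\gamma$, the hypothesis $\mbox{Arg}\,Z_\gamma(u'_\pm)<\mbox{Arg}\,Z_\gamma(u''_\pm)$ forces $u'_+,u'_-$ onto one side of $l_\gamma$ and $u''_+,u''_-$ onto the other, so each $\phi_\pm$ meets $l_\gamma$ exactly once. By the generic condition $(*)_\lambda$ (Definition \ref{9002}) together with Gromov compactness (Corollary \ref{29}), after shrinking $U_\lambda$ toward the relevant segment of $l_\gamma$ we may assume that the only relative classes carried by holomorphic discs bounding some $L_{u'}$, $u'\in U_\lambda$, of energy $<\lambda$ are the multiple covers $d\gamma$, and that these occur precisely along $l_\gamma$; we also take $U_\lambda$ disjoint from the discriminant of the affine structure and small enough that Fukaya's trick applies to every path in $U_\lambda$ and that $U_\lambda$ is contractible, so that Theorem \ref{102} is available at the truncation $T^\lambda$.

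Next I would build the loop. Choose an arc $\beta'$ from $u'_-$ to $u'_+$ staying on their common side of $l_\gamma$, and an arc $\beta''$ from $u''_+$ to $u''_-$ staying on the other side, both inside $U_\lambda$ and disjoint from $l_\gamma$. The concatenation $\ell=\beta'\cdot\phi_+\cdot\beta''\cdot\phi_-^{-1}$ is a loop in $U_\lambda$, hence contractible there, so Theorem \ref{102} gives $F^{can}_{(\ell,u)}=\mbox{Id}\ (\mbox{mod }T^{\lambda})$. On the other hand the fibres over $\beta'$ and $\beta''$ bound no holomorphic disc of energy $<\lambda$, since these arcs never meet $l_\gamma$, so Proposition \ref{32} gives $F^{can}_{(\beta',u)}\equiv F^{can}_{(\beta'',u)}\equiv\mbox{Id}\ (\mbox{mod }T^{\lambda})$. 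Using that $F^{can}$ is multiplicative along concatenation of paths and inverts along path reversal (which follows from the construction via pseudo-isotopies, cf. Proposition \ref{35}), and that changing the reference point only conjugates by parallel transport, this yields
\[
\mbox{Id}\equiv F^{can}_{(\ell,u)}\equiv\big(F^{can}_{(\phi_-,u)}\big)^{-1}\circ F^{can}_{(\phi_+,u)}\ (\mbox{mod }T^{\lambda}),
\]
which is the assertion.

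The one point that needs attention is the comparison itself: each $F^{can}_{(\phi_\pm,u)}$ is defined on the Maurer--Cartan space of the flux-shifted $A_\infty$-algebra of $L_u$ as in \eqref{7878}, and the identification between the endpoint data of $\phi_+$ and of $\phi_-$ is exactly the parallel transport along $\beta'$ and $\beta''$, i.e. the maps $F^{can}_{(\beta',u)}$, $F^{can}_{(\beta'',u)}$ — which are trivial modulo $T^\lambda$ precisely because in the flux-shifted frame at $u$ parallel transport is the identity. Hence the $T^{Z_\gamma}$-dependence of the wall-crossing factor $f_\gamma$ of Theorem \ref{101} is carried along consistently by the pseudo-isotopy and never has to be matched by hand. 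I expect the genuinely delicate input to be Theorem \ref{102}: the null-homotopy of $\ell$ necessarily sweeps across the enclosed segment of $l_\gamma$, where the $d\gamma$-moduli become nonempty, so one must know that it still induces a pseudo-isotopy of pseudo-isotopies truncated at $T^\lambda$. This is exactly what the shrinking of $U_\lambda$ secures, Fukaya's trick and Gromov compactness giving uniform control of the relevant moduli inside $U_\lambda$, so Theorem \ref{102} applies and no further obstruction arises.
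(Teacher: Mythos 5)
Your skeleton — complete $\phi_+$ and $\phi_-$ to a square, invoke homotopy invariance (Theorem \ref{102}), and then control the two connecting arcs — matches the paper's implicit strategy, but the step where you kill the connecting arcs has a genuine gap. You assume that after shrinking $U_{\lambda}$ the only classes of energy $<\lambda$ represented by holomorphic discs with boundary on fibres over $U_{\lambda}$ are the multiples $d\gamma$, supported exactly along $l_{\gamma}$, so that Proposition \ref{32} makes $F^{can}_{(\beta',u)}$ and $F^{can}_{(\beta'',u)}$ trivial mod $T^{\lambda}$. This cannot be arranged: the generic condition $(*)_{\lambda}$ constrains only classes whose central charge is proportional to $Z_{\gamma}$, and walls $l_{\gamma'}$ passing through the point $u$ itself, transversally to $l_{\gamma}$, survive every shrinking of $U_{\lambda}$ (Gromov compactness, via Corollary \ref{29}, only lets you discard the finitely many walls that miss $u$). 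Such a wall separates the crossing point of $\phi_+$ from that of $\phi_-$, so at least one of your arcs must cross it, and its $F^{can}$ is then a nontrivial elementary transformation rather than the identity. Note also that your argument never uses the hypothesis that $u$ avoids the wall of marginal stability $W'_{\gamma}$ (which is how the stated condition functions in the paper's proof); a proof that does not use it would show $f_{\gamma}$ never jumps along $l_{\gamma}$, contradicting the wall-crossing phenomenon the paper is built around — so the simplification proves too much.

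What is missing is precisely the algebraic step the paper supplies: write $F^{can}_{\phi'}$ and $F^{can}_{\phi''}$ (your $\beta',\beta''$) as finite ordered products of elementary transformations attached to the walls they do cross, use Theorem \ref{101} and Remark \ref{348} to put everything in exponential form, and compare, via the Baker--Campbell--Hausdorff formula, the coefficient of $T^{dZ_{\gamma}(u)}z^{d\partial\gamma}$ in $F^{can}_{\phi_+}$ with that in the composition of $F^{can}_{\phi_-}$ with the two connecting transformations, at the first order $d$ where $f_{\gamma,+}\neq f_{\gamma,-}$. A discrepancy there could only be produced by commutators of factors attached to classes $\gamma_i$ realized by holomorphic discs with $\sum_i\gamma_i=d\gamma$ and $\langle\gamma_i,\gamma\rangle\neq 0$, whose walls (after shrinking $U_{\lambda}$) must pass through $u$ — forcing $u\in W'_{\gamma}$ and a contradiction. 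Your proposal needs this BCH/commutator comparison, or some substitute handling the unavoidable walls through $u$; without it the argument does not close.
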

  \begin{proof}
       From the proof of Theorem \ref{101}, the transformations $F^{can}_{\phi_{\pm}}$ act as 
         \begin{align*}
            F^{can}_{\phi_{\pm}}:z^{\partial \gamma'}\mapsto z^{\partial \gamma'} f_{\gamma,\pm}^{\langle\gamma',\gamma\rangle},
         \end{align*} for some function $f_{\gamma,\pm}\in 1+\Lambda^{\mathcal{S}}/F^{\lambda}\Lambda^{\mathcal{S}}[[z^{\partial\gamma}]]$. Let $\phi'$( and $\phi''$) be a path connecting from $u'_+$ to $u'_-$( and from $u''_+$ to $u''_-$ respectively) in $U_{\lambda}$ as shown in the picture below. By shrinking $U_{\lambda}$ if necessary, we may assume that there are only finitely many affine lines $\lambda_{\gamma'}$ in $U_{\lambda}$ such that the torus fibres above those affine lines can bound holomorphic discs i in relative class $\gamma'$ with $|Z_{\gamma'}(u')|<\lambda$ for some $u'\in U_{\lambda}\cap l_{\gamma'}$. Thus $F^{can}_{\phi'},F^{can}_{\phi''} $ (mod $T^{\lambda}$) can be expressed as product of elementary transformations. Let $d\in \mathbb{N}$ be the smallest integer such that 
        \begin{align*}
          f_{\gamma,+}\neq f_{\gamma,-}(\mbox{ mod } z^{d\gamma}).
        \end{align*} Now from Theorem \ref{101}, each term in 
        \begin{align} \label{349}
                   (F^{can}_{\phi'})^{-1}\circ F^{can}_{\phi_-}\circ F^{can}_{\phi''}   
                   \hspace{5mm} (\mbox{mod } T^{\lambda})
                \end{align} can be expressed in terms of the product of elementary transformations (see Remark \ref{348}).        
        Then from Baker-Campbell-Hausdorff formula, the coefficients of $T^{lZ_{\gamma}(u)}z^{l\partial\gamma}$, $l=1,\cdots d$, in (\ref{349}) is the same as the one in $F^{can}_{\phi_+}$, then there exist terms in $F^{can}_{\phi'}$ and $F^{can}_{\phi''}$ such that the product gives the term of the form $cz^{d\partial \gamma}$. Therefore, there exist $\gamma_i$ can be represented as holomorphic discs and $\gamma=\sum_i\gamma_i$, with at least one $\gamma_i$ such that $\langle \gamma_i,\gamma\rangle \neq 0$. If any of $l_{\gamma_i}$ does not pass through $u$, then we may avoid the situation by shrinking $\mathcal{U}_{\lambda}$. Otherwise, $u\in W'_{\gamma}$ and contradicts with the assumption.  
      
  \end{proof}
  
   \begin{center}
   \begin{figure}
   \begin{overpic}[scale=0.5,unit=1mm]
   {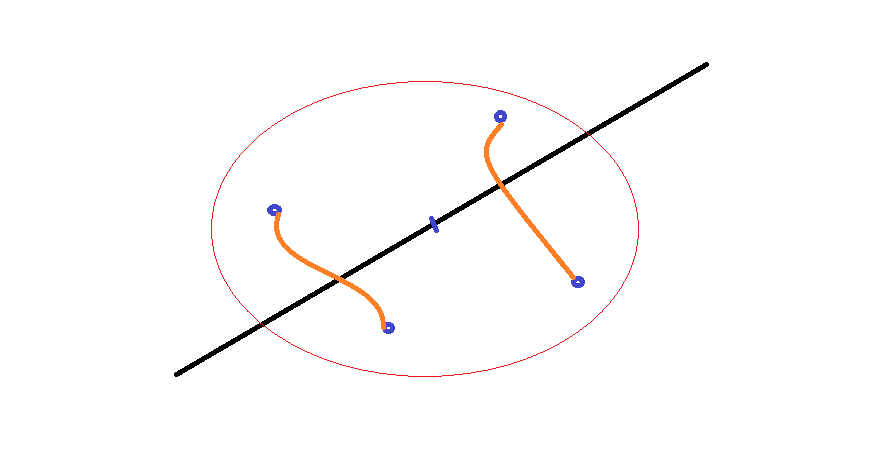}
   \put(60,28){\small \color{blue} $p$} 
   \put(32,35){\small \color{blue} $u'_+$}
   \put(59,42){\small \color{blue} $u'_-$}
   \put(46,13){\small \color{blue} $u''_+$}
   \put(78,24){\small \color{blue} $u''_-$}
   \put(34,24){\small \color{Orange}$\phi_+$}
    \put(70,33){\small \color{Orange}$\phi_-$}   
    \put(21,6){\small \color{black}$l$}   
   \end{overpic}
    \end{figure}               
    \end{center}              
  Let $\gamma$ be a primitive relative class, $l_{\gamma}$ an affine line  labeled by $\gamma$ and an unbounded increasing sequence $\lambda_i\in \mathbb{R}_{>0}$, there exits $u_{i,+},u_{i,-}\in U_{\lambda}$ on different sides of $l$ and a path $\phi_i$ in $U_{\lambda}$ connecting  $u_{i,+},u_{i,-}$ such that $\int_{\gamma}(\mbox{Im}\Omega+i\omega)$ is increasing, then the transformation $F^{can}_{\phi}$ acts as
    \begin{align*} 
       F^{can}_{\phi}:z^{\partial\gamma'} \mapsto z^{\partial\gamma'}f_{i,\gamma}^{\langle \gamma', \gamma\rangle},
    \end{align*} with $f_{i,\gamma}\in \Lambda^{\mathcal{S}_{\gamma}}[[z^{\partial \gamma}]]/F^{\lambda_i}\Lambda^{\mathcal{S}_{\gamma}}$. 
    \begin{definition} \label{4001}\begin{enumerate} 
      Assume that $X$ satisfies the generic condition (*)\footnote{One may replace $\Lambda[[H_1(L_u)]]$ by $\Lambda[[H_2(X,L_u)]]$ and all the arguments still go through.}. 
        \item With the notation above, the inverse limit $\underset{\leftarrow}{\lim}f_{i,\gamma}$ exists and is independent of various choices made in the construction. In other words, we can associate a power series 
                  \begin{align*}
                     f_{\gamma}(u):=\lim_{\leftarrow}f_{i,\gamma} \in \Lambda^{\mathcal{S}_{\gamma}}/F^{\lambda_i}\Lambda^{\mathcal{S}}[[z^{\partial\gamma}]]
                  \end{align*} for each $u\in l_{\gamma}\backslash \cup_{d\in \mathbb{N}}W_{d\gamma}'$. 
        \item Given $u\in B_0$ and $\gamma\in H_2(X,L_u)$. We will compute $f_{\gamma}(u)$ in $X_{\vartheta}$, $\vartheta=\mbox{Arg}Z_{\gamma}(u)-\frac{\pi}{2}$, and define the open Gromov-Witten invariants $\tilde{\Omega}'(\gamma;u)$
             by the formula 
              \begin{align} \label{4030}
                 f_{\gamma}(u)= \sum_{d\geq 1} d\tilde{\Omega}'(d\gamma;u)(z^{\partial \gamma}T^{Z_{\gamma}(u)})^d.
              \end{align}
    \end{enumerate}
    
    \end{definition}   
    Motivated by the Gross-Siebert program, we have the following conjecture:
    \begin{conj}
       The coefficients of $f_{\gamma}(u)$ are always positive. 
    \end{conj}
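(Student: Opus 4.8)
The plan is to push the statement onto the tropical side and there display $f_\gamma(u)$ explicitly as a product of factors each of which manifestly has non‑negative coefficients. First I would invoke the correspondence results of the paper — the main theorem $\tilde\Omega(\gamma;u)=\tilde\Omega^{trop}(\gamma;u)$ of Theorem~\ref{3013} together with the identification of the Floer‑theoretic $\tilde\Omega'$ with $\tilde\Omega^{trop}$ underlying its proof — to replace $f_\gamma(u)$ by the tropical slab function $f^{trop}_\gamma(u)$ of~(\ref{78}). The goal then becomes to show that $f^{trop}_\gamma(u)$ lies in the set of power series of the form $\prod_k(1+M_k)^{N_k}$, where the $M_k$ are monomials $T^{\ast}z^{\ast\partial\gamma}$ and the exponents $N_k$ are non‑negative integers; any such product lies in $1+z^{\partial\gamma}\,\mathbb{Z}_{\geq0}[[T^{\mathbb{R}_{\geq0}Z_\gamma(u)},z^{\partial\gamma}]]$, which is precisely the assertion of the conjecture (with ``positive'' read as ``non‑negative, and strictly positive on the monomials that actually occur''). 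Note that the building blocks are already of this shape: by Theorem~\ref{4020} the initial discs attached to each of the $24$ type $I_1$ fibres contribute the slab function $1+T^{Z_{\gamma_e}}z^{\partial\gamma_e}$.

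The argument for a general ray is an induction on the energy $\lambda=|Z_\gamma(u)|$, carried out inside the local description of the scattering already used in the proof of Theorem~\ref{3011}. Fix $\lambda$; by Lemma~\ref{3006} only finitely many rays with $|Z|<\lambda$ are relevant, and in a small neighbourhood of any point of $B_0$ — where the affine structure is standard $\mathbb{R}^2$ and the walls $l_{\gamma'}$ are honest line segments — the consistency of $\Theta^{trop}_{\mathcal{S}}$ is a finite Gross--Pandharipande--Siebert scattering problem. By the inductive hypothesis the incoming wall functions there are of the product form above; the structure theorem behind Theorem~\ref{3011} (Theorem~2.8 of \cite{GPS}) then writes the newly created wall functions again as $\prod_k(1+M_k)^{k N^{trop}_{\{\partial\gamma_i\}}(\mathbf{w})}$, whose exponents are the tropical curve counts of \cite{GPS}, non‑negative because every trivalent multiplicity $\mathrm{Mult}_v=w_1w_2|v_{e_1}\wedge v_{e_2}|$ is non‑negative. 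This closes the inductive step away from the affine singularities; across a singularity one argues directly as in Theorem~\ref{3037}: the monodromy $\alpha(z^{\gamma'})=z^{\gamma'-\langle\gamma_e,\gamma'\rangle\gamma_e}$ only relabels monomials, the newly spawned ray carries the positive factor $1+T^{Z_{\gamma_e}}z^{\partial\gamma_e}$, and the mutation identity~(\ref{4999}) preserves the product form. Transporting back along paths to $u$, $f^{trop}_\gamma(u)$ inherits the product form modulo $T^\lambda$ for every $\lambda$, hence outright, and the conjecture follows.

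The main obstacle is exactly the globalization of the positivity of the tropical vertex to the K3 base. In \cite{GPS} one scatters finitely many straight lines emanating into a common region of $\mathbb{R}^2$, whereas here the walls are gradient trajectories of $|Z_{\gamma'}|^2$, they are created at the $24$ affine singularities rather than from a single source, and the affine monodromy is non‑trivial; so the inductive hypothesis ``the incoming data is positive'' has to be propagated coherently across the whole affine manifold rather than within one local scattering, and it is not clear that the bookkeeping of which monomials feed into which local picture does not destroy this. Concretely, what one would really need is a log Gromov--Witten interpretation of the exponents produced by the global scattering as genus‑zero invariants of an auxiliary log Calabi--Yau surface, together with a non‑negativity theorem for those invariants that is uniform in the energy cutoff $\lambda$; this is the input that is currently missing, which is why the statement is recorded as a conjecture rather than a theorem. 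A secondary, purely technical point is the compatibility of the $T^\lambda$‑truncations used at successive inductive stages, which should follow from Lemmas~\ref{2999} and~\ref{3006} but needs to be stated with care.
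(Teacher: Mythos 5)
This statement is recorded in the paper as a conjecture, with no proof given, so there is nothing in the paper to match your argument against; the question is only whether your proposal actually closes the gap, and it does not. The decisive problem is not the globalization issue you flag at the end (which is real), but the local inductive step itself: you assert that Theorem~2.8 of \cite{GPS}, the input behind Theorem~\ref{3011}, ``writes the newly created wall functions again as $\prod_k(1+M_k)^{kN^{trop}_{\{\partial\gamma_i\}}(\mathbf{w})}$'' with non-negative exponents. That is not what the GPS theorem provides. It computes $\log f$ of the outgoing walls as a sum over weight vectors whose coefficients carry the multiple-cover factors $\frac{(-1)^{w-1}}{w^2}$ --- exactly the signed factors appearing in the paper's own weight (\ref{4029}) and in the relation (\ref{79}) --- so the terms of $\log f^{trop}_\gamma$ are of both signs, and the resulting $\tilde\Omega^{trop}(d\gamma;u)$ are a priori only rational. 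Repackaging $f^{trop}_\gamma$ as a product $\prod_d(1-c(d\gamma)z^{d\partial\gamma})^{d\Omega^{trop}(d\gamma;u)}$ as in (\ref{77}) yields exponents $\Omega^{trop}(d\gamma;u)\in\mathbb{Q}$ whose integrality is itself the integrality conjecture stated right after (\ref{77}); non-negativity of coefficients of $f_\gamma$ (equivalently, a suitable positivity of these exponents after the quadratic-refinement signs are accounted for) is precisely the content of the statement you are trying to prove. So your inductive step assumes, at each scattering vertex, essentially the conclusion: the argument is circular where it matters, already for a single vertex in $\mathbb{R}^2$ with two incoming walls of intersection pairing $\geq 2$, before any of the K3-specific difficulties (singular fibres, monodromy, walls that are gradient trajectories rather than straight lines) enter.

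The parts of your proposal that are sound are the reduction $f_\gamma(u)=f^{trop}_\gamma(u)$ via Theorem~\ref{3013} and Theorem~\ref{3005}, the positivity of the initial data $1+T^{Z_{\gamma_e}}z^{\partial\gamma_e}$ from Theorem~\ref{4020}, the observation that the monodromy $\alpha$ and the mutation identity (\ref{4999}) only relabel monomials, and the finiteness bookkeeping via Lemmas~\ref{2999} and~\ref{3006}. What is missing is a genuine positivity theorem for the scattering output --- of the kind later established in the cluster setting by broken-line/theta-function techniques, or via a log Gromov--Witten interpretation of the exponents together with an integrality and positivity statement for those invariants --- and this is exactly why the paper leaves the statement as a conjecture. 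As written, your proof establishes positivity only of the initial slab functions, not of their consistent completion.
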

      
       A direct consequence of Lemma \ref{3001} is the following:
                       \begin{lem}    \label{3003}
                         Assume that the affine line segment from $u_1$ to $u_2$ does not intersect $W'_{d\gamma}$ for any $d\in \mathbb{Z}$ and $d|Z_{\gamma}|<\lambda$ on the affine segment from $u_1$ to $u_2$. Then we have 
                         \begin{align}
                             T_{u_1,u_2}f_{\gamma}(u_1)=f_{\gamma}(u_2) \hspace{4mm}(\mbox{mod }z^{d\partial\gamma}).
                         \end{align} 
                   \end{lem}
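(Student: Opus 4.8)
The plan is to deduce the statement directly from Lemma \ref{3001} together with the homotopy invariance of the pseudo-isotopy maps (Theorem \ref{102}), by a compactness/covering argument. Fix the integer $d$. A preliminary observation is the wall inclusion $W'_{d'\gamma}\subseteq W'_{d\gamma}$ for $d'\le d$: if $\gamma_1+\gamma_2=q\,d'\gamma$ with $0\le q\le 1$ and $\langle\gamma_1,\gamma_2\rangle\neq0$, then also $\gamma_1+\gamma_2=(qd'/d)\,d\gamma$ with $qd'/d\le 1$, so the same stratum $W'_{\gamma_1,\gamma_2}$ appears in $W'_{d\gamma}$. Hence the hypothesis that the affine segment $\sigma$ from $u_1$ to $u_2$ misses $W'_{d\gamma}$ already forces $\sigma$ to miss every $W'_{d'\gamma}$ with $d'<d$, and in particular $\sigma\cap W'_\gamma=\emptyset$. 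Since $\sigma$ is compact, it therefore suffices to produce, for each point $p\in\sigma$, a neighborhood in which $u\mapsto f_\gamma(u)$ is constant modulo $z^{d\partial\gamma}$ up to parallel transport, and then to compose these local identifications over a finite subcover ordered along $\sigma$; this reduces everything to the case where $u_1$ and $u_2$ lie in one neighborhood $U=U_\lambda(p)$ as supplied by Lemma \ref{3001} (with $\lambda$ chosen large enough that the coefficients $\tilde\Omega'(d'\gamma;u)$, $d'<d$, are visible along $\sigma$).

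So assume $u_1,u_2\in U$. Place points $u_i^{\pm}$ on the two sides of $l_\gamma$ near $u_i$, let $\phi_i\subseteq U$ be a short path from $u_i^-$ to $u_i^+$ crossing $l_\gamma$ once transversally (with $|Z_\gamma|$ increasing), and let $\phi_{+}\subseteq U$ (resp. $\phi_{-}\subseteq U$) run from $u_1^+$ to $u_2^+$ (resp. $u_1^-$ to $u_2^-$) without touching $l_\gamma$. The concatenation $\phi_1*\phi_{+}*(\phi_2)^{-1}*(\phi_{-})^{-1}$ is a loop contractible inside $U$, so Theorem \ref{102} gives that the corresponding product of pseudo-isotopy maps is the identity; feeding in Lemma \ref{3001}, which says $F^{can}_{\phi_{+}}=F^{can}_{\phi_{-}}$ modulo $T^{\lambda}$, we obtain $F^{can}_{\phi_2}=(F^{can}_{\phi_{+}})^{-1}\circ F^{can}_{\phi_1}\circ F^{can}_{\phi_{+}}$ modulo $T^{\lambda}$ (after conjugating everything to a common base point $p$ and inserting $T_{u_1,u_2}$). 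By Definition \ref{4001} the pure $\gamma$-content of $F^{can}_{\phi_i}$ is exactly $f_\gamma(u_i)$, so it remains to check that conjugation by $F^{can}_{\phi_{+}}$ leaves this $\gamma$-content unchanged modulo $z^{d\partial\gamma}$.

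For this, note that $\phi_{+}$ avoids $l_\gamma$, so by Theorem \ref{101} and Proposition \ref{32} the transformation $F^{can}_{\phi_{+}}$ is, modulo $T^{\lambda}$, a product of elementary transformations whose directions are the classes of the (finitely many) other walls actually crossed by $\phi_{+}$; none of these directions is a positive multiple of $\gamma$. Passing to the Lie-algebra description of Remark \ref{348}, write $F^{can}_{\phi_1}=\exp\operatorname{ad}(g_1)$ with $g_1\in z^{\partial\gamma}\Lambda[[z^{\partial\gamma}]]$ and $F^{can}_{\phi_{+}}=\exp\operatorname{ad}(h)$ with $h$ supported on classes not proportional to $\gamma$. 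A bracket of such an $h$-term with a power $z^{d'\partial\gamma}$ produces a monomial of the form $z^{\partial\gamma''+d'\partial\gamma}$ with $\gamma''$ not proportional to $\gamma$, hence never again a power of $z^{\partial\gamma}$; and, again using $W'_{d'\gamma}\subseteq W'_{d\gamma}$, no such mixed class of energy below the cutoff can reappear along $\sigma$. Comparing coefficients of $T^{d'Z_\gamma}z^{d'\partial\gamma}$ for $d'<d$ via the Baker--Campbell--Hausdorff formula then shows the $\gamma$-part of $F^{can}_{\phi_2}$ modulo $z^{d\partial\gamma}$ equals $T_{u_1,u_2}$ applied to that of $F^{can}_{\phi_1}$; taking the inverse limit over the energy cutoff gives $T_{u_1,u_2}f_\gamma(u_1)\equiv f_\gamma(u_2)\pmod{z^{d\partial\gamma}}$, and composing over the finite cover of $\sigma$ completes the argument.

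The routine parts are the compactness/covering reduction and the two appeals — to the homotopy invariance of $F^{can}$ (Theorem \ref{102}) and to its local structure near a single wall (Theorem \ref{101}, Proposition \ref{32}). I expect the main obstacle to be the bookkeeping in the third paragraph: verifying rigorously, at the level of the $A_\infty$ pseudo-isotopy maps rather than just their leading terms, that the contributions of the off-wall crossings genuinely decouple from the $\gamma$-direction modulo $z^{d\partial\gamma}$. This is precisely where the wall inclusion $W'_{d'\gamma}\subseteq W'_{d\gamma}$ and the commutator structure of Remark \ref{348} do the essential work, and it is the only place where one must be careful about which relative classes can enter at a given energy.
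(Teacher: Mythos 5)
Your global skeleton (the inclusion $W'_{d'\gamma}\subseteq W'_{d\gamma}$ for $d'\le d$, the compactness/covering reduction to a single neighborhood $U_\lambda$ on the wall, and the conversion between truncation mod $T^{\lambda}$ and truncation mod $z^{d\partial\gamma}$ via $d|Z_\gamma|<\lambda$) is exactly how the paper intends this to go: Lemma \ref{3003} is stated as a direct consequence of Lemma \ref{3001}, chained along the segment. The problem is in how you use Lemma \ref{3001} inside the neighborhood. In that lemma the two paths $\phi_{\pm}$ are both \emph{wall-crossing} paths, i.e.\ short paths transverse to $l_\gamma$ crossing it at two nearby points; the lemma says those two crossing transformations agree mod $T^{\lambda}$. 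In your notation these are your $\phi_1$ and $\phi_2$, so Lemma \ref{3001} applied as stated already gives $F^{can}_{(\phi_1,p)}=F^{can}_{(\phi_2,p)}$ mod $T^{\lambda}$, hence $T_{u_1,u_2}f_\gamma(u_1)=f_\gamma(u_2)$ mod $z^{d\partial\gamma}$, and your covering step finishes the proof. Instead you attribute to Lemma \ref{3001} the statement $F^{can}_{\phi_+}=F^{can}_{\phi_-}$ for the two \emph{side} paths running parallel to $l_\gamma$ on either side. That is not what the lemma says, and it is false in general: if some other wall $l_{\gamma'}$ crosses $l_\gamma$ transversally between $u_1$ and $u_2$, the two side paths cross $l_{\gamma'}$ on different sides of $l_\gamma$ and also see different scattering rays emanating from the intersection point, so their transformations differ by mixed-class terms of energy below the cutoff. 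Consequently the conjugation formula $F^{can}_{\phi_2}=(F^{can}_{\phi_+})^{-1}\circ F^{can}_{\phi_1}\circ F^{can}_{\phi_+}$, which you derive from that claim, is unjustified as written.

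The rest of your third paragraph is then an attempt to re-prove Lemma \ref{3001} rather than use it, and there the key step is only gestured at: with the correct relation $F^{can}_{\phi_2}=F^{can}_{\phi_+}\circ F^{can}_{\phi_1}\circ (F^{can}_{\phi_-})^{-1}$ you must rule out that Baker--Campbell--Hausdorff brackets \emph{among} the off-wall factors of $F^{can}_{\phi_\pm}$ (two classes $\gamma'',\gamma'''$, neither proportional to $\gamma$, with $\partial\gamma''+\partial\gamma'''$ a multiple of $\partial\gamma$) produce a pure power of $z^{\partial\gamma}$ below the cutoff, and also that no elementary factor of $F^{can}_{\phi_\pm}$ itself has direction proportional to $\partial\gamma$. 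Your sentence ``no such mixed class of energy below the cutoff can reappear along $\sigma$'' does not do this; what is needed is the geometric argument in the paper's proof of Lemma \ref{3001}: such a contribution forces classes $\gamma_i$, represented by holomorphic discs with $\sum_i\gamma_i$ a multiple of $\gamma$ and some nonvanishing pairing, whose walls pass through the neighborhood, so some point of the segment lies in $W'_{d\gamma}$, contradicting the hypothesis (after shrinking $U_\lambda$, using the generic condition). So either apply Lemma \ref{3001} directly to the two crossing paths---which is the intended one-line argument and makes the rectangle, Theorem \ref{102}, and the BCH bookkeeping unnecessary---or, if you insist on the rectangle argument, drop the false side-path equality and carry out the bracket exclusion with the wall-avoidance hypothesis made explicit.
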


    In particular, this implies the following basic property of the open Gromov-Witten invariants $\tilde{\Omega}'(\gamma;u)$.
    \begin{prop} \label{3009}
       Assume that $X$ satisfies the generic condition $(*)_{\lambda}$, $u\notin W'_{\gamma}$ and $|Z_{\gamma}(u)|<\lambda$, then $\tilde{\Omega}'(\gamma;u)$ is well-defined. Moreover, we have the following properties:
       \begin{enumerate}
          \item If there exists a path connecting $u,u'\in B_0$ which does not pass through $W'_{\gamma}$, then 
                    \begin{align*}
                        \tilde{\Omega}'(\gamma;u)=\tilde{\Omega}'(\gamma;u').
                    \end{align*} 
          \item Let $u\in W'_{\gamma}$ generic and let $u_+,u_-$ be on the different sides of $W'_{\gamma}$ near $u$. Assume that there exists no $\gamma_i\in H_2(X,L_u)$ such that $\sum_{i}\gamma_i=\gamma$, $\mbox{Arg}Z_{\gamma_i}(u)=\mbox{Arg}Z_{\gamma}(u)$ and $\tilde{\Omega}'(\gamma;u)\neq 0$ with $\langle \gamma_i,\gamma_{i'}\rangle$ for some $i,i'$, then 
          \begin{align*}
              \tilde{\Omega}'(\gamma;u_+)=\tilde{\Omega}'(\gamma;u_-).
          \end{align*}        
          \item (Reality condition) 
          $\tilde{\Omega}'(\gamma;u)=\tilde{\Omega}'(-\gamma;u_-)$.
       \end{enumerate}
        \end{prop}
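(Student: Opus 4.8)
The plan is to read off all three properties from the canonical Maurer--Cartan isomorphisms $F^{can}_{(\phi,p)}$ and to propagate the local statements already available — Lemma \ref{3001}, Lemma \ref{3003}, Theorem \ref{101}, the homotopy invariance of Theorem \ref{102}, and Proposition \ref{32} — along paths, in the spirit of the proof of Theorem \ref{3037} on the tropical side. The structural point is that Fukaya's trick, hence each $F^{can}_{(\phi,p)}$, is built only from $\omega$ and the complex structure $J$ of $X$; the hyperk\"ahler parameter $\vartheta$ enters only in identifying the level hypersurface $\{\mathrm{Arg}\,Z_\gamma=\mathrm{const}\}$ with the straight wall $l_\gamma$ of $B_\vartheta$, and $l_{d\gamma}=l_\gamma$ as subsets of $B_0$ for every $d>0$.

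For part (1) I would reduce to $u'$ in a small neighbourhood $U$ of $u$, since $B_0\setminus W'_\gamma$ is locally path connected and the equality can then be chained along a covering of the given path. Fix $\lambda>0$; shrinking $U$ by Gromov compactness and Lemma \ref{3006}, assume only finitely many walls $l_{\gamma'}$ with $|Z_{\gamma'}|<\lambda$ meet $U$ and no forbidden $W'_{d\gamma}$ does. Given $u'\in U$, choose transversal crossing paths at $u$ and at $u'$ and join their endpoints by paths on each side of $l_\gamma$; the latter meet no wall of norm $<\lambda$, so their $F^{can}$ are the identity mod $T^\lambda$ by Proposition \ref{32}, and by Theorem \ref{102} the two crossing paths give the same jump up to parallel transport (equivalently, invoke Lemma \ref{3003}). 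Hence $T_{u,u'}f_\gamma(u)=f_\gamma(u')$ mod $z^{d\partial\gamma}$, and comparing coefficients in (\ref{4030}) gives $\tilde{\Omega}'(d\gamma;u)=\tilde{\Omega}'(d\gamma;u')$ for all $d$ with $d|Z_\gamma(u)|<\lambda$; let $\lambda\to\infty$.

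For part (2), with $u\in W'_\gamma$ generic and $u_\pm$ on its two sides, I would run the argument from the last paragraph of the proof of Lemma \ref{3001}. By Theorem \ref{101} the transformations $F^{can}_{\phi_\pm}$, $F^{can}_{\phi'}$, $F^{can}_{\phi''}$ are, mod $T^\lambda$, finite products of elementary transformations attached to walls $l_{\gamma_i}$ through $u$. If $f_\gamma(u_+)\neq T_{u_-,u_+}f_\gamma(u_-)$, take the minimal $d$ at which they disagree and apply the Baker--Campbell--Hausdorff formula to the product of these elementary transformations around a small loop at $u$: the degree $d\partial\gamma$ term could only be produced as a bracket of lower-order elementary factors, forcing a decomposition $\gamma=\sum_i\gamma_i$ with $\mathrm{Arg}\,Z_{\gamma_i}(u)=\mathrm{Arg}\,Z_\gamma(u)$, $\tilde{\Omega}'(\gamma_i;u)\neq 0$ and $\langle\gamma_i,\gamma_{i'}\rangle\neq 0$ for some $i,i'$, contradicting the hypothesis. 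So no jump occurs, and (\ref{4030}) gives $\tilde{\Omega}'(\gamma;u_+)=\tilde{\Omega}'(\gamma;u_-)$.

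For part (3), complex conjugation $J\mapsto -J$ identifies $X_\vartheta$ with $X_{\vartheta+\pi}$ (one checks $\Omega_{\vartheta+\pi}=\overline{\Omega_\vartheta}$) and carries a holomorphic disc of class $\gamma$ to one of class $-\gamma$, inducing $\mathcal{M}_\gamma(X_\vartheta,L_u)\cong\mathcal{M}_{-\gamma}(X_{\vartheta+\pi},L_u)$ compatibly with the Kuranishi data and smooth correspondences defining $F^{can}$; since $\vartheta=\mathrm{Arg}\,Z_\gamma(u)-\pi/2$ is adapted to $(\gamma,u)$ exactly when $\vartheta+\pi$ is adapted to $(-\gamma,u)$ and $Z_{-\gamma}=-Z_\gamma$, the relation (\ref{4030}) for $f_{-\gamma}(u)$ is obtained from that for $f_\gamma(u)$ by $z^{\partial\gamma}\mapsto z^{-\partial\gamma}$, $T^{Z_\gamma(u)}\mapsto T^{-Z_\gamma(u)}$, so $\tilde{\Omega}'(d\gamma;u)=\tilde{\Omega}'(-d\gamma;u)$. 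I expect the main obstacles to be parts (1) and (2) — respectively, checking carefully that changing the hyperk\"ahler structure while moving $u$ does not affect $f_\gamma$, and making the order-by-order Baker--Campbell--Hausdorff bookkeeping rigorous so that no new slab-function term appears across $W'_\gamma$ without a genuine decomposition into classes with nonzero invariant and nontrivial pairing — whereas part (3) is essentially naturality of the construction.
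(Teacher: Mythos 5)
Your proposal follows essentially the same route as the paper: part (1) by chaining Lemma \ref{3003}/Lemma \ref{3001} along small segments of the path, part (2) by applying Theorem \ref{102} to a small contractible loop around the generic wall point and comparing the $z^{\partial\gamma}$-coefficient of the resulting product of elementary transformations via the Baker--Campbell--Hausdorff formula, exactly as in the paper's proof, and part (3) by identifying $\mathcal{M}_{\gamma}$ with $\mathcal{M}_{-\gamma}$ through the $\vartheta\mapsto\vartheta+\pi$ conjugation, which is the paper's orientation comparison of the moduli spaces following \cite{L4} (the orientation/Kuranishi compatibility being the one point you assert rather than verify, just as the paper defers it to \cite{L4}). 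I see no essential divergence or gap.
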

    \begin{proof} 
       If $\mbox{Arg}Z_{\gamma}(u)=\mbox{Arg}Z_{\gamma}(u')$, then the first property follows from Lemma \ref{3003} by dividing the path connecting $u,u'\in B_0$ into segments small enough. The proof of the general case is similar to the proof of the second property and we will skip it here.
       
        Fix $\lambda>|Z_{\gamma}(u)|$. From the assumption that $u\in W'_{\gamma}$ generic, we may assume that $u$ doesn't fall on $W'_{\gamma'}$ with $|Z_{\gamma'}(u)|<|Z_{\gamma}(u)|$. Choose a small counterclockwise contractible loop $\phi$ avoiding such walls except $W'_{\gamma}$ and apply Theorem \ref{102} with base point $u$. Then we have 
         \begin{align}\label{9005}
            \prod \tilde{\theta}_{\gamma_i}(u_+)= \prod \tilde{\theta}_{\gamma_i}(u_-) \hspace{5mm} (\mbox{ mod $T^{\lambda}$}).
         \end{align} On the other hand, each factor $\tilde{\theta}_{\gamma_i}$ are of the form $\exp{\tilde{f}_{\gamma_i}}$ from Remark \ref{348}, where $\tilde{f}_{\gamma_i}\in z^{\partial \gamma_i}\Lambda[[z^{\partial \gamma_i}]]$ and $f_{\gamma_i}(u), \tilde{f}_{\gamma_i}$ determines each other uniquely. The second property follows from Baker-Campbell-Hausdorff formula and compare the coefficient of $z^{\partial \gamma}$ on both sides of (\ref{9005}).

 The third property follows from checking the orientation of the moduli spaces of $\mathcal{M}_{k,\gamma}(\mathfrak{X},L_u)$ and $\mathcal{M}_{k,-\gamma}(\mathfrak{X},L_u)$, which is similar to the proof of the reality condition in \cite{L4}. 
       
    \end{proof}   
    Now we are ready for the weak correspondence theorem
    \begin{thm}\label{3021}
       Assume that $X$ satisfies the generic condition $(*)_{\lambda}$. Let $\gamma\in H_2(X,L_u)$ such that $\tilde{\Omega}'(\gamma;u)\neq 0$ and $Z_{\gamma}(u)|<\lambda$ . Then there exists a tropical disc $(\phi,T,w)$ with stop at $u$ and $[\phi]=\gamma$. 
    \end{thm}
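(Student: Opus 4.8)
The plan is to argue by induction on the energy $|Z_\gamma(u)|$. Because $X$ satisfies $(*)_\lambda$, the Gromov compactness theorem (in the form of Corollary \ref{29} together with the finiteness of walls of bounded energy) shows that only finitely many relative classes $\gamma'$ with $|Z_{\gamma'}(u)|<\lambda$ satisfy $\tilde\Omega'(\gamma';u)\neq 0$; hence the set of energies that occur is discrete below $\lambda$ and the induction is well-founded. Given $\gamma$, set $\vartheta=\mathrm{Arg}\,Z_\gamma(u)-\pi/2$ and let $l_\gamma$ be the affine line through $u$ in $B_\vartheta$ labelled by $\gamma$, along which $|Z_\gamma|$ is monotone. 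By Lemma \ref{3003}, $f_\gamma$ and therefore $\tilde\Omega'(\gamma;\cdot)$ is invariant under parallel transport along $l_\gamma$ as long as no wall $W'_{d\gamma}$ is crossed. Following $l_\gamma$ in the direction of decreasing $|Z_\gamma|$, the traversed affine length is controlled by $|Z_\gamma(u)|$ and the walls of energy $<|Z_\gamma(u)|$ are locally finite, so the backward trajectory terminates either at a singularity $p_0\in\Delta$ of the affine structure (it is an initial ray) or at a point $p$ where $l_\gamma$ first meets other walls $l_{\gamma_i}$; in the latter case $(*)_\lambda$ and Remark \ref{445} force $|Z_{\gamma_i}(p)|<|Z_\gamma(p)|$ and $\gamma=\sum_i m_i\gamma_i$ inside the monoid generated by the $\gamma_i$.

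\textbf{Case A: $l_\gamma$ is an initial ray.} Shrinking toward $p_0$, I would apply Theorem \ref{45}: near $p_0$ one has $H_2(X,L_{u'})\cong\mathbb{Z}\langle\gamma_e\rangle$ with $\gamma_e$ the Lefschetz thimble class, the slab function $f_{\gamma_e}$ is the Ooguri--Vafa one, so $\gamma=d\gamma_e$ and $\tilde\Omega'(d\gamma_e;u)=(-1)^{d-1}/d^2\neq 0$. The affine segment from $p_0$ to $u$ with weight $d$ is then a tropical disc $(\phi,T,w)$ with stop $u$; by Definition \ref{303} its relative class is the parallel transport of $d\gamma_e$, i.e.\ $\gamma$. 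If $l_\gamma$ passes through a singularity other than its source (so $\gamma$ is not a thimble class), I would argue as in the proof of Theorem \ref{3037}: conjugating by the monodromy transformation $\alpha$ and by $\theta_{\pm\gamma_e}$ reduces the jump of $\tilde\Omega'(\gamma;\cdot)$ across the singularity to jumps of strictly lower-energy classes, which carry tropical discs by induction, and these are grafted through the singularity according to Definition \ref{322}(5).

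\textbf{Case B: $l_\gamma$ is produced at a scattering point $p$.} Parallel transport $\tilde\Omega'(\gamma;u)$ along $l_\gamma$ to a point $u_+$ near $p$ on the far side of $W'_\gamma$, so $\tilde\Omega'(\gamma;u_+)\neq 0$. I would then invoke the wall-crossing identity $\prod^{\curvearrowright}_i\tilde\theta_{\gamma_i}(u_+)=\prod^{\curvearrowright}_i\tilde\theta_{\gamma_i}(u_-)$ of Proposition \ref{3009}/Theorem \ref{102}, write each $\tilde\theta_{\gamma_i}=\exp\tilde f_{\gamma_i}$, and expand by Baker--Campbell--Hausdorff exactly as in the proof of Theorem \ref{3011}. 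This expresses the jump of $\tilde\Omega'(\gamma;\cdot)$ as a sum of terms $|\mathrm{Aut}(\mathbf w)|^{-1}\,N^{trop}_{\{\partial\gamma_i\}}(\mathbf w)\prod_{i,j}\tilde\Omega'(w_{ij}\gamma_i;p)$ over weight data $\mathbf w$ with $\sum_i|\mathbf w_i|\gamma_i=\gamma$. Non-vanishing forces some summand to be non-zero, hence $N^{trop}_{\{\partial\gamma_i\}}(\mathbf w)\neq 0$ and $\tilde\Omega'(w_{ij}\gamma_i;p)\neq 0$ for all $i,j$. By the inductive hypothesis (all of energy $<|Z_\gamma(u)|$) there are tropical discs $\phi_{ij}$ with stop $p$ and $[\phi_{ij}]=w_{ij}\gamma_i$. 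Picking a tropical curve $\tilde\phi$ in standard position with respect to $\{(v_i,w_{ij})\}$ realizing a non-zero summand of $N^{trop}_{\{\partial\gamma_i\}}(\mathbf w)$, grafting the $\phi_{ij}$ onto its unbounded edges, and adjoining a contracted-edge subtree over $p$ as in Definition \ref{1012}, produces an admissible tropical disc with stop $p$ and relative class $\gamma$; extending its root edge along $l_\gamma$ from $p$ to $u$ yields the tropical disc sought.

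The main obstacle is Case B: one must show that the purely algebraic Kontsevich--Soibelman jump of $\tilde\Omega'$ across $W'_\gamma$ can be reorganized into the tropical-vertex sum of Gross--Pandharipande--Siebert, so that non-vanishing of the invariant is traced to non-vanishing of a genuine tropical count $N^{trop}$ and of lower invariants. This is the Floer-theoretic analogue of the bookkeeping in Theorem \ref{3011}, and the only additional subtlety — the case where $l_\gamma$ crosses the discriminant $\Delta$ — is handled by the monodromy conjugation already used in Theorem \ref{3037}.
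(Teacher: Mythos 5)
Your proposal is correct and follows essentially the same route as the paper's proof: trace $l_{\gamma}$ in the direction of decreasing $|Z_{\gamma}|$, and either $\gamma$ is a multiple of a Lefschetz thimble realized by the affine segment emanating from an $I_1$ singularity, or $\tilde{\Omega}'(\gamma;\cdot)$ jumps at a point of $W'_{\gamma}$, where the wall-crossing statement (Proposition \ref{3009}(2)) yields a decomposition $\gamma=\sum_i\gamma_i$ with $\tilde{\Omega}'(\gamma_i)\neq 0$ and strictly smaller energy, and one recurses (terminating by Gromov compactness), concatenating the local segments into the image of a tropical disc ending at $u$. The only notable difference is that in your scattering case you invoke the full GPS tropical-vertex expansion in terms of $N^{trop}$ and grafting in standard position to produce an admissible disc, whereas the paper only needs the bare existence of the decomposition, since the statement asks for a tropical disc, not an admissible one.
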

    \begin{proof}
       Given $\gamma\in H_2(X,L_u)$ and $\tilde{\Omega}'(\gamma;u)\neq 0$, this implies that $\tilde{\Omega}'(\gamma;u')\neq 0$ for $u'$ in a neighborhood of $u$. Let $u'$ move along $l_{\gamma}$ in the direction such that $|Z_{\gamma}(u')|$ is decreasing. 
       \begin{lem}
       The function $|Z_{\gamma}|$ will decrease to zero on $l_{\gamma}$ at a point $u_0$. 
       \end{lem}
       \begin{proof}
          Assume that $|Z_{\gamma}|$ reaches a local minimal at $u_0$. Then by maximal principle for $Z_{\gamma}$ (or Lemma 2.7 \cite{L4}) we have either $Z_{\gamma}(u_0)=0$ or $u_0\in \Delta$. The generic choice of $\theta$ excludes the second situation. 
       \end{proof}
        If $\tilde{\Omega}'(\gamma;u')$ is constant on $l_{\gamma}$ between $u$ and $u'$, then $L_u'$ is a singular fibre. In particular, if $X$ has only $I_1$-type singular fibre then $\gamma$ is a multiple of Lefschetz thimble. If $\tilde{\Omega}'(\gamma;u')$ jumps at some point $u_1$ on $l_{\gamma}$, then $u'\in W'_{\gamma}$. From the second part of Proposition \ref{3009}, there exists $\gamma_i$ such that $\gamma=\sum_i \gamma_i$, $\tilde{\Omega}'(\gamma_i;u_1)\neq 0$ and $\langle \gamma_i,\gamma_{i'}\rangle \neq 0$ for some $i,i'$. Let 
         \begin{align*}
            l_{\gamma_i}^-:=\{u'\in l_{\gamma_i}|u' \mbox{ near $u_1$ and } |Z_{\gamma_i}(u')|<|Z_{\gamma_i}(u_1)|\}.
         \end{align*} Then union of $l_{\gamma}^-$ and the segment of $l_{\gamma}$ between $u$ and $u_1$ gives an image of tropical rational curve around $u_1$. Replace $\gamma_i$ by $\gamma$ and repeat the argument. Notice that $|Z_{\gamma_i}(u_1)|<|Z_{\gamma}(u_1)|$. When the symplectic area of holomorphic discs is small enough, then the speical Lagrangian torus fibre should be in a small tubular neighborhood of a singular fibre by Proposition 4.4.1 \cite{S6}. In particular, it implies that the relative homology class of the holomorphic discs can only be the multiple of the corresponding Lefschetz thimble. The procedure stops after finitely many steps by Gromov compactness theorem. The union of all local tropical rational curves gives an image of a tropical disc ends at $u$. Notice that the decomposition $\gamma=\sum_i \gamma_i$ at each vertex might not be unique and might lead to different (images of) tropical discs. 
    \end{proof}
    \begin{rmk}
       Comparing to the split attactor flows of black holes studied by Denef-Moore \cite{DM}, here the singularities of type $I_1$ correspond to the conifold points while we don't have the attactor points. 
    \end{rmk}
    Now we want to prove that the open Gromov-Witten invariants $\{\tilde{\Omega}'(\gamma;u)\}$ satisfies the Kontsevich-Soibelman wall-crossing formula.

    \begin{thm} \label{9001}
       Assume that $X$ satisfies the generic condition (*), then the open Gromov-Witten invariants $\tilde{\Omega}'(\gamma;u)$ satisfy the analogue of  Theorem \ref{3008}
    \end{thm}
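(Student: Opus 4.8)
The plan is to mirror the proof of Theorem \ref{3008} for the tropical side, replacing tropical input with the Floer-theoretic input already established, and using the uniqueness of solutions to the wall-crossing functional equations. First I would reduce to the two local situations exactly as in the proof of Theorem \ref{3008}: either $u$ lies on no wall $W'_\gamma$ with $|Z_\gamma(u)|<\lambda$ and $\mathrm{Arg}Z_\gamma(u)\in\mathcal{S}$, or $u$ lies on finitely many walls $W'_{\gamma_1},\dots,W'_{\gamma_k}$ whose classes have phase inside $\mathcal{S}$. In the first case, using Proposition \ref{3009}(1) together with Lemma \ref{3003}, the slab functions $f_\gamma(u')$ for $u'\in U$ are identified (mod $T^\lambda$) via parallel transport, and the ordering of the arguments $\mathrm{Arg}Z_\gamma(u')$ is locally constant; hence the ordered product defining the analogue of $\Theta^{trop}_{\mathcal{S}}$ is carried to itself by $T_{u_1,u_2}$, and Lemma \ref{2999} controls the truncation parameter $\lambda'$ with $\lambda'\to\infty$. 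In the second case, I would further reduce (as in Theorem \ref{3008}) to $u_1,u_2$ straddling a single wall $W'_{\gamma_i}$ and connected by a path $\phi$ crossing it once at a point $p$, with all other walls avoided inside $U$.

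The key new input replacing Theorem \ref{3011} is the monodromy/consistency statement coming from Floer theory, namely Theorem \ref{102}: the isomorphism $F^{can}_{(\phi,p)}$ depends only on the homotopy class of $\phi$ and is trivial for contractible loops. Applying Theorem \ref{102} to a small counterclockwise loop around $p$ inside $U$ (as in the proof of Proposition \ref{3009}(2)), the loop decomposes into two arcs $\phi_1,\phi_2$ separated by $W'_{\gamma_i}$, and $F^{can}$ along $\phi_1$ equals $F^{can}$ along $\phi_2$ modulo $T^{\lambda'}$. By Theorem \ref{101}, each $F^{can}$ along a short sub-arc is a product of elementary transformations $z^{\partial\gamma'}\mapsto z^{\partial\gamma'}f_{\gamma}^{\langle\gamma',\gamma\rangle}$ with $f_\gamma\in 1+\Lambda^{\mathcal{S}_\gamma}[[z^{\partial\gamma}]]$, i.e. exactly the shape of the elementary transformations $\tilde\theta^{trop}_\gamma$ used to build $\Theta^{trop}_{\mathcal{S}}$. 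So the composite of the transformations attached to the walls met by $\phi_1$ equals the composite attached to $\phi_2$; applying the appropriate parallel transport $T_{u_1,u_2}$ to both sides gives
\begin{align*}
  \Theta_{\mathcal{S}_i}(u_2)=T_{u_1,u_2}\Theta_{\mathcal{S}_i}(u_1)\hspace{4mm}(\mbox{mod }T^{\lambda'}),
\end{align*}
for the sub-sector $\mathcal{S}_i$ containing $Z_{\gamma_i}(p)$ in its interior, where $\Theta$ now denotes the product of elementary transformations built from the $f_\gamma$ of Definition \ref{4001}. For relative classes with phase in $\mathcal{S}\setminus\mathcal{S}_i$ along $\phi$, they are identified by parallel transport via Lemma \ref{3003}, and patching all sub-sectors gives the full statement; the control $\lim_{\lambda\to\infty}\lambda'(\lambda)=\infty$ again follows from Lemma \ref{2999}.

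I expect the main obstacle to be the bookkeeping needed to show that the Floer-theoretic transformation $F^{can}_{(\phi,p)}$, when decomposed across the finitely many walls crossed by an arc, factors \emph{in the correct order} as the ordered product $\overset{\curvearrowright}{\prod}\tilde\theta_{\gamma}$ indexed by $\mathrm{Arg}Z_\gamma$ — i.e. that the ordering of wall-crossings along $\phi_1$ is compatible with the $\mathrm{Arg}Z_\gamma$-ordering. This is the analogue of the "standard position" combinatorics on the tropical side, and requires the genericity of $\vartheta$ (via condition $(*)$, Remark \ref{445}) to ensure no two walls coincide and that each wall is crossed transversally exactly once. A secondary technical point is that the elementary-transformation shape from Theorem \ref{101} is only known modulo $T^\lambda$, so one must carefully track that taking $\lambda\to\infty$ and passing to the inverse limit in Definition \ref{4001} is compatible with the wall-crossing identity; this is handled by the Baker--Campbell--Hausdorff argument already used in Proposition \ref{3009}(2), comparing coefficients of $z^{d\partial\gamma}$ order by order in $d$.
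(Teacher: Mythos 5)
Your proposal matches the paper's proof, which consists precisely of the observation that Theorem \ref{102} serves as the Floer-theoretic analogue of Theorem \ref{3011} and that the rest of the argument runs parallel to the proof of Theorem \ref{3008}; you have simply spelled out that outline (using Theorem \ref{101}, Lemma \ref{3003}, Proposition \ref{3009} and Lemma \ref{2999}) in more detail than the paper does.
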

    \begin{proof}
       Theorem \ref{102} is the Floer theoretic analogue of Theorem \ref{3011}. The rest of the proof is similar to the proof of Theorem \ref{3008}. 
    \end{proof}

     From the discussion in previous sections, the two open Gromov-Witten invariants $\tilde{\Omega}(\gamma;u)$ and $\tilde{\Omega}'(\gamma;u)$ share many properties. Actually, they are indeed the same invariants via the analogue of divisor axiom for Gromov-Witten invariants. 
  \begin{thm} \label{3005} Assume that $X$ satisfies the generic condition $(*)_{u,\gamma}$. Let $\gamma\in H_2(X,L_u)$ and 
     assume that $u\notin W'_{\gamma}$, then $\tilde{\Omega}(\gamma;u)$ and $\tilde{\Omega}'(\gamma;u)$ are both well-defined and 
       \begin{align*}
          \tilde{\Omega}(\gamma;u)=\tilde{\Omega}'(\gamma;u).
       \end{align*}
  \end{thm}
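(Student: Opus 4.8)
The plan is to show that $\tilde{\Omega}(\gamma;u)$ and $\tilde{\Omega}'(\gamma;u)$ both compute a single geometric quantity: the virtual number of holomorphic discs in the relative class $\gamma$ that are born as the special Lagrangian boundary fibre is dragged across the wall $l_\gamma$. First I would reduce to the case $u\in l_\gamma$. Under the genericity hypothesis $(*)_{u,\gamma}$ there is no splitting $\gamma=\gamma_1+\gamma_2$ into disc classes of the same phase as $\gamma$, since that would force $Z_{\gamma_i}=t_iZ_\gamma$ with $0<t_i<1$; hence near $u$ the wall $l_\gamma$ does not meet $W'_\gamma$, and by Proposition \ref{3012}(2) for $\tilde{\Omega}$ and Lemma \ref{3003} for the slab function both invariants are unchanged when $u$ is moved onto $l_\gamma$. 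Perturbing $u$ along $l_\gamma$ if necessary (harmless by the same local constancy and by the countability of the walls), I may assume $u$ lies on no other wall, write $\gamma=d_0\gamma_0$ with $\gamma_0$ primitive, and work in $X_{\vartheta_0}$ with $\vartheta_0=\mbox{Arg}Z_\gamma(u)-\frac{\pi}{2}$.

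Next I would unwind the $\tilde{\Omega}'$-side. Pick a short path $\phi$ crossing $l_\gamma$ transversally near $u$, short enough that it meets no other wall and Fukaya's trick applies. By Remark \ref{445} the only disc classes involved in $F^{can}_{(\phi,p)}$ are the multiples $d\gamma_0$, so Theorem \ref{101} applies: $F^{can}_{(\phi,p)}$ is the elementary transformation $z^{\partial\gamma'}\mapsto z^{\partial\gamma'}f_{\gamma_0}^{\langle\gamma',\gamma_0\rangle}$ modulo $T^{\lambda}$, and its proof exhibits the tree-degree-one contribution as $\mathfrak{c}'(1,d\gamma_0)(b)=\big(z^{\partial\gamma_0}T^{\omega(\gamma_0)}\big)^d\int_0^1 c^{\tau}_{0,d\gamma_0}\,d\tau$, all higher tree-degrees vanishing by the divisor axiom (Theorems \ref{4053} and \ref{4054}). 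By Lemma \ref{27} and Corollary \ref{4056}, the closed $1$-form $c^{\tau}_{0,d\gamma_0}$ represents $\big(\int_0^1 c^{\tau}_{-1,d\gamma_0}\,d\tau\big)$ times the class Poincar\'e dual to $\partial(d\gamma_0)$; substituting this back, comparing with Definition \ref{4001}, and letting $\lambda\to\infty$ gives
\begin{align*}
\tilde{\Omega}'(d\gamma_0;u)=\int_0^1 c^{\tau}_{-1,d\gamma_0}\,d\tau=Corr_*(\tilde{\mathcal{M}}_{0,d\gamma_0};tri,tri)(1).
\end{align*}

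The remaining task, and the crux, is to prove $Corr_*(\tilde{\mathcal{M}}_{0,d\gamma_0};tri,tri)(1)=\tilde{\Omega}(d\gamma_0;u)$. Via Fukaya's trick, $\tilde{\mathcal{M}}_{0,d\gamma_0}=\bigcup_{t\in[0,1]}\mathcal{M}_{0,d\gamma_0}(X_{\vartheta_0},L_{\phi(t)})$, and since each slice has virtual dimension $-1$ the family is virtually concentrated where $\phi(t)\in l_\gamma$; thus $\tilde{\mathcal{M}}_{0,d\gamma_0}$ carries the moduli space $\mathcal{M}_{0,d\gamma_0}(X_{\vartheta_0},L_{u})$ together with the obstruction coming from the normal direction to $l_\gamma$ parametrised by $t$. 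On the other side, the identity $\mathcal{M}_{d\gamma_0}(\mathfrak{X},L_u)=\mathcal{M}_{d\gamma_0}(X_{\vartheta_0},L_u)$ shows that $\tilde{\Omega}(d\gamma_0;u)$ is the virtual count of the same underlying moduli space, now obstructed by the direction $\vartheta\in S^1$. The hyperK\"ahler rotation formulas make the two deformations — rotating $\vartheta$ away from $\vartheta_0$, and moving the boundary fibre off $l_\gamma$ — act on the Cauchy-Riemann operator in the same way to first order, so the two obstruction structures are deformation equivalent and the counts agree up to an overall sign independent of $(u,\gamma)$. That sign is pinned down to $+1$ by the Ooguri-Vafa computation: near an $I_1$ fibre with vanishing class $\gamma_e$, Theorem \ref{3020} (equivalently Theorem \ref{4020}) gives $\tilde{\Omega}(d\gamma_e;u)=(-1)^{d-1}/d^2$, while Theorems \ref{42} and \ref{45} give $f_{\gamma_e}(u)=1+T^{Z_{\gamma_e}(u)}z^{\partial\gamma_e}$ and hence $\tilde{\Omega}'(d\gamma_e;u)=(-1)^{d-1}/d^2$, so the two match. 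Taking $d=d_0$ proves the theorem, and the multiple cover relation (\ref{79}) falls out as a by-product.

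The hard part is the comparison in the previous paragraph: exhibiting, in an orientation-preserving way, a deformation equivalence between the Kuranishi/obstruction structure that $\tilde{\Omega}$ acquires from the $S^1$-family of hyperK\"ahler complex structures and the one that $\tilde{\Omega}'$ acquires from a path transverse to the wall. This requires Fukaya's trick at the level of Kuranishi structures, the explicit hyperK\"ahler rotation identities relating the two normal directions at the wall, and a careful matching of coherent orientations of the moduli spaces — and it is precisely because tracking orientations by hand is delicate that the global sign is instead fixed by the explicit local model near an $I_1$ singular fibre rather than computed directly.
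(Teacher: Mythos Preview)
Your overall shape is right---both sides are virtual counts of the same underlying moduli space equipped with two different one-parameter thickenings---but the step you flag as ``the hard part'' is exactly what the paper sidesteps, and as written your argument does not close it. Asserting that rotating $\vartheta$ and sliding the fibre off $l_\gamma$ ``act on the Cauchy--Riemann operator in the same way to first order'' is not a proof that the two Kuranishi/obstruction structures yield the same virtual number; making that precise at the level of coherent orientations and compatible Kuranishi data is genuinely delicate, and fixing only the overall sign by the Ooguri--Vafa model does not by itself justify the deformation equivalence for arbitrary $(u,\gamma)$.

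The paper's route avoids this comparison entirely. It works in the $59$-dimensional moduli space $\mathcal{M}$ of marked K3 surfaces together with a special Lagrangian torus, where both deformations---moving $L$ across $l_\gamma$ with $(X,\omega,\Omega)$ fixed, and hyperK\"ahler-rotating $(\omega,\Omega)$ with $L$ fixed---are paths crossing the same real Noether--Lefschetz wall $NL_\gamma=\{\mbox{Arg}\,Z_\gamma=0\}$. The argument of Lemma~\ref{3001} transports verbatim to $\mathcal{M}$ (it only used Theorem~\ref{101} and the homotopy invariance Theorem~\ref{102}), so the slab function attached to crossing $NL_\gamma$ is independent of the path. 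For the first path this is $\tilde{\Omega}'$ by definition. For the second path the family moduli space is literally $\mathcal{M}_{d\gamma}(\mathfrak{X},L_u)=\cup_\vartheta\mathcal{M}_{d\gamma}(X_\vartheta,L_u)$, so the same computation you carried out (via Lemma~\ref{27} and the vanishing of $\mathfrak{c}'(k,\cdot)$ for $k\ge 2$) reads off $\tilde{\Omega}(d\gamma;u)$ directly, with no sign ambiguity to fix. In short: rather than comparing two obstruction theories on one moduli space, the paper recognises both as instances of a single path-independent transformation in a larger parameter space.
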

  We will postpone the proof to Section \ref{4005}. Similar to the tropical story, we have the analogue definition of wall of marginal stability.
      \begin{definition} 
         The wall of marginal stability for holomorphic discs are defined by
         \begin{align*}
            W_{\gamma}:=\{u\in W'_{\gamma}| \tilde{\Omega}(\gamma) \mbox{ differs on different sides of $W'_{\gamma}$}\}.
         \end{align*} For $u\in W'_{\gamma}\backslash W_{\gamma}$, we will define $\tilde{\Omega}(\gamma;u)$ by the natural continuous extension. 
      \end{definition}
      \begin{rmk}
         Similar proofs shows that Theorem \ref{3005} holds when $u\notin W_{\gamma}$. 
      \end{rmk}

  This will lead us to the main theorem of the paper, which matches the tropical discs counting with the open Gromov-Witten invariants.
  \begin{thm} \label{3013}
     Assume that $X$ satisfies the generic condition (*), $u\notin W_{\gamma}$ for $\gamma\in H_2(X,L_u)$
     , then
        \begin{align} \label{3022}
            \tilde{\Omega}(\gamma;u)=\tilde{\Omega}^{trop}(\gamma;u). 
        \end{align}
  \end{thm}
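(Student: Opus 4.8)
The plan is to combine Theorem \ref{3005}, which identifies $\tilde{\Omega}(\gamma;u)$ with the Floer-theoretic invariant $\tilde{\Omega}'(\gamma;u)$, with a uniqueness-of-scattering-diagram argument matching $\tilde{\Omega}'$ with $\tilde{\Omega}^{trop}$. By Theorem \ref{3005} it suffices to prove $\tilde{\Omega}'(\gamma;u)=\tilde{\Omega}^{trop}(\gamma;u)$ whenever $u\notin W_{\gamma}$. Both sides organize into families of automorphisms of the quantum torus: on the tropical side the collection $\{\Theta^{trop}_{\mathcal{S}}(u)\}$ assembled from the slab functions $f^{trop}_{\gamma}$ of (\ref{78}), and on the Floer side the analogous collection assembled from the slab functions $f_{\gamma}(u)$ of Definition \ref{4001}. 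By Theorem \ref{3008} the former is invariant under parallel transport, and by Theorem \ref{9001} so is the latter; in the language of scattering diagrams, each defines a consistent scattering diagram on $B_{\vartheta}$, $\vartheta=\mbox{Arg}Z_{\gamma}(u)-\pi/2$, whose walls are the affine lines $l_{\gamma}$ (respectively the loci $W'^{trop}_{\gamma}$).

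First I would set up an induction on the central charge. Fix $\lambda>0$ and work modulo $T^{\lambda}$; by Lemma \ref{3006} and Gromov compactness only finitely many relative classes need to be tracked. Let $\lambda_0>0$ be the smallest central charge of any nontrivial configuration emanating from $\Delta$. For $|Z_{\gamma}(u)|<\lambda_0$ the only relative classes supporting either invariant are the multiple covers $d\gamma_e$ of a Lefschetz thimble attached to an $I_1$ fibre: on the holomorphic side this is Theorem \ref{3020} (equivalently Theorem \ref{4020} via Theorem \ref{45}), giving $\tilde{\Omega}(d\gamma_e;u)=(-1)^{d-1}/d^2$, and on the tropical side the admissible tropical discs of small central charge near a singularity are precisely the weight-$d$ affine segments in the monodromy-invariant direction, whose multiplicity $\mbox{Mult}(\phi)=(-1)^{d-1}/d^2$ by (\ref{4029}). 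Hence the two scattering diagrams carry the same initial walls near each singular fibre.

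The inductive step is the uniqueness of the consistent scattering diagram extending this initial data, which under the generic condition (*) follows from the Gross--Pandharipande--Siebert argument (Theorem \ref{3011}) exactly as in the proof of Theorem \ref{3008}: working order by order in $T^{Z}$, any new wall of energy in an interval $[\lambda',\lambda'')$ is forced by the consistency relations around the intersection points of walls of strictly smaller energy, and Theorems \ref{3008} and \ref{9001} guarantee that both diagrams satisfy these relations. Since the initial data agree and both diagrams are consistent, they coincide modulo $T^{\lambda}$; reading off the coefficient of $(z^{\partial\gamma}T^{Z_{\gamma}(u)})^{d}$ in $f_{\gamma}(u)$ versus $f^{trop}_{\gamma}(u)$ via (\ref{4030}) and (\ref{78}) yields $\tilde{\Omega}'(d\gamma;u)=\tilde{\Omega}^{trop}(d\gamma;u)$ for all $d$ with $d|Z_{\gamma}(u)|<\lambda$. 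Letting $\lambda\to\infty$ and invoking Theorem \ref{3005} gives (\ref{3022}).

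The main obstacle I expect is the bookkeeping at the singularities of the affine structure: making precise that the initial data of the Floer-theoretic scattering diagram near an $I_1$ fibre consists of exactly the Ooguri--Vafa wall and nothing else. Theorem \ref{45} is the crucial input here, identifying $F^{can}_{\phi}$ around a type $I_1$ fibre with the Ooguri--Vafa transformation modulo $T^{\lambda}$, but one must additionally check that the admissibility conditions of Definition \ref{1012} produce no spurious contributions concentrated over the singular point, and that the monodromy-twisted gluing appearing in the proof of Theorem \ref{3037} (the identity (\ref{4999})) is reproduced identically on both sides, so that the two scattering diagrams agree wall by wall rather than merely yielding equal invariants after summation.
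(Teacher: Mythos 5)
Your overall strategy (reduce to $\tilde{\Omega}'=\tilde{\Omega}^{trop}$ via Theorem \ref{3005}, match initial data at the $I_1$ fibres, and propagate by wall-crossing plus the Kontsevich--Soibelman uniqueness of factorization) is the same skeleton as the paper's, but as written there is a genuine gap in the step ``since the initial data agree and both diagrams are consistent, they coincide modulo $T^{\lambda}$.'' Uniqueness of a consistent scattering diagram determines the \emph{outgoing} walls once the set of \emph{incoming} walls is fixed; consistency alone does not forbid the Floer-theoretic diagram from carrying an extra incoming wall, e.g.\ a full affine line along which $\tilde{\Omega}'(\gamma;\cdot)\neq 0$ that is not attached to any singular fibre and is consistent by itself. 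To apply your uniqueness argument you must first show that every class with $\tilde{\Omega}'(\gamma;u)\neq 0$ is supported on a wall that traces back, through scattering, to the Lefschetz-thimble walls at the $I_1$ fibres. That is exactly the weak correspondence theorem (Theorem \ref{3021}), which your proposal never invokes; the paper uses it crucially (``all the relative classes appearing in the side of $u_-$ appear in the tropical analogue as well'') before the Kontsevich--Soibelman lemma can be applied to matched data, and uses it again to rule out additional jumps of $\tilde{\Omega}$ along $l_{\gamma}$. Your final paragraph worries about spurious contributions \emph{over the singular point}, but the real danger is spurious Floer walls \emph{away} from the singularities.

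For comparison, the paper does not run a global energy-by-energy uniqueness argument. After reducing to generic $u$ (so that tropical discs in class $\gamma$ have no internal vertex on $\Delta$ and avoid intersections of walls), it inducts on $n(\gamma;u)$, the maximal number of internal vertices of tropical discs in class $\gamma$ with stop $u$: one slides $u$ along $l_{\gamma}$ in the direction of decreasing $|Z_{\gamma}|$ to the first point $u_1$ where $\tilde{\Omega}^{trop}(\gamma)$ jumps, applies the induction hypothesis to the sub-discs $\gamma_i$ there (which have fewer internal vertices), and then matches the jump of $\tilde{\Omega}'$ with the tropical jump using Theorems \ref{9001} and \ref{3008} together with the KS lemma in a sector of angle less than $\pi$ (this is where condition (\ref{444}) enters); Theorem \ref{3021} plus the induction hypothesis then excludes any jump of $\tilde{\Omega}$ between $u_1$ and $u$. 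Your energy induction could likely be repaired by inserting Theorem \ref{3021} (or reproving its attractor-flow argument) to pin down the incoming walls of the Floer diagram, but without that ingredient the uniqueness step does not go through.
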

  \begin{proof}
     From Theorem \ref{3005}, it suffices to prove that 
           \begin{align*}
              \tilde{\Omega}'(\gamma;u)=\tilde{\Omega}^{trop}(\gamma;u),
           \end{align*} for $X$ satisfies generic condition $(*)_{\lambda}$ such that $\lambda>|Z_{\gamma}(u)|$. 
   It suffices to consider the situation that $u\notin W_{\gamma}\cup W^{trop}_{\gamma}$, since we extend $\tilde{\Omega}'(\gamma;u)$ and $\tilde{\Omega}^{trop}(\gamma;u)$ by natural continuous extension.  
  We will further assume that $u$ is generic in the sense that
 \begin{enumerate}
 \item All tropical discs contributing to $\tilde{\Omega}^{trop}(\gamma;u)$ have no internal vertex mapped to singularities.
 \item All tropical discs contributing to $\tilde{\Omega}^{trop}(\gamma;u)$ do not have images fall into the intersection of different walls of marginal stability. 
 \end{enumerate} This is true for tropical discs with respect to generic $\vartheta$ and we will choose $u$ generic such that $\mbox{Arg}Z_{\gamma}(u)$ is generic.
  Assume that the theorem holds for $u'$ near the general $u\notin W_{\gamma}$ with above properties, then we have 
            \begin{align*}
               \tilde{\Omega}(\gamma;u)=\tilde{\Omega}(\gamma;u')=\tilde{\Omega}^{trop}(\gamma;u')=\tilde{\Omega}^{trop}(\gamma;u),
            \end{align*} where the first equality is from Proposition \ref{3009} and the last one is from Theorem \ref{3037}. Therefore, the proof for general $u$ reduce to the case when $u$ is generic in the sense above. 
   
    Now for a general $u\notin W'_{\gamma}\cup W'^{trop}_{\gamma}$, we associate an integer $n(\gamma;u)$ to be the maximal number of internal vertices of the image of tropical discs stop at $u$ such that its relative class is $\gamma$. Explicitly,
       \begin{align*}
         n(\gamma;u):=\max \{|(C_0^{int}(T)|-1|(\phi,T,w) \mbox{ a tropical disc stop at $u$ with $[\phi]=\gamma$.}\}
       \end{align*} From Lemma \ref{3006}, the number $n(\gamma;u)$ is finite. We will first prove the theorem by induction on $n(\gamma;u)$. 
   
     First we assume that $n(\gamma;u)=0$. From the proof of Theorem \ref{3021}, we have that $\gamma$ is the parallel transport of the multiple of Lefschetz thimble from an $I_1$-singularity and 
         \begin{align*}
           \tilde{\Omega}'(\gamma;u)=\tilde{\Omega}^{trop}(\gamma;u),
         \end{align*} for any $u$ close enough to the singularity
        from Definition \ref{1011} and Theorem  \ref{3020}. 
     
     Now assume that the theorem is true for all pairs $(u',\gamma')$ with $n(\gamma';u')\leq n$ and now we want to prove the statement when $n(u;\gamma)=n+1$. There exists an affine line $l_{\gamma}$ passing through $u$ such that $Z_{\gamma}$ has constant phase along $l_{\gamma}$. Move $u$ along $l_{\gamma}$ in the direction such that $|Z_{\gamma}|$ is decreasing. From the assumption $l_{\gamma}$ will intersect $W^{trop}_{\gamma}$, say at $u_1$, the first point where $\tilde{\Omega}^{trop}(\gamma)$ jumps along $l_{\gamma}$. By Definition \ref{1012}, the tropical discs contributing to the jump of  $\tilde{\Omega}^{trop}(\gamma;u)$ on different sides of $W'^{trop}_{\gamma}$ are gluing of tropical discs with stop at $u_1$ together with the affine segment from $u_1$ to $u$ along $l_{\gamma}$. Let $\gamma_1,\cdots, \gamma_k$ be the relative classes of these tropical discs. Then $\tilde{\Omega}^{trop}(\gamma_i;u_1)\neq 0$ by 
          By Proposition 3.26 \cite{L4}, the tropical discs with stop at $u_1$ has the edges adjacent to $u_1$ mapped to the side of the wall with
        \begin{align} \label{444}  
         \frac{\mbox{Im}(Z_{\gamma_i}(u_1)\bar{Z}_{\gamma_{i'}}(u_1))}{\langle \gamma_i,\gamma_{i'}\rangle }<0
         \end{align}
          if $\langle \gamma_i,\gamma_{i'}\rangle\neq 0$. By induction hypothesis, we have $\tilde{\Omega}(\gamma_i;u_1)=\tilde{\Omega}^{trop}(\gamma_i;u_1)\neq 0$. In particular, the moduli spaces  $\mathcal{M}_{\gamma_i}(X,L_{u_1})$ are non-empty and $u_1\in W'_{\gamma}$.
           
       From Theorem \ref{9001}, there exists a neighborhood $\mathcal{U}\ni u_1$ such that for generic $u_{\pm}\in \mathcal{U}$, we have 
          \begin{align} \label{9003}
            T_{u_+,u_1} \prod^{\curvearrowright}_{\gamma\in H_2(X,L_u) \mbox{ primitive}: \atop \mbox{Arg}Z_{\gamma}(u)\in \mathcal{S} } \tilde{\theta}_{\gamma,<\lambda}(u_+) = T_{u_-,u_1}\prod^{\curvearrowright}_{\gamma\in H_2(X,L_u) \mbox{ primitive}: \atop \mbox{Arg}Z_{\gamma}(u)\in \mathcal{S} } \tilde{\theta}_{\gamma,<\lambda}(u_-)
          \end{align} where we take $\mathcal{S}$ be the open sector containing $\mbox{Arg}Z_{\gamma_i}$ for all $i$. The condition (\ref{444}) ensures that $\mathcal{S}$ can be chosen with angle less than $\pi$. At the same time, Theorem \ref{3008} gives the analogue equality for tropical discs counting. We choose $u_{\pm}$ on different sides of the wall such that $\frac{\mbox{Im}(Z_{\gamma_i}(u_+)\bar{Z}_{[\gamma_{i'}]}(u_+))}{\langle \gamma_i,\gamma_{i'}\rangle }>0$. All the relative classes appearing in the side of $u_-$ in (\ref{9003}) appears in the tropical analogue as well from the weak correspondence theorem (Theorem \ref{3021}). Conversely, all the relative classes appearing the the tropical analogue in the side of $u_-$ appear in the side of $u_-$ in (\ref{9003}) by induction hypothesis. In other words, the factors (both relative classes and the corresponding slab functions) appear in the side of $u_-$ in (\ref{9003}) and the tropical analogue are identical. The Kontsevich-Soibelman lemma (Theorem 6 \cite{KS1}) says that each factor in the side of $u_+$ in (\ref{9003}) are uniquely determined if the product is respect to the phase of the central charge. In particular, we have  $\tilde{\Omega}(\gamma;u'_1)=\tilde{\Omega}^{trop}(\gamma;u'_1)$, for $u'_1$ close enough to $u_1$ on $l_{\gamma}$ between $u$ and $u_1$. 
     
     From the definition of $u_1$, we have $\tilde{\Omega}^{trop}(\gamma;u)=\tilde{\Omega}^{trop}(\gamma;u'_1)$ and we now want to show that $\tilde{\Omega}(\gamma)$ does not jump between $u'_1$ and $u$ along $l_{\gamma}$. Assume that the open Gromov-Witten invariant jumps at $u_2$ (between $u'_1$ and $u$ on $l_{\gamma}$),
     then there exist ${\gamma}'_i\in H_2(X,L_{u_2})$ such that $\gamma=\sum_i{\gamma}'_i$, $\tilde{\Omega}(\gamma'_i;u_2)\neq 0$, $\mbox{Arg}Z_{\gamma'_i}(u_2)=\mbox{Arg}Z_{\gamma}(u_2)$ and $\langle \gamma'_i,\gamma'_{i'}\rangle \neq 0$ for some $i,i'$. 
     From Theorem \ref{3021}, we have $u_2\in W'^{trop}_{\gamma}$ and 
      $n(\gamma'_i;u_2)<n(\gamma;u)$. Therefore, we have $\tilde{\Omega}(\gamma'_i;u_2)=\tilde{\Omega}^{trop}(\gamma'_i;u_2)\neq 0$ by the induction hypothesis. Similar argument above shows that $\tilde{\Omega}^{trop}(\gamma)$ also jumps at $u_2$ and contradicts to the definition of $u_1$. Therefore, we have $\tilde{\Omega}(\gamma;u)=\tilde{\Omega}(\gamma;u'_1)$ and the theorem is proved. It worth noticing that we also prove that the wall of marginal stability for holomorphic discs and tropical discs are the same, i.e, $ W_{\gamma}=W^{trop}_{\gamma}$.
     
     \end{proof}
    
     \begin{thm}
       Given any elliptic K3 surface with only $I_1$-type singular fibres, the correspondence theorem (\ref{3022}) holds. 
     \end{thm}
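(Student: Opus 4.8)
The plan is to remove the genericity hypothesis of Theorem \ref{3013} by a deformation argument, exploiting that the condition $(*)_\lambda$ is open and dense in the period domain (Remark \ref{445}(1)) together with the deformation invariance of both sides of (\ref{3022}). Throughout, $\gamma\in H_2(X,L_u)$ is the given class and $u\notin W_\gamma$.

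First I would reduce to $u$ generic. Fix $\lambda>|Z_\gamma(u)|$. By Gromov compactness (Lemma \ref{3006} and Corollary \ref{29}) only finitely many relative classes $\gamma'$ with $|Z_{\gamma'}(u)|<\lambda$ are relevant, so the union of the walls $W'_{\gamma'}\cup W'^{trop}_{\gamma'}$ they determine is, locally, a finite union of real codimension one subsets. By Proposition \ref{3012}(2) and Theorem \ref{3037}, both $\tilde{\Omega}(\gamma;\cdot)$ and $\tilde{\Omega}^{trop}(\gamma;\cdot)$ are locally constant off $W'_\gamma$ and agree with their continuous extensions across $W'_\gamma\setminus W_\gamma$; hence it suffices to prove (\ref{3022}) for $u$ in the open dense set avoiding all $W'_{\gamma'}$ with $|Z_{\gamma'}(u)|<\lambda$. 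So we may assume $u$ is generic in this sense.

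Next I would deform the holomorphic volume form (equivalently the period of the K3), giving a one‑parameter family of central charges $Z^t$ with $Z^0=Z$, such that the deformed surface $X^t$ satisfies $(*)_\lambda$ for all sufficiently small $t>0$; this is possible since $(*)_\lambda$ is open and dense, and since for $t\ll1$ the point $u$ stays in $B_0$ and off all the (finitely many, continuously varying) walls $W'_{\gamma'}$ below energy $\lambda$. Now invoke deformation invariance on each side. On the tropical side this is exactly Theorem \ref{9011}: $\tilde{\Omega}^{trop}_t(\gamma;u)=\tilde{\Omega}^{trop}(\gamma;u)$ for $t\ll1$. On the holomorphic side, since $u\notin W'_\gamma$ the family moduli space $\mathcal{M}_\gamma(\mathfrak{X},L_u)=\cup_\vartheta\mathcal{M}_\gamma(X_\vartheta,L_u)$ is compact without real codimension one boundary (Proposition \ref{3012}, see \cite{L4}), and this persists for $X^t$ with $t\ll1$; a standard cobordism argument on the virtual fundamental classes then gives $\tilde{\Omega}(\gamma;u)=\tilde{\Omega}_{X^t}(\gamma;u)$, where the relative class is transported along the family by the Gauss--Manin connection. (Equivalently one uses $\tilde{\Omega}=\tilde{\Omega}'$ from Theorem \ref{3005} and the stability of $F^{can}_{(\phi,p)}$ under $C^1$‑small changes of the complex structure.) Finally, applying Theorem \ref{3013} to $X^t$ — whose proof uses only $(*)_\lambda$ with $\lambda>|Z_\gamma(u)|$ — yields $\tilde{\Omega}_{X^t}(\gamma;u)=\tilde{\Omega}^{trop}_{X^t}(\gamma;u)$, and chaining the three equalities gives
\[
\tilde{\Omega}(\gamma;u)=\tilde{\Omega}_{X^t}(\gamma;u)=\tilde{\Omega}^{trop}_{X^t}(\gamma;u)=\tilde{\Omega}^{trop}(\gamma;u).
\]

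The main obstacle is the deformation invariance of the open Gromov--Witten invariant $\tilde{\Omega}(\gamma;u)$: one must verify that compactness of $\mathcal{M}_\gamma(\mathfrak{X},L_u)$ and the absence of real codimension one boundary are preserved throughout the deformation (this is where $u\notin W'_\gamma$ and the energy bound $\lambda$ are used) and that the Kuranishi/perturbation data can be chosen compatibly along the family so that the virtual counts are genuinely cobordant. The remaining points — reducing to generic $u$, controlling which finitely many classes enter below energy $\lambda$, and checking that the finitely many relevant walls can be simultaneously avoided under a small period deformation — are routine bookkeeping with the results already established.
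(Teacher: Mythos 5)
Your proposal follows essentially the same route as the paper: deform the period so that for small $t>0$ the surface $X_t$ satisfies $(*)_\lambda$ with $\lambda>|Z_\gamma(u)|$, use that $u\notin W'_\gamma$ persists for $t\ll1$ to get deformation invariance of $\tilde{\Omega}$ (via the boundaryless family moduli space) and Theorem \ref{9011} for invariance of $\tilde{\Omega}^{trop}$, then apply Theorem \ref{3013} to $X_t$ and chain the equalities. The only point you gloss over, which the paper settles by citing Theorem 4.1 of \cite{L10}, is that the deforming family can be chosen so that every $X_t$ remains an elliptic K3 with only $I_1$-type singular fibres; otherwise the argument matches.
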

     
     \begin{proof}
       Let $X$ be a general K3 surface with elliptic fibration and only type $I_1$ singular fibres. Fix $\lambda>|Z_{\gamma}(u)|$. By Theorem 4.1 \cite{L10} and Remark \ref{445}, there exists a $1$-parameter family of elliptic K3 surfaces $X_t$ with only typer $I_1$ singular fibres and such that $X_t$ satisfies the generic condition $(*)_{\lambda}$, for $t>0$. We may choose a trivializaton which induces diffeomorphism $X_t\cong X_0$. Choose a smooth section on $X_0$ induces a section of $X_t$ for $t\ll 1$ and the diffeomorphism of the base of elliptic fibraton on $X_t$ with the base of the elliptic fibration on $X_0$. We will use $W'^t_{\gamma}$ to denote the corresponding wall labeled by $\gamma$ in $X_t$ and $\tilde{\Omega}_t(\gamma;u)$ (and $\tilde{\Omega}_t^{trop}(\gamma;u)$) to denote the open Gromov-Witten invariants (and tropical disc counting invariants) of $X_t$. By Theorem \ref{3013}, we have the correspondence theorem $\tilde{\Omega}'_t(\gamma;u)=\tilde{\Omega}^{trop}_t(\gamma;u)$ for $X_t$, $t>0$. 
          
 Recall that the open Gromov-Witten invariant $\tilde{\Omega}(\gamma;u)$ is defined for $u\notin W'_{\gamma}$ and extend continuously to $u\notin W_{\gamma}$ for a general K3 surface $X$ with elliptic fibration. Notice that if $u\notin W'_{\gamma}$, then $u\notin W'^t_{\gamma}$ for $t\ll 1$. Thus, we have $\tilde{\Omega}_t(\gamma;u)$ is well-defined and $\tilde{\Omega}(\gamma;u)=\tilde{\Omega}_t(\gamma;u)$. Then we have 
      \begin{align*}
        \tilde{\Omega}(\gamma;u)=\tilde{\Omega}_t(\gamma;u)=\tilde{\Omega}_t^{trop}(\gamma;u)=\tilde{\Omega}^{trop}(\gamma;u).
      \end{align*} Here the last equality is from Theorem \ref{9011} and this finishes the prove of the theorem. 
    
  \end{proof}

\subsection{Open Gromov-Witten Invariants via Real Noether-Lefschetz Theory} \label{4005}
 The Lemma \ref{3001} and Definition \ref{4001} can actually be generalized to the following setting following the same argument. Consider $\mathcal{M}$ be the moduli space of the marked K3 surfaces together with special Lagrangians. Let $\mathbb{L}_{K3}$ be the K3 lattice. Explicitly, $\mathcal{M}$ is the set of $5$-tuples,
      \begin{align}
     \mathcal{M}=\{(X,\omega,\Omega,\alpha,L)\}/\sim, 
       \end{align} where $X$ is a K3 surface with a hyperK\"ahler pair $(\omega,\Omega)$, $\alpha:H^2(X,\mathbb{Z})\cong \mathbb{L}_{K3}$ is a marking and $L$ is a smooth (oriented) special Lagrangian torus with respect to the pair $(\omega,\Omega)$, i.e. $\omega|_{L}=\mbox{Im}\Omega|_{L}=0$. Two $5$-tuples $(X,\omega,\Omega,\alpha,L)$ and $(X',\omega',\Omega',\alpha',L')$ are equivalent if and only if there exists a diffeomorphism $f:X\rightarrow X'$ such that $f^*\omega'=\omega$, $f^*\Omega'=\Omega$, $\alpha'\circ f^*=\alpha$ and $f(L)=L'$. The moduli space $\mathcal{M}$ is an infinite disjoint union of smooth oriented manifolds of real dimension $59$\cite{L6}. It worth mentioning that the boundary of one irreducible component of $\mathcal{M}$ may fall in another irreducible component.  
  
      We will abuse notation and denote $\mathcal{M}$ for one of its components and write $(X,L)$ for the $5$-tuple by view the rest as part of the data in $X$ for simplicity. Now assume that there is a pair $u=(X_0,\omega_0,L_0)\in \mathcal{M}$ such that $L_0$ bounds a holomorphic disc in the relative class $\gamma\in H_2(X_0,L_0)$. Then there is a submersion from a small neighborhood $\mathfrak{U}$ of $\mathcal{M}$ containing $(X_0,L_0)$ such that $\int_{\gamma_{(X,L)}}\omega_X\neq 0$, for $(X,L)\in \mathfrak{U}$. Thus there is a well-defined map from $\mathfrak{U}$ to
      $S^1_\vartheta$ given by,
      \begin{align*}
         \mbox{Arg}_{\gamma}=\mbox{Arg}(Z_{\gamma}):\mathfrak{U}\rightarrow S^1_{\vartheta}, 
      \end{align*} where 
      \begin{align*}
      Z_{\gamma}(X,L)=-i\int_{\gamma_{(X,L)}}\mbox{Im}\Omega_X+i\omega_X,
      \end{align*} for $(X,\omega,\Omega,\alpha, L)\in \mathfrak{U}$. The map $Arg_{\gamma}$ is a locally a submersion and thus the fibre of $\mbox{Arg}_{\gamma}$ over $1\in S^1$, denoted by $NL_{\gamma}$, is a real codimension one submanifold of $\mathcal{M}$. $NL_{\gamma}$ can be viewed as the real analogue of the Noether-Lefschetz
      divisor.  We will call them Noether-Lefschetz walls because they are codimension one submanifolds, which separates a tubular neighborhood of locus of $\mathcal{M}$ where $\gamma$ can be represented as holomorphic discs into two parts. Since the phase
      $S^1_{\vartheta}$ is viewed as the unit circle in the complex plane and thus is naturally oriented, the Noether-Lefschetz walls
      $NL_{\gamma}$ are also oriented.

  \begin{proof}(of Theorem \ref{3005})

     Now assume there are a sequence of pairs of points $\{u^{\pm}_i=(X^{\pm}_i,\omega^{\pm}_i,\Omega^{\pm}_i,\alpha^{\pm}_{i},L^{\pm}_i)\}$ in $\mathcal{M}$ converging to $u$ but fall on different sides of $NL_{\gamma}$. Choose paths $\phi_i$ connected $u^{\pm}$ in a neighborhood in $u$. Following the same argument in Lemma \ref{3001} and Definition \ref{4001}, we get a power series  $  f_{\gamma}(u)\in \Lambda[[z^{\partial \gamma}]]
      $, for $u\in NL_{\gamma}$. Same argument in Lemma \ref{3001} shows that $f_{\gamma}(u)$ is independent of the choice of the sequences $u^{\pm}_i$. In particular, we are interested in the following two cases (the paths both go in the direction such that $\mbox{Arg}Z_{\gamma}$ is increasing): 
     \begin{enumerate}
       \item $X^+_i=X^-_i, \omega^+_i=\omega^-_i,\Omega^+_i=\Omega^-_i, \alpha^+_i=\alpha^-_i$. $L^+_i$ and $L^-_i$ are two special Lagrangians on different sides of the affine line $l_{\gamma}$.  
       \item $X^+_i=X^-_i, \alpha^+_i=\alpha^-_i, L^+_i=L^-_i$, and 
          \begin{align*}
            &\omega^{\pm}_i= \mbox{Im}(e^{\mp i\epsilon_i}(\mbox{Im}\Omega_0+i\omega_0)), \\
            &\Omega^{\pm}_i=\mbox{Re}\Omega_0+i\mbox{Re}(e^{\mp i\epsilon_i}(\mbox{Im}\Omega_0+i\omega_0)),
          \end{align*}
       for some $\epsilon_i \searrow 0$. 
       In other words, the path $\phi_i$ is an arc contained the $S^1$-family of hyperK\"ahler structures described in Section \ref{4003} with two end points on the different side of $NL_{\gamma}$. 
     \end{enumerate} 
    Let $\phi_1$ is a path of the first kind from $u_1^-$ to $u_1^+$ and $\phi_2$ is a path of the second kind from $u_2^-$ to $u_2^+$ in a small neighborhood of $u$. Choose $\phi_+$ connects $u_1^+, u_2^+$ and $\phi_-$ connects $u_1^-,u_2^-$ such that the composition of the paths $\phi_1 \circ \phi_+ \circ \phi_2^{-1} \circ \phi_-^{-1}$ is contractible, since $\mathcal{M}$ is a manifold. Therefore, 
        \begin{align*}
            F^{can}_{(\phi_1,u)}=(F^{can}_{(\phi_+,u)})^{-1}\circ F^{can}_{(\phi_2,u)}\circ F^{can}_{(\phi_-,u)}. 
        \end{align*}
     Similar to the proof of Proposition \ref{3009} (where $\mathcal{M}$ plays the role of $B_0$ and $NL_{\gamma}$ is the analogue of $l_{\gamma}$), we can choose the path $\phi_{\pm}$ such that the coefficients of $T^{Z_{\gamma}(u)}z^{\partial\gamma}$ of the power series corresponding to $F^{can}_{(\phi_1,u)},F^{can}_{(\phi_2,u)}$ coincide since $u$ corresponds to a pair of K3 and special Lagrangian torus which does not fall on $NL_{\gamma}$ but not on $NL_{\gamma_1}\cap NL_{\gamma_2}$ for $\gamma_1+\gamma_2=\gamma$. By definition, the former defines the open Gromov-Witten invariant $\tilde{\Omega}'(\gamma;u)$. Let $e^*_1,e^*_2$ be an integral basis of $H_1(L_u)$ such that $\langle e^*_1,e^*_2\rangle=1$ and $\partial \gamma=ce^*_1, c\in \mathbb{N}$. To show the theorem, it suffices to show that 
        \begin{align*}
           -d\tilde{\Omega}(d\gamma;u)z_1^{cd}e_2=\sum_{k\geq 1} \mathfrak{c}'(k,d\gamma)(b),
        \end{align*} From the proof of Theorem \ref{101}, we have
        $\mathfrak{c}(k,d\gamma)(b,\cdots,b)=0$, for $k\geq 2$. For $k=1$, we have
            \begin{align*}
               \mathfrak{c}'(1,d\gamma)(b)= (z^{\partial \gamma})^d \int_0^1 c^{\tau}_{0,d\gamma}d\tau.
            \end{align*}
     By Lemma \ref{27}, we have 
         \begin{align*}
             \mathfrak{c}'(1,d\gamma)(b)&=(z^{\partial \gamma})^d \int_0^1 c^{\tau}_{0,d\gamma}d\tau \\
                &=-(z^{\partial \gamma})^d \tilde{\Omega}(d\gamma;u)de_2.
         \end{align*}
   \end{proof}


$ 
 $	
\appendix
\flushbottom
\include{AppendixA}
\flushbottom
\section{Existence of Quadratic Refinement} \label{2997}

\begin{prop} Let $\Gamma'\subset \Gamma$ be the subset of $\Gamma$ consisting of pair $(u,\gamma)$ such that $\gamma$ can be realized as tropical discs with stop at $u$ and with respect to some $\vartheta\in S^1$. Then there exists a unique well-defined quadratic refinement defined on $\Gamma'$ such that the value is $-1$ for every parallel transport of the Lefschetz thimble. 
\end{prop}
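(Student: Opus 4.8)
The plan is to exhibit $c$ by a closed formula on all of $\Gamma$ and then observe that on $\Gamma'$ there is no other choice. Over a fibre $L_u$ the intersection pairing on $H_1(L_u;\mathbb{Z})\cong\mathbb{Z}^2$ reduces modulo $2$ to the standard symplectic form $\bar B$ on $(\mathbb{Z}/2)^2$; I will use its quadratic refinement $\bar q\colon(\mathbb{Z}/2)^2\to\mathbb{Z}/2$ of Arf invariant one, characterised by $\bar q(0)=0$ and $\bar q(x)=1$ for $x\neq 0$ — concretely, writing $\partial\gamma=(a,b)$ in an integral basis, $\bar q(\partial\gamma\bmod 2)\equiv a+ab+b\pmod 2$. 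The proposed refinement is
$$
c(\gamma):=(-1)^{\bar q(\partial\gamma\bmod 2)},
$$
so that $c(\gamma)=1$ precisely when $\partial\gamma$ is divisible by $2$ in $H_1(L_u;\mathbb{Z})$ and $c(\gamma)=-1$ otherwise. First I would check the three defining properties, all of which are immediate: the condition ``$\partial\gamma\equiv0\bmod2$'' is preserved by every automorphism of $H_1(L_u;\mathbb{Z})$, so $c$ is invariant under the monodromy of the fibration and under parallel transport, hence well defined on $\Gamma=\bigcup_u H_2(X,L_u)$; from $\bar q(x+y)=\bar q(x)+\bar q(y)+\bar B(x,y)$ together with $\bar B(\partial\gamma_1\bmod2,\partial\gamma_2\bmod2)\equiv\langle\partial\gamma_1,\partial\gamma_2\rangle\pmod2$ one gets $c(\gamma_1+\gamma_2)=(-1)^{\langle\partial\gamma_1,\partial\gamma_2\rangle}c(\gamma_1)c(\gamma_2)$, so $c$ is a quadratic refinement; and any parallel transport $\gamma_e$ of a Lefschetz thimble has primitive boundary, so $\partial\gamma_e\not\equiv0\bmod2$ and $c(\gamma_e)=-1$.

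Next I would prove uniqueness on $\Gamma'$, which also re-establishes consistency of the naive definition by decompositions. By the inductive definition of $[\phi]$ in Definition \ref{303}, every $\gamma\in\Gamma'$ is obtained from a tropical disc by repeatedly forming, at the internal vertex closest to the root, the balanced sum of the relative classes of the incoming sub-discs — each again realised by a tropical disc, hence in $\Gamma'$ — with base case a multiply covered primitive Lefschetz thimble $d\gamma_e$; thus $\gamma=\sum_{i=1}^{k}\gamma_{e_i}$ is a sum of parallel transports of primitive Lefschetz thimbles with all partial sums in $\Gamma'$. Now let $c'$ be any quadratic refinement on $\Gamma'$ equal to $-1$ on all such thimbles. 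I would first note $c'$ is forced to be parallel-transport invariant on $\Gamma'$: around an $I_1$ singularity the Picard--Lefschetz transformation is $\gamma\mapsto\gamma-m\gamma_e$ with $m=\langle\partial\gamma_e,\partial\gamma\rangle$, and since $m\gamma_e\in\Gamma'$ the quadratic refinement identity gives $c'(\gamma)=(-1)^{\langle\partial\gamma-m\partial\gamma_e,\,m\partial\gamma_e\rangle}c'(\gamma-m\gamma_e)c'(m\gamma_e)=(-1)^{-m^2}(-1)^{m}\,c'(\gamma-m\gamma_e)=c'(\gamma-m\gamma_e)$, because $m^2+m$ is even; general parallel transports are compositions of these. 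Iterating the quadratic refinement identity along $\gamma=\sum_i\gamma_{e_i}$, and using $c'(d\gamma_e)=c'(\gamma_e)^d=(-1)^{d}$ (as $\langle\partial\gamma_e,\partial\gamma_e\rangle=0$), I obtain $c'(\gamma)=(-1)^{\,k+\sum_{i<j}\langle\partial\gamma_{e_i},\partial\gamma_{e_j}\rangle}$. Finally, iterating $\bar q(x+y)=\bar q(x)+\bar q(y)+\bar B(x,y)$ gives $\bar q\big(\textstyle\sum_i\partial\gamma_{e_i}\bmod2\big)\equiv k+\sum_{i<j}\langle\partial\gamma_{e_i},\partial\gamma_{e_j}\rangle\pmod2$ (using $\bar q(\partial\gamma_{e_i}\bmod2)=1$), so $c'(\gamma)=(-1)^{\bar q(\partial\gamma\bmod2)}=c(\gamma)$. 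Hence $c$ is unique, and the sign $(-1)^{k+\sum_{i<j}\langle\partial\gamma_{e_i},\partial\gamma_{e_j}\rangle}$ depends only on $\gamma$.

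The inputs from geometry are minimal: only the primitivity of boundaries of Lefschetz thimbles, and the monodromy invariance of $\bar q$ — the latter being the remark that $SL_2(\mathbb{Z}/2)\cong S_3$ simply permutes the three nonzero vectors of $(\mathbb{Z}/2)^2$ and therefore fixes the quadratic form equal to $1$ on all of them. The one conceptual point is the last display of the previous paragraph: the unwieldy sign produced by iterating the quadratic-refinement axiom collapses to a function of the total class alone, which is exactly what makes a prescription ``$-1$ on all thimbles'' consistent. The main (and only mild) obstacle I anticipate is the bookkeeping in the uniqueness argument — verifying that the recursion of Definition \ref{303} can always be run with every intermediate relative class and every multiply covered thimble $d\gamma_e$ genuinely lying in $\Gamma'$, so that the quadratic-refinement identity legitimately applies at each stage, and keeping track of the two orientations of $\gamma_e$ along the two monodromy-invariant rays issuing from a singularity (both of which must receive the value $-1$).
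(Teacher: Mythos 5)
Your construction is correct and is essentially the paper's own: your form $\bar q(a,b)=a+ab+b$ is exactly the paper's $C(v^1,v^2)=1-(1-v^1)(1-v^2)$ modulo $2$ (so $c=-1$ precisely when $\partial\gamma$ is not divisible by $2$), and in both arguments existence and consistency/uniqueness reduce to the same mod-$2$ identity obtained by expanding the refinement axiom over a decomposition of a tropical disc class into parallel transports of Lefschetz thimbles. The bookkeeping point you flag (that intermediate partial sums lie in $\Gamma'$) is also left implicit in the paper, so there is no essential divergence.
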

 \begin{proof}
    We will actually prove that there exists an integer-value function $C:\Gamma_g:=\cup_{u\in B_0}H_1(L_u) \rightarrow \mathbb{Z}$ satisfying 
      \begin{enumerate}
          \item For any $\partial \gamma_1,\partial \gamma_2\in \Gamma_g$, we have
           \begin{align*}
           C(\partial\gamma_1+\partial\gamma_2)=C(\partial\gamma_1)C(\partial\gamma_2)+\langle \partial\gamma_1,\partial \gamma_2 \rangle (\mbox{ mod }2).
           \end{align*}
          \item If $\partial\gamma\in \Gamma_g$ is a parallel transport of vanishing cycle from an $I_1$-type singular fibre then $C(\partial \gamma)=1$.
      \end{enumerate}
     Then any $\gamma\in \Gamma$, we can define 
           \begin{align*}
              c(\gamma)=(-1)^{C(\partial \gamma)}.
           \end{align*}
    
    Assume there are two tropical discs images represents the same relative homology class. Without lose of generality, we may assume that there exists integers $k_i, k\in \mathbb{Z}$ and parallel transport of Lefschetz thimbles $\gamma, \gamma_i \in H_2(X,L_u)$ such that 
         \begin{align*}
            \sum_i k_i\gamma_i=k\gamma.
         \end{align*}
    It suffices to prove that the quadratic refinement satisfies
          \begin{align} \label{9999}
             C(\sum_i k_i\partial\gamma_i)=C(k\partial\gamma) ( \mbox{ mod }2),
          \end{align} where $k_i, k\in \mathbb{Z}$ and $\partial \gamma_i,\partial\gamma\in H_1(L_u)$ are parallel of vanishing cycles.    
    Notice that $c(\partial\gamma_i)=1$ and $c(\partial\gamma)=1$.
    By choose a symplectic basis for $H_1(L_u)$ and set 
         \begin{align*}
            \partial\gamma_i=(v_i^1,v_i^2), \hspace{5mm} \partial \gamma=(v^1,v^2).
         \end{align*} Then we have 
         \begin{align*}
             c(k_i\partial\gamma_i)&=1-(1-k_iv_i^1)(1-k_iv_i^2) \\
             c(k_i\partial\gamma)&=1-(1-\sum_i k_iv_i^1)(1-\sum_i k_iv_i^2). 
         \end{align*} The equation (\ref{9999}) is reduced to the following identity in $\mathbb{Z}_2$
         \begin{align*}
            1-\big(1-\sum_i k_i v_i^1\big)\big(1-\sum_i k_i v_i^2  \big)=\sum_k \big[ 1-(1-v_k^1)(1-v_k^2)\big]+\sum_{i\neq j} v_i^1 v_j^2.            
         \end{align*}     
    
 \end{proof}
     
\include{AppendixB}
\flushbottom
\section{Refined Tropical Countings} \label{4099}

      The Kontsevich-Soibelman algebra can be $q$-deformed as follows \cite{KS2}: the algebra becomes non-commutative
        \begin{align*}
           z^{\partial\gamma_1}z^{\partial\gamma_2}=q^{\langle\gamma_1,\gamma_2\rangle}z^{\partial\gamma_1}z^{\partial\gamma_2},
        \end{align*} where $q$ is a formal variable. The Lie bracket is replaced by 
        \begin{align*}
           [z^{\partial\gamma_1},z^{\partial\gamma_2}]=(q^{n/2}-q^{-n/2})z^{\partial\gamma_1+\partial\gamma_2}. 
        \end{align*} 
        The $q$-deformed dilogarithm 
          \begin{align*}
             Li_2(z;q):=\sum^{\infty}_{k=1}\frac{z^k}{k(1-q^k)}
          \end{align*}
       induces a $q$-deformation of $\theta_{\gamma}$ by 
          \begin{align*}
             \theta_{q,\gamma}(u)=Ad\big(\frac{1}{d}Li_2(q^{1/2}T^{Z_{\gamma}(u)}z^{\partial\gamma})\big),
          \end{align*} where $d$ is the divisibility of $\partial\gamma$.
     With the above notation, we are ready for the definition of refined tropical discs counting invariants.
     \begin{definition}
        \begin{enumerate}
           \item Under the same notation as Definition \ref{1011}, we define its weight of an admissible tropical disc $(\phi,T,w)$ to be 
     \begin{align*}
                  \mbox{Mult}_q(\phi):=\prod_{v\in C^{int}_0(T)}[\mbox{Mult}_v(\phi)]_q\prod_{v\in C_0^{ext}(T)\backslash \{u\}}\frac{(-1)^{w_v-1}}{w_v[w_v]_q}  \prod_{\bar{v}\in C^{int}_0(\bar{T})}|\mbox{Aut}(\bold{w}_{\bar{v}})|.
              \end{align*} Here we use the notation 
                \begin{align*}
                        [n]_q:= \frac{q^{n/2}-q^{-n/2}}{q^{1/2}-q^{-1/2}}, \mbox{ for $n\in \mathbb{N}$} .
                    \end{align*}
        \item Let $u\in B_0$ and $\gamma\in H_2(X,L_u)$ such that $u\notin W'^{trop}_{\gamma}$. The refined tropical discs counting invariant  $\tilde{\Omega}_q^{trop}(\gamma;u)$ is defined to be 
                \begin{align*}
                   \tilde{\Omega}_q^{trop}(\gamma;u):=\sum_{\phi} \mbox{Mult}_q(\phi) ,
                \end{align*} where the sum is over all admissible tropical discs on $B$ with stop at $u$ such that $[\phi]=\gamma$.        
           \end{enumerate}        
                    
     \end{definition} 
      The definition of refined tropical discs counting again looks artificial. However, they satisfy the refined wall-crossing formula. 
       \begin{thm}
       Then the refined tropical discs counting invariants satisfy the refined wall-crossing formula.                   
       \end{thm}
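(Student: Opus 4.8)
The plan is to transcribe the proof of the unrefined wall-crossing formula (Theorem~\ref{3008}) almost verbatim, replacing the commutative quantum torus by its $q$-deformation, the transformations $\tilde\theta^{trop}_{\gamma}$ by the $q$-deformed transformations $\theta_{q,\gamma}$, and the classical tropical multiplicities by the refined ones $\mathrm{Mult}_q$. The only genuinely new input is a $q$-refinement of the Gross--Pandharipande--Siebert formula of Theorem~\ref{3011}; everything else is formal.

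First I would assemble the refined local objects. For a primitive $\gamma\in H_2(X,L_u)$ one packages $\tilde\Omega_q^{trop}(d\gamma;u)$ into a refined slab function $f^{trop}_{q,\gamma}(u)$ lying in $\mathbb{C}[q^{\pm 1/2}][[z^{\partial\gamma}]]\otimes\Lambda_+^{\mathcal{S}_\gamma(u)}$, and one defines the refined elementary transformation $\theta^{trop}_{q,\gamma}(u)$ on the $q$-deformed torus by the analogue of (\ref{3002})--(\ref{336}) with the quantum dilogarithm $Li_2(z;q)$ in place of $Li_2(z)$; here the coefficient $\tfrac{(-1)^{w-1}}{w[w]_q}$ in $\mathrm{Mult}_q$ is exactly the multiple-cover coefficient read off from $Li_2(\,\cdot\,;q)$. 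One then forms $\Theta^{trop}_{q,\mathcal{S}}(u)$ as the phase-ordered product over primitive classes, which is still a finite product modulo $F^\lambda$ because the $q$-deformation does not touch the Novikov filtration, so Lemma~\ref{3006} and all the attendant finiteness statements apply unchanged.

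The heart of the argument is the $q$-analogue of Theorem~\ref{3011}: around a point $u\in B_0$ the phase-ordered product $\prod_i\bigl(T_{p_i,u}\theta_{q,\gamma_i}(p_i)\bigr)^{\epsilon_i}$ over the affine lines meeting a small loop is the identity modulo $F^\lambda$. By the gluing description of admissible tropical discs in Definition~\ref{1012} this reduces, exactly as in the unrefined proof, to the statement that the refined tropical vertex counts $N^{trop}_{q,\{\partial\gamma_i\}}(\mathbf{w})=\sum_{\tilde\phi}\mathrm{Mult}_q(\tilde\phi)$ assemble into a \emph{consistent} scattering diagram in the $q$-deformed tropical vertex group; this is the Block--Göttsche refinement of the GPS tropical vertex, established in the work of Bousseau and of Filippini--Stoppa. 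The key geometric mechanism is that the refined trivalent multiplicity $[\,w_1 w_2\,|v_{e_1}\wedge v_{e_2}|\,]_q$ is the combinatorial shadow of the $q$-deformed bracket $[z^{\partial\gamma_1},z^{\partial\gamma_2}]=(q^{n/2}-q^{-n/2})z^{\partial\gamma_1+\partial\gamma_2}$, so the scattering computation in the $q$-Lie algebra is matched term by term with the refined tropical count, the factors $|\mathrm{Aut}(\mathbf{w})|$ supplying precisely the symmetrization needed for the Baker--Campbell--Hausdorff bookkeeping.

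Granting this refined GPS statement, the remainder runs as for Theorems~\ref{3037} and \ref{3008}: across a wall $W'^{trop}_{\gamma}$ the variation of $\Theta^{trop}_{q,\mathcal{S}}$ is controlled by commutators of the $\theta_{q,\gamma_i}$, and the uniqueness of the $q$-scattering diagram forces $\Theta^{trop}_{q,\mathcal{S}}(u_2)=T_{u_1,u_2}\,\Theta^{trop}_{q,\mathcal{S}}(u_1)$ modulo $F^{\lambda'}$; the monodromy computation around an $I_1$ singularity uses the quantum-dilogarithm pentagon identity in place of the classical identity (\ref{4999}), which is again a formal fact in the $q$-deformed tropical vertex group. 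The main obstacle is Step~3 above — proving that $\mathrm{Mult}_q$ with the $[n]_q$-factors yields a consistent $q$-scattering diagram and that the normalizations of the external-edge and automorphism factors make the $q$-deformed change of variables agree with the Block--Göttsche count — since once that is in place the wall-crossing formula itself is an essentially formal consequence, the Novikov filtration being insensitive to the $q$-deformation.
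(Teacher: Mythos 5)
Your proposal matches the paper's argument: the paper likewise observes that the proof is essentially a verbatim repetition of the proof of Theorem~\ref{3008}, with the single new ingredient being the refined analogue of Theorem~\ref{3011}, which it imports from Filippini--Stoppa (their Corollary~4.9), exactly the input you identify as the crux. No discrepancy to report.
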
   
       \begin{proof}
         The proof is essentially the same as the proof of Theorem \ref{3008}. The refined analogue of Theorem \ref{3011} is given by Corollary 4.9 in \cite{FS}.
       \end{proof}

    Together with the Theorem \ref{3013} which states that
       \begin{align*}
         \tilde{\Omega}(\gamma;u)=\tilde{\Omega}^{trop}(\gamma;u),
       \end{align*} this suggests that the open Gromov-Witten invariants $\tilde{\Omega}(\gamma;u)$ also admits an refinement.        
       We will leave the geometric interpretation of the refined open Gormov-Witten invariants for future work.

\begin{bibdiv}
\begin{biblist}
\bibselect{file001}
\end{biblist}
\end{bibdiv}

Department of Mathematics, Columbia University\\
E-mail address: yslin@math.columbia.edu

\end{document}